\numberwithin{equation}{section}
\newcommand{\imunit}{\sqrt{-1}}
\newcommand{\V}[1]{\vspace{#1}}
\newcommand{\id}{\operatorname{id}}
\newcommand{\PP}{\mathbb{P}}
\newcommand{\C}{\mathbb{C}}
\newcommand{\R}{\mathbb{R}}
\newcommand{\Z}{\mathbb{Z}}
\newcommand{\Pa}{\partial}
\newcommand{\Int}{\operatorname{Int}}
\newcommand{\Crit}{\operatorname{Crit}}
\newcommand{\Critv}{\operatorname{Critv}}
\newcommand{\Ker}{\operatorname{Ker}}
\renewcommand{\Re}{\operatorname{Re}}
\renewcommand{\Im}{\operatorname{Im}}
\newcommand{\std}{\operatorname{std}}
\newcommand{\Hom}{\operatorname{Hom}}
\newcommand{\Aut}{\operatorname{Aut}}
\newcommand{\Hir}{\mathbb{F}}
\newcommand{\disk}{\mathbb{D}}
\newcommand{\sph}{\mathbb{S}}
\newcommand{\Emb}{\operatorname{Emb}}
\newcommand{\PD}{\operatorname{PD}}
\newcommand{\Coker}{\operatorname{Coker}}
\theoremstyle{plain}
\newtheorem{theorem}{Theorem}[section]
\newtheorem{corollary}[theorem]{Corollary}
\newtheorem{lemma}[theorem]{Lemma}
\newtheorem{proposition}[theorem]{Proposition}
\newtheorem{problem}[theorem]{Problem}
\theoremstyle{definition}
\newtheorem{definition}[theorem]{Definition}
\newtheorem{conjecture}[theorem]{Conjecture}
\newtheorem{remark}[theorem]{Remark}
\title{Combinatorial construction of symplectic 6-manifolds via bifibration structures}
\author{Kenta Hayano}
\email{k-hayano@math.keio.ac.jp}
\address{Department of Mathematics, Faculty of Science and Technology, Keio University,
Yagami Campus, 3-14-1, Hiyoshi, Kohoku-ku, Yokohama, 223-8522, Japan}
\thanks{
The author was supported by Japanese Society for the Promotion of Science KAKENHI Grant Numbers JP22K03320 and JP23K03123.
}
\begin{document}

\begin{abstract}

A bifibration structure on a $6$-manifold is a map to either the complex projective plane $\PP^2$ or a $\PP^1$-bundle over $\PP^1$, such that its composition with the projection to $\PP^1$ is a ($6$-dimensional) Lefschetz fibration/pencil, and its restriction to the preimage of a generic $\PP^1$-fiber is also a ($4$-dimensional) Lefschetz fibration/pencil. 
This object has been studied by Auroux, Katzarkov, Seidel, among others.
From a pair consisting of a monodromy representation of a Lefschetz fibration/pencil on a $4$-manifold and a relation in a braid group, which are mutually compatible in an appropriate sense, we construct a bifibration structure on a closed symplectic $6$-manifold, producing the given compatible pair as its monodromies.
We further establish methods for computing topological invariants of symplectic $6$-manifolds, including Chern numbers, from compatible pairs. 
Additionally, we provide an explicit example of a compatible pair, conjectured to correspond to a bifibration structure derived from the degree-$2$ Veronese embedding of the $3$-dimensional complex projective space. 
This example can be viewed as a higher-dimensional analogue of the lantern relation in the mapping class group of the four-punctured sphere.
Our results not only extend the applicability of combinatorial techniques to higher-dimensional symplectic geometry but also offer a unified framework for systematically exploring symplectic $6$-manifolds.

\end{abstract}

\maketitle

\section{Introduction}

The construction of symplectic structures from Lefschetz fibrations/pencils established by Gompf \cite{GompfTowardTopcharasympmfd,Gompf2005localholyieldsymp} (see also \cite{GS19994-mfd}), together with the correspondence between Lefschetz fibrations/pencils and their monodromies shown in \cite{Kas1980handlebodyLF,Matsumoto1996LFgenus2,BaykurHayanoHuriwitzMultisection}, has enabled the construction of symplectic $4$-manifolds from relations in the mapping class groups of surfaces.
This combinatorial approach has yielded various intriguing examples of symplectic 4-manifolds, such as those obtained in \cite{EMV2011substitution,BH2016multisection,AM2020genus2LF}.
Conversely, for a given symplectic manifold, Donaldson \cite{Donaldson1999LPonsympmfd} constructed a Lefschetz pencil on it by perturbing a map to the complex projective space $\PP^1$ obtained as the ratio of two sections of a line bundle with certain properties, which are later referred to as \textit{approximately holomorphic sections} in \cite{Auroux2000symp4mfdasbranchedcover,AurouxKatzarkovbranchedcoversymp4mfd,AurouxSympplecticMapsInvariant}. 
Auroux and Katzarkov \cite{Auroux2000symp4mfdasbranchedcover,AurouxKatzarkovbranchedcoversymp4mfd,AurouxSympplecticMapsInvariant} further developed this idea: they used three approximately holomorphic sections to construct a map from a higher-dimensional symplectic manifold to $\PP^2$ with well-behaved critical point/value sets, which we call a \textit{Lefschetz bipencil} in this manuscript.
In particular, by examining the monodromies of this map on a symplectic $6$-manifold, one can obtain a pair consisting of a monodromy representation of a Lefschetz pencil on a 4-manifold and a relation in a braid group, which are compatible in an appropriate sense (see \Cref{def:compatibility condition} for further details).

This study aims to lay groundwork for investigating symplectic 6-manifolds from the aforementioned pairs via bifibration structures, specifically as follows: 

\begin{enumerate}[(I)]

\item 
To clarify the correspondence between bifibration structures on symplectic $6$-manifolds and compatible pairs, including both existence and uniqueness (under a suitable equivalence). 

\item 
To establish a method for computing the topological invariants of symplectic 6-manifolds, such as the fundamental group, the (co)homology groups, and the Chern classes/numbers, from compatible pairs.

\item 
To present examples of compatible pairs that potentially serve as building blocks for other compatible pairs, ideally as simple as possible yet non-trivial, such as the lantern, chain, and torus relations in mapping class groups of surfaces.

\end{enumerate}

In order to deal with broader classes of compatible pairs than those associated with Lefschetz bipencils appearing in \cite{AurouxKatzarkovbranchedcoversymp4mfd,AurouxSympplecticMapsInvariant}, we consider a \textit{Lefschetz bifibration} on a $6$-manifold, which is a map to the total space of a $D^2$- or $\PP^1$-bundle $Z$ such that the composition with the bundle projection $\rho:Z\to E$ is a Lefschetz fibration, and its restriction on the preimage of a generic fiber $\rho^{-1}(\ast)$ is also a Lefschetz fibration (see \Cref{def:Lefschetz bifibration} for the precise definition). 
Note that the concept of Lefschetz bifibrations has already been introduced in \cite{SeidelbookFukayaPicard} and relevant references with additional geometric structures. 
As in the case of Lefschetz fibrations/pencils, one can obtain a Lefschetz bifibration from a Lefschetz bipencil by blowing-up the target $\PP^2$ and the source manifold.

We first address the existence part of the purpose (I). 
Specifically, under some mild assumptions, we construct a Lefschetz bifibration $F:X\to \Hir_m$ on a closed symplectic $6$-manifold $X$ (where $\Hir_m$ is the Hirzebruch surface with the $(-m)$-section) from a compatible pair of a monodromy representation $\theta$ of a Lefschetz fibration and a relation $\tau_{\mu_1}^{\epsilon_1}\cdots \tau_{\mu_n}^{\epsilon_n}=t_\Pa^m$ in the braid group. (\Cref{thm:combinatorial construction LbF over Hir_m,thm:almost cpx str LbF}.) 
Suppose that $\theta$ is the monodromy representation of a Lefschetz fibration with $b$ sections.
The corresponding Lefschetz bifibration $F:X\to \Hir_m$ also has $b$ sections, which are naturally identified with $\Hir_m$ via $F$, in particular admit $\PP^1$-bundle structures. 
If some of sections of a Lefschetz fibration associated with $\theta$ have self-intersection $-1$, the corresponding sections of $F$ can be blown-down along fibers (as a $\PP^1$-bundle).
%
%
On the other hand, the preimage of the $(-m)$-section of $\Hir_m$ by $F$ admits a trivial $\PP^1$-bundle over a genus-$g$ surface. 
If $m=1$, this $\PP^1$-bundle can be blown-down along fibers. 
In both cases, the resulting $6$-manifold admits a \textit{pencil-fibration structure}, and a symplectic structure. (\Cref{thm:combinatorial construction pencil-fibration,prop:symplectic structure blowdowns}. See \Cref{def:other pencil-fibration structures} for the definitions of various pencil-fibration structures.)

Part of the theorems presented above (for Lefschetz bipencils on symplectic manifolds) was stated at the second paragraph of \cite[p.~33]{AurouxSympplecticMapsInvariant}, though proofs were not provided. 
In the construction of Lefschetz bifibrations, we use a criterion for an element of a braid group, considered as an isotopy class of a self-diffeomorphism of $D^2$ fixing a finite subset $R\subset \Int(D^2)$, to be lifted to a self-isomorphism of a Lefschetz fibration over the disk whose critical value set is $R$. (\Cref{thm:condition lift a braid}.) 
This criterion was also stated in \cite[p.~35]{AurouxSympplecticMapsInvariant}, again without proofs (and any technical assumptions, cf.~\Cref{rem:assumptions for lifing a braid}).

As for the second purpose (II), it is straightforward to compute the Euler characteristic, the fundamental group, and the (co)homology group from a given compatible pair, by examining the handle decomposition associated with the Lefschetz fibration obtained as the composition of the Lefschetz bifibration and the bundle projection on the target space. (\Cref{prop:Euler LbF,prop:pi1 LbF,prop:H2 LbF}.)
Utilizing an almost complex structure constructed in \Cref{thm:almost cpx str LbF}, we show that the Poincar\'{e} duals of the sets of the critical points and the cusps of a Lefschetz bifibration are respectively represented by the Thom polynomials for folds and cusps. (\Cref{thm:crit pt set Chern class}.)
As a corollary, one can compute the Chern number $c_1c_2$ of a symplectic $6$-manifold from the corresponding compatible pair (by counting the number of cusps, see \Cref{cor:explicit description cusp [crit]}). 
Note that the methods for computing topological invariants explained here are also valid for other pencil-fibration structures. (\Cref{prop:blowdown topinv 1,prop:blowdown topinv 2,prop:blowdown topinv 3}.)

For the last purpose (III), we present an example of a compatible pair of a monodromy representation of a genus-$1$ Lefschetz pencil and a relation (a factorization of $t_{\Pa}$ into a product of half twists) in a braid group (\Cref{thm:relation for deg2 Veronese}). 
Building on the results in the preceding paragraph, we further demonstrate the computation of topological invariants of the corresponding symplectic $6$-manifold.
The compatible pair presented here can potentially be regarded, in a sense, as a generalization of the lantern relation in the mapping class group of the four-punctured sphere. 
Indeed, the lantern relation arises as the monodromy factorization of a Lefschetz pencil obtained as the composition of the degree-$2$ Veronese embedding $\PP^2 \hookrightarrow \PP^5$ and a generic projection $\PP^5 \dashrightarrow \PP^1$. 
By analogy, the given compatible pair is conjectured to correspond to the monodromies of a Lefschetz bipencil derived from the degree 2 Veronese embedding of $\PP^3$. 
Refer to \Cref{rem:example is deg2 Veronese} for further details (and \cite{Torricelli2020generalizedlantern_preprint} for generalized lantern relations).

The paper is organized as follows. 
In \Cref{sec:fibration str symp mfd} explains the definitions of Lefschetz fibrations/pencils, bifibrations, other pencil-fibration structures, and basic properties of their monodromies. 
The theorems for combinatorial constructions of Lefschetz bifibrations and pencil-fibration structures are stated in this section. (\Cref{thm:combinatorial construction LbF over D2timesD2,thm:combinatorial construction LbF over Hir_m,thm:combinatorial construction pencil-fibration}.) 
In \Cref{sec:liftable briad}, we give a criterion for an element in a braid group to be lifted to a self-isomorphism of a Lefschetz fibration. 
\Cref{sec:combinatorial construction pencil-fibration} is devoted to the proofs of the construction theorems, in which the aforementioned criterion for lifting a braid is utilized. 
In \Cref{sec:cpx/symp str LbF}, we give almost complex and symplectic structures of the total spaces of Lefschetz bifibrations and pencil-fibration structures. (\Cref{thm:almost cpx str LbF,prop:symplectic structure blowdowns}.)
In \Cref{sec:top inv LbF}, we explain how to compute topological invariants of symplectic $6$-manifolds obtained via the combinatorial constructions. 
In \Cref{sec:example}, we give an example of a compatible pair of a monodromy representation of a Lefschetz pencil and a relation in a braid group.

\subsection*{Conventions and notations}

Throughout the paper, we assume that manifolds are smooth, compact, oriented and connected unless otherwise noted. 
We denote the $n$-dimensional complex projective space by $\PP^n$, and the unit sphere in $\C^n$ by $\sph^{2n-1}$. 
We put $\disk_r =\{z\in \C~|~|z|\leq r\}$ for $r>0$ and $D^2=\disk_1$. 
For a smooth map $f:X\to Y$ between manifolds, we denote the set of critical points and values of $f$ by $\Crit(f)\subset X$ and $\Critv(f)\subset Y$, respectively. 
We use the dashed arrow $\dashrightarrow$ to represent a map defined on the complement of a closed submanifold with positive codimension (e.g.~a Lefschetz pencil).

Let $X$ be a manifold and $P,S\subset X$.
We denote by $\mathcal{D}_P(X,S)$ the group of orientation-preserving self-diffeomorphisms which preserve $P$ (resp.~$S$) pointwise (resp.~setwise). 
We denote the mapping class group $\pi_0(\mathcal{D}_P(X,S))$ by $\mathcal{M}_P(X,S)$. 
The symbols $P$ and $S$ are omitted when they are the empty set. 
We put $\Pa$ instead of $P$ when $P=\Pa X$ (e.g.~$\mathcal{M}_\Pa(\disk_1,P)$ in page \pageref{page:braid group}). 
We define multiplication of mapping class groups to be opposite to composition of representatives, that is, $[\xi_1]\cdot [\xi_2] =[\xi_2\circ \xi_1]$ for diffeomorphisms $\xi_1,\xi_2$, in order to make group structures of mapping class groups consistent to those of braid groups, and also make monodromy representations homomorphisms.

\section{Fibration structures on symplectic manifolds}\label{sec:fibration str symp mfd}

\subsection{Lefschetz fibrations, pencils and their monodromies}\label{subsec:LF/LP}

In this subsection we briefly review Lefschetz fibrations (over $D^2$ or $\PP^1$), pencils, and their monodromy representations. 

\begin{definition}

Let $X$ be a compact $2n$-manifold ($n\geq 2$) and $E$ be either $D^2$ or $\PP^1$. 
A \emph{Lefschetz fibration} on $X$ is a smooth map $f:X\to E$ satisfying the following conditions.

\begin{enumerate}[(1)]

\item
$\Crit(f)$ is contained in $\Int (X)$.
For any $x\in \Crit(f)$, one can take an orientation-preserving complex chart $\varphi:U\to \C^n$ (resp.~$\psi:V\to \C$) around $x$ (resp.~$f(x)$) satisfying
\[
\psi\circ f\circ \varphi^{-1}(z_1,\ldots, z_n) = z_1^2+\cdots +z_n^2.
\]

\item
Each fiber of $f$ contains at most one critical point. 

\item 
The boundary $\Pa X$ admits a decomposition into two codimension-$0$ submanifolds $\Pa_v X= f^{-1}(\Pa E)$ and $\Pa_h X = \overline{\Pa X \setminus \Pa_v X}$, called the \textit{vertical boundary} and the \textit{horizontal boundary} of $X$ with respect to $f$.

\item 
The restriction $f:\Pa_h X\to E$ is a $(2n-3)$-manifold bundle.

\end{enumerate}

\noindent
In general, a critical point of a smooth map admitting the local model above is called a \emph{Lefschetz singularity}. 
Let $f_i:X_i\to E$ be a Lefschetz fibration ($i=0,1$), and $\Phi:X_0\to X_1$ and $\Psi:E\to E$ be diffeomorphisms.
The pair $(\Phi,\Psi)$ is called an \textit{isomorphism} from $f_0$ to $f_1$ if $\Psi\circ f_0 = f_1\circ \Phi$ holds. 

\end{definition}

%
%
%
%
%
%

Let $f:X\to D^2$ be a Lefschetz fibration over the disk and $q_0\in D^2\setminus \Critv(f)$. 
We denote the regular fiber $f^{-1}(q_0)$ by $W$. 
Take a trivialization $\Pa_h X \cong D^2\times \Pa W$ of a $(2n-3)$-manifold bundle $f:\Pa_hX\to D^2$ and a horizontal distribution $\mathcal{H}\subset TX\setminus \Crit(f)$ for $f$ so that $\mathcal{H}$ is equal to $\{0\}\oplus (TD^2 \oplus\{0\})$ in $\nu \Pa_h X$ under the identification $\nu\Pa_h X \cong (-1,0]\times(D^2\times \Pa W)$.  
One can define the \emph{monodromy representation}
\[
\theta_f:\pi_1(D^2\setminus \Critv(f),q_0) \to \mathcal{M}_\Pa(W)
\]
by taking a parallel transport with respect to $\mathcal{H}$. 

A system of simple paths $\gamma_1,\ldots, \gamma_m\subset D^2$ is called a \emph{Hurwitz system} of $f$ (from $q_0$) if it satisfies the following conditions.

\begin{enumerate}[(A)]\label{list:condition Hurwitz system}

\item
$\gamma_1,\ldots,\gamma_m$ are mutually disjoint except at the common initial point $q_0$.

\item
$\gamma_1,\ldots, \gamma_m$ appear in this order when one goes around $q_0$ counterclockwise.

\item
Each point in $\Critv(f)$ is an end of one of $\gamma_i$'s.

\item
Each $\gamma_i$ is away from $\Critv(f)$ except at one end of it.

\end{enumerate}

\noindent
For each path $\gamma_i$ in a Hurwitz system of $f$, we take an element $\alpha_i\in \pi_1(D^2\setminus\Critv(f),q_0)$ as follows: $\alpha_i$ is represented by a loop going along $\gamma_i$ from $q_0$, going around the end of $\alpha_i$ counterclockwise, and then going back to $q_0$ along $\gamma_i$. 
Such a loop or the corresponding element in $\pi_1(D^2\setminus\Critv(f),q_0)$ is called a \emph{meridian loop} of $\gamma_i$. 
It is known that the monodromy $\theta_f(\alpha_i)$ is the (generalized) \emph{Dehn twist} $t_{C_i}$ along an $(n-1)$-sphere $C_i\subset W$. 
The sphere $C_i$ or its isotopy class is called a \emph{vanishing cycle} of $f$ associated with $\gamma_i$. 
Since the product $\alpha_1\cdots \alpha_m\in \pi_1(D^2\setminus \Critv(f),q_0)$ represents a loop homotopic to $\Pa D^2$, the monodromy along $\Pa D^2$ is equal to $t_{C_1}\cdots t_{C_m}$.
This factorization of the monodromy into Dehn twists is called a \emph{monodromy factorization} of $f$. 
Any sequence of Dehn twists can be realized as a monodromy factorization of a Lefschetz fibration. 
Furthermore, the isomorphism class of a Lefschetz fibration over $D^2$ is uniquely determined from the sequence of its vanishing cycles (\cite{Kas1980handlebodyLF,Matsumoto1996LFgenus2}).
The following indeed holds. (See \cite{Matsumoto1996LFgenus2} for Lefschetz fibrations on $4$-manifolds.) 

\begin{theorem}\label{thm:equivalence LF from van cyc}

Let $W$ be a $(2n-2)$-manifold and $C_1,\ldots, C_m$ be $(n-1)$-spheres in $W$ whose normal bundles are isomorphic to their cotangent bundles. 
There exists a Lefschetz fibration $f:X\to D^2$ on a $2n$-manifold $X$ such that its regular fiber is diffeomorphic to $W$, and its monodromy factorization is $t_{C_1}\cdots t_{C_m}$. 
Let $f_i:X_i\to D^2$ be a Lefschetz fibration ($i=0,1$). 
Suppose that $f_0$ and $f_1$ have the same monodromy factorization $t_{C_1}\cdots t_{C_m}$. 
Let $\varphi_j^i:U_j^i\to \C^n$ and $\psi_j^i:V_j^i\to \C$ be orientation-preserving complex charts of the critical points and values of $f_i$ associated with the vanishing cycle $C_j$ such that $\psi_j^i\circ f_i \circ (\varphi_j^i)^{-1}(z_1,\ldots, z_n)=z_1^2+\cdots +z_n^2$. 
There exists an isomorphism $(\Phi,\Psi)$ from $f_0$ to $f_1$ such that $\Psi$ preserves Hurwitz systems yielding the monodromy factorization up to isotopy, $\Phi|_{U_j^0}=(\varphi_j^1)^{-1}\circ \varphi_j^0$, and $\Psi|_{V_j^0}=(\psi_j^1)^{-1}\circ \psi_j^0$. 

\end{theorem}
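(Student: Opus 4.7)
The plan is to treat the two halves of the statement separately: existence by an explicit handle-attachment construction, and uniqueness by cutting $D^2$ along a Hurwitz system and gluing together standard local models of a Lefschetz singularity.

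For existence I would start from the trivial piece $D^2\times W$ (which serves as the fibration away from critical values), choose distinct interior points $p_1,\ldots,p_m\in \Int(D^2)$ and disjoint arcs $\delta_i$ from $q_0$ to small disks $\Delta_i$ around $p_i$ arranged counterclockwise, and replace a neighborhood of each $p_i$ by the standard Lefschetz model $(z_1,\ldots,z_n)\mapsto z_1^2+\cdots+z_n^2$ on the Milnor fiber of $A_1$. The hypothesis $\nu C_i\cong T^\ast C_i$ is precisely what is needed at this step: it provides a framed embedding of $C_i$ as the vanishing cycle in the smooth fiber over a point of $\Pa \Delta_i$, identifying a tubular neighborhood of $C_i$ with a neighborhood of the zero section in $T^\ast S^{n-1}$, which is exactly the model boundary of the local Lefschetz piece. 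Gluing the local models to $D^2\times W$ along these fiber identifications yields $X$, and the standard monodromy computation for the $A_1$ singularity (see, e.g., \cite{SeidelbookFukayaPicard}) shows the monodromy around $p_i$ equals $t_{C_i}$, so the resulting factorization along $\Pa D^2$ is $t_{C_1}\cdots t_{C_m}$.

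For uniqueness, fix Hurwitz systems $\gamma_i^0$ of $f_0$ and $\gamma_i^1$ of $f_1$ yielding the common factorization $t_{C_1}\cdots t_{C_m}$. First I would produce $\Psi\colon D^2\to D^2$ sending $\gamma_i^0$ to $\gamma_i^1$ and $p_i^0\in\Critv(f_0)$ to $p_i^1\in\Critv(f_1)$, with $\Psi|_{V_j^0}=(\psi_j^1)^{-1}\circ \psi_j^0$ near each critical value; such a $\Psi$ exists by elementary $2$-dimensional surface topology since both collections are simple disjoint arc systems based at $q_0$ in standard counterclockwise order. Next I would construct $\Phi$ covering $\Psi$ in three stages: (i) on each critical neighborhood $U_j^0$ set $\Phi:=(\varphi_j^1)^{-1}\circ \varphi_j^0$; (ii) over a tubular neighborhood of each arc $\gamma_i^0$ extend $\Phi$ fiberwise by matching parallel transports for chosen horizontal distributions $\mathcal{H}^0$, $\mathcal{H}^1$, arranged to agree with (i) at the endpoints; (iii) over the complement of the union of these tubular neighborhoods, which is a disk containing no critical values, both fibrations are trivial bundles, so the already-constructed boundary identification extends.

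The main technical obstacle is the gluing at the interface of (ii) and (iii) along the circles $\Pa \Delta_i$, where one must reconcile the chart-induced identification coming from $\varphi_j^i$ with the parallel-transport identification coming from $\mathcal{H}^i$. The discrepancy is a fiberwise self-diffeomorphism of $\Pa \Delta_i\times W$ that equals $t_{C_i}$ on a reference fiber; because the factorizations are literally identical (not merely conjugate), this fiberwise map is isotopic through fiberwise diffeomorphisms to a model supported near $C_i$, and the remaining ambiguity is absorbed into the extension over the complement, by induction on $i$. This is the gluing step at the heart of the classical Kas--Matsumoto argument, and I expect it to be the most delicate part, since it is where the on-the-nose agreement with the prescribed local charts $\varphi_j^i$ and $\psi_j^i$ must be achieved rather than only an agreement up to isotopy.
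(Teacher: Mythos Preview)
The paper does not supply its own proof of this theorem: it is stated as a known result, with the preceding sentence citing \cite{Kas1980handlebodyLF,Matsumoto1996LFgenus2} and the parenthetical ``See \cite{Matsumoto1996LFgenus2} for Lefschetz fibrations on $4$-manifolds.'' Your outline is precisely the classical Kas--Matsumoto argument those references contain (handle-attachment for existence, matching over a Hurwitz system and extending over the contractible complement for uniqueness), so there is nothing to compare against beyond noting that your approach is the expected one.

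One small point worth tightening in your write-up: the chart-compatibility clause $\Phi|_{U_j^0}=(\varphi_j^1)^{-1}\circ\varphi_j^0$ is slightly stronger than what the bare Kas--Matsumoto argument gives, and the paper relies on exactly this refinement later (in the proof of \Cref{thm:condition lift a braid} and throughout \Cref{sec:combinatorial construction pencil-fibration}). Your step (i) imposes it by fiat, but you should check that the parallel-transport extension in step (ii) can genuinely be made to agree with it on the overlap, not merely up to a fiberwise isotopy supported near $C_i$; otherwise the ``on-the-nose'' conclusion degrades to an isotopy statement. This is achievable by first adjusting the horizontal distributions $\mathcal{H}^0,\mathcal{H}^1$ near the critical fibers so that the induced parallel transport into $U_j^i$ already matches the chart, before running the rest of the argument.
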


Let $f:X\to \PP^1$ be a Lefschetz fibration over $\PP^1$, and $D\subset \PP^1$ be a closed disk containing no critical values of $f$. 
Put $X'=X\setminus f^{-1}(\Int(D))$. 
The restriction $f:X'\to \PP^1\setminus \Int(D)$ is a Lefschetz fibration over a disk, in particular we obtain a monodromy factorization $t_{C_1}\cdots t_{C_m}$ of this fibration, which we call a monodromy factorization of $f$. 
Since the restriction $f:f^{-1}(\overline{D})\to \overline{D}$ is a trivial $(2n-2)$-manifold bundle, the product $t_{C_1}\cdots t_{C_m}\in \mathcal{M}_\Pa(W)$ is contained in the kernel of the forgetting homomorphism $\mathcal{M}_\Pa(W)\to \mathcal{M}(W)$. 
Suppose that $X$ is a closed $4$-manifold. 
Let $\mathcal{S}_i\subset X$~($i=1,\ldots, b$) be a section of $f$ with self-intersection $a_i\in \Z$, and $\nu\mathcal{S}_i$ be its tubular neighborhood. 
The restriction $f:X\setminus (\bigsqcup_i \nu \mathcal{S}_i)\to \PP^1$ is a Lefschetz fibration, whose regular fiber $W$ is diffeomorphic to the genus-$g$ surface with $b$ boundary components for some $g\geq 0$. 
It is known that the product $t_{C_1}\cdots t_{C_m}\in \mathcal{M}_\Pa (W)$ for this fibration is equal to $t_{\delta_1}^{-a_i}\cdots t_{\delta_b}^{-a_b}$, where $\delta_1,\ldots, \delta_b\subset W$ are the simple closed curves parallel to the boundary components.

Let $X$ be a closed $2n$-manifold and $B\subset X$ be a codimension $4$ submanifold. 
A smooth map $f:X\setminus B\to \PP^1$ is called a \emph{Lefschetz pencil} if $B$ admits a neighborhood $V\subset X$ with a complex vector bundle $p:V\to B$ compatible with the orientation of $X$ such that the structure group of $p:V\to  B$ is $S^1$ and $f|_V$ is the projectivization on each fiber of $p$, and $f|_{X\setminus V}$ is a Lefschetz fibration. 
The submanifold $B$ is called the \emph{base locus} of $f$. 
As explained in the introduction, we denote a Lefschetz pencil by $f:X\dashrightarrow \PP^1$ when we do not need to represent its base locus explicitly. 
Let $\widetilde{X}$ be the blow-up of $X$ along $B$ and $\pi:\widetilde{X}\to X$ be the projection. 
It is easy to see that there exists a Lefschetz fibration $\widetilde{f}:\widetilde{X}\to \PP^1$ such that the restriction $\widetilde{f}:{\widetilde{X}\setminus \mathcal{S}}\to \PP^1$ is equal to $f\circ \pi$, where $\mathcal{S}$ is the exceptional divisor.
Let $W$ be a regular fiber of the restriction $\widetilde{f}:\widetilde{X}\setminus \nu \mathcal{S}\to \PP^1$, whose boundary admits an $S^1$-bundle associated with $p:V\to B$.
It is known that the product $t_{C_1}\cdots t_{C_m}\in \mathcal{M}_{\Pa}(W)$ is represented by the $2\pi$ counterclockwise rotation of the fibers of the $S^1$-bundle on $\Pa W$ (cf.~\cite{AurouxSympplecticMapsInvariant}). 
In the $4$-dimensional case (i.e.~$n=2$), this product is equal to $t_{\delta_1}\cdots t_{\delta_m}$.

\subsection{Bifibration structures on $6$-manifolds and their monodromies}\label{subsec:pencil-fibration str}

In this subsection, we introduce several bifibration structures on $6$-manifolds, which induce Lefschetz fibrations/pencils on $6$-manifolds and their $4$-dimensional regular fibers. 

\begin{definition}\label{def:Lefschetz bifibration}

Let $X$ be a $6$-manifold and $\rho:Z\to E$ be either a $D^2$-bundle or a $\PP^1$-bundle over $E=D^2$ or $\PP^1$. 
Let $\Pa_v Z = \rho^{-1}(\Pa E)$ and $\Pa_hZ = \overline{\Pa Z\setminus \rho^{-1}(\Pa E)}$, which are called the vertical and horizontal boundaries of $Z$, respectively. (Note that $\Pa_vZ=\emptyset$ if $E=\PP^1$, and $\Pa_h Z=\emptyset$ if $\rho$ is a $\PP^1$-bundle.)
A smooth map $F:X\to Z$ is called a \emph{Lefschetz bifibration} over $\rho:Z\to E$ if it satisfies the following conditions.

\begin{enumerate}[(1)]

\item 
For any $x\in\Crit(F)\cap \Int (X)$, one can take an orientation-preserving complex chart $\varphi:U\to \C^3$ (resp.~$\psi:V\to \C^2$) around $x$ (resp.~$F(x)$) satisfying either
\begin{itemize}

\item 
$\psi\circ F\circ \varphi^{-1}(z_1,z_2,z_3) = (z_1,z_2^2+z_3^2)$, or

\item 
$\varphi(x)=0$, $\psi\circ F\circ \varphi^{-1}(z_1,z_2,z_3) = (z_1,z_2^3+z_1z_2+z_3^2)$, and there exists an orientation-preserving complex chart $\psi':V'\to \C$ around $\rho(F(x))\in E$ such that $\psi'\circ \rho\circ \psi^{-1}$ is the projection to the first component.

\end{itemize} 

\noindent
The point $x\in\Crit(F)$ satisfying the former (resp.~the latter) condition is called a \emph{fold} (resp.~a \emph{cusp}). 
Let $\mathcal{C}_F$ be the set of cusps and $C_F =F(\mathcal{C}_F)\subset \Critv(F)$. 

\item 
Each fiber of $F$ contains at most two critical points. 
Each double point $y\in \Critv(F)$ is contained in $\Int(Z)$, and for such a $y$, one can take an orientation-preserving complex chart $\psi:V\to \C^2$ (resp.~$\psi':V'\to \C$) around $y$ (resp.~$\rho(y)$) so that $\psi'\circ \rho\circ \psi^{-1}$ is the projection to the first component, and $\psi(V\cup \Critv(F))$ is either 
\begin{equation}\label{eqn:local model +doublept}
\psi(V)\cap \bigl(\{(z,0)\in \C^2~|~z\in \C\}\cup \{(z,z)\in \C^2~|~z\in \C\}\bigr)
\end{equation}
or
\begin{equation}\label{eqn:local model -doublept}
\psi(V)\cap \bigl(\{(z,0)\in \C^2~|~z\in \C\}\cup \{(z,\overline{z})\in \C^2~|~z\in \C\}\bigr).
\end{equation}
Let $I_F^+, I_F^-\subset \Critv(F)$ be the sets of positive and negative double points, $I_F = I_F^+\cup I_F^-$, $\mathcal{I}_F^\pm = F^{-1}(I_F^\pm)\cap \Crit(F)$, and $\mathcal{I}_F = \mathcal{I}_F^+\cup \mathcal{I}_F^-$. 

\item 
The set $B_F=\Crit(\rho|_{\Critv(F)\setminus(C_F\cup I_F)})$ is contained in $\Int(Z)$.
We put $\mathcal{B}_F = F^{-1}(B_F)\cap \Crit(F)$. 
For each $x\in \mathcal{B}_F$, one can take an orientation-preserving complex chart $\varphi:U\to \C^3$ (resp.~$\psi:V\to \C^2$, and $\psi':V'\to \C$) around $x$ (resp.~$F(x)$, and $\rho\circ F(x)$) so that $\psi'\circ \rho\circ \psi^{-1}$ is the projection to the first component, and $\psi\circ F\circ \varphi^{-1}(z_1,z_2,z_3)=(z_1^2+z_2^2+z_3^2,z_1)$. 
A point in $B_F$ and one in $\mathcal{B}_F$ are called a \emph{branch point} of $F$.

\item 
Each fiber of $\rho$ contains at most one point in $\Delta_F = C_F\cup I_F \cup B_F$. 

\item 
The boundary $\Pa X$ admits a decomposition into three codimension-$0$ submanifolds $\Pa_{vv}X = F^{-1}(\Pa_v Z)$, $\Pa_{vh}X = F^{-1}(\Pa_h Z)$, and $\Pa_h X = \overline{\Pa X \setminus F^{-1}(\Pa Z)}$, called the \textit{vertical boundaries over $\Pa_v Z$ and $\Pa_h Z$} and the \textit{horizontal boundary} with respect to $F$. 

\item
The restriction $F:{\Pa_h X}\to Z$ is a disjoint union of $S^1$-bundles. 

\item
For any $x\in \Crit(F)\cap \Pa X$, $F(x)$ is contained in $\Pa_v Z$, and one can take an orientation-preserving complex chart $\varphi:U\to \C^3_{\geq 0}$ (resp.~$\psi:V\to \C^2_{\geq 0}$) around $x$ (resp.~$F(x)$) with $\psi\circ F\circ \varphi^{-1}(z_1,z_2,z_3) = (z_1,z_2^2+z_3^2)$, where $\C^n_{\geq 0} = \{(z_1,\ldots, z_n)\in \C^n~|~ \Im z_1 \geq 0\}$. 

\end{enumerate}

\noindent
For a Lefschetz bifibration $F:X\to Z$, regular fibers of $F$ and $\rho\circ F$ are respectively called a \textit{$2$-dimensional fiber} and a \textit{$4$-dimensional fiber}, and the genus of a $2$-dimensional fiber is called the \textit{genus} of $F$.

\end{definition}

For $m\geq 0$, we define an $(\sph^1)^2$-action to $(\sph^3)^2$ as follows:
\[
(\lambda,\mu)\cdot (s_0,s_1,t_0,t_1) := (\lambda s_0,\lambda s_1,\mu t_0, \mu\lambda^{-m}t_1). 
\]
Let $\Hir_m = (\sph^3)^2/(\sph^1)^2$. 
It is known that the projection $\rho_m: \Hir_m \to \sph^3/\sph^1\cong \PP^1$ is a holomorphic $\PP^1$-bundle and $S_m:= [\sph^3\times \{(1,0)\}]\subset \Hir_m$ is a holomorphic section of $\rho_m$ with self-intersection $-m$, in particular, $\Hir_m$ is the Hirzebruch surface of degree $m$. 
For a technical reason, we always assume the following condition for a Lefschetz bifibration $F:X\to \Hir_m$ over $\rho_m$. 

\begin{enumerate}[(1)]

\setcounter{enumi}{-1}

\item 
$\Critv(F)\cap S_m=\emptyset$.

\end{enumerate}

%
%
%
%
%
%
%
%
%

Let $F:X\to Z$ be a Lefschetz bifibration and $L\subset Z$ be a fiber of $\rho$ away from $\Delta_F$. 
The restriction $F|_{F^{-1}(L)}:F^{-1}(L)\to L$($\cong D^2$ or $\PP^1$) is a Lefschetz fibration by the following lemma. 

\begin{lemma}\label{lem:restriction fold to transverse submfd is Lef}

Let $G:\C^3\to \C^2$ be the smooth map defined by $G(z_1,z_2,z_3)=(z_1,z_2^2+z_3^2)$ and $L\subset \C^2$ be a $2$-dimensional submanifold which intersects $\C\times \{0\}$ at one point transversely. 
Then the restriction $\overline{G}:=G|_{G^{-1}(L)}:G^{-1}(L)\to L$ has a single Lefschetz singularity. 

\end{lemma}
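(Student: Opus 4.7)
The plan is to leverage the transversality hypothesis to produce local complex coordinates in which $\overline{G}$ takes exactly the standard Lefschetz normal form $(w_2, w_3) \mapsto w_2^2 + w_3^2$. Since $\Crit(G) = \C \times \{0\} \times \{0\}$ and $\Critv(G) = \C \times \{0\}$, the hypothesis yields $L \cap \Critv(G) = \{(a, 0)\}$ for some $a \in \C$, so the only candidate critical point of $\overline{G}$ is $p := (a, 0, 0)$. On the complement of $p$, the map $G$ is a submersion along $G^{-1}(L)$, hence $G^{-1}(L) \setminus \{p\}$ is automatically a smooth $4$-submanifold on which $\overline{G}$ is a submersion.

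Next, I would use transversality to write $L$ near $(a, 0)$ as a graph over the second $\C$-factor. The condition $T_{(a,0)} L \cap T_{(a,0)}(\C \times \{0\}) = \{0\}$ means that the second projection $\pi_2 \colon \C^2 \to \C$ restricts to a local diffeomorphism on $L$ near $(a, 0)$, so there exist a neighborhood $V \subset \C$ of $0$ and a smooth (in general \emph{not} holomorphic) map $\psi \colon V \to \C$ with $\psi(0) = a$ such that $L = \{(\psi(w), w) : w \in V\}$ locally around $(a, 0)$. Substituting into the defining equation of $G^{-1}(L)$ gives, near $p$,
\[
G^{-1}(L) = \{(z_1, z_2, z_3) \in \C^3 : z_1 = \psi(z_2^2 + z_3^2)\},
\]
which is the graph of a smooth function of $(z_2, z_3)$; in particular it is a smooth $4$-submanifold near $p$, and $(z_2, z_3)$ furnishes a complex chart $\varphi$ on $G^{-1}(L)$ around $p$. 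Taking $\pi_2|_L$ as a complex chart on $L$ around $(a, 0)$, the map $\overline{G}$ reads
\[
(z_2, z_3) \longmapsto \pi_2\bigl(\psi(z_2^2 + z_3^2),\, z_2^2 + z_3^2\bigr) = z_2^2 + z_3^2,
\]
the standard Lefschetz model; the substantive point is that the non-holomorphic factor $\psi$ is absorbed into the choice of chart on $L$.

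The remaining verification is that both charts are orientation-preserving with respect to the orientations carried by $G^{-1}(L)$ (inherited by transverse preimage on the smooth locus away from $p$, extended smoothly across $p$ by the chart $\varphi$) and by $L$. This reduces to the complex linearity of the projection $\C^3 \to \C^2_{z_2, z_3}$ and to the fact that $\pi_2|_L$ is an orientation-preserving local diffeomorphism, both of which follow directly from the transversality hypothesis. I expect the main (mild) obstacle to be keeping the two coordinate systems distinct and handling orientations carefully; the substantive idea is simply that transversality lets one kill the non-holomorphic dependence of $L$ via a well-chosen chart on the target.
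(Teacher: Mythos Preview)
Your proof is correct and follows essentially the same approach as the paper's. Both arguments use the projection to the last two coordinates $(z_2,z_3)$ as a chart on $G^{-1}(L)$ and the projection $\pi_2$ to the second factor as a chart on $L$, then verify that in these charts $\overline{G}$ becomes $(z_2,z_3)\mapsto z_2^2+z_3^2$; you simply make the graph descriptions of $L$ and $G^{-1}(L)$ more explicit and add a brief orientation discussion that the paper leaves implicit.
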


\begin{proof}
Without loss of generality, one can assume that $L$ intersects $\C\times\{0\}$ at the origin. 
Since the tangent space $T_x G^{-1}(L)$ is equal to the preimage $(dG_x)^{-1}(\Im dG_x \cap T_{G(x)}L)$, a critical point of $\overline{G}$ is contained in $G^{-1}(L)\cap \Crit(G)=\{0\}$. 
Thus, the origin of $\C^3$ is the unique critical point of $\overline{G}$. 
Let $p:\C^3\to \C^2$ and $p':\C^2\to \C$ be the projections to the latter components. 
By the assumption, the restrictions $p|_{G^{-1}(L)}$ and $p'|_{L}$ are both locally diffeomorphic at the origins. 
One can further check that $(p'|_{L})\circ \overline{G}\circ (p|_{G^{-1}(L)})^{-1}(z_2,z_3)$ is equal to $z_2^2+z_3^3$, which completes the proof of \Cref{lem:restriction fold to transverse submfd is Lef}. 
\end{proof}

\subsubsection{Monodromy representations}

Let $\rho:D^2\times D^2\to D^2$ be the projection to the first component, and $F:X\to D^2\times D^2$ be a Lefschetz bifibration over $\rho$. 
Take $q_0\in D^2\setminus \rho(\Delta_F)$, and put $L = \rho^{-1}(q_0) = \{q_0\}\times D^2$, which is identified with $D^2$ in the obvious way. 
Let $R = L\cap \Critv(F)=\{p_1,\ldots, p_d\} \subset \Int(D^2)$. 
We take a horizontal distribution $\mathcal{H}$ of $\rho$ on ${(D^2\times D^2)\setminus \rho^{-1}(\rho(\Delta_F))}$ satisfying the following conditions.

\begin{itemize}

\item 
$\mathcal{H}_y = T_y\Critv(F)$ for $y\in \Critv(F) \setminus (\rho^{-1}(\rho(\Delta_F)))$. 

\item 
$\mathcal{H}_y = T_y(\{\ast\}\times D^2)$ for $y$ close to $\Pa_h (D^2\times D^2) = D^2\times \Pa D^2$. 

\end{itemize}

\noindent
Taking parallel transports along $\mathcal{H}$, one can define a \emph{braid monodromy representation} of $F$
\[
\eta_F :\pi_1(D^2\setminus\rho(\Delta_F),q_0) \to B_d, 
\]
where $B_d$ is the braid group with $d$ strands, which is identified with the mapping class group $\mathcal{M}_\Pa(D^2,R)$\label{page:braid group}. 

Let $\gamma\subset D^2$ be a simple path from $q_0$ which intersects $\rho(\Delta_F)$ only at $q\in \rho(\Delta_F)$ on its end. 
There exist two points in $L\cap \Critv(F)$ which are connected by a path $\mu_0$ in $\rho^{-1}(\gamma)\cap \Critv(F)$. 
Furthermore, one can take a path $\mu\subset L$, unique up to homotopy, such that $\mu_0\cup \mu$ bounds a disk in $\rho^{-1}(\gamma)$ which intersects $\Critv(F)$ only along $\mu_0$. 
We call the path $\mu$ and its homotopy class the \emph{vanishing path} of $\gamma$. 
%
%
%
%
As we introduced for Lefschetz fibrations in \Cref{subsec:LF/LP}, we also define a \emph{Hurwitz system} of $F$ (from $q_0$) as a system of simple paths $\gamma_1,\ldots, \gamma_n$ in $D^2$ satisfying the conditions (A) and (B) on page \pageref{list:condition Hurwitz system}, and
(C) and (D) on the same page, with $\Critv(f)$ replaced by $\rho(\Delta_F)$.
Let $\alpha_i$ be the meridian loop of $\gamma_i$ and $\mu_i$ be the vanishing path of $\gamma_i$.
It is easy to see that the monodromy $\eta_F(\alpha_i)$ is equal to $\tau_{\mu_i}^3$ (resp.~$\tau_{\mu_i}^{\pm 2}$, $\tau_{\mu_i}$) if an end point of $\gamma_i$ is contained in $\rho_m(C_F)$ (resp.~$\rho_m(I_F)$, $\rho_m(B_F)$). 
As in the case of Lefschetz fibrations, we obtain the factorization $\tau_{\mu_1}^{\epsilon_1}\cdots \tau_{\mu_n}^{\epsilon_n}$ of the braid monodromy along $\Pa D^2$ into half twists, where $\epsilon_i\in \{1,\pm2,3\}$. 
This factorization is called a \emph{braid monodromy factorization} of $F$. 


Let $W = F^{-1}(L)$.  
By \Cref{lem:restriction fold to transverse submfd is Lef}, the restriction $F|_W:W\to L$ is a Lefschetz fibration whose critical value set is equal to $R=L\cap \Critv(F)$.
Take $r_0\in \Pa D^2$ and put $\Sigma=F^{-1}(r_0)$.
We call the monodromy representation $\theta_{F|_{W}}:\pi_1(D^2\setminus R,r_0)\to \mathcal{M}_\Pa(\Sigma)$ the \emph{geometric monodromy representation} or \emph{fiber monodromy representation} of $F$, which is denoted by $\theta_F$. 

For each $i=1,\ldots, n$, one can isotope a vanishing path $\mu_i$ so that it goes through $r_0$.
In particular one can regard $\mu_i$ as a pair of reference paths and obtain two meridian loops $\beta_i^1, \beta_i^2\in \pi_1(D^2\setminus R,r_0)$. 
By the local coordinate descriptions of $F$ around cusps, double points and branch points, one can deduce that two vanishing cycles $c_1,c_2$ of $F|_{W}$ associated with $\beta_i^1$ and $\beta_i^2$ can be isotoped so that these intersect on one point transversely (resp.~these are disjoint, these coincide) if the exponent $\epsilon_i$ is $3$ (resp.~$\pm 2$, $1$).
This condition is equivalent to that $\theta_{F}(\beta_i^1)$ and $\theta_{F}(\beta_i^2)$ satisfy the braid relation (resp. commute, coincide) if an end point of $\mu_i$ is contained in $\rho_m(C_F)$ (resp.~$\rho_m(I_F)$, $\rho_m(B_F)$). 

\begin{definition}\label{def:compatibility condition}

The condition for $\mu_i$ (with respect to $\theta_F$ and the index $\epsilon_i$) given here is called the \emph{compatibility condition}.
A factorization $\tau_{\mu_1}^{\epsilon_1}\cdots \tau_{\mu_n}^{\epsilon_n}$ and a homomorphism $\theta_F$ are said to be \textit{compatible} if all the $\mu_i$'s satisfy the compatibility condition. 

\end{definition}

\noindent
Note that if an end point of $\mu_i$ is in $\rho_m(B_F)$, the vanishing path of $\mu_i$ is a \emph{matching path} in the sense of \cite[(16g)]{SeidelbookFukayaPicard} and one can obtain a \emph{matching cycle} of the vanishing path, which is a $2$-sphere in $W$. 
Moreover, the monodromy of the Lefschetz fibration $\rho\circ F$ along a meridian loop of $\mu_i$ is equal to the Dehn twist along a matching cycle.

\subsubsection{Construction of bifibration structures from monodromies}

As in the case of Lefschetz fibrations, one can construct a Lefschetz bifibration from a compatible pair of braid and fiber monodromies. 
The following theorem, which is one of the main results of this paper, provides a combinatorial method for constructing Lefschetz bifibrations on $6$-manifolds over $D^2 \times D^2$.

\begin{theorem}\label{thm:combinatorial construction LbF over D2timesD2}

Let $R=\{r_1,\ldots, r_d\}\subset \Int(D^2)$, $\mu_1,\ldots, \mu_n\subset \Int(D^2)$ be simple paths between two points in $R$, and $\epsilon_i \in \{1,\pm 2,3\}$. 
Let $r_0\in \Pa D^2$, $\Sigma_g^b$ be a genus-$g$ surface with $b$ boundary components and 
\[
\theta:\pi_1(D^2\setminus R,r_0) \to \mathcal{M}_\Pa(\Sigma_g^b)
\] 
be a homomorphism such that $\theta(\alpha)$ is a non-trivial Dehn twist for the meridian loop $\alpha$ of a simple path between $r_0$ and a point in $R$.
Suppose that $\mu_i$ satisfies the compatibility condition with respect to $\theta$ and $\epsilon_i$ for $i=1,\ldots, n$.  
There exist a $6$-manifold $X$ and a Lefschetz bifibration $F:X\to D^2\times D^2$ over the projection $\rho:D^2\times D^2\to D^2$ whose braid monodromy factorization is $\tau_{\mu_1}^{\epsilon_1}\cdots \tau_{\mu_n}^{\epsilon_n}$ and fiber monodromy representation is $\theta$. 

\end{theorem}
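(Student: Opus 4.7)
The plan is to construct $X$ and $F$ by a gluing procedure: build the bifibration explicitly over a suitable skeleton in the base $D^2$ of $\rho$, attach an explicit local model for each singularity dictated by $\tau_{\mu_i}^{\epsilon_i}$, and use the compatibility condition together with the lifting criterion \Cref{thm:condition lift a braid} to ensure the pieces fit together coherently.

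First, apply \Cref{thm:equivalence LF from van cyc} to $\theta$ to obtain a $4$-dimensional Lefschetz fibration $f_0 : W \to D^2$ with critical value set $R$ whose geometric monodromy representation is $\theta$. Fix a base point $q_0 \in \Pa D^2$ in the base of $\rho$, together with points $q_1, \ldots, q_n \in \Int(D^2)$ that will become $\rho(\Delta_F)$ and a Hurwitz system $\gamma_1, \ldots, \gamma_n$ from $q_0$ with $\gamma_i$ ending at $q_i$. Let $N \subset D^2$ be a regular neighborhood of $\{q_0\} \cup \bigcup_i \gamma_i$ from which open disks $\Int(D_i)$ around each $q_i$ have been removed; since $N$ is contractible, define the trivial bifibration $F_0(q,w) = (q, f_0(w))$ on $N \times W \to N \times D^2$.

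Next, for each $i$ build a local model $F_i : X_i \to D_i \times D^2$ of the singularity type prescribed by $\epsilon_i$. For $\epsilon_i = 3$ take a compact neighborhood of the cusp $(z_1,z_2,z_3) \mapsto (z_1, z_2^3 + z_1 z_2 + z_3^2)$; for $\epsilon_i = \pm 2$ take two families of transverse folds meeting with the appropriate sign; for $\epsilon_i = 1$ take the branch-point model $(z_1,z_2,z_3) \mapsto (z_1^2 + z_2^2 + z_3^2, z_1)$. In every case the restriction to $\Pa D_i \times D^2$ is a $4$-dimensional Lefschetz fibration with exactly two critical values joined by a distinguished vanishing path $\nu_i$, whose two associated vanishing cycles meet transversely once (respectively are disjoint, or coincide) precisely when $\epsilon_i = 3$ (respectively $\pm 2$, $1$). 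To glue $F_i$ onto $F_0$ along $\gamma_i$, identify the $4$-dimensional Lefschetz fibration on $\Pa D_i$ of $F_i$'s base with the corresponding restriction of $f_0$ obtained by parallel transport along $\gamma_i$: the compatibility condition on $\mu_i$ states that the pair of vanishing cycles from $\mu_i$ under $\theta$ has exactly the same geometric intersection pattern as that arising from $\nu_i$, so their monodromy factorizations coincide up to Hurwitz moves and \Cref{thm:equivalence LF from van cyc} provides a smooth identification of the two Lefschetz fibrations.

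Performing this gluing for each $i = 1, \ldots, n$ yields the $6$-manifold $X$ and the bifibration $F : X \to D^2 \times D^2$ with braid monodromy factorization $\tau_{\mu_1}^{\epsilon_1}\cdots \tau_{\mu_n}^{\epsilon_n}$ and fiber monodromy representation $\theta$. The main obstacle is to realize the collar identifications by genuine smooth diffeomorphisms rather than just by mapping class group elements: as one traverses a loop in $D^2$ around $q_i$, the ambient Lefschetz fibration $f_0$ undergoes a braid motion of its critical values by $\tau_{\mu_i}^{\epsilon_i}$, and this motion must be realized by an isotopy through $4$-dimensional Lefschetz fibrations in order to match the collar of the local model $F_i$. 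This is exactly the content of \Cref{thm:condition lift a braid}, whose hypotheses are supplied by the compatibility condition. Once this lifting is in hand, the rigidity in \Cref{thm:equivalence LF from van cyc} promotes the mapping-class-level match to a smooth isotopy, the gluings close up, and the axioms of \Cref{def:Lefschetz bifibration} (boundary decomposition, horizontal $S^1$-bundle structure, and the normal forms at each critical point) follow by construction.
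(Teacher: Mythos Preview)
Your outline has the right ingredients, but there is a genuine gap in the gluing step. You place the \emph{trivial} bifibration $F_0(q,w)=(q,f_0(w))$ on $N$, so the restriction of $F_0$ to each inner boundary circle $\partial D_i$ is the \emph{trivial} $W$-bundle $\partial D_i\times W$. On the other hand, the local model $F_i$ has nontrivial braid monodromy $\tau_{\mu_i}^{\epsilon_i}$ around $\partial D_i$; its vertical boundary is the mapping torus of a self-isomorphism $(\widetilde T_i',T_i')$ of $f_0$ covering $\tau_{\mu_i}^{\epsilon_i}$. Since $\tau_{\mu_i}^{\epsilon_i}\neq 1$, this mapping torus is \emph{not} isomorphic (as a $W$-bundle over $S^1$, compatibly with the Lefschetz-fibered structure on fibers) to $\partial D_i\times W$, so no fiber-preserving gluing exists. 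Your last paragraph asserts that ``the ambient Lefschetz fibration $f_0$ undergoes a braid motion by $\tau_{\mu_i}^{\epsilon_i}$'' as one encircles $q_i$, but with the trivial bifibration on $N$ it does not: the fibration is literally constant there. Invoking \Cref{thm:condition lift a braid} does not help at this point, because that theorem produces a lift $\widetilde T_i$ of $\tau_{\mu_i}^{\epsilon_i}$, not an isotopy from $\widetilde T_i$ to the identity.

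The paper fixes exactly this issue by building the monodromy into the base piece from the outset. One takes a polygon $E'$, glues pairs of edges by the lifts $(\widetilde T_i,T_i)$ obtained from \Cref{thm:condition lift a braid}, and gets a bifibration $F_0:X_0\to Z_0$ over a planar surface $E_0$ whose monodromy around the $i$-th inner boundary $S_i$ is precisely $(\widetilde T_i,T_i)$. The second, and equally essential, use of \Cref{thm:condition lift a braid} is then its \emph{uniqueness} clause: the paper computes the actual boundary monodromy $(\widetilde T_i',T_i')$ of each local model in explicit coordinates, verifies that it satisfies conditions (A)--(C) for the same pair $(\tau_{\mu_i}^{\epsilon_i},1)$, and concludes that $(\widetilde T_i,T_i)$ and $(\widetilde T_i',T_i')$ are related by a fiber-preserving isotopy pair. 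That isotopy is what furnishes the gluing diffeomorphism between the two mapping tori. Your sketch uses only the existence part of \Cref{thm:condition lift a braid} and never arranges the two monodromies to agree; that is the missing idea.
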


\noindent
The proof of this theorem is given in \Cref{sec:combinatorial construction pencil-fibration}.

Let $F:X\to \Hir_m$ be a Lefschetz bifibration over $\rho_m:\Hir_m\to \PP^1$. 
Take a tubular neighborhood $\nu S_m$ of the section $S_m$ and a closed disk $D\subset \PP^1$ containing no values in $\rho_m(\Delta_F)$. 
Put $Z' = \Hir_m \setminus (\rho_m^{-1}(\Int(D))\cup \nu S_m)$ and $X' = F^{-1}(Z')$. 
The restriction $F':=F|_{X'}:X'\to Z'$ is a Lefschetz bifibration over the restriction $\rho_m:Z'\to \PP^1\setminus \Int(D)$. 
A \textit{braid monodromy factorization} of $F$ is that of the Lefschetz bifibration $F':X'\to Z'$, and denoted by $\tau_{\mu_1}^{\epsilon_1}\cdots \tau_{\mu_n}^{\epsilon_n}$. 
We also define a \textit{fiber monodromy representation} of $F$, denoted by $\theta_F$, in the same manner. 
Since the self-intersection of $S_m$ is $-m$, the braid monodromy along $\Pa D^2$ is $t_\Pa^m$, where $t_\Pa$ is the Dehn twist along a simple closed curve in $D^2$ parallel to $\Pa D^2$. 
Thus the product $\tau_{\mu_1}^{\epsilon_1}\cdots \tau_{\mu_n}^{\epsilon_n}\in B_d$ is equal to $t_\Pa^m$. 

\begin{theorem}\label{thm:combinatorial construction LbF over Hir_m}

Let $R$, $\mu_i$, $\epsilon_i$, and $\theta$ be the same ones as those in \Cref{thm:combinatorial construction LbF over D2timesD2} (satisfying the same assumptions). 
Suppose that $(g,b)\neq (0,2)$, the product $\tau_{\mu_1}^{\epsilon_1}\cdots \tau_{\mu_n}^{\epsilon_n}$ is equal to $t_\Pa^m$, and the image $\theta(\alpha_\Pa)$ is equal to $t_{\delta_1}^{a_1}\cdots t_{\delta_b}^{a_b}$, where $\alpha_\Pa$ is represented by a loop homotopic to $\Pa D^2$ with counterclockwise orientation, $\delta_1,\ldots, \delta_b\subset \Sigma_g^b$ are the simple closed curves parallel to the boundary components, and $a_1,\ldots,a_b\in \Z$.
There exist a closed $6$-manifold $X$, a Lefschetz bifibration $F:X\to \Hir_m$, and sections $\mathcal{S}_1,\ldots, \mathcal{S}_b\subset X$ of $F$ satisfying the following conditions.

\begin{itemize}

\item 
A braid monodromy factorization of the Lefschetz bifibration $F:X\setminus (\bigsqcup_k \nu \mathcal{S}_k)\to \Hir_m$ is $\tau_{\mu_1}^{\epsilon_1}\cdots \tau_{\mu_n}^{\epsilon_n}$, and its fiber monodromy representation is $\theta$. 

\item 
The normal bundle of $\mathcal{S}_k$ has Euler number $-a_k$ (resp.~$0$) over a fiber of $\rho_m$ (resp.~$S_m$). (Here, we identify $\mathcal{S}_k$ with $\Hir_m$ via $F$.)

\end{itemize}

\end{theorem}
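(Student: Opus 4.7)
The strategy is to deduce \Cref{thm:combinatorial construction LbF over Hir_m} from \Cref{thm:combinatorial construction LbF over D2timesD2} by extending the target of the combinatorial construction from $D^2\times D^2$ to $\Hir_m$. First I would apply \Cref{thm:combinatorial construction LbF over D2timesD2} to the data $(R,\{\mu_i\},\{\epsilon_i\},\theta)$, obtaining a Lefschetz bifibration $F_0:X_0\to D^2\times D^2$ over the first projection. After choosing a closed disk $D\subset \PP^1$ disjoint from the eventual values $\rho_m(\Delta_F)$ and a tubular neighborhood $\nu S_m$ of the $(-m)$-section, the complement $Z':=\Hir_m\setminus (\rho_m^{-1}(\Int D)\cup \nu S_m)$ is canonically a trivial $D^2$-bundle over $\PP^1\setminus \Int D\cong D^2$, and I identify this $D^2$-bundle with $D^2\times D^2$. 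Pulling $F_0$ through this identification yields a Lefschetz bifibration $F':X'\to Z'$ realizing the first bullet of the conclusion.

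Next I would extend $F'$ across the two missing regions of $\Hir_m$. Over $\rho_m^{-1}(\Int D)$ no $\Delta_F$-values occur, and the hypothesis $\tau_{\mu_1}^{\epsilon_1}\cdots \tau_{\mu_n}^{\epsilon_n}=t_\Pa^m$ says that in the $\PP^1$-bundle trivialization of $\rho_m$ over $D$ (which differs from the $\PP^1\setminus \Int D$ side by the $m$-twist gluing of $\Hir_m$) the braid monodromy of the extended $F$ around $\partial D$ is the identity. Consequently $\Critv(F)\cap \rho_m^{-1}(D)$ can be taken to consist of $d$ disjoint disks projecting identically to $D$, and one glues in a standard Lefschetz-bifibration piece over $D\times \PP^1$ with trivial braid monodromy and fiber monodromy at $\partial D$ inherited from $\theta$ at $\partial D^2$. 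Over $\nu S_m$ the preimage under the extended $F$ is a $\Sigma_g$-bundle because $\Critv(F)\cap S_m=\emptyset$ by assumption (0); the horizontal boundary $\partial_h X'$ decomposes as a disjoint union of $b$ trivial $S^1$-bundles over the contractible $Z'$, indexed by the boundary components $\delta_k\subset \Sigma_g^b$, and each caps off across $\nu S_m$ into a $D^2$-bundle whose zero section provides the required section $\mathcal{S}_k$.

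The main obstacle is verifying that the two extensions are mutually compatible and produce a Lefschetz bifibration satisfying the local models of \Cref{def:Lefschetz bifibration}: specifically, the lift of the braid $t_\Pa^m$ to the generic closed $4$-manifold fiber of $\rho_m\circ F$ (preserving the $b$ sections coming from the boundary components of $\Sigma_g^b$) must coincide up to isotopy with the $m$-fold $\PP^1$-bundle monodromy of $\rho_m$. This check uses the uniqueness part of \Cref{thm:equivalence LF from van cyc} applied to the induced $4$-dimensional Lefschetz fibration, together with the assumption $(g,b)\neq (0,2)$, which ensures that $t_{\delta_1},\ldots, t_{\delta_b}\in \mathcal{M}_\Pa(\Sigma_g^b)$ are independent so that the exponents $a_k$ in $\theta(\alpha_\Pa)=t_{\delta_1}^{a_1}\cdots t_{\delta_b}^{a_b}$ are unambiguous. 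Granting these compatibilities, the normal-bundle assertions follow from the recipe reviewed in \Cref{subsec:LF/LP}: the Euler number of $N_{\mathcal{S}_k/X}$ along a fiber of $\rho_m$ equals the self-intersection of $\mathcal{S}_k\cap W_q$ in the closed $4$-manifold $W_q=(\rho_m\circ F)^{-1}(q)$, which is $-a_k$ because the boundary monodromy of the induced Lefschetz fibration on $W_q$ is $t_{\delta_1}^{a_1}\cdots t_{\delta_b}^{a_b}$; and it vanishes along $S_m$ because $F^{-1}(S_m)$ is a trivial $\Sigma_g$-bundle in which the $\mathcal{S}_k$ are constant sections.
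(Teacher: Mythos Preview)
Your overall strategy---start from \Cref{thm:combinatorial construction LbF over D2timesD2}, then close up in two steps (first over a disk $D$ in the base $\PP^1$, then over a neighborhood of $S_m$)---matches the paper's proof. However, two of the justifications you give for the delicate steps are not the right ones, and one of them hides a genuine gap.

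First, the gluing over $\rho_m^{-1}(D)$ is not settled by \Cref{thm:equivalence LF from van cyc}. That theorem produces an isomorphism between two Lefschetz fibrations sharing a monodromy factorization; it says nothing about uniqueness of a \emph{self}-isomorphism up to fiber-preserving isotopy. What is actually needed is: the self-isomorphism $(\widetilde{T}_0,T_0)$ of $h:W\to D^2$ arising as the monodromy of $F_0$ around $\partial D^2$ (the composite of the lifts of the $\tau_{\mu_i}^{\epsilon_i}$) is fiber-preserving isotopic to an explicit model representing $(t_\partial^m,\id_\Sigma)$, so that one may cap with $D^2\times W$. This is exactly the uniqueness clause of \Cref{thm:condition lift a braid}, and the paper invokes it at this point. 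Knowing only that both self-isomorphisms cover isotopic braids and restrict to isotopic maps on $\Sigma$ does not, by itself, make them isotopic through fiber-preserving diffeomorphisms of $W$.

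Second, your explanation of the hypothesis $(g,b)\neq(0,2)$ is not the operative one. The paper uses this assumption only to show that the $\Sigma$-bundle $\widetilde{F}_1:\partial_{vh}\widetilde{X}_1\to \partial\widetilde{Z}_1$ (over the circle bundle $\partial\widetilde{Z}_1\to\PP^1$) is trivial, so that one may glue in a product $\widetilde{Z}_1'\times\Sigma$ over the $(-m)$-disk-bundle $\widetilde{Z}_1'$. The point is that $2-2g-b<0$ forces each component of $\mathcal{D}(\Sigma)$ to be contractible, whence the bundle is classified by its monodromy in $\mathcal{M}(\Sigma)$, and this monodromy vanishes because $t_{\delta_1}^{a_1}\cdots t_{\delta_b}^{a_b}$ lies in the kernel of $\mathcal{M}_\Pa(\Sigma)\to\mathcal{M}(\Sigma)$. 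Your proposal asserts the cap-off over $\nu S_m$ without ever checking that the bundle to be capped is trivial; this is the step where $(g,b)\neq(0,2)$ actually enters (cf.\ \Cref{rem:assumption (gb)neq(02)}), not the well-definedness of the $a_k$.
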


\noindent
The proof of this theorem is also given in \Cref{sec:combinatorial construction pencil-fibration}. 
Note that the assumption $(g,b)\neq (0,2)$ is inessential. 
As discussed in \Cref{rem:assumption (gb)neq(02)}, this condition can be removed by refining the argument concerning the behavior of Lefschetz bifibrations at the horizontal boundaries.
While such a refinement is possible, it is omitted in this paper to maintain overall conciseness, as the resulting examples in the case of $(g,b)=(0,2)$ are not particularly significant.

As in the case of Lefschetz fibrations and pencils, one can obtain a Lefschetz bifibration by blowing up other variants of bifibration structures. 

\begin{definition}\label{def:other pencil-fibration structures}

Let $X$ be a closed $6$-manifold. 

\begin{itemize}

\item 
Let $B\subset X$ be a codimension $4$ ($2$-dimensional) submanifold. 
A smooth map $F:X\setminus B\to \Hir_m$ is called a \emph{Lefschetz pencil-fibration} if it satisfies the conditions (0)--(4) in \Cref{def:Lefschetz bifibration} and the following condition. 

\begin{enumerate}

\setcounter{enumi}{7}

\item 
For any $x\in B$, one can take an orientation-preserving complex chart $\varphi:U\to \C^3$ (resp.~$\psi':V'\to \C$) around $x$ (resp.~$\rho_m(F(q))$) and a local trivialization $\psi:\rho_m^{-1}(V')\to V'\times \PP^1$ of $\rho_m$ satisfying $(\psi'\times \id_{\PP^1})\circ \psi \circ F \circ \varphi^{-1}(z_1,z_2,z_3)=(z_1,[z_2:z_3])$.  

\end{enumerate}

\item 
A smooth map $F:X\to \PP^2$ is called a \emph{Lefschetz pencil-fibration} if $\Critv(F)$ is away from $[0:0:1]$ and $F$ satisfies the conditions (1)--(4) in \Cref{def:Lefschetz bifibration} with $\rho$ in the conditions replaced with the projection $\rho_1':\PP^2\setminus \{[0:0:1]\}\to \PP^1$. 

\item 
Let $B \subset X$ be a finite set. 
A smooth map $F:X\setminus B\to \PP^2$ is called a \emph{Lefschetz bipencil} if $\Critv(F)$ is away from $[0:0:1]$, $F$ satisfies the conditions (1)--(4) in \Cref{def:Lefschetz bifibration} with $\rho$ in the conditions replaced with the projection $\rho_1':\PP^2\setminus \{[0:0:1]\}\to \PP^1$, and the following condition. 

\begin{enumerate}

\setcounter{enumi}{8}

\item 
For any $x\in B$, one can take an orientation-preserving complex chart $\varphi:U\to \C^3$ around $x$ satisfying $F \circ \varphi^{-1}(z_1,z_2,z_3)=[z_1:z_2:z_3]$.  

\end{enumerate}

\end{itemize}

\end{definition}

Let $F:X\setminus B \to \PP^2$ be a Lefschetz bipencil. 
By blowing up $X$ at all points in $B$, one can obtain a Lefschetz pencil-fibration $\widetilde{F}_1:\widetilde{X}_1\to \PP^2$. 
On the other hand, by blowing up $X$ (resp.~$\PP^2$) along $\overline{F^{-1}([0:0:1])}$ (resp.~at $[0:0:1]$), one can obtain a Lefschetz pencil-fibration $\widetilde{F}_2:\widetilde{X}_2\setminus \widetilde{B} \to \Hir_1$, where $\widetilde{B}\subset \widetilde{X}_2$ is the preimage of $B\subset X$ under the blow-down map from $\widetilde{X}_2$ to $X$. 
One can further obtain a Lefschetz bifibration by either blowing up $\widetilde{X}_1$ along $\widetilde{F}_1^{-1}([0:0:1])$, or blowing up $\widetilde{X}_2$ along $\widetilde{B}$. 
Let $F$ be one of those given in \Cref{def:other pencil-fibration structures}. 
A braid/fiber monodromy representation/factorization means that of a Lefschetz bifibration obtained by blowing-up $F$.

\begin{theorem}\label{thm:combinatorial construction pencil-fibration}

Let $R$, $\mu_i$, $\epsilon_i$, $\theta$ and $a_k$ be the same as those in \Cref{thm:combinatorial construction LbF over Hir_m} (satisfying the same assumptions). 

\begin{enumerate}[(1)]

\item 
For $k_1,\ldots, k_l\in {1,\ldots, b}$ with $a_{k_1}=\cdots = a_{k_l}=1$, the $6$-manifold $X'$ obtained by blowing-down $X$ along fibers of $\mathcal{S}_{k_1},\ldots, \mathcal{S}_{k_l}$ admits a Lefschetz pencil-fibration $F':X'\dasharrow \Hir_m$ over $\rho_m$ satisfying the same monodromy condition as that in \Cref{thm:combinatorial construction LbF over Hir_m}. 

\item 
If $m=1$, there exist a $6$-manifold $X^\dagger$, a Lefschetz pencil-fibration $F^\dagger: X^\dagger \to \PP^2$, and sections $\mathcal{S}_1^\dagger,\ldots, \mathcal{S}_b^\dagger$ such that it satisfies the same monodromy condition, and the normal bundle of $\mathcal{S}_i^\dagger$ has Euler number $-a_k$ over a linear $\PP^1\subset \PP^2$. 
Furthermore, for $k_1,\ldots, k_l\in \{1,\ldots, b\}$ with $a_{k_1}=\cdots =a_{k_l}=1$, the $6$-manifold $X^{\dagger\dagger}$ obtained by blowing-down $\mathcal{S}_{k_1}^\dagger,\ldots, \mathcal{S}_{k_l}^\dagger$ admits a Lefschetz bipencil $F^{\dagger\dagger}:X^{\dagger\dagger}\dasharrow \PP^2$ satisfying the same monodromy condition. 

\end{enumerate}

\end{theorem}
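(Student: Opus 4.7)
The plan is to realize both parts of the theorem by performing appropriate blow-down operations on the Lefschetz bifibration $F:X\to \Hir_m$ furnished by \Cref{thm:combinatorial construction LbF over Hir_m}, and then verifying the local models in \Cref{def:other pencil-fibration structures}.

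For part $(1)$, I start with $F:X\to\Hir_m$ together with the sections $\mathcal{S}_1,\ldots,\mathcal{S}_b$. Fix an index $k_j$ with $a_{k_j}=1$. Since $F|_{\mathcal{S}_{k_j}}$ is a diffeomorphism onto $\Hir_m$, the composition $\rho_m\circ F|_{\mathcal{S}_{k_j}}$ endows $\mathcal{S}_{k_j}$ with a $\PP^1$-bundle structure over $\PP^1$, whose fiber over $p$ is $\mathcal{S}_{k_j}\cap F^{-1}(\rho_m^{-1}(p))$. The hypothesis on the Euler number tells us that each such fiber is a $(-1)$-sphere inside the $4$-manifold $F^{-1}(\rho_m^{-1}(p))$. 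I take a tubular neighborhood $\nu\mathcal{S}_{k_j}$, identify it with the total space of the normal line bundle, and perform a family blow-down: each $(-1)$-sphere collapses to a point, and the union of these collapsed points forms a $2$-dimensional submanifold $B\subset X'$ diffeomorphic to $\PP^1$. The local model in condition $(8)$ of \Cref{def:other pencil-fibration structures} then emerges from the standard normal form of an $\O(-1)$-bundle over $\PP^1$ blown down to $\C^2$, carried out in a family parametrized by the base of $\rho_m$. Iterating the procedure over all $j$ (the $\mathcal{S}_{k_j}$'s being mutually disjoint as sections of $F$) yields the desired Lefschetz pencil-fibration $F':X'\dashrightarrow \Hir_m$, with the fibration structure and monodromies preserved since the blow-down occurs away from $\Critv(F)$.

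For part $(2)$ with $m=1$, I first analyze $F^{-1}(S_1)$. By condition $(0)$, $\Critv(F)\cap S_1=\emptyset$, so $F|_{F^{-1}(S_1)}:F^{-1}(S_1)\to S_1\cong \PP^1$ is a smooth genus-$g$ surface bundle; since $\PP^1$ is simply connected, this bundle is trivial, giving $F^{-1}(S_1)\cong \Sigma_g\times S_1$. The normal bundle of $F^{-1}(S_1)$ in $X$ is $F^*\O_{\Hir_1}(-1)$, whose restriction to each $\{x\}\times S_1$ has degree $-1$. Performing the analogous family blow-down collapses each fiber $\{x\}\times S_1$ to the point $x\in \Sigma_g$; on the target side this corresponds to the classical blow-down $\Hir_1\to \PP^2$ of $S_1$ to $[0{:}0{:}1]$. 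The resulting map $F^\dagger:X^\dagger\to \PP^2$ is smooth and satisfies the conditions of a Lefschetz pencil-fibration to $\PP^2$. To produce the sections, I observe that $\mathcal{S}_i\cap F^{-1}(S_1)$ is a $\PP^1$ mapped diffeomorphically onto $S_1$ and hence coincides with some fiber $\{x_i\}\times S_1$ of the trivial $\PP^1$-bundle; this fiber collapses to the single point $x_i\in \Sigma_g\subset X^\dagger$. Consequently each $\mathcal{S}_i^\dagger$ is the blow-down of $\Hir_1$ along $S_1$, namely a copy of $\PP^2$, and the degree of its normal bundle on a linear $\PP^1\subset \PP^2$ coincides with the degree on a fiber of $\rho_1$, which equals $-a_k$. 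For indices with $a_{k_j}=1$, this normal bundle is $\O_{\PP^2}(-1)$, so $\mathcal{S}_{k_j}^\dagger$ may be blown down to a point, and the local model around each such point is precisely condition $(9)$, yielding the claimed Lefschetz bipencil $F^{\dagger\dagger}$.

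The main technical obstacle is verifying the local normal forms after these family blow-downs. This requires choosing local complex coordinates on $X$ around a fiber of $\mathcal{S}_{k_j}\to \PP^1$ in $(1)$, respectively around $\{x\}\times S_1$ in $(2)$, in which $F$ is compatible with the normal splitting, and then computing the blow-down by a model calculation in these coordinates. A secondary subtlety is the compatibility of the two blow-downs in part $(2)$: the blow-down of $F^{-1}(S_1)$ must interact smoothly with the subsequent blow-down of the $\mathcal{S}_{k_j}^\dagger$'s, which holds because these loci meet only at the isolated points $x_i$, around which coordinates can be chosen simultaneously.
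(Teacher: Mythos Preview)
Your overall strategy---blow down the completed bifibration $F:X\to\Hir_m$ from \Cref{thm:combinatorial construction LbF over Hir_m} and then check the local models---is correct in outline and close in spirit to the paper's proof. The paper, however, executes it differently: rather than blowing down the finished $X$, it goes back to the building blocks assembled in the proof of \Cref{thm:combinatorial construction LbF over Hir_m} and swaps out the relevant pieces directly. For part~(1) it replaces each disk-bundle piece $\widetilde{X}_{2,k_i}'=((\sph^3)^2\times\disk_1)/(\sph^1)^2$ by the explicit quotient $\widetilde{X}_{2,k_i}''=(\sph^3\times D^4)/\sph^1$, exhibiting the blow-down map by a formula. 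For part~(2) it replaces the disk bundle $\widetilde{Z}_1'$ over $\PP^1$ by a $4$-ball (yielding $\PP^2$ on the target side), replaces $\widetilde{X}_1'=\widetilde{Z}_1'\times\Sigma$ by $D^4\times\Sigma$, and uses $\widetilde{X}_{2,k}^{\dagger\prime}=(\sph^5\times\disk_1)/\sph^1$ for the sections, again with an explicit blow-down to $D^6$. The advantage of this bookkeeping is that the local models in conditions~(8) and~(9) of \Cref{def:other pencil-fibration structures} are visible in coordinates from the start, so no separate verification is needed.

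There is one genuine gap in your argument. Your claim that the genus-$g$ surface bundle $F|_{F^{-1}(S_1)}:F^{-1}(S_1)\to S_1$ is trivial ``since $\PP^1$ is simply connected'' is not a valid justification: $\Sigma_g$-bundles over $S^2$ are classified by $\pi_1(\mathcal{D}(\Sigma_g))$, which is nonzero when $g\le 1$, and the hypotheses of the theorem do not exclude $g=0$ or $g=1$. The conclusion is nonetheless true for the specific $F$ built in the proof of \Cref{thm:combinatorial construction LbF over Hir_m}: there the piece over $S_m$ is glued in as a product $\widetilde{Z}_1'\times\Sigma$ together with disk bundles that are trivial over $S_m$, so $F^{-1}(S_m)$ is trivial by construction (equivalently, it admits a section $\widetilde{S}_m$ of self-intersection $0$; cf.\ the argument in the proof of \Cref{prop:symplectic structure blowdowns}). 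You should invoke that construction rather than simple connectivity. Once this is fixed, your blow-down argument goes through, though the local-model checks you defer (``emerges from the standard normal form'') are exactly what the paper's explicit quotient descriptions are designed to make transparent.
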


\noindent
The proof is given in \Cref{sec:combinatorial construction pencil-fibration}.

\section{Liftable braids}\label{sec:liftable briad}

In this section, we give a criterion for an element of a braid group, considered as an isotopy class of a self-diffeomorphism of $D^2$ fixing a finite subset $R\subset \Int(D^2)$, to be lifted to a self-isomorphism of a Lefschetz fibration over the disk whose critical value set is $R$ (\Cref{thm:condition lift a braid}). 
Note that a similar statement (existence and uniqueness of a symplectomorphic lift by a Lefschetz pencil) was given in \cite{AurouxSympplecticMapsInvariant} without proofs (and any technical assumptions in \Cref{thm:condition lift a braid}, cf.~\Cref{rem:assumptions for lifing a braid}).

Let $W$ be a $4$-manifold, and $f:W\to D^2$ be a Lefschetz fibration whose regular fiber is diffeomorphic to $\Sigma_g^b$ (in particular $\Pa_h W \neq \emptyset$).
Let $(\Phi_i,\Psi_i)$ ($i=0,1$) be a self-isomorphism of $f$. 
A pair of level-preserving diffeomorphisms 
\[
(\mathfrak{H}:[0,1]\times W\to [0,1]\times W, \mathfrak{h}:[0,1]\times D^2\to [0,1]\times D^2)
\]
is called a \textit{fiber-preserving isotopy pair} from $(\Phi_0,\Psi_0)$ to $(\Phi_1,\Psi_1)$ if it satisfies the following conditions. 

\begin{itemize}

\item 
$\mathfrak{h}_t\circ f = f\circ \mathfrak{H}_t$ for any $t\in [0,1]$, where $\mathfrak{H}_t(x):=p_2\circ \mathfrak{H}(t,x)$ and $\mathfrak{h}_t(x):=p_2\circ \mathfrak{h}(t,x)$. 

\item 
$\mathfrak{H}_0=\id_W$, $\Phi_0\circ \mathfrak{H}_1 = \Phi_1$, $\mathfrak{h}_0=\id_{D^2}$, and $\Psi_0\circ \mathfrak{h}_1 = \Psi_1$. 


\end{itemize}

\noindent
Two self-isomorphisms $(\Phi_0,\Psi_0)$ and $(\Phi_1,\Psi_1)$ of $f$ are said to be \textit{isotopic} if there exists a fiber-preserving isotopy pair between them. 
Let $(\mathfrak{H},\mathfrak{h})$ be a fiber-preserving isotopy pair from $(\Phi_0,\Psi_0)$ to $(\Phi_1,\Psi_1)$. 
By taking a monotone-increasing smooth function $\rho:[0,1]\to [0,1]$ with $\rho\equiv 0$ (resp.~$\rho\equiv 1$) on a neighborhood of $0\in [0,1]$ (resp.~$1\in [0,1]$), one can define a level-preserving diffeomorphism $\widetilde{\mathfrak{H}}:\R\times W\to \R\times W$ by $\widetilde{\mathfrak{H}}(t+n,x) = (t+n,\Phi_0^{-n}\circ \mathfrak{H}_{\rho(t)}\circ \Phi_1^n(x))$ for any $t\in [0,1]$ and $n\in \Z$, and $\widetilde{\mathfrak{h}}:\R\times D^2\to \R\times D^2$ in the same way. 
These maps satisfy the following conditions.

\begin{itemize}

\item 
$\widetilde{\mathfrak{h}}_t\circ f = f\circ \widetilde{\mathfrak{H}}_t$ for any $t\in \R$. 

\item 
$\widetilde{\mathfrak{H}}_0=\id_W$, $\Phi_0^n\circ \widetilde{\mathfrak{H}}_{t+n} = \widetilde{\mathfrak{H}}_{t}\circ \Phi_1^n$ for $t\in \R$ and $n\in \Z$, and the same conditions for $\widetilde{\mathfrak{h}}_t$, $\Psi_0$, and $\Psi_1$.


\end{itemize}

\noindent
We also call $(\widetilde{\mathfrak{H}},\widetilde{\mathfrak{h}})$ satisfying the conditions above a fiber-preserving isotopy pair. 
Note that the map $\widetilde{\mathfrak{H}}$ induces an isomorphisms (as $W$-bundles) between the mapping tori $(\R\times W)/((t+n,x)\sim (t,\Phi_1^n(x)))$ and $(\R\times W)/((t+n,x)\sim (t,\Phi_0^n(x)))$, and so does $\widetilde{\mathfrak{h}}$ (for $\Psi_0,\Psi_1$).  

\begin{theorem}\label{thm:condition lift a braid}

Let $W$ and $f$ be the same as above. 
We take $r_0\in \Pa D^2$, and put $\Sigma=f^{-1}(r_0)$ and $R=\Critv(f)=\{r_1,\ldots, r_d\}$.
Let $\theta_{f,r_0}:\pi_1(D^2\setminus R,r_0)\to \mathcal{M}_\Pa(\Sigma)$ be the monodromy representation of $f$. 

\begin{itemize}

\item 
For mapping classes $\tau\in \mathcal{M}_\Pa(D^2,R)$ and $\xi\in \mathcal{M}_\Pa (\Sigma)$, the following conditions are equivalent. 

\begin{enumerate}

\item 
$\theta \circ \tau_\ast = \xi^{-1} \cdot \theta\cdot \xi$, where $\tau_\ast$ is the automorphism of $\pi_1(D^2\setminus R,r_0)$ induced by $\tau$. 

\item 
For any $\overline{\xi}\in \xi$, there exist $T\in \tau$ and $\widetilde{T}\in \mathcal{D}(W)$ satisfying the following conditions.

\begin{enumerate}[(A)]

\item 
$(\widetilde{T},T)$ is a self-isomorphism of $f$. 

\item 
$\widetilde{T}|_{\Sigma}=\overline{\xi}$. 

\end{enumerate}

\end{enumerate}

\item 
For any $j\in \{1,\ldots,d\}$, take a sufficiently small orientation-preserving complex chart $\varphi_j:U_j\to \C^2$ (resp.~$\psi_j:V_j\to \C$) around $x_j\in \Crit(f)\cap f^{-1}(r_j)$ (resp.~$r_j$) so that $\psi_j\circ f\circ \varphi_j^{-1}(z,w)=zw$. 
For $\tau$ and $\xi$ satisfying the condition (1) above, we can take $\tilde{T}$ and $T$ satisfying the conditions (A), (B) and

\begin{enumerate}[(A)]

\setcounter{enumi}{2}

\item 
$\varphi_{\sigma_\tau(j)}\circ \widetilde{T}\circ \varphi_{j}^{-1}(z,w)$ is equal to either $(z,w)$ or $(w,z)$ for $(z,w)\in \C^2$ in a neighborhood of the origin and $j=1,\ldots, d$, where $\sigma_\tau\in \mathfrak{S}_d$ (which is the symmetric group) satisfying $T(r_j)=r_{\sigma_\tau(j)}$. 

\end{enumerate}

\item 
Suppose that $f$ is relatively minimal, that is, no fiber of $f$ contains a $(-1)$-sphere, and $(g,b)$ is not equal to $(1,0)$.
The isomorphism $(\widetilde{T},T)$ satisfying the conditions (A)--(C) is uniquely determined from $\tau$ and $\xi$ up to fiber-preserving isotopy. 

\end{itemize}

\end{theorem}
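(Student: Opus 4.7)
My plan is to handle the three assertions separately.

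For $(2) \Rightarrow (1)$ in the equivalence, I would compute directly: given a self-isomorphism $(\widetilde{T}, T)$ with $\widetilde{T}|_\Sigma = \overline{\xi}$, transporting the horizontal distribution that defines $\theta_{f, r_0}$ through $\widetilde{T}$ shows that parallel transport around a loop $\gamma$ in $(D^2 \setminus R, r_0)$ is conjugated to parallel transport around $\tau_* \gamma$ by $\overline{\xi}$; under the paper's opposite-composition convention this reads $\theta \circ \tau_* = \xi^{-1} \cdot \theta \cdot \xi$.

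For $(1) \Rightarrow (2)$ together with the normal form (C), I would construct the lift in two stages. Away from the critical set, $f : W \setminus \Crit(f) \to D^2 \setminus R$ is a $\Sigma_g^b$-bundle with trivialized horizontal boundary; its pullback $T^* f$ along a chosen representative $T \in \tau$ has monodromy $\theta \circ \tau_*$ at $r_0$, which by hypothesis (1) is conjugate to $\theta$ via $\overline{\xi}$. The classification of based surface bundles over $D^2 \setminus R$ with prescribed horizontal-boundary trivialization then yields a bundle isomorphism $\Phi : W|_{D^2 \setminus R} \to T^* W$ restricting to $\overline{\xi}$ over $r_0$, and post-composition with the canonical projection $T^* W \to W$ produces a diffeomorphism $\widetilde{T}^\circ$ covering $T$ away from critical fibers. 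Near each critical value I would first isotope $T$ inside $\mathcal{D}_\Pa(D^2, R)$ so that $\psi_{\sigma_\tau(j)} \circ T \circ \psi_j^{-1}$ is the identity on a small neighborhood of $0$, and then take a candidate local lift $\widetilde{T}_j$ of the form required by (C). The two options $(z, w)$ and $(w, z)$ realize the two cosets in the fiberwise mapping class group of the Milnor link around $x_j$ modulo the local monodromy (the Dehn twist along the vanishing cycle); this is precisely the ambiguity needed to match $\widetilde{T}_j$ with $\widetilde{T}^\circ$ on the link, and once matched, a standard collar-interpolation patches them into a single global lift satisfying (A)--(C).

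For uniqueness, given two lifts $(\widetilde{T}_0, T_0)$ and $(\widetilde{T}_1, T_1)$, I would first reduce to $T_0 = T_1$: an isotopy $T_s$ from $T_0$ to $T_1$ in $\mathcal{D}_\Pa(D^2, R)$ can be upgraded, via a parameterized version of the existence argument, to a fiber-preserving isotopy of $\widetilde{T}_0$ covering $T_s$. Then $\widetilde{T}_1^{-1} \circ \widetilde{T}_0$ is a fiber-preserving self-diffeomorphism of $W$ covering $\id_{D^2}$, fixing $\Sigma$ pointwise, and of one of the local forms in (C) at each critical point. The hypothesis $(g, b) \neq (1, 0)$ together with relative minimality forces this map to be fiber-preserving isotopic to $\id_W$: away from critical fibers one uses contractibility of the identity component of $\mathcal{D}_\Pa(\Sigma_g^b)$ (which fails exactly for the torus), while relative minimality prevents $(-1)$-sphere fibers from contributing extra continuous families of fiberwise automorphisms. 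The step I expect to be the main obstacle is the patching across critical fibers — both in existence, where the mapping class on the Milnor link induced by $\widetilde{T}^\circ$ must be reconciled with one of the two local normal forms in (C), and in uniqueness, where a fiber-preserving isotopy must be propagated across each nodal fiber while preserving the local form of the lift.
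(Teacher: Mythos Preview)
Your overall architecture matches the paper's: $(2)\Rightarrow(1)$ is done the same way, and for $(1)\Rightarrow(2)$ both you and the paper build the lift first on the complement of the critical fibers and then patch in the local model, with the two options $(z,w)$ versus $(w,z)$ accounting exactly for the two orientations of the vanishing cycle. The paper's construction is more explicit (parallel transport along a Hurwitz system using carefully chosen metrics) where you invoke bundle classification by monodromy, but these are equivalent routes to the same object.

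The real gap is in uniqueness, at the point you yourself flag. After reducing to a fiber-preserving $\widetilde{T}$ over $\id_{D^2}$ that is the identity on $\Sigma$ and near each critical point, you must isotope it to $\id_W$ \emph{through fiber-preserving maps}, and contractibility of the identity component of $\mathcal{D}_\Pa(\Sigma)$ alone does not do this. Along a reference path $\gamma_j$ to the $j$-th critical value, the restriction of $\widetilde{T}$ determines a loop of embeddings of an annular neighborhood $\mathcal{A}_{j,0}$ of the vanishing cycle into $\Sigma$, and the obstruction to straightening $\widetilde{T}$ along $\gamma_j$ lives in $\pi_1(\Emb(\mathcal{A}_{j,0},\Sigma))$. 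The paper computes this group from the fibration sequence
\[
\pi_1(\mathcal{D}_\Pa(\Sigma)) \longrightarrow \pi_1(\Emb(\mathcal{A}_{j,0},\Sigma)) \longrightarrow \mathcal{M}_\Pa(\Sigma\setminus\Int(\mathcal{A}_{j,0})) \longrightarrow \mathcal{M}_\Pa(\Sigma),
\]
using $(g,b)\neq(1,0)$ to make the left term vanish and relative minimality to ensure $\Sigma\setminus\Int(\mathcal{A}_{j,0})$ has no disk components, so that the kernel on the right is the cyclic group generated by $t_{c_1}t_{c_2}^{-1}$. The resulting cyclic obstruction is then killed by an explicit fiber-preserving isotopy supported in the local chart around the critical point. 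Your description of the role of relative minimality (``preventing $(-1)$-sphere fibers from contributing extra continuous families of fiberwise automorphisms'') is not the actual mechanism; its job is to rule out disk complements of the vanishing-cycle annulus, which is what makes the kernel computation go through. Without this $\pi_1(\Emb)$ step you do not have a proof of the third bullet.
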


\begin{remark}\label{rem:cpx coordinates in lifting a braid}

Instead of the complex charts $\varphi_j,\psi_j$, one can also consider other charts $\varphi_j',\psi_j'$ satisfying $\psi_j'\circ f \circ \varphi_j'^{-1}(z,w)=z^2+w^2$.  
The condition (C) is then equivalent to the condition that $\varphi_{\sigma_\tau(j)}'\circ \widetilde{T}\circ \varphi_{j}'^{-1}(z,w)$ is equal to either $(z,w)$ or $(\pm z,\mp w)$.

\end{remark}

\begin{remark}\label{rem:assumptions for lifing a braid}

The last statement (uniqueness of a self-isomorphism) does not necessarily hold when $f$ is not relatively minimal. 
Indeed, let $f:W\to D^2$ be a Lefschetz fibration whose regular fiber is $\Sigma_g$ ($g\geq 2$). 
Suppose that $f$ has only one critical point with null-homologous vanishing cycle. 
One can take an embedding $\iota:D^2\times \Sigma_g^1\to W$ so that the composition $f\circ \iota:D^2\times \Sigma_g^1\to D^2$ is the projection, and a diffeomorphism $\widetilde{T}:W\to W$ so that $(\widetilde{T},\id_{D^2})$ is a self-isomorphism of $f$, the restriction $\widetilde{T}|_{W\setminus \Im (\iota)}$ is the identity map, and $\iota^{-1}\circ \widetilde{T}\circ \iota = \id_{D^2}\times \Xi$, where $\Xi:\Sigma_g^1\to \Sigma_g^1$ represents a non-trivial pushing map (i.e.~an element in the kernel of the capping homomorphism $\mathcal{M}(\Sigma_g^1)\to \mathcal{M}(\Sigma_g)$). 
Then, the restriction of $\widetilde{T}$ on a regular fiber is isotopic to the identity map, and thus $(\widetilde{T},\id_{D^2})$ satisfies the conditions (A)--(C) for $\tau =1$ and $\xi=1$. 
However, any isotopy $(\mathfrak{H},\mathfrak{h})$ from $(\widetilde{T},\id_{D^2})$ preserves the genus-$g$ component $\Sigma$ in $f^{-1}(y)\setminus \Crit(f)$ for $y\in \Critv(f)$, in particular $\widetilde{T}\circ \mathfrak{H}_1|_{\Sigma}:\Sigma\to \Sigma$ cannot be the identity map. 

The author does not know whether or not uniqueness of $(\widetilde{T},T)$ holds without the condition (C). 
In order to remove the condition (C) from the last statement, we have to examine the structure of the stabilizer subgroup $\mathcal{A}_f$ of $\mathcal{A}$, the group of diffeomorphism-germs of the source and the target, with respect to the Lefschetz singularity germ $f:(z,w)\mapsto zw$. 
Such stabilizers have been studied in e.g.~\cite{Wall1980secondnotesym,duPW1991rightsym,FduP2011str_multigerm_eq_preprint} for generic map-germs.
Since the germ $f$ is not generic (more precisely, it is neither finitely $\mathcal{A}$-determined nor a critical simplification in the sense of \cite{duPW1991rightsym}), one cannot apply the results in \cite{Wall1980secondnotesym,duPW1991rightsym,FduP2011str_multigerm_eq_preprint} to $f$. 
Indeed, one can easily check that, for the Lefschetz singularity germ $f$, the subgroup $\operatorname{Inv}\widetilde{~}(D)\subset \mathcal{L}$ (defined in \cite{duPW1991rightsym}) is the set of self-diffeomorphism germs of $(\C,0)$, and the projection $\mathcal{A}_f\to \operatorname{Inv}\widetilde{~}(D)$ is not surjective.
However, this projection is surjective for a generic map-germ (\cite[Lemma 37]{FduP2011str_multigerm_eq_preprint}).

While the author expects that the last statement remains valid without the assumption $(g,b)\neq (1,0)$, this case is not addressed in the paper as it is not required for the proofs of \Cref{thm:combinatorial construction LbF over D2timesD2,thm:combinatorial construction LbF over Hir_m,thm:combinatorial construction pencil-fibration}.
For more details, refer to \Cref{rem:assumption not torus in lifting braid}. 

\end{remark}

\begin{proof}[Proof of \Cref{thm:condition lift a braid}]
It is easy to check that all the statements in the theorem hold if $d=0$ (i.e.~$f$ has no critical points). 
In what follows, we assume $d$ is greater than $0$. 

Suppose that the condition (2) holds. 
For a curve $c:[0,1]\to D^2\setminus R$ with $c(0)=c(1)=r_0$, put $E(c^\ast f)=\{(t,x)\in [0,1]\times W~|~c(t)=f(x)\}$. 
Take a trivialization $\Phi:E(c^\ast f) \to [0,1]\times \Sigma$ so that $\Phi_0(x) =x$ for $x\in \Sigma$, where $\Phi_t(x)=\Phi(t,x)$. 
It is then easy to check that $\theta([c])$ is represented by $\Phi_1^{-1}$. 
Since $E((T\circ c)^\ast f)$ is equal to $\{(t,x)\in [0,1]\times W~|~(t,\widetilde{T}^{-1}(x))\in c^\ast f\}$, one can take the following trivialization of $E((T\circ c)^\ast f)$.
\[
E((T\circ c)^\ast f) \xrightarrow{\id_{[0,1]}\times \widetilde{T}^{-1}} E(c^\ast f) \xrightarrow{\Phi} [0,1]\times \Sigma \xrightarrow{\id_{[0,1]}\times \widetilde{T}|_\Sigma} [0,1]\times \Sigma. 
\]
One can further check that the restriction of the trivialization above on $\{0\}\times \Sigma$ is the identity map. 
Thus, the mapping class $\theta \circ \tau_\ast ([c])$ is represented by the inverse map of the restriction of the trivialization above on $\{1\}\times \Sigma$, which is equal to
$\widetilde{T}|_\Sigma\circ \Phi_1^{-1} \circ (\widetilde{T}|_{\Sigma})^{-1}$. 
This map represents $\xi^{-1}\cdot \theta([c])\cdot \xi$, completing the proof of the implication (2)$\Rightarrow$(1). (Note that multiplication of the mapping class group is defined to be opposite to composition.)

Suppose that the condition (1) holds. 
We take a trivialization $\kappa_h: \nu \Pa_h W \to D^2\times (-1,0]\times(\bigsqcup_k S^1)$ of $f$ (as a $((-1,0]\times(\bigsqcup_k S^1))$-bundle). 
In what follows, we construct a self-isomorphism of $f$ satisfying the conditions (A)--(C). 

We take an open disk neighborhood $B\subset D^2$ of $r_0$, a diffeomorphism $\psi_0:\overline{B}\to \mathbb{H}_1$, where $\mathbb{H}_1 = \{z\in \disk_1~|~\Im(z)\geq 0\}$, and a representative $T \in \tau$ so that $T|_B=\id_B$ and $\psi_{\sigma_\tau(j)}\circ T\circ \psi_j^{-1}|_{\disk_2}=\id_{\disk_2}$ for $j=1,\ldots, d$. (Recall that $\disk_r = \{z\in \C~|~|z|\leq r\}$ for $r>0$, as explained in the introduction.) 
We also take a Hurwitz path system $\gamma_1,\ldots, \gamma_d \subset D^2$ from $r_0$ and a tubular neighborhood $\iota_j:\gamma_j\times [-1,1]\to D^2$ for each $j=1,\ldots, d$ so that they satisfy the following conditions for a sufficiently small $\delta>0$. (In what follows, we identify $\gamma_j$ with $[0,1]$ so that $r_0\in \gamma_j$ corresponds to $0\in [0,1]$.)

\begin{itemize}

\item 
$\iota_j([\delta,1]\times [-1,1])\cap \iota_k([\delta,1]\times [-1,1])=\emptyset$ for $j\neq k$. 

\item 
$\iota_j(\gamma_j\times [-1,1])\cap \overline{B} = \iota_j([0,2\delta]\times [-1,1])$ and $d(\psi_0\circ \iota_j)_{(s,t)}\left(\left(\frac{\Pa}{\Pa s}\right)\right)$ has the radial direction centered at $0\in \mathbb{H}_1$ for any $(s,t)\in [\delta,2\delta]\times [-1,1]$. 

\item 
$\iota_j(\gamma_j\times [-1,1])\cap \psi_j^{-1}(\mathbb{I})=\iota_j([1-\delta,1]\times [-1,1])$, where $\mathbb{I}=\{z\in \C~|~|\Re(z)|, |\Im(z)|\leq 1\}$, and $\psi_j\circ \iota_j(s,t) = (s-1)/\delta + t \imunit$ for each $j=1,\ldots, d$ and $(s,t)\in [1-\delta,1]\times [-1,1]$.

\end{itemize}

\noindent
Put $\widetilde{B} = B \cup \left(\bigsqcup_{j=1}^d \iota_j([\delta,1-\delta]\times [-1,1])\right)$. 
We take a nowhere zero vector field $\chi$ on $\widetilde{B}\setminus \{r_0\}$ so that $(\psi_0)_\ast \chi$ has the radial direction on $\mathbb{H}_1\setminus \{0\}$ and $\chi = d\iota_j\left(\left(\frac{\Pa}{\Pa s}\right)\right)$ on $\iota_j([\delta,1-\delta]\times [-1,1])$.

We define $q:\C^2\to \C$ by $q(z,w)=zw$.
Let $A_i = \{\zeta\in \C~||\zeta| \geq 4\}$, $A=A_1\sqcup A_2$, and $\mathbb{E}_r:= \{(z,w)\in \C^2~|~zw\in \mathbb{D}_2,\hspace{.3em}\|z\|\leq r,\hspace{.3em}\|w\|\leq r\}$.
We can define a local trivialization $\kappa:q^{-1}(\mathbb{D}_2)\setminus \Int(\mathbb{E}_4) \to \mathbb{D}_2\times A$ of $q$ as follows:
\[
\kappa(z,w) = \begin{cases}
(zw, z)\in \mathbb{D}_2\times A_1 & (|z|\geq |w|) \\
(zw,w)\in \mathbb{D}_2\times A_2 & (|w|\geq |z|). 
\end{cases}
\]
We take a Riemannian metric $g$ on $\C^2$ so that the restriction of it on $q^{-1}(\mathbb{D}_2)\setminus \mathbb{E}_4$ coincides with the pull-back of the product metric on $\mathbb{D}_2\times A$ by $\kappa$. (Note that $\mathbb{D}_2$ and $A$ have the standard metric as subsets of $\C$.) 
We next take a Riemannian metric $g_0$ of $W$ so that $g_0$ is equal to $(\varphi_j)^\ast g$ on $\varphi_j^{-1}(\mathbb{C}^2)$ and the pull-back by $\kappa_h$ of the product metric of $D^2\times (0,1]\times (\bigsqcup_k S^1)$ on $\nu\Pa_h W$. 
Let $\widetilde{\chi}$ be the lift of $\chi$ by $df$ via the horizontal distribution $(\Ker df)^\perp$ with respect to $g_0$, which is defined on $f^{-1}(\widetilde{B})$. 
Using the flow of $\widetilde{\chi}$ starting from $\Sigma=f^{-1}(r_0)$, we take a trivialization $\Phi_0:f^{-1}(\widetilde{B}) \to \widetilde{B}\times \Sigma$. 
We also take a trivialization $\Phi_0^T:f^{-1}(T(\widetilde{B})) \to T(\widetilde{B})\times \Sigma$ using the vector field $T_\ast \chi$ on $T(\widetilde{B})$ (and possibly another metric $g_0^T$ satisfying the same conditions as those for $g_0$) in the same manner. 
Let $\Xi\in \xi$ and 
\[
\Psi_0=(\Phi_0^T)^{-1}\circ (T\times \Xi)\circ \Phi_0:f^{-1}(\widetilde{B})\to f^{-1}(T(\widetilde{B})). 
\]
One can easily check that $T\circ f=f\circ \Psi_0$ on $f^{-1}(\widetilde{B})$ and $\Psi_0|_\Sigma = \Xi$. 
For $t\in [-1,1]$ and $j=1,\ldots, d$, put $\beta_t=-1+t\sqrt{-1}$ and define $\mathcal{C}_{j,t},\mathcal{A}_{j,t}\subset \Sigma$ as follows:
{\allowdisplaybreaks
\begin{align*}
\mathcal{C}_{j,t}& = p_2\circ \Phi_0\circ \varphi_j^{-1}\left(\left\{\left.(z,w)\in q^{-1}(\beta_t)~\right|~|z|=|w|\right\}\right),\\
\mathcal{A}_{j,t}&= p_2\circ \Phi_0\circ \varphi_j^{-1}\left(\left\{(z,w)\in q^{-1}(\beta_t)~\left|~\sqrt{1+t^2}/4\leq|z|\leq 4\right.\right\}\right),
\end{align*}
}%
where $p_2:\widetilde{B}\times \Sigma \to \Sigma$ is the projection. 
We give the orientation of $\mathcal{C}_{j,t}$ so that the curve 
\[
t \mapsto p_2\circ \Phi_0\circ \varphi_j^{-1}(z\exp(\sqrt{-1}t),w\exp(-\sqrt{-1}t))
\]
on $\mathcal{C}_{j,t}$ is positively oriented for any $(z,w)\in \mathcal{C}_{j,t}$. 
Using $\Phi_0^T$ instead of $\Phi_0$, we also define $\mathcal{C}_{j,t}^T,\mathcal{A}_{j,t}^T\subset \Sigma$, and the orientation of $\mathcal{C}_{j,t}^T$ in the same way.  
Since $\mathcal{C}_{j,t}$ and $\mathcal{C}_{\sigma_\tau(j),t}^T$ are vanishing cycles of $f$ associated with $\gamma_j$ and $T(\gamma_j)$, respectively, $\Xi(\mathcal{C}_{j,t})$ is isotopic to $\mathcal{C}_{\sigma_\tau(j),t}^T$ by the assumption. 
As one can regard $\mathcal{A}_{j,t}$ (resp.~$\mathcal{A}_{j,t}^T$) as a tubular neighborhood of $\mathcal{C}_{j,t}$ (resp.~$\mathcal{C}_{j,t}^T$), by uniqueness of a tubular neighborhood, one can change $\Phi_0$ on $\bigsqcup_j f^{-1}(\iota_j([2\delta,1-\delta]\times [-1,1]))$ with $\Phi_0^T$ fixed (by modifying the metric $g_0$ with $g_0^T$ fixed) so that $\varphi_{\sigma_\tau(j)}\circ\Psi_0|_{\Phi_0^{-1}(\{\beta_t\}\times \mathcal{A}_{j,t})}\circ \varphi_j^{-1}(z,w)$ is equal to $(z,w)$ (resp.~$(w,z)$) for any $t\in [-1,1]$ and $(z,w)\in q^{-1}(\beta_t)\cap \mathbb{E}_4$ if the orientation of $\Xi(\mathcal{C}_{j,t})$ is equal to (resp.~opposite to) that of $\mathcal{C}_{\sigma_\tau(j),t}^T$. 
By the assumptions on metrics, $\Psi_0$ can be extended to
\[
\widetilde{\Psi}_0:f^{-1}\left(\widetilde{B}\cup \left(\bigsqcup_j \psi_j^{-1}(\mathbb{I})\right)\right)\to f^{-1}\left(T(\widetilde{B})\cup \left(\bigsqcup_j \psi_j^{-1}(\mathbb{I})\right)\right)
\]
so that $\varphi_{\sigma_\tau(j)}\circ \widetilde{\Psi}_0\circ \varphi_j^{-1}(z,w)$ is equal to either $(z,w)$ or $(w,z)$ for $(z,w)\in q^{-1}(\mathbb{I})\cap \mathbb{E}_4$ and $\widetilde{\Psi}_0$ is defined on the other part of $f^{-1}\left(\widetilde{B}\cup \left(\bigsqcup_j \psi_j^{-1}(\mathbb{I})\right)\right)$ by using the extensions of $\Phi_0$ and $\Phi_0^T$ (obtained by lifting the extensions of $\chi$ and $T_\ast \chi$). 

We take a simple smooth path $\alpha\subset \widetilde{B}\cup \left(\bigsqcup_j \psi_j^{-1}(\mathbb{I})\right)$ satisfying the following conditions. 


\begin{itemize}

\item 
$\alpha$ intersects the boundary of $\widetilde{B}\cup \left(\bigsqcup_j \psi_j^{-1}(\mathbb{I})\right)$ only at its ends transversely. 
Furthermore, $\Pa \alpha$ is contained in $\Pa D^2$.

\item 
$\alpha$ is away from $\{r_0\}\cup R$.

\item 
$\{r_0\}\cup R$ is contained in a connected component $R_1\subset D^2$ of $D^2\setminus \alpha$. 

\end{itemize}

\noindent
We also take a tubular neighborhood $\lambda:\alpha\times (-1,1)\to D^2$ so that $\lambda (\Pa \alpha\times (-1,1))\subset \Pa D^2$, $\lambda(\alpha\times (-1,0))$ is contained in $R_1$ and the image of $\lambda$ is away from $\{r_0\}\cup R$. 
Let $R_2 = (D^2\setminus R_1)\cup \lambda(\alpha\times (-1,1))$. 
Since $R_2$ and $T(R_2)$ are contractible and do not contain any critical value of $f$, one can take trivializations $\Phi_1:f^{-1}(R_2) \to R_2 \times \Sigma$ and $\Phi_1^T:f^{-1}(T(R_2))\to T(R_2)\times \Sigma$ of $f$ (as a $\Sigma$-bundle) so that $\Phi_1\circ \kappa_h^{-1}(x,t,s)=(x,\kappa_h(r_0,t,s))$ for $(x,t,s)\in R_2\times (-1,0]\times (\bigsqcup_k S^1)$ and $\Phi_1^T$ satisfies the same condition. 
We identify $\alpha$ and $T(\alpha)$ with $[-1,1]$ so that $T|_\alpha$ becomes the identity map on $[-1,1]$.  
For $s\in [-1,1]$($\cong \alpha$) and $t\in (-1,0]$, we take a diffeomorphism $\Xi_{s,t}:\Sigma \to \Sigma$ satisfying the following equality for any $x\in \Sigma$. 
\[
((T\circ \lambda)^{-1}\times \id_\Sigma)\circ \Phi_1^T\circ \widetilde{\Psi}_0\circ \Phi_1^{-1}\circ (\lambda \times \id_\Sigma)((s,t),x)= ((s,t), \Xi_{s,t}(x)). 
\]
Take a monotone-increasing smooth function $\varrho:[0,1]\to [0,1]$ so that $\varrho\equiv 0$ on $[0,1/4]$ and $\varrho\equiv 1$ on $[3/4,1]$. 
We can eventually define $\widetilde{T}:W\to W$ as follows. 

\begin{itemize}

\item 
$\widetilde{T} = \widetilde{\Psi}_0$ on $f^{-1}(R_1\setminus \lambda (\alpha\times (-1,0]))$.

\item 
$((T\circ \lambda)^{-1}\times \id_\Sigma)\circ \Phi_1^T\circ \widetilde{T}\circ \Phi_1^{-1}\circ (\lambda \times \id_\Sigma)((s,t),x)= ((s,t), \Xi_{\varrho(-t)s,\varrho(-t)t}(x)). 
$ for $s\in [-1,1]$ and $t\in (-1,0]$. 

\item 
$\Phi_1^T\circ \widetilde{T}\circ \Phi_1^{-1}(w,x)= (T(w),\Xi_{0,0}(x))$ for $w\in R_2\setminus \lambda(\alpha\times (-1,0])$ and $x\in \Sigma$. 

\end{itemize}

\noindent
The pair $(\widetilde{T},T)$ satisfies the conditions (A)--(C) in \Cref{thm:condition lift a braid}. 

In order to show the last statement of \Cref{thm:condition lift a braid}, suppose that $\tau$ and $\xi$ satisfy the condition (1) and both $(\widetilde{T}_0,T_0)$ and $(\widetilde{T}_1,T_1)$ satisfy the condition (A)--(C) for $\tau$ and $\xi$. 
By considering the pair $(\widetilde{T}_1^{-1}\circ \widetilde{T}_0,(T_1)^{-1}\circ T_0)$, one can assume $\tau =1$, $\xi =1$ and $(\widetilde{T}_0,T_0)=(\id,\id)$ without loss of generality.
In what follows, we put $(\widetilde{T},T)=(\widetilde{T}_1,T_1)$.
Furthermore, by modifying $\varphi_j$ and $\psi_j$ if necessary, one can assume that $\widetilde{T}$ (resp.~$T$) is the identity map on $\varphi_j^{-1}(\mathbb{E}_4)$ (resp.~$\psi_j^{-1}(\mathbb{D}_2)$). 
Since $T$ represents the unit element in $\mathcal{M}_\Pa(D^2,R)$, one can take an ambient isotopy $H:[0,1]\times D^2\to [0,1]\times D^2$ satisfying the following conditions.

\begin{itemize}

\item 
$H$ preserves $\Pa D^2$ and $\bigsqcup_j\psi_j^{-1}(\disk_1)$ pointwise. 

\item 
$H_0=\id_{D^2}$, where $H_t(x)=p_2\circ H(t,x)$ and $p_2:[0,1]\times D^2\to D^2$ is the projection. 

\item 
$H_1(T(x))=x$ for any $x\in D^2\setminus \bigsqcup_j \psi_j^{-1}(\mathbb{D}_2)$. 

\item 
There exists $n_j\in \Z$ such that $H_1(T(\psi_j^{-1}(z)))=\psi_j^{-1}(\exp(2\pi \sqrt{-1}n_j\varrho'(|z|))z)$ for any $z\in \mathbb{D}_2$, where $\varrho':\R\to \R$ is a monotone-decreasing function with $\varrho(r)=1$ for $r\leq 1$ and $\varrho(r)=0$ for $r\geq 2$. 

\end{itemize}
Let $\upsilon=H_\ast \left(\frac{\Pa}{\Pa t}\right)$ and $\widetilde{\upsilon}$ be the vector field on $[0,1]\times (W\setminus \Crit(f))$ satisfying $d(\id_{[0,1]}\times f)(\widetilde{\upsilon})=\upsilon$ and $dp_2(\widetilde{\upsilon})$ is contained in $(\Ker df)^\perp$ with respect to the metric $g_0$ of $W$ used in the proof of (1)$\Rightarrow$(2), where $p_2:[0,1]\times W\to W$ is the projection. 
As $H$ preserves $\bigsqcup_j\psi_j^{-1}(\disk_1)$ pointwise, $\upsilon$ is equal to $\left(\frac{\Pa }{\Pa t}\right)$ on $\bigsqcup_j\psi_j^{-1}(\disk_1)$. 
Thus, $\widetilde{\upsilon}$ can be extended on $[0,1]\times W$. 
We denote the extended vector field by the same symbol $\widetilde{\upsilon}$. 
Taking flows of $\widetilde{\upsilon}$, we obtain an ambient isotopy $\widetilde{H}:[0,1]\times W\to [0,1]\times W$ satisfying $d\widetilde{H}_{(t,x)}\left(\left(\frac{\Pa}{\Pa t}\right)\right) = \widetilde{\upsilon}_{\widetilde{H}(t,x)}$.
It is easy to check that $(\id_{[0,1]}\times f)\circ \widetilde{H}$ is equal to $H\circ (\id_{[0,1]}\times f)$, in particular $H$ preserves fibers of $f$.
We can further deduce that $\widetilde{H}_0\circ\widetilde{T}=\widetilde{T}$, $\widetilde{H}_1|_{\bigsqcup_jf^{-1}(\psi_j^{-1}(\disk_1))}=\id$ and 
\[
f\circ \widetilde{H}_1\circ \widetilde{T}(x)=H_1\circ f\circ \widetilde{T}(x)=H_1\circ T\circ f(x). 
\]
Thus, by replacing $\widetilde{T}$ and $T$ with $\widetilde{H}_1\circ \widetilde{T}$ and $H_1\circ T$, respectively, we can assume that $T$ is the identity map on $D^2\setminus \bigsqcup_j \psi_j^{-1}(\mathbb{D}_2)$ and $T(\psi_j^{-1}(z))=\psi_j^{-1}(\exp(2\pi \sqrt{-1}n_j\varrho'(|z|))z)$ for $z\in \mathbb{D}_2$. 

We define an ambient isotopy $H':[0,1]\times D^2\to [0,1]\times D^2$ from $\id_{D^2}$ to $T^{-1}$ as follows. 
\[
H'(t,x) = \begin{cases}
(t,x) & (x\in D^2\setminus \bigsqcup_i \psi_i^{-1}(\mathbb{D}_2)) \\
(t, \psi_j^{-1}(\exp(-2\pi \sqrt{-1}n_jt\varrho(|z|))z)) & (x= \psi_j^{-1}(z)\mbox{ for }z\in \disk_2).
\end{cases}
\]
We take an isotopy $\widetilde{H}':[0,1]\times (W\setminus \bigsqcup_j\varphi_j^{-1}(\mathbb{E}_4))\to [0,1]\times (W\setminus \bigsqcup_j\varphi_j^{-1}(\mathbb{E}_4))$ from $H'$ in the same way as construction of $\widetilde{H}$ from $H$. 
The following equality holds for $(z,w)\in q^{-1}(\disk_2)\setminus \mathbb{E}_4$. 
\[
\varphi_j\circ\widetilde{H}'_t\circ \varphi_j^{-1}(z,w)=\begin{cases}
(z,\exp(-2\pi \sqrt{-1}n_jt\varrho'(|zw|)w) & (|z|\geq 4)\\
(\exp(-2\pi \sqrt{-1}n_jt\varrho'(|zw|)z,w) & (|w|\geq 4). 
\end{cases}
\]
Let $\varrho'':\R\to \R$ be a smooth function with $\varrho''(r)=0$ for $r\leq 4$ and $\varrho''(r)=1$ for $r\geq 5$. 
We extend the restriction of $\widetilde{H}'$ on $W\setminus \bigsqcup_j\varphi_j^{-1}(q^{-1}(\mathbb{D}_2)\setminus \mathbb{E}_5)$ to $W$ so that it satisfies the following equality for $(z,w)\in \mathbb{E}_5$ and $j=1,\ldots, d$.
\begin{align*}
&\varphi_j\circ\widetilde{H}'_t\circ \varphi_j^{-1}(z,w)\\
=&\left(\exp(-2\pi \sqrt{-1}n_jt(1-\varrho''(|z|))\varrho'(|zw|))z,\exp(-2\pi \sqrt{-1}n_jt\varrho''(|z|)\varrho'(|zw|))w\right).
\end{align*}%
One can check that $\widetilde{H}'_0=\id_W$, $(\id_{[0,1]}\times f)\circ \widetilde{H}' = H'\circ (\id_{[0,1]}\times f)$, and $\widetilde{H}'_1$ is the identity map on $\bigsqcup_j\varphi_j^{-1}(\mathbb{E}_4)$. 
By replacing $\widetilde{T}$ and $T$ with $\widetilde{H}_1\circ \widetilde{T}$ and $H_1\circ T$, respectively, we can assume $T=\id_{D^2}$. 

We take $B, \gamma_j, \iota_j, \widetilde{B}$ and $\Phi_0$ as in the proof of (1)$\Rightarrow$(2). 
By the assumption, $\widetilde{T}|_{f^{-1}(r_0)}$is isotopic to the identity map.
Thus, one can isotope $\widetilde{T}$ so that it is the identity on $f^{-1}(B)$. 
Identifying $f^{-1}(\iota_j([\delta,1-\delta]\times [-1,1]))$ with $[\delta,1-\delta]\times [-1,1]\times \Sigma$ via $\Phi_0$ and $\iota_j\times \id_\Sigma$, we can regard the restriction of $\widetilde{T}$ on $[\delta,1-\delta]\times \{0\}\times \mathcal{A}_{j,0}$ as a one-parameter family of embeddings of the annulus $\mathcal{A}_{j,0}$ into $\Int(\Sigma)$. 
Since the restrictions of $\widetilde{T}$ on $\{\delta\}\times \{0\}\times \mathcal{A}_{j,0}$ and $\{1-\delta\}\times \{0\}\times \mathcal{A}_{j,0}$ are both the identity map, we obtain an element in $\pi_1(\Emb(\mathcal{A}_{j,0},\Sigma))$, where $\Emb(\mathcal{A}_{j,0},\Sigma)$ is the space of embeddings defined in \cite{Ivanov1998MCG}, whose base point is the inclusion. 

In what follows, we obtain a generator of $\pi_1(\Emb(\mathcal{A}_{j,0},\Sigma))$. 
By \cite[Theorem 2.6.A]{Ivanov1998MCG}, we obtain the following exact sequence. 
\begin{equation}\label{eqn:exact seq Emb}
\pi_1(\mathcal{D}_\Pa(\Sigma))\to \pi_1(\Emb(\mathcal{A}_{j,0},\Sigma))
\xrightarrow{\Pa} \mathcal{M}_\Pa(\Sigma\setminus \Int(\mathcal{A}_{j,0}))\xrightarrow{\iota}\mathcal{M}_\Pa(\Sigma). 
\end{equation}%
\noindent
Since $f$ is relatively minimal, $d>0$ and $(g,b)\neq (1,0)$, either $g\geq 2$ or $b\geq 1$ holds. (Recall that $g$ and $b$ are the genus and the number of boundary components of $\Sigma$, respectively.)
In particular, the connected component of $\mathcal
D_\Pa(\Sigma)$ containing the identity map is contractible (\cite{EE1969homotopytypediffeogrp,ES1970homotopytypediffeogrp}). 
One can thus deduce from \eqref{eqn:exact seq Emb} that the map $\Pa:\pi_1(\Emb(\mathcal{A}_{j,0},\Sigma))\to \Im \Pa =\Ker \iota$ is an isomorphism. 
The complement $\Sigma\setminus \Int(\mathcal{A}_{j,0})$ does not have any disk components since $f$ is relatively minimal. 
By \cite[Theorem 3.18]{FarbMargalitBookMCG}, $\Ker \iota$ is a cyclic group generated by $t_{c_1}t_{c_2}^{-1}$, where $c_1,c_2\subset \Sigma\setminus \Int(\mathcal{A}_{j,0})$ are simple closed curves parallel to the two boundary components of $\mathcal{A}_{j,0}$. 
It is easy to see that the one-parameter family $\Psi:[0,1]\times \mathcal{A}_{i,0}\to \mathcal{A}_{j,0}\subset \Sigma_g^b$ defined below represents the generator of $\pi_1(\Emb(\mathcal{A}_{j,0},\Sigma))$ corresponding to $t_{c_1}t_{c_2}^{-1}$ or its inverse. (Here, we identify $\mathcal{A}_{j,0}$ with $\left\{(z,w)\in \C^2~|~zw=1, 1/4\leq\|z\|\leq 4\right\}$ in the obvious way.)
\[
\Psi(t,(z,w)) = (\exp(2\pi\sqrt{-1}t)z,\exp(-2\pi\sqrt{-1}t)w). 
\]

By the observation above, we can change $\widetilde{T}$ by an ambient isotopy of $W$ preserving fibers of $f$ so that it satisfies the following conditions. 

\begin{itemize}

\item 
$\widetilde{T}$ is the identity map on $f^{-1}(B)\cup \bigsqcup_{j,t}([\delta,1-\delta]\times \{t\}\times \mathcal{A}_{j,t})$.

\item 
There exist $m_j\in \Z$ and a smooth function $\varrho_j:\R\to \R$ with $\varrho_j(r)=0$ (resp.~$\varrho_j(r)=m_j$) if $r\leq -1$ (resp.~$r\geq -1/2$) such that the following equality holds for $(z,w)\in \mathbb{E}_4$.
\begin{align*}
&\varphi_j\circ \widetilde{T}\circ \varphi_j^{-1}(z,w) \\
=&(\exp(2\pi\sqrt{-1}\varrho_j(\Re(zw)))z,\exp(-2\pi\sqrt{-1}\varrho_j(\Re(zw)))w).
\end{align*}

\end{itemize}

\noindent
Let $E_j = \varphi_j^{-1}(\mathbb{E}_4)\cap (\psi_j\circ f)^{-1}(\mathbb{I})\subset W$. 
We define an ambient isotopy $\Theta_j$ of $E_j$ by putting 
\begin{align*}
&\varphi_j\circ(\Theta_j)_t\circ \varphi_j^{-1}(z,w)\\
=& (\exp(2\pi\sqrt{-1}(1-t)\varrho_j(\Re(zw)))z,\exp(-2\pi\sqrt{-1}(1-t)\varrho_j(\Re(zw)))w)
\end{align*}%
for $(z,w)\in E_j$ and $t\in [0,1]$. 
It is easy to see that this isotopy can be extended to one on $(\psi_j\circ f)^{-1}(\mathbb{I})$. 
This isotopy changes $\widetilde{T}$ so that it is the identity map on 
\[
f^{-1}(B)\cup \bigsqcup_{j,t}\left([\delta,1-\delta]\times \{t\}\times \mathcal{A}_{j,t}\right)\cup \bigsqcup_j E_j. 
\]
Since the restriction of $f$ on 
\[
f^{-1}\left(\bigsqcup_j (\iota_j([\delta,1]\times [-1,1])\cup\psi_j^{-1}(\mathbb{I}))\right)\setminus\left( \bigsqcup_{j,t}\left([\delta,1-\delta]\times \{t\}\times \mathcal{A}_{j,t}\right)\cup \bigsqcup_j E_j\right)
\]
is a trivial bundle (with fiber $\Sigma\setminus \Int(\mathcal{A}_{j,0})$), one can further change $\widetilde{T}$ by a fiber-preserving ambient isotopy of $W$ so that it is the identity map on $f^{-1}(\widetilde{B}\cup \bigsqcup_j \psi_j^{-1}(\mathbb{I}))$.
Since the complement of $\widetilde{B}\cup \bigsqcup_j \psi_j^{-1}(\mathbb{I})$ in $D^2$ is contractible and does not contain any critical value of $f$, the map $f$ is a trivial $\Sigma$-bundle on this complement, and thus $\widetilde{T}$ can be changed to the identity map on the whole of $W$. 
This completes the proof of \Cref{thm:condition lift a braid}. 
\end{proof}

\begin{remark}\label{rem:assumption not torus in lifting braid}

The assumption $(g,b)\neq (1,0)$ in \Cref{thm:condition lift a braid} is used to show that $\pi_1(\Emb(\mathcal{A}_{j,0},\Sigma))$ is a cyclic group. 
This does not hold if $(g,b)=(1,0)$. 
In this case, the map $\pi_1(\mathcal{D}_\Pa(\Sigma))\to \pi_1(\Emb(\mathcal{A}_{j,0},\Sigma))$ in \eqref{eqn:exact seq Emb} is an isomorphism, and the map $\Z\oplus \Z\cong\pi_1(\Sigma)\to \pi_1(\mathcal{D}_\Pa(\Sigma))$ induced by the group action of $\Sigma\cong T^2$ is also an isomorphism. 
The author believes that the uniqueness of $(\widetilde{T},T)$ in the case $(g,b)=(1,0)$ can be shown in a similar manner, that is, by making a given self-isomorphism of $f$ trivial on $f^{-1}(\widetilde{B}\cup \bigsqcup_j \psi_j^{-1}(\mathbb{I}))$ via a suitable fiber-preserving isotopy on $(\psi_j\circ f)^{-1}(\mathbb{I})$ (corresponding to $\Theta_j$).
However, for the sake of simplicity, the case $(g,b)=(1,0)$ is not pursued further, as this case is not required for the proofs of \Cref{thm:combinatorial construction LbF over D2timesD2,thm:combinatorial construction LbF over Hir_m,thm:combinatorial construction pencil-fibration} (cf.~the beginning of \Cref{sec:combinatorial construction pencil-fibration}).

\end{remark}

Let $f:W\to D^2$ be a Lefschetz fibration satisfying the last statement in \Cref{thm:condition lift a braid} and $r_0\in \Pa D^2$.
We define 
{\allowdisplaybreaks
\begin{align*}
\mathcal{D}_f(W)&=\{\widetilde{T}\in \mathcal{D}(W)~|~\exists T\in \mathcal{D}_\Pa(D^2,\Critv(f))\mbox{ s.t. } f\circ \widetilde{T}=T\circ f\}, \\
\mathcal{M}_f(W)&= \pi_0(\mathcal{D}_f(W)), \\
\mathcal{L}(f,r_0)&=\{(\tau,\xi) \in \mathcal{M}_\Pa(D^2,\Critv(f))\times \mathcal{M}(\Sigma)~|~\theta\circ \tau_\ast = \xi^{-1}\cdot \theta\cdot \xi\}.
\end{align*}
}%
By \Cref{thm:condition lift a braid}, we obtain a well-defined homomorphism $\ell_{f,r_0}:\mathcal{L}(f,r_0)\to \mathcal{M}_f(W)$ by putting $\ell_{f,r_0}(\tau,\xi)=[\widetilde{T}]$ for $(\tau,\xi)\in \mathcal{L}(f,r_0)$ and $\widetilde{T}\in \mathcal{D}_f(W)$ obtained by applying \Cref{thm:condition lift a braid} to $(\tau,\xi)$. 
Furthermore, it is easy to see that if $(\tau,1)\in \mathcal{L}(f,r_0)$ for some $r_0\in \Pa D^2$, then $(\tau,1)\in \mathcal{L}(f,r)$ for any $r\in \Pa D^2$. 
One can thus define a subgroup $\mathcal{L}_0(f)$ of $\mathcal{M}_\Pa(D^2,\Critv(f))$ by
\[
\mathcal{L}_0(f)=\{\tau\in \mathcal{M}_\Pa(D^2,\Critv(f))~|~(\tau,1)\in \mathcal{L}(f,r)\mbox{ for some }r\in \Pa D^2\}. 
\]
We call an element in $\mathcal{L}_0(f)$ a \emph{liftable braid} with respect to $f$, and define $\ell_f:\mathcal{L}_0(f)\to \mathcal{M}_f(W)$ as the restriction of $\ell_{f,r_0}$ for some $r_0$, which is independent of the choice of $r_0$. 
It is easy to check that $\tau_{\mu}^{\epsilon}$ is a liftable braid with respect to $f$ if $\mu$ satisfy the compatibility condition with respect to $\theta_f$ and $\epsilon$ (cf.~\cite{AurouxSympplecticMapsInvariant}).

\section{Combinatorial construction of pencil-fibration structures}\label{sec:combinatorial construction pencil-fibration}

This section is devoted to the proofs of \Cref{thm:combinatorial construction LbF over D2timesD2,thm:combinatorial construction LbF over Hir_m,thm:combinatorial construction pencil-fibration}, that is, constructions of various pencil-fibration structures from braid and fiber monodromies. 
Since the theorems become obvious when $d$ is equal to $0$ (the trivial pencil-fibration structure satisfies the desired conditions), we assume $d>0$ in the proofs below. 
In this case, $(g,b)\neq (0,0), (0,1)$ since $\theta(\alpha)$ is assumed to be a \textit{non-trivial} Dehn twist for a meridian loop $\alpha$.

If $(g,b)=(1,0)$, we can take a lift $\overline{\theta}:\pi_1(D^2\setminus R,r_0)\to \mathcal{M}_\Pa(\Sigma_1^1)$ of $\theta$ with respect to the capping homomorphism $\mathcal{M}_\Pa(\Sigma_1^1)\to \mathcal{M}(\Sigma_1)$, so that $\tau_{\mu_1}^{\epsilon_1}\cdots \tau_{\mu_n}^{\epsilon_n}$ and $\overline{\theta}$ are compatible. 
We can obtain a desired Lefschetz bifibration in \Cref{thm:combinatorial construction LbF over D2timesD2} by applying the theorem to $\overline{\theta}$, yielding a Lefschetz bifibration $\overline{F}:\overline{X}\to D^2\times D^2$ over $\rho$, and then gluing $D^2\times D^2\times D^2$ with $\overline{X}$ along $\Pa_h \overline{X}$.
As for \Cref{thm:combinatorial construction LbF over Hir_m,thm:combinatorial construction pencil-fibration}, we can also obtain desired bifibration structures by applying the theorems to $\overline{\theta}$, and then forgetting a section. 
It is therefore enough to show \Cref{thm:combinatorial construction LbF over D2timesD2} (resp.~\Cref{thm:combinatorial construction LbF over Hir_m,thm:combinatorial construction pencil-fibration}) under the assumption that $(g,b)\neq (1,0)$ (resp.~$(g,b)\neq (1,0),(0,2)$). 

\begin{proof}[Proof of \Cref{thm:combinatorial construction LbF over D2timesD2}]
By Theorem~\ref{thm:equivalence LF from van cyc}, there exists a Lefschetz fibration $h:W\to D^2$ such that $\Sigma := h^{-1}(r_0)$ is diffeomorphic to $\Sigma_g^b$, and $\theta_h$ is equal to $\theta$ under this diffeomorphism. 
For each critical point $x_j\in \Crit(h)$, we take orientation-preserving complex charts $\varphi_j:U_j\to \C^2$ and $\psi_j:V_j\to \C$ around $x_j$ and $h(x_j)$, respectively, so that $\psi_j\circ h\circ \varphi_j^{-1}(z_1,z_2)$ is equal to $z_1^2+z_2^2$. 
By the assumption, a braid $\tau_{\mu_i}^{\epsilon_i}$ is liftable with respect to $h$ for each $i=1,\ldots,n$.  
Let $(\widetilde{T}_i,T_i)$ be a self-isomorphism (with respect to $\kappa_h$) obtained by applying \Cref{thm:condition lift a braid} to $\tau = \tau_{\mu_i}^{\epsilon_i}$, $\overline{\xi}=\id_{\Sigma}$, and complex charts taken above.
We put $T_0=T_n\circ \cdots \circ T_1$ and $\widetilde{T}_0 = \widetilde{T}_n\circ \cdots \circ \widetilde{T}_1$. 


Let $E'\subset \R^2$ be the region given in \Cref{fig:original disk1} and take edges $\alpha_i,\beta_i\subset \Pa E'$ as shown in \Cref{fig:original disk1}. 
Let $E_0$ be a planar surface obtained by gluing $\alpha_i$ and $\beta_i$ by an orientation-reversing diffeomorphism $\lambda_i:\alpha_i\to \beta_i$ (see \Cref{fig:original disk2}).
For each $i=1,\ldots,n$, we glue $\alpha_i\times D^2\subset \Pa E'\times D^2$ with $\beta_i\times D^2$ by $\Lambda_i=\lambda_i\times T_i:\alpha_i\times D^2\to \beta_i\times D^2$, and denote the resulting $4$-manifold by $Z_0$. 
We can also obtain a $6$-manifold $X_0$ by gluing $\alpha_i\times W\subset \Pa E' \times W$ with $\beta_i\times W$ by $\widetilde{\Lambda}_i=\lambda_i\times \widetilde{T}_i$ for each $i$. 
Let $F_0:X_0\to Z_0$ and $\rho_0:Z_0\to E_0$ be the maps defined by $F_0([(w,x)])=[(w,h(x))]$ and $\rho_0([(w,y)])=[w]$, respectively. (Note that $X_0,Z_0,E_0$ are quotient spaces obtained from $E'\times W,E'\times D^2$, and $E'$, respectively.) 
We denote by $S_0,\ldots, S_n\subset \Pa E_0$ each component of $\Pa E_0$ as shown in \Cref{fig:original disk2}. 
Let $\nu S_i$ be a collar neighborhood of $S_i\subset \Pa E_0$. 
One can easily take diffeomorphisms $\widetilde{\iota}_i,\iota_i,\iota'_i$ making the following diagram commute:
\begin{equation}\label{eq:commutative diagram S_i}
\begin{CD}
(-1,0]\times (\R\times W) /\sim_{\widetilde{T}_i} @>\widetilde{\iota}_i >> (\rho_0\circ F_0)^{-1}(\nu S_i)\\
@V\id_{(-1,0]}\times \id_{\R}\times hVV @VV F_0 V\\
(-1,0]\times (\R\times D^2) /\sim_{T_i}  @>\iota_i >>\rho_0^{-1}(\nu S_i)\\
@V\id_{(-1,0]}\times \mbox{proj.}VV @VV \rho_0V\\
(-1,0]\times (\R/\Z)@>\iota'_i >>\nu S_i, 
\end{CD}
\end{equation}
where we denote by $(\R\times W) /\sim_{\widetilde{T}_i}$ the quotient space $(\R\times W)/((t+n,x)\sim(t,\widetilde{T}_i^n(x)))$, which also appears in the beginning of \Cref{sec:liftable briad}.
The same notation applies to $T_i$ and other self-diffeomorphisms introduced hereafter.
\begin{figure}[htbp]
\subfigure[The region $E'$.]{\includegraphics{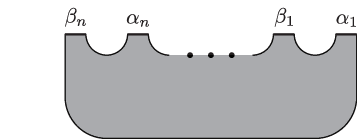}\label{fig:original disk1}}
\subfigure[The region $E$.]{\includegraphics{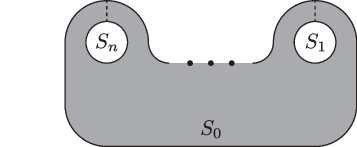}\label{fig:original disk2}}
\caption{Base spaces of bifibrations.}
\label{fig:original disk}
\end{figure}%
In what follows, for each $i=1,\ldots, n$, we construct a Lefschetz bifibration $F_i:X_i\to \disk_1\times D^2$ over the projection $\rho:\disk_1\times D^2\to \disk_1$ so that $F_i(\Delta_{F_i})=\{0\}$ and its braid monodromy along $\Pa \disk_1$ is $\tau_{\mu_i}^{\epsilon_i}$.
We then glue $F_i$ with $F_0$ along $S_i$. 
Let 
$c_i^1,c_i^2\subset \Sigma$ be the two vanishing cycles of the Lefschetz fibration $h:W\to D^2$ associated with a path $\mu_i$.

We first deal with the case $\epsilon_i=3$. 
We define $c:\C^3\to \C^2$ by $c(z_1,z_2,z_3)=(z_1,z_2^3+z_1z_2+z_3^2)$. 
Let $W_C=c^{-1}(\mathbb{D}_1 \times \mathbb{D}_{1})\cap \{|z_2|\geq 4\}$ and $A_C = \C \setminus \Int(\mathbb{D}_{2})$. 
We define a holomorphic function $\mathcal{R}:\C\setminus \{r\in \R~|~r\leq 0\} \to \C$ by $\mathcal{R}(z)=\sqrt{z}$ and $\mathcal{R}(1)=1$. 
Since $1>|w_1/q^4+w_2/q^6|$ for any $q\in A_C$ and $(w_1,w_2)\in \disk_{1}\times \disk_{1}$, we can define a local trivialization $\Phi_C:(\disk_{1}\times \disk_{1})\times A_C \to W_C$ of $c$ as follows.
\[
\Phi_C((w_1,w_2),q) = \left(w_1,-q^2,q^3\mathcal{R}\left(1+\frac{w_1}{q^4}+\frac{w_2}{q^6}\right)\right)
\]
for $(w_1,w_2)\in \disk_{1}\times \disk_{1}$ and $q\in A_C$. 

Since $c_i^1$ and $c_i^2$ intersect at a single point transversely, a regular neighborhood $\nu (c_i^1\cup c_i^2)$ of $c_i^1\cup c_i^2$ in $\Sigma$ has one boundary component.
Identifying a collar neighborhood of $\Pa(\nu (c_i^1\cup c_i^2))$ in $\Sigma'=\overline{\Sigma\setminus \nu (c_i^1\cup c_i^2)}$ with $A_C$, we can obtain a $6$-manifold $Y_i$ by gluing $(\mathbb{D}_{1}\times \mathbb{D}_1)\times \Sigma'$ with $c^{-1}(\mathbb{D}_{1}\times \mathbb{D}_1)$ using $\Phi_C$. 
We define $G_i:Y_i\to \mathbb{D}_{1}\times \mathbb{D}_1$ as follows. 
\[
G_i(x) = \begin{cases}
c(x) & (x\in c^{-1}(\mathbb{D}_{1}\times \mathbb{D}_1))\\
(w_1,w_2) & (x=((w_1,w_2),v)\in (\mathbb{D}_{1}\times \mathbb{D}_1)\times \Sigma'). 
\end{cases}
\]
The intersection $h^{-1}(\Pa\nu\mu_i)\cap \nu\Pa_hW$ can be identified with $\Pa (\nu\mu_i)\times \nu \Pa \Sigma$ via $\kappa_h$. 
We take diffeomorphisms $\xi:\Pa(\nu \mu_i)\to \Pa\disk_1$ and $\widetilde{\xi}:\disk_1\times h^{-1}(\Pa \nu\mu_i)\to G_i^{-1}(\disk_1\times \Pa \disk_1)$ so that 
\[
\widetilde{\xi} (w,y,z)=(w,\xi(y),z)\in (\disk_1\times \Pa \disk_1)\times \nu \Pa \Sigma \subset (\disk_1\times \Pa\disk_1)\times \Sigma'\subset G_i^{-1}(\disk_1\times \Pa \disk_1)
\]
for $(w,y,z)\in \disk_1\times (\Pa(\nu\mu_i)\times \nu\Pa\Sigma)\cong h^{-1}(\Pa\nu\mu_i)\cap \nu\Pa_hW$, and the following diagram commutes:
\[
\begin{CD}
\disk_1\times h^{-1}(\Pa \nu\mu_i) @>\widetilde{\xi}>> G_i^{-1}(\disk_1\times \Pa \disk_1)\\
@V\id_{\disk_1}\times hVV @VVG_iV\\
\disk_1 \times \Pa(\nu \mu_i) @>\id_{\disk_1}\times \xi>> \disk_1\times \Pa \disk_1\\
@V\mathrm{proj.}VV @VV\mathrm{proj.}V\\
\disk_1 @>\id_{\disk_1}>> \disk_1.
\end{CD}
\]
Using these maps, one can obtain a desired Lefschetz bifibration $F_i:X_i\to Z_i$ over $\rho_i:Z_i\to \disk_1$, where $X_i = (\disk_1\times h^{-1}(D^2\setminus\nu\mu_i))\cup_{\widetilde{\xi}}Y_i$, $Z_i = (\disk_1\times (D^2\setminus \nu\mu_i))\cup_{\id_{\disk_1}\times \xi}(\disk_1\times \disk_1)$, and $\rho_i$ is the projection to the first component. 

In what follows, we take coordinate neighborhoods around critical points and values of $F_i$ in which $F_i$ becomes standard, and examine behavior of the monodromy of $\rho_i\circ F_i$ along $\Pa D^2$ in the neighborhoods. 
(This process is necessary for applying \Cref{thm:condition lift a braid} to the monodromy of $F_i$, especially checking the condition (C) in the theorem.)
We define maps $\Upsilon_C^1:\R\times \C^2\ \to \Pa \disk_1\times \C^2$ and $\upsilon_C^+:\R\times \C\to \Pa \disk_1\times \C$ as follows. 
\begin{align*}
\Upsilon_C^1(\zeta,(z_2,z_3)) =& \left(\exp(2\pi \sqrt{-1}\zeta),z_2+\exp(\pi\sqrt{-1}\zeta)\frac{\sqrt{-1}}{\sqrt{3}},z_3\right),\\
\upsilon_C^+(\zeta,w_2) =& \left(\exp(2\pi\sqrt{-1}\zeta),w_2 +\exp(3\pi\sqrt{-1}\zeta)\frac{2\sqrt{-1}}{3\sqrt{3}}\right).
\end{align*}%
The following equality then holds for $(\zeta,z_2,z_3)\in \R\times \C^2$. 
\[
c\circ \Upsilon_C^1(\zeta,z_2,z_3)\\
=\upsilon_C^+\left(\zeta,z_2^2\left(z_2+\exp(\pi\sqrt{-1}\zeta)\sqrt{3}\sqrt{-1}\right) +z_3^2 \right).
\]
We can define $\Upsilon_C^2 :\R\times \disk_{\delta}\times \C\to \R\times \C^2$ for $\delta\ll 1$ as follows. 
{\allowdisplaybreaks
\begin{align*}
\Upsilon_C^2(\zeta,z_2,z_3) &=\left(\zeta, \sqrt{-1}z_2\mathcal{R}\left(-z_2-\exp(\pi\sqrt{-1}\zeta)\sqrt{3}\sqrt{-1}\right),z_3\right).
\end{align*}
}%
We can deduce from the inverse function theorem that $\Upsilon_C^2$ is a diffeomorphism on its image for a sufficiently small $\delta$. 
For such a $\delta$ and $(\zeta,z_2,z_3)$ in the image of $\Upsilon_C^2$, the following equality holds:
\[
(\upsilon_C^+)^{-1}\circ c \circ \Upsilon_C^1\circ (\Upsilon_C^2)^{-1}(\zeta,z_2,z_3)=(\zeta,z_2^2+z_3^2).
\] 
We put $\Upsilon_C^+ = \Upsilon_C^1\circ (\Upsilon_C^2)^{-1}$. 
Let $\tau_C:\R\times \C\to \R\times \C$ and $\widetilde{\tau}_C:\R\times \C^2\to \R\times \C^2$ be diffeomorphisms defined by $\tau_C(\zeta,w_2)=(\zeta,-w_2)$ and $\widetilde{\tau}_C(\zeta,z_2,z_3)=(\zeta,-z_2,\sqrt{-1}z_3)$, respectively.
We define $\Upsilon_C^-$ and $\upsilon_C^-$ as follows:
\[
\Upsilon_C^-(\zeta,z_2,z_3)=\widetilde{\tau}_C\circ \Upsilon_C^+(\zeta,-\sqrt{-1}z_2,-\sqrt{-1}z_3), \hspace{.3em}\upsilon_C^-(\zeta,w)=\tau_C\circ \upsilon_C^+(\zeta,-w). 
\]
These also satisfy the equality $(\upsilon_C^-)^{-1}\circ c \circ  \Upsilon_C^-(\zeta,z_2,z_3)=(\zeta,z_2^2+z_3^2)$.

It is easy to check that the restriction $h_i:=F_i|_{(\rho_i\circ F_i)^{-1}(1)}:(\rho_i\circ F_i)^{-1}(1)\to \rho_i^{-1}(1) \cong (D^2\setminus \nu\mu_i)\cup_\xi \disk_1$ is a Lefschetz fibration whose monodromy representation is isomorphic to $\theta$. 
In the total space $(\rho_i\circ F_i)^{-1}(1)=(\{1\}\times h^{-1}(D^2\setminus \nu\mu_i))\cup_{\widetilde{\xi}}G_i^{-1}(\{1\}\times \disk_1)$, there are $n-2$ critical points $(1,x_{j_1}),\ldots, (1,x_{j_{n-2}})$ in $\{1\}\times h^{-1}(D^2\setminus \nu\mu_i)$ (for some $j_1,\ldots, j_{n-2}\in \{1,\ldots, n\}$) and two critical points $x_C^+:=\Upsilon_C^+(0,0,0)$ and $x_C^-:=\Upsilon_C^-(0,0,0)$ in $G^{-1}(\{1\}\times \disk_1)$. 
We can take the complex charts $\varphi_{j_i}\circ p_2$ and $\psi_{j_i}\circ p_2$ around $(1,x_{j_i})$ and $h_i(1,x_{j_i})$, respectively, making $h_i$ the standard model $(z_1,z_2)\mapsto z_1^2+z_2^2$ (where $p_2$ is the projection). 
Furthermore, suitable restrictions of the maps $(\Upsilon_C^\pm)^{-1}$ and $(\upsilon_C^\pm)^{-1}$ (around $(0,0,0)$ and $(0,0)$, respectively) induce complex charts around $x_C^\pm$ and $h_i(x_C^\pm)$ with the same property, which we denote by $(\overline{\Upsilon_C^\pm})^{-1}$ and $(\overline{\upsilon_C^\pm})^{-1}$.
By Theorem~\ref{thm:equivalence LF from van cyc}, one can take an isomorphism $(\Phi_i,\Psi_i)$ from $h$ to $h_i$ so that their restrictions on neighborhoods of critical points and values are compositions of the given complex charts. 

Let $\gamma:\R\to \Pa \disk_1$ be the loop defined by $\gamma(\zeta)=\exp(2\pi\sqrt{-1}\zeta)$. 
It is easy to check that $\Upsilon_C^\pm$ (resp.~$\upsilon_C^\pm$) factors through the natural map $E(\gamma^\ast(\rho_i\circ F_i))\to (\rho_i\circ F_i)^{-1}(\Pa \disk_1)$ (resp.~$E(\gamma^\ast \rho_i)\to \rho_i^{-1}(\Pa \disk_1)$), where $E(\gamma^\ast(\rho_i\circ F_i))$ and $E(\gamma^\ast \rho_i)$ are the total spaces of the pull-backs. 
We also denote the induced maps to $E(\gamma^\ast (\rho_i\circ F_i))$ and $E(\gamma^\ast \rho_i)$ by $\Upsilon_C^\pm$ and $\upsilon_C^\pm$, respectively. 
We take vector fields $\chi$ and $\widetilde{\chi}$ on $E(\gamma^\ast \rho_i)$ and $E(\gamma^\ast(\rho_i\circ F_i))$ so that they satisfy the following conditions:

\begin{itemize}

\item 
$dF_i(\widetilde{\chi}) = \chi$, 

\item 
$d\rho_i(\chi) = \frac{d\gamma}{d\zeta}$, 

\item 
$\widetilde{\chi}=\frac{d}{d\zeta}$ on $\R\times h^{-1}(D^2\setminus \nu\mu_i)$, where we identify 
\[
\{(\zeta,\exp(2\pi\sqrt{-1}\zeta),x)\in \R\times \Pa \disk_1\times h^{-1}(D^2\setminus \nu\mu_i)\}\subset E(\gamma^\ast (\rho_i\circ F_i))
\]
with $\R\times h^{-1}(D^2\setminus \nu\mu_i)$ via the projection, 

\item 
$\chi = \frac{d}{d\zeta}$ on $\R\times (D^2\setminus \nu\mu_i)$, where we regard $\R\times (D^2\setminus \nu\mu_i)$ as a subset of $E(\gamma^\ast \rho_i)$ in the same way as above,

\item 
$\widetilde{\chi} = d\Upsilon_C^\pm\left(\frac{d}{d\zeta}\right)$ on a neighborhood of the image $\Upsilon_C^\pm(\R\times \{0\})$, 

\item 
$\chi = d\upsilon_C^\pm\left(\frac{d}{d\zeta}\right)$ on a neighborhood of the image $\upsilon_C^\pm(\R\times \{0\})$, 


\end{itemize}

\noindent
Taking time-$1$ flows of $\widetilde{\chi}$ and $\chi$, one can take a self-isomorphism $(\widetilde{T}_i',T_i')$ of $h_i$. 
The diffeomorphism $T_i'$ represents a braid monodromy of $F_i$ along $\gamma$. 
In particular $\widetilde{T}_i'$ preserves $(1,x_{j_1}),\ldots, (1,x_{j_{n-2}})$ and interchanges the two critical points $x_C^+$ and $x_C^-$.
For $(z_1,z_2)\in \C^2$ sufficiently close to the origin, the image $(\overline{\Upsilon_C^\pm})^{-1}\circ \widetilde{T}_i' \circ \overline{\Upsilon_C^\mp}(z_1,z_2)$ can be calculated as follows: 
{\allowdisplaybreaks
\begin{align*}
(\overline{\Upsilon_C^-})^{-1}\circ \widetilde{T}_i' \circ \overline{\Upsilon_C^+}(z_1,z_2)=& (\overline{\Upsilon_C^-})^{-1}\circ \Upsilon_C^+(1,z_1,z_2)\\
=& (\overline{\Upsilon_C^-})^{-1}\circ \Upsilon_C^1\circ (\Upsilon_C^2)^{-1}(1,z_1,z_2)\\
=& (\overline{\Upsilon_C^-})^{-1}\circ \Upsilon_C^1(1,w_1,z_2)\\
&\hspace{3em}(\mbox{where }\sqrt{-1}w_1\mathcal{R}(\sqrt{-1}\sqrt{3}-w_1)=z_1)\\
=& (\overline{\Upsilon_C^-})^{-1}\left(1,w_1-\frac{\sqrt{-1}}{\sqrt{3}},z_2\right)\\
=& (\overline{\Upsilon_C^-})^{-1}\circ \widetilde{\tau}_C\circ \Upsilon_C^1\left(0,-w_1,-\sqrt{-1}z_2\right)\\
=& \left(-\sqrt{-1}w_1r_{\sqrt{-1}}(-w_1+\sqrt{-1}\sqrt{3}),z_2\right)=\left(z_1,z_2\right), \\
(\overline{\Upsilon_C^+})^{-1}\circ \widetilde{T}_i' \circ \overline{\Upsilon_C^-}(z_1,z_2)=& (\overline{\Upsilon_C^+})^{-1}\circ \Upsilon_C^-(1,z_1,z_2)\\
=& (\overline{\Upsilon_C^+})^{-1}\circ \widetilde{\tau}_C\circ\Upsilon_C^1\circ (\Upsilon_C^2)^{-1}(1,-\sqrt{-1}z_1,-\sqrt{-1}z_2)\\
=& (\overline{\Upsilon_C^+})^{-1}\left(1,-w_1'+\frac{\sqrt{-1}}{\sqrt{3}},z_2\right)\\
&\hspace{3em}(\mbox{where }\sqrt{-1}w_1'\mathcal{R}(\sqrt{-1}\sqrt{3}-w_1')=-\sqrt{-1}z_1)\\
=& (\overline{\Upsilon_C^+})^{-1}\circ \Upsilon_C^1\left(0,-w_1',z_2\right)=(-z_1,z_2).
\end{align*}
}%
Since the restriction $T_i'|_{D^2\setminus \nu\mu_i}$ is the identity map, one can show that the composition $\Psi_i^{-1}\circ T_i'\circ \Psi_i$ is isotopic to $T_i$ (both of them represent $\tau_{\mu_i}^3$). 
One can further show that both of the pairs $(\widetilde{T}_i,T_i)$ and $(\Phi_i^{-1}\circ \widetilde{T}_i'\circ \Phi_i,\Psi_i^{-1}\circ T_i'\circ \Psi_i)$ satisfy the conditions (A)--(C) in Theorem~\ref{thm:condition lift a braid} for $f=h$, complex charts $\{(\varphi_i,\psi_i)\}_i$, $\tau = \tau_{\mu_i}^3$ and $\xi=1$ (cf.~\Cref{rem:cpx coordinates in lifting a braid}).
By Theorem~\ref{thm:condition lift a braid}, one can take a fiber-preserving isotopy pair $(\mathfrak{H}:\R\times W\to \R\times W,\mathfrak{h}:\R\times D^2\to \R\times D^2)$ from $(\widetilde{T}_i^{-1},T_i^{-1})$ to $(\Phi_i^{-1}\circ (\widetilde{T}_i')^{-1}\circ \Phi_i,\Psi_i^{-1}\circ (T_i')^{-1}\circ \Psi_i)$.

The vertical boundaries $\Pa_v X_i$ and $\Pa_v Z_i$ are diffeomorphic to $(\R\times (\rho_i\circ F_i)^{-1}(1))/\sim_{\widetilde{T}_i'}$ and $(\R\times \rho_i^{-1}(1))/\sim_{T_i'}$, respectively. 
Furthermore, under the identifications by these diffeomorphisms, $F_i:\Pa_v X_i\to \Pa_v Z_i$ and $\rho_i:\Pa_v Z_i\to \Pa \disk_1$ coincide with $\id_{\R}\times h_i$ and $[(\zeta,y)]\mapsto \exp(2\pi \sqrt{-1}\zeta)$, respectively.
Thus, the following diagram commutes (see \eqref{eq:commutative diagram S_i} for the diffeomorphisms in the left side): 
\[
\begin{CD}
(\rho_0\circ F_0)^{-1}(S_i) \cong(\R\times W) /\sim_{\widetilde{T}_i'} @> (\id_{\R}\times \Phi_i)\circ \mathfrak{H}>> (\R\times (\rho_i\circ F_i)^{-1}(1)) /\sim_{\widetilde{T}_i}\cong \Pa_v X_i\\
@VF_0VV @VVF_iV\\
\rho_0^{-1}(S_i) \cong(\R\times D^2) /\sim_{T_i'} @>(\id_{\R}\times \Phi_i)\circ \mathfrak{h}>> (\R\times \rho_i^{-1}(1)) /\sim_{T_i} \cong \Pa_v Z_i\\
@V\rho_0VV @VV\rho_iV\\
S_i\cong\R/\Z @>\exp(2\pi\sqrt{-1}~\cdot~)>> \Pa \disk_1. 
\end{CD}
\]
We can eventually glue $F_i$ with $F_0$ along $S_i$ by the horizontal diffeomorphisms above.

We next deal with the case $\epsilon_i=2$. 
We define $f:\C^3\to \C^2$ by $f(z_1,z_2,z_3)=(z_1,z_2z_3)$.
Let $W_I=f^{-1}(\disk_1\times \disk_1)\cap \{(z_1,z_2,z_3)\in \C^3~|~|z_2|\geq 2\mbox{ or }|z_3|\geq 2\}$
and $A_I^2=A_I^3=\C \setminus \Int(\mathbb{D}_{2})$. 
We can define a local trivialization $\Phi_I:(\disk_{1}\times \disk_{1})\times (A_I^2\sqcup A_I^3)\to W_I$ of $f$ as follows: 
\[
\Phi_I((w_1,w_2),z) = \begin{cases}
(w_1,z,w_2/z) & (z\in A_I^2) \\
(w_1,w_2/z,z) & (z\in A_I^3). 
\end{cases}
\]
We define a linear map $a: \C^2\to \C^2$ by $a(z_1,z_2)=(z_1,z_2+z_1/2)$ and let
\[
\widetilde{W}_I = (a\circ f)^{-1}(\disk_{1}\times \disk_{1})\cap \{(z_1,z_2,z_3)\in \C^3~|~|z_2|\geq 2\mbox{ or }|z_3|\geq 2\}.
\] 
One can easily check that $\Critv(a\circ f)=\{(z,z/2)\in \C^2~|~z\in \C\}$ and the map 
\[
\widetilde{\Phi}_I:(\disk_{1}\times \disk_{1})\times (A_I^2\sqcup A_I^3)\to \widetilde{W}_I
\]
defined by 
\[
\widetilde{\Phi}_I((w_1,w_2),z) =\begin{cases}
(w_1,z,(w_2-w_1/2)/z) &(z\in A_I^2)\\
(w_1,(w_2-w_1/2)/z,z) & (z\in A_I^3)
\end{cases} 
\]
is a local trivialization of $a \circ f$.
In the same way as the construction for a cusp (i.e.~the case $\epsilon_i=3$), we can obtain a $6$-manifold $Y_i$ by gluing $(\disk_{1}\times \disk_{1})\times (\Sigma\setminus (\nu (c_i^1\sqcup c_i^2)))$ with $f^{-1}(\disk_{1}\times \disk_{1})\sqcup (a \circ f)^{-1}(\disk_{1}\times \disk_{1})$ using $\Phi_I\sqcup \widetilde{\Phi}_I$, define a map $G_i:Y_i\to \disk_{1}\times \disk_{1}$, and obtain a desired Lefschetz bifibration $F_i:X_i\to Z_i$. 
%
%
%
%
Furthermore, one can easily take an embedding $\Upsilon_I^\pm:\Pa\disk_1\times U\to G^{-1}(\Pa \disk_1\times \disk_1)\subset X_i$ (resp.~$\upsilon_I^\pm:\Pa\disk_1\times V\to \Pa \disk_1\times \disk_1\subset Z_i$) to a neighborhood of a component of the critical point set (resp.~critical value set) of $F_i:X_i\to Z_i$ (where $U\subset \C^2$ and $V\subset \C$ are neighborhoods of the origins) so that $(\upsilon_I^\pm)^{-1}\circ F_i \circ \Upsilon_I^\pm(s,z,w)=(s,zw)$.  
Using these embeddings (instead of $\Upsilon_C^\pm$ and $\upsilon_C^\pm$), one can glue $F_i$ with $F_0$ along $S_i$ in the same way as that for the case $\epsilon_i=3$. 
We can also construct a desired $F_i$ and glue it with $F_0$ for $i\in \{1,\ldots, n\}$ with $\epsilon_i=-2$ in the same manner.

We next deal with the case $\epsilon_i=1$. 
We define $b:\C^3\to \C^2$ by $b(z_1,z_2,z_3)=(z_1^2+z_2z_3, 2z_1)$. Let
\[
W_B=b^{-1}(\disk_{1}\times \disk_{1})\cap \{(z_1,z_2,z_3)\in \C^3~|~|z_2|\geq 2\mbox{ or }|z_3|\geq 2\}, 
\]
and $A_B^2=A_B^3=\C\setminus \Int(\disk_{2})$.
We can define a local trivialization $\Phi_B:(\disk_{1}\times \disk_{1})\times (A_B^2\sqcup A_B^3)\to W_B$ of $b$ as follows: 
\[
\Phi_B((w_1,w_2),z) = \begin{cases}
(w_2/2,z,(w_1-w_2^2/4)/z) & (z\in A_B^2) \\
(w_2/2,(w_1-w_2^2/4)/z,z) & (z\in A_B^3). 
\end{cases}
\]
As we did for a cusp and a double point, we can obtain a $6$-manifold $Y_i$ by gluing $(\disk_1\times \disk_1)\times (\Sigma\setminus \nu c_i^1)$ with $b^{-1}(\disk_1\times \disk_1)$ using $\Phi_B$ (note that $c_i^1$ and $c_i^2$ coincide in this case), define a map $G_i:Y_i\to \disk_1\times \disk_1$, and obtain a desired Lefschetz bifibration $F_i:X_i\to Z_i$. 

For sufficiently small neighborhoods $U\subset \C^2$ and $V\subset \C$ of the origins, we define maps $\Upsilon_B^+:\R\times U\to b^{-1}(\Pa \disk_1\times \C)$ and $\upsilon_B^+:\R\times V\to \Pa \disk_1\times \C$ as follows: 
{\allowdisplaybreaks
\begin{align*}
\Upsilon_B^+(s,z_2,z_3) =& \left(\exp(\pi\sqrt{-1}s)\mathcal{R}\left(1-z_2z_3\exp(-2\pi\sqrt{-1}s)\right),z_2,z_3\right), \\
\upsilon_B^+(s,w)=&\left(\exp(2\pi\sqrt{-1}s),\frac{\exp(\pi\sqrt{-1}s)}{2}\mathcal{R}\left(1-w\exp(-2\pi\sqrt{-1}s)\right)\right). 
\end{align*}
}%
One can easily check that these maps satisfy the equality $b \circ \Upsilon_B^+(s,z_2,z_3)=\upsilon_B^+(s,z_2z_3)$. 
Let $\tau_B:\R\times \C\to \R\times \C$ and $\widetilde{\tau}_B:\R\to \C^2\to \R\times \C^2$ be diffeomorphisms defined by $\widetilde{\tau}_B(s,z_2,z_3)=(s,-z_2,z_3)$ and $\tau_B(s,w)=(s,-w)$, respectively.
We put $\Upsilon_B^-=\widetilde{\tau}_B\circ \Upsilon_B^+$ and $\upsilon_B^-=\tau_B\circ \Upsilon_B^+$. 
Using the embeddings $\Upsilon_B^\pm$ and $\upsilon_B^\pm$, one can glue $F_i$ with $F_0$ along $S_i$ in the same way as that for the case $\epsilon_i=3$.

We have obtained the $6$-manifold $\widetilde{X}_0 := X_0 \cup (\bigsqcup_i X_i)$, the $4$-manifold $\widetilde{Z}_0 := Z_0\cup (\bigsqcup_i Z_i)$, the Lefschetz bifibration $\widetilde{F}_0 := F_0\cup (\bigsqcup_i F_i):\widetilde{X}_0\to \widetilde{Z}_0$ over $\widetilde{\rho}_0:=\rho_0\cup (\bigsqcup_i \rho_i):\widetilde{Z}_0\to \widetilde{E}_0:=E_0\cup (\bigsqcup_i \disk_1)\cong D^2$. 
Since $\widetilde{E}_0$ is diffeomorphic to $D^2$ and $\widetilde{\rho}_0$ is a disk-bundle over $\widetilde{E}_0$, $\widetilde{Z}_0$ is diffeomorphic to $D^2\times D^2$ and $\widetilde{\rho}_0$ is trivial. 
We thus complete the proof of \Cref{thm:combinatorial construction LbF over D2timesD2} ($\widetilde{F}_0$ is a desired Lefschetz bifibration). 
\end{proof}

\begin{proof}[Proof of \Cref{thm:combinatorial construction LbF over Hir_m}]
We denote by $A_k$ the component of $\Pa \Sigma$ corresponding to that of $\Pa \Sigma_g^b$ parallel to $\delta_k$. 
We identify a collar neighborhood $\nu A_k$ of $A_k$ with $(-1,0]\times (\R/\Z)$, and denote by $(s,[\zeta])_k$ by the element of it. 
Let $\rho:[-1,0]\to \R$ be a monotone-increasing smooth function such that $\rho \equiv 0$ (resp.~$\rho\equiv 1$) on a neighborhood of $-1\in [-1,0]$ (resp.~$0\in [-1,0]$).
We define a diffeomorphism $\phi \in \mathcal{D}_\Pa(\Sigma)$ as follows: 
\[
\phi(z) = \begin{cases}
z & (z\not\in \nu \Pa \Sigma = \bigsqcup_k \nu A_k) \\
(u,[\zeta-\rho(u)a_k])_k & (z = (u,[\zeta])_k \in \nu A_k). 
\end{cases}
\]
Note that $\phi$ represents the product $t_{\delta_1}^{a_1}\cdots t_{\delta_b}^{a_b}$.
Construction of $h$ (i.e.~the proof of Theorem~\ref{thm:equivalence LF from van cyc}) implies that one can take neighborhoods of boundaries and diffeomorphisms to them as follows:

\begin{itemize}

\item 
a collar neighborhood $\nu\Pa D^2$ of $\Pa D^2$ and a diffeomorphism $\kappa':(-1,0]\times (\R/\Z) \to \nu \Pa D^2$ so that $\kappa'(0,[0])=r_0$, 

\item 
a neighborhood $\nu \Pa_h W$ of $\Pa_hW$, and a diffeomorphism 
\[
\kappa_h:D^2\times \nu\Pa \Sigma \cong D^2\times \left(\bigsqcup_{k=1}^b(-1,0]\times (\R/\Z)\right) \to \nu \Pa_h W
\]
so that $h\circ \kappa_h$ is the projection to the first $D^2$-component, 

\item 
the neighborhood $\nu \Pa_v W=h^{-1}(\nu \Pa D^2)$ of $\Pa_vW$, and a diffeomorphism
\[
\kappa_v : (0,1]\times (\R\times \Sigma)/\sim_{\phi}\to \nu\Pa_v W,
\]
so that $h\circ \kappa_v(s,[\zeta,y]) = \kappa'(s,[\zeta])$ and
\[
\kappa_h (\kappa'(s,[\zeta_1]),(u,[\zeta_2])_k) = \kappa_v(s,[\zeta_1,(u,[\zeta_2+(\rho(u)-1)\zeta_1a_k])_k]).
\] 

\end{itemize} 

\noindent

Let $\widetilde{F}_0:\widetilde{X}_0\to \widetilde{Z}_0$ be the Lefschetz bifibration constructed in the proof of \Cref{thm:combinatorial construction LbF over D2timesD2} from the given monodromies. 
We first glue the trivial Lefschetz bifibration $\disk_1\times W\to \disk_1\times D^2\to \disk_1$ to  $\widetilde{F}_0$ along $S_0 = \Pa \widetilde{E}_0$. 
We define a diffeomorphism $T_0':D^2\to D^2$ as follows: 
\[
T_0'(y) = \begin{cases}
y & (y\not \in \nu \Pa D^2) \\
\kappa'(s,[\zeta - \rho(s)m]) & (y = \kappa'(s,[\zeta]) \in\nu \Pa D^2 \mbox{ for }(s,[\zeta])\in (-1,0]\times(\R/\Z)). 
\end{cases}
\]
This map represents the mapping class $t_{\Pa}^m\in B_d$. 
We also define diffeomorphisms $\widetilde{T}_0^h,\widetilde{T}_0^v:W\to W$ by the following conditions:

\begin{itemize}

\item 
The supports of $\widetilde{T}_0^h$ and $\widetilde{T}_0^v$ are respectively contained in $\nu\Pa_h W$ and $\nu \Pa_v W$, 

\item 
$\widetilde{T}_0^h(\kappa_h(y,z)) = \kappa_h(y,\phi^m(z))$ for $(y,z)\in D^2\times \nu \Pa \Sigma$ (note that the support of $\phi$ is contained in $\nu \Pa \Sigma$), 

\item 
$\widetilde{T}_0^v(\kappa_v(s,[\zeta,z])) = \kappa_v(s,[\zeta-\rho(s)m,z])$ for $(s,[\zeta,z])\in (-1,0]\times (\R\times \Sigma)/\sim_{\id}$. 

\end{itemize}

\noindent
It is easy to check that $(\widetilde{T}_0^h,\id_{D^2})$ and $(\widetilde{T}_0^v,T_0')$ are both self-isomorphisms of $h:W\to D^2$, and thus so is the composition $(\widetilde{T}_0^v\circ \widetilde{T}_0^h,T_0')$. 
Furthermore, the following holds for $(s,[\zeta_1])\in (-1,0]\times (\R/\Z)$ and $(u,[\zeta_2])_k\in A_k\subset \nu\Pa \Sigma$.
{\allowdisplaybreaks
\begin{align*}
&\widetilde{T}_0^v\circ \widetilde{T}_0^h\bigl(\kappa_h \bigl(\kappa'(s,[\zeta_1]),(u,[\zeta_2])_k\bigr)\bigr) \\
=& \widetilde{T}_0^v\bigl(\kappa_h \bigl(\kappa'(s,[\zeta_1]),\phi^m((u,[\zeta_2])_k)\bigr)\bigr)\\
=&\widetilde{T}_0^v\bigl(\kappa_h \bigl(\kappa'(s,[\zeta_1]),(u,[\zeta_2-m\rho(u)a_i])_k\bigr)\bigr)\\
=&\widetilde{T}_0^v\bigl(\kappa_v\bigl(s,[\zeta_1,(u,[\zeta_2+(\rho(u)-1)\zeta_1a_k-m\rho(u)a_k])_k]\bigr)\bigr)\\
=&\kappa_v\bigl(s,[\zeta_1-\rho(s)m,(u,[\zeta_2+(\rho(u)-1)\zeta_1a_k-m\rho(u)a_k])_k]\bigr)\\
=&\kappa_h\bigl(\kappa'(s,[\zeta_1-\rho(s)m]),(u,[\zeta_2+(\rho(u)-1)(\rho(s)-1)ma_k])_k\bigr).
\end{align*}
}%
Hence, $\widetilde{T}_0^v\circ \widetilde{T}_0^h$ is the identity map on $\Pa_v W$, especially on the reference fiber $\Sigma$. 
By Theorem~\ref{thm:condition lift a braid} (in particular uniqueness of $\widetilde{T}$), there exists a fiber-preserving isotopy pair $(\mathfrak{H},\mathfrak{h})$ from $(\widetilde{T}_0,T_0)$ to $(\widetilde{T}_0^v\circ \widetilde{T}_0^h,T_0')$ making the following diagram commute:
\begin{equation*}
\begin{CD}
(\R\times W) /\sim_{\widetilde{T}_0^v\circ \widetilde{T}_0^h} @>\widetilde{\iota}_0\circ \mathfrak{H}>>  \Pa_{vv}\widetilde{X}_0 \\
@VV\id_{\R}\times hV @V\widetilde{F}_0VV \\
(\R\times D^2) /\sim_{T_0'} @>\iota_0\circ \mathfrak{h} >> \Pa_v\widetilde{Z}_0\\
@VVp_1V @V\widetilde{\rho}_0VV \\
\R/\Z @>\iota_0' >> \Pa \widetilde{E}_0, 
\end{CD}
\end{equation*}
where $\widetilde{\iota}_0,\iota_0,\iota_0'$ are (the restrictions of) diffeomorphisms given in \eqref{eq:commutative diagram S_i}. 
(Note that $\Pa \widetilde{E}_0 = S_0$, $\widetilde{F}_0 = F_0$, and $\widetilde{\rho}_0=\rho_0$ on the preimages of $\Pa \widetilde{E}_0$.)
We take a monotone non-decreasing smooth odd function $\widetilde{\varrho}:\R\to \R$ so that $\widetilde{\varrho}(\zeta+1)=\widetilde{\varrho}(\zeta)+1$ for any $\zeta\in \R$ and $\widetilde{\varrho}\equiv 0$ on a neighborhood of $0\in \R$. 
Using this function, we define a diffeomorphism $\mathfrak{g}_0:\R\times D^2\to \R\times D^2$ so that its support is contained in $\R\times\nu\Pa D^2$ and $\mathfrak{g}_0(\zeta_1,\kappa'(s,[\zeta_2])) = (\zeta_1,\kappa'(s,[\zeta_2+m\widetilde{\varrho}(\zeta_1)\rho(s)]))$ for $\zeta_i\in \R$ and $s\in (-1,0]$. 
We further define diffeomorphisms $\mathfrak{G}_0^v:\R\times W\to \R\times W$ and $\mathfrak{G}_0^h:\R\times W\to \R\times W$ so that

\begin{itemize}

\item 
the support of $\mathfrak{G}_0^h$ is contained in $\R\times \nu\Pa_h W$, while that of $\mathfrak{G}_0^v$ is contained in $\R\times \nu\Pa_vW$. 

\item 
$\mathfrak{G}_0^h(\zeta_1,\kappa_h(y,(u,[\zeta_2])_k)) = (\zeta_1,\kappa_h(y,(u,[\zeta_2+m\rho(u)\widetilde{\varrho}(\zeta_1)a_k])_k))$ for $\zeta_i\in \R$, $y\in D^2$ and $u\in (-1,0]$. 

\item 
$\mathfrak{G}_0^v(\zeta_1,\kappa_v(s,[\zeta_2,z])) = (\zeta_1,\kappa_v(s,[\zeta_2+m\rho(s)\widetilde{\varrho}(\zeta_1),z]))$ for $\zeta_i\in \R$, $z\in \Sigma$ and $s\in (-1,0]$. 

\end{itemize}


\noindent
It is easy to check that the maps $\mathfrak{g}_0$ and $\mathfrak{G}_0^v\circ \mathfrak{G}_0^h$ induce the horizontal diffeomorphisms (which are denoted by the same symbols) in the following commutative diagram:
\[
\begin{CD}
(\R\times W)/\sim_{\id_{W}} @> \mathfrak{G}_0^v\circ \mathfrak{G}_0^h>> (\R\times W)/\sim_{\widetilde{T}_0^v\circ \widetilde{T}_0^h} \\
@V\id_\R \times hVV @VV\id_\R\times hV \\
(\R\times D^2)/\sim_{\id_{D^2}} @>\mathfrak{g}_0>>(\R\times D^2)/\sim_{T_0'}\\
@Vp_1VV @VVp_1V \\
\R/\Z @>\id_{\R/\Z}>> \R/\Z. 
\end{CD}
\]
Under the identifications $\Pa\disk_1\times W \cong (\R\times W) /\sim_{\id_W}$, $\Pa\disk_1\times D^2\cong (\R\times D^2)/\sim_{\id_{D^2}}$, and $\Pa \disk_1 \cong \R/\Z$, one can define diffeomorphisms $\widetilde{\Phi}_0:\Pa \disk_1\times W\to \Pa_{vv}\widetilde{X}_0$, $\Phi_0:\Pa \disk_1\times D^2\to \Pa_v\widetilde{Z}$, and $\phi_0:\Pa \disk_1\to \Pa\widetilde{E}_0$ as follows: 
{\allowdisplaybreaks
\begin{align*}
\widetilde{\Phi}_0([\zeta,w])&=\widetilde{\iota}_{0}\circ \mathfrak{H}\circ \mathfrak{G}_0^v\circ \mathfrak{G}_0^h([-\zeta,w])\\
\Phi_0([\zeta,y])&=\iota_0\circ \mathfrak{h}\circ \mathfrak{g}_0([-\zeta,y])\\
\Phi_0'([\zeta])&=\iota_0'([-\zeta]). 
\end{align*}
}%
\noindent
Using these diffeomorphisms, we can obtain the following Lefschetz bifibration:
\[
\widetilde{X}_1:=\widetilde{X}_0\cup_{\widetilde{\Phi}_0}(\disk_1\times W) \xrightarrow{\widetilde{F}_1} \widetilde{Z}_1:=\widetilde{Z}_0\cup_{\Phi_0}(\disk_1\times D^2) \xrightarrow{\widetilde{\rho}_1} \widetilde{E}_1:=\widetilde{E}_0\cup_{\phi_0} \disk_1 \cong \PP^1. 
\]

By the construction, $\widetilde{\rho}_1:\widetilde{Z}_1\to \widetilde{E}_1$ is a disk bundle over $\widetilde{E}_1\cong \PP^1$ with the Euler number $m$. 
Let $\widetilde{\rho}_1':\widetilde{Z}_1'\to \widetilde{E}_1$ be a disk-bundle with the Euler number $-m$, and $\Phi_1:\Pa\widetilde{Z}_1'\to \Pa \widetilde{Z}_1$ be an orientation-reversing isomorphism (as an $S^1$-bundle). 
%
%
By the assumption, $2-2g-b <0$ and thus a connected component of $\mathcal{D}(\Sigma)$ is contractible (\cite{EE1969homotopytypediffeogrp,ES1970homotopytypediffeogrp}).
Thus, the isomorphism class of $\widetilde{F}_1:\Pa_{vh} \widetilde{X}_1\to \Pa \widetilde{Z}_1$ (as a $\Sigma$-bundle) is uniquely determined from its monodromy along a fiber in $\Pa\widetilde{Z}_1$ (as an $S^1$-bundle), which is equal to $t_{\delta_1}^{a_1}\cdots t_{\delta_b}^{a_b}$. 
Since this product is in the kernel of the forgetting homomorphism $\mathcal{M}_\Pa(\Sigma)\to \mathcal{M}(\Sigma)$, $\widetilde{F}_1:\Pa_{vh} \widetilde{X}_1\to \Pa \widetilde{Z}_1$ is a trivial $\Sigma$-bundle. 
Let $\widetilde{X}_1'= \widetilde{Z}_1'\times \Sigma$.
We can take an orientation-reversing diffeomorphism $\widetilde{\Phi}_1$ making the following diagram commute: 
\[
\begin{CD}
\Pa_{vh}\widetilde{X}_1'= \Pa \widetilde{Z}_1'\times \Sigma @>\widetilde{\Phi}_1>> \Pa_{vh}\widetilde{X}_1 \\
@V\mathrm{proj.}VV @VV\widetilde{F}_1V \\
\Pa \widetilde{Z}_1' @>\Phi_1>> \Pa \widetilde{Z}_1. 
\end{CD}
\] 
We can eventually obtain the following Lefschetz bifibration:
\[
\widetilde{X}_2:= \widetilde{X}_1\cup_{\widetilde{\Phi}_1}\widetilde{X}_1' \xrightarrow{\widetilde{F}_2:=\widetilde{F}_1\cup \mathrm{proj.}} \widetilde{Z}_2 := \widetilde{Z}_1\cup_{\Phi_1}\widetilde{Z}_1' \xrightarrow{\widetilde{\rho}_2:=\widetilde{\rho}_1\cup \widetilde{\rho}_1'} \widetilde{E}_1. 
\]
By the construction, $\widetilde{\rho}_2:\widetilde{Z}_2\to \widetilde{E}_1$ is a $\PP^1$-bundle which has a $(-m)$-section in $\widetilde{Z}_1'\subset \widetilde{Z}_2$, in particular it is identified with $\rho_m:\Hir_m\to \PP^1$. 
Since $\Critv(\widetilde{F}_2)$ is away from $\widetilde{Z}_1'$, it is also away from $S_m\subset \Hir_m$.

The restriction $\widetilde{F}_2|_{\Pa \widetilde{X}_2}:\Pa \widetilde{X}_2\to \Hir_m$ is a disjoint union of an $S^1$-bundle, and each component of it corresponds to a connected component of $\Pa \Sigma= \bigsqcup_k A_k$. 
Let $Y_k\subset \Pa \widetilde{X}_2$ be the component of $\Pa \widetilde{X}_2$ corresponding to $A_k$ (whose element is denoted by $(u,[\zeta])_k$). 
The isomorphism class of $Y_k$ as an $S^1$-bundle is determined from those of its restrictions over $S_m$ and a fiber of $\rho_m$. 
Since $Y_k\cap \widetilde{F}_2^{-1}(S_m)$ is contained in $\widetilde{X}_1'$, which is a trivial $\Sigma$-bundle, the restriction of $\widetilde{F}_2|_{Y_k}$ over $S_m$ is a trivial $S^1$-bundle. 
The restriction of $\widetilde{F}_2$ over a fiber of $\rho_m$ is a Lefschetz fibration with a monodromy representation $\theta$. 
Since $\theta(\alpha_\Pa)$ is equal to $t_{\delta_1}^{a_1}\cdots t_{\delta_b}^{a_b}$, the restriction of $\widetilde{F}_2:Y_k\to \Hir_m$ over a fiber of $\rho_m$ has the Euler number $a_k$.

We define an $(\sph^1)^2$-action to $(\sph^3)^2\times \disk_1$ as follows:
\[
(\lambda,\mu)\cdot (s_0,s_1,t_0,t_1,z) := (\lambda s_0,\lambda s_1,\mu t_0, \mu\lambda^{-m}t_1,\mu^{-a_k} z). 
\]
Let $\widetilde{X}_{2,k}' = ((\sph^3)^2\times \disk_1)/(\sph^1)^2$. 
It is easy to see that the projection $\widetilde{F}_{2,k}':\widetilde{X}_{2,k}'\to \Hir_m$ to the $(\sph^3)^2$-components is a $\disk_1$-bundle whose restrictions over $S_m$ and a fiber of $\rho_m$ are $0$ and $-a_k$, respectively. 
One can thus glue $\widetilde{X}_{2,k}'$ to $\widetilde{X}_2$ by a orientation-reversing, fiber-preserving diffeomorphism $\widetilde{\Phi}_{2,k}$, and eventually obtain a desired Lefschetz bifibration 
\[
\widetilde{X}_3=\widetilde{X}_2\cup_{\bigsqcup_k\widetilde{\Phi}_{2,k}} \left(\bigsqcup_k\widetilde{X}_{2,k}'\right) \xrightarrow{\widetilde{F}_3=\widetilde{F}_2\cup \left(\bigsqcup_k\widetilde{F}_{2,k}'\right)}\Hir_m \xrightarrow{\rho_m}\PP^1. 
\]
Note that the $0$-section $\mathcal{S}_i$ of the $\disk_1$-bundle $\widetilde{F}_{2,i}'$ is a section of $\widetilde{F}_3$ with the desired property. 
\end{proof}

\begin{remark}\label{rem:assumption (gb)neq(02)}

The assumption $(g,b)\neq (0,2)$ in \Cref{thm:combinatorial construction LbF over Hir_m} is needed only in the construction of the Lefschetz bifibration $\widetilde{F}_2:\widetilde{X}_2\to \widetilde{Z}_2$ (when we show that $\widetilde{F}_1:\Pa_{vh}\widetilde{X}_1\to \Pa \widetilde{Z}_1$ is a trivial $\Sigma$-bundle). 
In fact, this assumption can be removed by carefully considering the behavior of Lefschetz bifibrations at the horizontal boundaries in the proofs of \Cref{thm:combinatorial construction LbF over D2timesD2,thm:combinatorial construction LbF over Hir_m}. 
More concretely, we glue Lefschetz bifibrations using lifts obtained from \Cref{thm:condition lift a braid} in the proofs. 
It suffices to show that these lifts can be made trivial in a certain sense at the horizontal boundary. 
Once this is achieved, it is shown that the structure group of $\widetilde{F}_1:\Pa_{vh}\widetilde{X}_1\to \Pa \widetilde{Z}_1$ is contained in $\mathcal{D}_\Pa(\Sigma)$. 
Since a connected component of $\mathcal{D}_\Pa(\Sigma)$ is contractible even in the case $(g,b)=(0,2)$, it follows that $\widetilde{F}_1:\Pa_{vh}\widetilde{X}_1\to \Pa \widetilde{Z}_1$ is trivial, as shown in the original proof.

To carry out the above argument, it is necessary to refine the discussion in the proof of \Cref{thm:condition lift a braid}. 
Such a refinement is indeed possible; however, in the case of $(g,b)=(0,2)$, the resulting examples are not particularly interesting. 
Therefore, for the sake of overall conciseness, the detailed explanation required to cover this case is omitted.

\end{remark}

\begin{proof}[Proof of (1) of \Cref{thm:combinatorial construction pencil-fibration}]
We define an $\sph^1$-action to $\sph^3\times D^4$~($D^4\subset \C^2$ is the unit ball) by $\lambda\cdot (s_0,s_1,z_0,z_1)=(\lambda s_0,\lambda s_1,z_0,\lambda^{-m}z_1)$. 
Let $\widetilde{X}_{2,k_i}''=(\sph^3\times D^4)/\sph^1$ ($i=1,\ldots, l$). 
It is easy to see that $\widetilde{X}_{2,k_i}''$ is a $6$-manifold with boundary, and it can be obtained by blowing-down $\widetilde{X}_{2,k_i}'$ along fibers of $\mathcal{S}_{k_i}$. 
Indeed, the map $\Psi_{k_i}:\widetilde{X}_{2,k_i}'\to \widetilde{X}_{2,k_i}''$ defined by $\Psi_{k_i}([s_0,s_1,t_0,t_1,z]) = [s_0,s_1,t_0z,t_1z]$ is a blow-down map.
Let $\mathcal{S}_{k_i}' = [\sph^3\times \{0\}]\subset \widetilde{X}_{2,k_i}''$, which is the image of $\mathcal{S}_{k_i}$ by $\Psi_{k_i}$. 
We can define a map $\widetilde{F}_{2,k_i}'':\widetilde{X}_{2,k_i}''\setminus \mathcal{S}_{k_i}'\to \Hir_m$ by $\widetilde{F}_{2,k_i}''([s_0,s_1,z_0,z_1]) = [s_0,s_1,z_0/|z|,z_1/|z|]$, which makes the following diagram commute:
\[
\xymatrix{
\widetilde{X}_{2,k_i}'\setminus \mathcal{S}_{k_i} \ar[rd]_{\widetilde{F}_{2,k_i}'}\ar[r]^{\Psi_{k_i}} & \widetilde{X}_{2,k_i}''\setminus \mathcal{S}_{k_i}' \ar[d]^{\widetilde{F}_{2,i}''}\\
&\Hir_m.
}
\]  
One can thus obtain a Lefschetz pencil-fibration $\widetilde{F}_3':\widetilde{X}_3'\dasharrow \Hir_m$ over $\rho_m$ by gluing $\widetilde{X}_{2,k_i}''$ instead of $\widetilde{X}_{2,k_i}'$ in the construction of $\widetilde{F}_3$. 
\end{proof}

\begin{proof}[Proof of (2) of \Cref{thm:combinatorial construction pencil-fibration}]
The restriction $\widetilde{\rho}_1':\Pa \widetilde{Z}_1'\to \widetilde{E}_1$ is an $S^1$-bundle with Euler number $m=1$. 
Let $\widetilde{Z}_1^\dagger=D^4$ with the opposite orientation to the standard one. 
The boundary $\Pa \widetilde{Z}_1^\dagger\cong \sph^3$ also admits an $S^1$-bundle with Euler number $-1$, in particular it can be identified with $\Pa \widetilde{Z}_1'$ as an $S^1$-bundle. 
One can thus glue $\widetilde{Z}_1^\dagger$ with $\widetilde{Z}_1$ instead of $\widetilde{Z}_1'$ using the same gluing map.
We denote the resulting $4$-manifold by $\widetilde{Z}_2^\dagger$, which is diffeomorphic to $\PP^2$, in particular admits the projection $\widetilde{\rho}_2^\dagger:\widetilde{Z}_2^\dagger\setminus \{0\}\to \widetilde{E}_1$ (where $0\in \widetilde{Z}_1^\dagger\cong D^4$ is the origin).
Since $\widetilde{F}_1':\widetilde{X}_1'\to \widetilde{Z}_1'$ is a trivial $\Sigma$-bundle, one can further glue $\widetilde{X}_1^\dagger = \widetilde{Z}_1^\dagger\times \Sigma$ with $\widetilde{X}_1$ instead of $\widetilde{X}_1'$ by the same gluing map. 
We denote the resulting $6$-manifold and map to $\widetilde{Z}_2^\dagger$ by $\widetilde{X}_2^\dagger$ and $\widetilde{F}_2^\dagger:\widetilde{X}_2^\dagger\to \widetilde{Z}_2^\dagger$, respectively. 

The boundary $\Pa\widetilde{X}_2^\dagger$ is a disjoint union of an $S^1$-bundle over $\widetilde{Z}_2^\dagger$, and each component of it corresponds to a connected component of $\Pa \Sigma =\bigsqcup_k A_k$. 
Let $Y_k^\dagger\subset \Pa \widetilde{X}_2^\dagger$ be the component of $\Pa \widetilde{X}_2^\dagger$ corresponding to $A_k$. 
The isomorphism class of $Y_k^\dagger$ as an $S^1$-bundle is determined from that of its restriction over the closure of a fiber of $\widetilde{\rho}_2^\dagger$, whose Euler number is $a_k$. 
We define an $\sph^1$-action to $\sph^5\times \disk_1$ by $\lambda\cdot (s_0,s_1,s_2,z) = (\lambda s_0,\lambda s_1,\lambda s_2,\lambda^{-a_k}z)$. 
Let $\widetilde{X}_{2,k}^{\dagger\prime} = (\sph^5\times \disk_1)/\sph^1$ and $\widetilde{F}_{2,k}^{\dagger\prime}: \widetilde{X}_{2,k}^{\dagger\prime} \to \PP^2$ be the projection to the former components. 
One can easily check that $\widetilde{F}_{2,k}^{\dagger\prime}$ is a $\disk_1$-bundle over $\PP^2$ with Euler number $-a_k$, and thus one can glue $\widetilde{X}_{2,k}^{\dagger\prime}$ with $\widetilde{X}_2^\dagger$. 
The resulting $6$-manifold $\widetilde{X}_3^\dagger$ admits a Lefschetz pencil-fibration $\widetilde{F}_3^\dagger:\widetilde{X}_3^\dagger\to \widetilde{Z}_2^\dagger \cong \PP^2$ over $\widetilde{\rho}_2^\dagger :\widetilde{Z}_2^\dagger \dasharrow\widetilde{E}_1$.

For $i=1,\ldots, l$, we can obtain $\widetilde{X}_{2,k_i}^{\dagger\prime}$ by blowing-up $D^6\subset \C^3$ at the origin. 
Indeed, a map $\Psi_{k_i}^\dagger: \widetilde{X}_{2,k_i}^{\dagger\prime} \to D^6$ defined by $\Psi_{k_i}^\dagger([s_0,s_1,s_2,z]) = (s_0z,s_1z,s_2z)$ is a blow-down map (the $0$-section $\mathcal{S}_i^\dagger$ in $\widetilde{X}_2^{\dagger\prime}$ is the exceptional divisor). 
Let $\pi:D^6\dasharrow \PP^2$ be the natural projection. 
The following diagram then commutes: 
\[
\xymatrix{
\widetilde{X}_{2,k_i}^{\dagger\prime}\setminus \mathcal{S}_{k_i}^\dagger \ar[rd]_{\widetilde{F}_{2,k_i}^{\dagger\prime}}\ar[r]^{\Psi_{k_i}^\dagger} & D^6\setminus \{0\} \ar[d]^{\pi}\\
&\PP^2.
}
\]  
One can thus obtain a Lefschetz bipencil $\widetilde{F}_3^{\dagger\dagger}:\widetilde{X}_3^{\dagger\dagger} \dasharrow \widetilde{Z}_2^\dagger$ over $\widetilde{\rho}_2^\dagger$ by gluing $D^6$ instead of $\widetilde{X}_{2,k_i}^{\dagger\prime}$. 
This completes the proof of (2) of \Cref{thm:combinatorial construction pencil-fibration}.
\end{proof}

\section{Almost complex/symplectic structures on Lefschetz bifibrations}\label{sec:cpx/symp str LbF}

In this section, we show that the total space of a pencil-fibration structure obtained in \Cref{thm:combinatorial construction LbF over Hir_m} or \Cref{thm:combinatorial construction pencil-fibration} admits an almost complex structure compatible with the pencil-fibration in a suitable sense, and further admits a symplectic structure under some mild assumption.  
Note that we need an almost complex structure constructed here for not only obtaining a symplectic structure, but also showing that the Poincar\'{e} duals of the critical point set and the set of cusps are represented by the Thom polynomials of complex folds and cusps (\Cref{thm:crit pt set Chern class}).

Let $F:X\to \Hir_m$ be a Lefschetz bifibration. 
For $y\in \Hir_m$, we call the closure of a connected component of $F^{-1}(y)\setminus \Crit(F)$ an \textit{irreducible component} of $F$. 
A fiber $F^{-1}(y)$ itself is an irreducible component if $y$ is a regular value, or contained in $C_F$, and the number of irreducible components in a fiber $F^{-1}(y)$ is at most three, depending on configuration of critical points in $F^{-1}(y)$.   
Each irreducible component has a natural orientation (as a fiber of $F$) and represents a class in $H_2(X)$.

\begin{theorem}\label{thm:almost cpx str LbF}

Let $F:X\to \Hir_m$ be a Lefschetz bifibration constructed in the proof of \Cref{thm:combinatorial construction LbF over Hir_m} (from $\mu_i$'s, $\epsilon_i$'s and $\theta$ satisfying the conditions in \Cref{thm:combinatorial construction LbF over D2timesD2,thm:combinatorial construction LbF over Hir_m}), $\mathcal{S}_1,\ldots, \mathcal{S}_b$ be its sections, and $\widetilde{S}_m\subset X$ be a section of the (trivial) surface bundle $F|_{F^{-1}(S_m)}:F^{-1}(S_m)\to S_m$.
There exist almost complex structures $J, J'$, and $J''$ of $X,\Hir_m$, $\PP^1$, respectively, satisfying the following conditions.

\begin{itemize}

\item 
$\rho_m$ is $(J',J'')$-holomorphic and $\rho_m\circ F$ is $(J,J'')$-holomorphic. 

\item 
$F$ is $(J,J')$-holomorphic outside a (arbitrarily small) neighborhood of $\mathcal{I}_F^-$. 

\item 
The restriction of $F$ on a fiber of $\rho_m\circ F$ is $(J,J')$-holomorphic (even if the fiber contains a point in $\mathcal{I}_F^-$).

\item 
$S_m$ is a $J'$-holomorphic submanifold of $\Hir_m$ and $\widetilde{S}_m, \mathcal{S}_1,\ldots, \mathcal{S}_b$ and $\Crit(F)$ are $J$-holomorphic submanifolds of $X$. 


\end{itemize}

\noindent
Furthermore, if there exists $c\in H^2_{\mathrm{dR}}(X)$ with $c([\Sigma])>0$ for any irreducible component of $F$, $X$ admits a symplectic structure taming $J$. 

\end{theorem}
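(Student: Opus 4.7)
The plan is to construct the almost complex structures $J, J', J''$ first and then produce a taming symplectic form by a two-parameter Gompf--Thurston construction.

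For the almost complex structures, I will take $J''$ and $J'$ to be the standard integrable complex structures on $\PP^1$ and the Hirzebruch surface $\Hir_m$; then $\rho_m$ is holomorphic and $S_m$ is $J'$-holomorphic automatically. On $X$, I will pin $J$ down near the distinguished loci using the local complex charts from \Cref{def:Lefschetz bifibration}: at folds, cusps, branch points, and positive double points, pull back the standard $\C^3$ structure through $\varphi$; near a negative double point, the two sheets of $\Critv(F)$ are individually $J'$-holomorphic in conjugate orientations, and since each fiber of $\rho_m\circ F$ meets at most one sheet, I can choose $J$ so that $F$ is $(J,J')$-holomorphic along every 4-dimensional fiber, even though it cannot be $(J,J')$-holomorphic in directions transverse to such fibers. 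The sections $\widetilde{S}_m$ and $\mathcal{S}_k$ are made $J$-holomorphic using the evident local product structure, and similarly for $\Crit(F)$. On the remaining open set, I will choose a horizontal distribution $\mathcal{H}$ for the Lefschetz fibration $\rho_m\circ F$; on the 4-dimensional vertical spaces, $F$ restricts to a 4-dim Lefschetz fibration by \Cref{lem:restriction fold to transverse submfd is Lef}, so a compatible almost complex structure exists fiberwise, and lifting $J''$ through $d(\rho_m\circ F)$ handles $\mathcal{H}$. Convex combinations of these locally compatible choices glue to a global $J$ with all listed properties.

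For the symplectic structure, I will produce a closed 2-form of the shape
\[
\omega_X \;=\; \eta + N_1\,(\rho_m\circ F)^*\omega_{\PP^1} + N_2\,F^*\beta,
\]
where $\omega_{\PP^1}$ is the Fubini--Study form, $\beta$ is a closed 2-form on $\Hir_m$ taming $J'$ along fibers of $\rho_m$ (for instance the fiber-positive part of any K\"ahler form on $\Hir_m$), and $\eta$ is a closed 2-form on $X$ representing a positive multiple of $c$ whose restriction to every irreducible component of every 2-dimensional fiber of $F$ is pointwise positive. The existence of $\eta$ is a Gompf--Thurston partition-of-unity construction: assemble local area forms on each irreducible component via cut-offs and correct cohomologically, using the hypothesis $c([\Sigma])>0$ to prevent cancellation between components meeting at nodes or cusps. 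To verify the taming, split $T_xX$ through $\Ker d(\rho_m\circ F)_x$, which $J$ preserves everywhere since $\rho_m\circ F$ is $(J,J'')$-holomorphic. On this 4-dimensional vertical subspace, split further through $\Ker dF_x$, which $J$ also preserves because $F|_{\text{4-fiber}}$ is $(J,J')$-holomorphic by construction: on $\Ker dF_x$, $\eta$ is positive while the other two terms of $\omega_X$ vanish; on a $J$-invariant complement inside $\Ker d(\rho_m\circ F)_x$, the term $N_2 F^*\beta$ is strictly positive and $\eta$ is bounded, so a sufficiently large $N_2$ yields taming. On a $J$-invariant horizontal complement to $\Ker d(\rho_m\circ F)_x$, the term $N_1\,(\rho_m\circ F)^*\omega_{\PP^1}$ is strictly positive, while $N_2 F^*\beta$ and $\eta$ are bounded (possibly negative), so with $N_2$ already fixed, choosing $N_1$ much larger than $N_2$ yields taming.

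The hard part will be precisely this last step: controlling the bounded $F^*\beta$ contribution on horizontal directions near $\mathcal{I}_F^-$, where $F$ itself is not $(J,J')$-holomorphic and so $F^*\beta(v,Jv)$ may be negative. The two-parameter scheme, with $N_1$ chosen after $N_2$, is designed precisely to absorb this bounded contribution using the globally positive $(\rho_m\circ F)^*\omega_{\PP^1}$; this decoupling is what makes the standard Gompf--Thurston trick go through even though $F$ is not a globally holomorphic map.
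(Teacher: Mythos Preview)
Your overall two-step plan (almost complex structures first, then a Gompf--Thurston symplectic form) is the right shape, and your treatment of the symplectic form---especially the two-parameter $N_1 \gg N_2$ device to absorb the non-holomorphicity of $F$ near $\mathcal{I}_F^-$---is essentially correct and matches the paper's reasoning.  The almost-complex part, however, has a genuine gap.

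The problem is your choice of $J'$ as the \emph{standard} integrable complex structure on $\Hir_m$.  The charts $\psi:V\to\C^2$ in \Cref{def:Lefschetz bifibration} are only ``orientation-preserving complex charts''; they are \emph{not} required to be holomorphic for any pre-existing complex structure on $\Hir_m$, and in the explicit construction of \Cref{thm:combinatorial construction LbF over Hir_m} the target is assembled by non-holomorphic gluing maps (the $\mathfrak{h}$, $\mathfrak{g}_0$, $\Phi_1$, etc.).  So if you set $J=\varphi^\ast J_{\std}$ near a fold or cusp and $J'=J_{\std}$ on $\Hir_m$, you have no reason to get $dF\circ J = J'\circ dF$: that would force $\psi$ to be a holomorphic chart for the standard $J'$, which is exactly what you have not arranged.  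The paper avoids this by \emph{building} $J'$ (and $J''$) from the charts themselves: it averages the pullbacks ${\varphi'_V}^\ast J_{\std}$ over a carefully chosen cover $\{\mathcal{V}_U\}$ via a partition of unity, lands in the space $\mathcal{L}_{\Hir_m}$ of automorphisms with no real eigenvalues, and then applies Gompf's retraction $j_{\Hir_m}:\mathcal{L}_{\Hir_m}\to\mathcal{J}_{\Hir_m}$.  Near each point of $\Delta_F$ only one chart contributes, so $J'={\varphi'_V}^\ast J_{\std}$ there exactly, and the local model becomes genuinely $(J,J')$-holomorphic.

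A secondary but related issue: ``convex combinations of these locally compatible choices glue to a global $J$'' is not enough as stated.  A convex combination of almost complex structures is typically not an almost complex structure.  The paper's mechanism---checking that the averaged section $A$ lies in $\mathcal{L}_X$ (no real eigenvalues, verified using $(\omega',F)$-tameness of the local $J_W$'s) and then retracting via $j_X$---is what actually produces a well-defined $J$ while preserving the submanifold conditions (\Cref{lem:Gompf almostcpxstr preserve subsp}) and the holomorphicity of $\rho_m\circ F$.  You need either this device or an equivalent one; as written, your gluing step is incomplete.
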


\noindent
Note that the last assumption (existence of $c$) in the theorem is necessary for existence of $\omega$ taming $J$ since $\Ker dF_x$ is $J$-complex, especially symplectic with respect to $\omega$, and thus any irreducible component of $F$ is a symplectic curve. 
The following proposition gives a sufficient condition for existence of $c$ concerning monodromies of $F$. 

\begin{proposition}\label{prop:existence cohomology for symp form}

The Lefschetz bifibration $F:X\to \Hir_m$ constructed in the proof of \Cref{thm:combinatorial construction LbF over Hir_m} satisfies the last assumption in \Cref{thm:almost cpx str LbF} (i.e.~existence of $c$) if there does not exist a path $\mu_i$ such that $\epsilon_i=\pm 2$ and the associated vanishing cycles $\beta_i^1$ and $\beta_i^2$ are isotopic in $\Sigma_g^b$. 

%
%
%

\end{proposition}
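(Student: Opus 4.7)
The plan is to produce the cohomology class $c\in H^2(X;\mathbb{R})$ as a positive linear combination of the Poincar\'e duals of the sections $\mathcal{S}_1,\ldots,\mathcal{S}_b$ together with auxiliary classes corresponding to certain vanishing-thimble $4$-cycles. Each $2$-dimensional fiber $F^{-1}(y)$ can be described combinatorially as the quotient of the closed generic fiber $\Sigma_g$ (that is, $\Sigma_g^b$ with its boundary circles capped off by the sections) obtained by collapsing a configuration of simple closed curves compatible with the critical values over $y$: a single curve at a fold, a $\theta$-graph at a cusp, two disjoint curves at a $\pm 2$ double point, and a doubled curve at a branch. Each irreducible component is the closure of a connected component of the complement of this configuration in $\Sigma_g$, inheriting a subset of the $b$ marked points coming from the section caps.

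Pairing $\sum_k \PD[\mathcal{S}_k]$ with an irreducible component $\Sigma_j$ yields exactly the number of marked points contained in $\Sigma_j$, which is positive whenever $\Sigma_j$ meets at least one section. I would then examine which components can fail to meet any section. The key observation is that the hypothesis of the proposition rules out the unique configuration in which such a component is also null-homologous in $X$, namely the sphere quotient of the annulus $A\subset\Sigma_g^b$ cobounded by two disjoint isotopic vanishing cycles $\beta_i^1,\beta_i^2$ at a $\pm 2$ double point: the annulus $A$ extends in nearby $2$-dimensional fibers of $\rho_m\circ F$ into a $3$-chain bounded by this sphere, so no cohomology class can pair positively with it, which also shows that the hypothesis is essentially sharp. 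For any other irreducible component $\Sigma_j$ missing all sections---which can occur only when a fold has a separating vanishing cycle whose complement in $\Sigma_g$ contains a closed side devoid of marked points---I would construct an auxiliary class $c_j$ via Poincar\'e duality applied to the $4$-cycle swept out by matching $\Sigma_j$ with its vanishing thimble over a small arc in $\Hir_m$ terminating at the relevant fold value. The pairing $c_j\cdot[\Sigma_j]$ is positive by transversality, while $c_j$ has support contained in a small neighborhood of that fold value and therefore pairs trivially with distant components.

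Taking $c=N\sum_k \PD[\mathcal{S}_k]+\sum_j c_j$ for $N$ sufficiently large then achieves positivity on every irreducible component, as the contribution of the section duals dominates any residual cross-pairings of the thimble classes with components neighboring their support. The main obstacle is the case analysis needed to handle critical values at which several folds collide (cusps, double points, branches): one must verify that the auxiliary classes for adjacent fold-like singularities retain positivity on their intended components, and that no ``hidden'' null-homologous closed component appears beyond the one produced by isotopic disjoint vanishing cycles. The non-isotopy hypothesis on $\beta_i^1,\beta_i^2$ at $\pm 2$ double points is exactly what rules out the pathological matching sphere arising in that case, and the necessary local structure for the remaining checks is furnished by the explicit local descriptions of $F$ established in the proof of \Cref{thm:combinatorial construction LbF over Hir_m}.
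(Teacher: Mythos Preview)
Your approach has a real gap: the ``auxiliary class $c_j$'' is not well-defined as stated. A vanishing thimble over an arc in $\Hir_m$ is a $3$-chain, not a $4$-cycle, and ``matching $\Sigma_j$ with its thimble'' does not produce a closed $4$-cycle in any evident way. Even if you intended something like the preimage of a small disk in $\Hir_m$ (which is $4$-dimensional), that is homologous to a multiple of the $4$-dimensional fiber $[W]$, and $[W]\cdot[\Sigma_j]=0$ for any $2$-dimensional fiber component. More fundamentally, cohomology classes do not have ``support'', so the claim that $c_j$ pairs trivially with distant components needs an actual representative and an actual computation; nearby components over the same fold arc will certainly see $c_j$, and you have not controlled those pairings. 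The sentence ``the contribution of the section duals dominates any residual cross-pairings'' hides exactly the case analysis you would need to do, and that analysis is not sketched.

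The paper avoids all of this by choosing a single global class that works uniformly: take $e\in H^2_{\mathrm{dR}}(X)$ to be minus the Euler class of the complex line bundle $\Ker dF$ on $X\setminus\Crit(F)$ (extended over $\Crit(F)$), and set $c=e+s$ with $s=\sum_k\PD[\mathcal{S}_k]$. For any irreducible component $\Sigma$ one has the clean formula
\[
e([\Sigma]) \;=\; 2g(\Sigma)-2 + f(\Sigma) + d(\Sigma),
\]
where $g(\Sigma)$ is the genus of the normalization, $f(\Sigma)$ the number of folds on $\Sigma$, and $d(\Sigma)$ the number of nodes. This reduces positivity of $(e+s)([\Sigma])$ to a short case analysis on how many folds lie in the fiber and whether the vanishing cycles are separating; relative minimality forces $g(\Sigma)\ge 1$ whenever a separating cycle cuts off a side with no sections, and the hypothesis on $\mu_i$ with $\epsilon_i=\pm 2$ is used exactly in the two-fold cases where $g(\Sigma)=0$ to guarantee $s([\Sigma])>0$. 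No auxiliary classes or large-$N$ arguments are needed. Your annulus observation showing the hypothesis is sharp is correct intuition, but replacing the Euler-class contribution by ad hoc thimble classes is what breaks the argument.
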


\noindent
As shown in \Cref{rem:example null-hom irre comp}, there might exist a null-homologous irreducible component (and thus a class $c$ in \Cref{thm:almost cpx str LbF} does not exist) if there exists a pair $(\mu_i,\mu_j)$ satisfying the conditions in the proposition. 

\begin{proof}[Proof of \Cref{thm:almost cpx str LbF}]
As explained in the beginning of \Cref{sec:combinatorial construction pencil-fibration}, we can assume $2-2g-b<0$ without loss of generality. 
Let $\nu S_m$, $\nu \widetilde{S}_m$ and $\nu \mathcal{S}_k$ be (sufficiently small) tubular neighborhood of $S_m$, $\widetilde{S}_m$ and $\mathcal{S}_k$, respectively. 
We can take a finite open cover $\mathcal{U}$ of $\PP^1$, a finite open cover of $\mathcal{V}_{U}$ of $\rho_m^{-1}(U)$ for each $U\in \mathcal{U}$, and a finite open cover $\mathcal{W}_{U,V}$ of $F^{-1}(V)$ for each $U\in \mathcal{U}$ and $V\in \mathcal{V}_U$ satisfying the following conditions: 

\begin{itemize}

\item 
Each $z\in \rho(\Delta_F)$ has a neighborhood such that only one $U\in \mathcal{U}$ intersects with it. 
For such a $U$, each $y\in \rho^{-1}(z)\cap \Delta_F$ has a neighborhood such that only one $V\in \mathcal{V}_U$ intersects with it. 
For such a $V$, each $x\in \Crit(F)\cap F^{-1}(y)$ has a neighborhood such that only one $W\in \mathcal{W}_{U,V}$ intersects with it. 

\item 
Assume that $U\in \mathcal{U}$ contains a point in $\rho_m(B_F\cup C_F)$ and take $V\in \mathcal{V}_U$ and $W\in \mathcal{W}_{U,V}$ so that $W$ contains the corresponding point in $\mathcal{B}_F\cup \mathcal{C}_F$. 
There exist complex charts $\varphi_{W}:W\to \C^3$, $\varphi'_{V}:V\to \C^2$, and $\varphi''_{U}:U\to \C$ making $F$ and $\rho_m$ the local model in \Cref{def:Lefschetz bifibration}. 

\item 
Assume that $U\in \mathcal{U}$ contains a point in $\rho_m(I_F)$ and take $V\in \mathcal{V}_U$ and $W_1,W_2\in \mathcal{W}_{U,V}$ so that $W_1$ and $W_2$ contain the corresponding points in $\mathcal{I}_F$.  
There exist complex charts $\varphi_{W_i}:W_i\to \C^3$, $\varphi'_V:V\to \C^2$, and $\varphi''_U:U\to \C$ such that $\varphi'_V\circ F\circ {\varphi_{W_1}}^{-1}(z_1,z_2,z_3)=(z_1,z_2^2+z_3^2)$, $\varphi'_V\circ F\circ {\varphi_{W_2}}^{-1}(z_1,z_2,z_3)$ is equal to either $(z_1,z_1+z_2^2+z_3^2)$ or $(z_1,\overline{z_1}+z_2^2+z_3^2)$ (depending on the sign of the intersection), and $\varphi''_U\circ \rho_m\circ {\varphi'_V}^{-1}(z_1,z_2)=z_1$.

\item 
For $U\in \mathcal{U}$, there exist exactly $(d-2)$ (resp.~$d$) $V$'s in $\mathcal{V}_U$ satisfying $V\cap \Critv(F)\neq \emptyset$ and $V\cap \Delta_F=\emptyset$ if $U\cap \rho(\Delta_F)\neq \emptyset$ (resp.~$U\cap \rho(\Delta_F)= \emptyset$). 
For each such $V$, there exists a unique $W\in \mathcal{W}_{U,V}$ satisfying $W\cap \Crit(F)\neq \emptyset$. 
Moreover, there exist complex charts $\varphi_{W}:W\to \C^3$, $\varphi'_V:V\to \C^2$, and $\varphi''_U:U\to \C$ such that $\varphi'_V\circ F\circ \varphi^{-1}_W(z_1,z_2,z_3)=(z_1,z_2^2+z_3^2)$ and $\varphi''_U\circ \rho\circ {\varphi'_V}^{-1}(z_1,z_2)=z_1$.


\item 
For each $U\in \mathcal{U}$, there exists a unique $V\in \mathcal{V}_U$ with $V\cap \nu S_m\neq \emptyset$. 
Furthermore, for such $U$ and $V$, there exists a unique $W\in \mathcal{W}_{U,V}$ with $W\cap \nu \widetilde{S}_m\neq \emptyset$.

\item 
For each $U\in \mathcal{U}$, $V\in \mathcal{V}_U$ and $k \in \{1,\ldots, b\}$, there exists a unique $W\in \mathcal{W}_{U,V}$ with $W\cap \nu \mathcal{S}_k\neq \emptyset$. 

\end{itemize}

Let $\mathcal{J}_{X}\subset \Gamma(\Aut(TX))$ be the set of almost complex structures of $X$ and $\mathcal{L}_{X}\subset \Gamma(\Aut(TX))$ be the set of sections of $\Aut(TX)$ whose value of any point in $X$ has no real eigenvalues.%
\footnote{$\mathcal{L}_X$ is denoted by $\mathcal{B}_X$ in \cite{GompfTowardTopcharasympmfd}. 
We avoid using this notation as it is confusing here.}
We take a retraction $j_{X}:\mathcal{L}_{X}\to \mathcal{J}_{X}$ as in \cite{GompfTowardTopcharasympmfd}. 
We also take $j_{\Hir_m}$ and $j_{\PP^1}$ in the same manner.
Let $\{\eta''_U\}_{U\in \mathcal{U}}$ be a partition of unity subordinate to the open cover $\mathcal{U}$. 
We put $J''_U = {\varphi''_U}^\ast J_{\std}$ for each $U\in \mathcal{U}$ and $A'':=\sum_{U\in \mathcal{U}}\eta''_UJ''_U$.
One can easily check that $A''$ is contained in $\mathcal{L}_{\PP^1}$. 
Let $J'' = j_{\PP^1}(A'')$. 
Since $j_{\PP^1}$ is a retraction, $J''$ is equal to ${\varphi''_U}^\ast J_{\std}$ on a neighborhood of each $z\in \rho(\Delta_F)$. 

For $U\in \mathcal{U}$ and $V\in \mathcal{V}_U$, we take an almost complex structure $J'_V$ of $V$ satisfying the following conditions: 

\begin{itemize}

\item 
$J'_V=\varphi_V'^\ast J_{\std}$ if $V\cap \Critv(F)\neq \emptyset$. 

\item 
$V\cap \nu S_m$ is $J_V'$-holomorphic (if it is not empty). 

\item 
$\rho_m$ is $(J'_V,J''_U)$-holomorphic. 

\end{itemize}

\noindent
For each $U\in \mathcal{U}$, we also take a partition of unity $\{\eta'_{U,V}\}_{V\in \mathcal{V}_U}$ on $\rho_m^{-1}(U)$ subordinate to the open cover $\mathcal{V}_U$.
Let $A'=\sum_{U\in \mathcal{U},V\in \mathcal{V}_U}\eta'_{U,V}\cdot(\eta_U''\circ \rho_m)\cdot J'_V$, which is contained in $\mathcal{L}_{\Hir_m}$.  
Indeed, suppose that $A'v=\lambda v$ for $v\in T_yZ$ and $\lambda\in \R$. 
Then, $\omega_{\mathrm{std}}(d(\rho_m)_y(v),d(\rho_m)_y(A'v))$ is equal to $\lambda\omega_{\mathrm{std}}(d(\rho_m)_y(v),d(\rho_m)_y(v))=0$.
On the other hand, the following holds:
\[
(0=)~\omega_{\mathrm{std}}(d(\rho_m)_y(v),d(\rho_m)_y(A'v))=\sum_{\rho_m(y)\in U\in \mathcal{U}}\eta''_U(\rho_m(y))\omega_{\mathrm{std}}(d(\rho_m)_y(v),J''_U(d(\rho_m)_y(v))).
\]
Since $J''_U$ is $\omega_{\mathrm{std}}$-tame, the right hand side of the equality above is $0$ only if $d(\rho_m)_y(v)=0$. 
As in the proof of Lemma 3.2 and Addendum 3.3 in \cite{GompfTowardTopcharasympmfd}, one can take a non-degenerate $2$-form $\zeta$ on $\Ker d\rho_y$ so that each $J'_{V}$ defined at $y$ is $\zeta$-tame on $\Ker d\rho_y$. 
Since $0= \zeta(v,A'v) = \sum_{U\in \mathcal{U},V\in \mathcal{V}_U}\eta'_{U,V}(y)\zeta(v,J'_{V}v)$, $v$ is equal to $0$. 

Let $J'=j_{\Hir_m}(A')$. 
By the construction, $J'$ is equal to $\varphi_V'^\ast J_{\std}$ on a neighborhood of each point in $\Delta_F$. 
Since $A''\circ d(\rho_m) = d(\rho_m)\circ A'$, one can also deduce from \cite[Corollary 4.2]{GompfTowardTopcharasympmfd} that $\rho_m$ is $(J',J'')$-holomorphic. 
Furthermore, since $A'$ preserves $TS_m$, $J'$ also preserves it by the following lemma.

\begin{lemma}\label{lem:Gompf almostcpxstr preserve subsp}

Let $Q$ be an even dimensional vector space, and $\mathcal{J},\mathcal{L}\subset \Aut(Q)$ and $j:\mathcal{L}\to \mathcal{J}$ be the subsets and the retraction defined in \cite{GompfTowardTopcharasympmfd}. 
If $B\in \mathcal{L}$ preserves a subspace $R\subset Q$, so does $j(B)$.

\end{lemma}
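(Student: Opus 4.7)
My plan is to exploit the fact that Gompf's retraction $j$ is constructed by holomorphic functional calculus on $B$, so that $j(B)$ lies in the real polynomial algebra generated by $B$. Once this is established, any $B$-invariant subspace is automatically $j(B)$-invariant, since it is preserved by every polynomial in $B$.

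First, I would recall Gompf's construction: because $B\in \mathcal{L}$ has no real eigenvalues, the spectrum of $-B^2$ avoids $(-\infty, 0]$, so the principal square root $\sqrt{-B^2}$ is a well-defined and invertible endomorphism of $Q$, obtained from $B^2$ by holomorphic functional calculus. Gompf's retraction takes the form $j(B) = B\bigl(\sqrt{-B^2}\bigr)^{-1}$, which squares to $-I$ and reduces to $B$ whenever $B^2 = -I$ already (so that $j$ is indeed the identity on $\mathcal{J}$). In particular, $j(B) = f(B)$ where $f(z) = z/\sqrt{-z^2}$ is holomorphic on a neighborhood of $\mathrm{Spec}(B)$ in $\mathbb{C} \setminus \mathbb{R}$.

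Next, I would invoke Lagrange--Hermite interpolation on the (finitely many) eigenvalues of $B$ with appropriate multiplicities to produce a polynomial $p(x)$ with $f(B) = p(B)$ on $Q\otimes \mathbb{C}$. Since $\mathrm{Spec}(B)$ is closed under complex conjugation (because $B$ is a real operator) and $f$ satisfies $f(\bar z) = \overline{f(z)}$, the interpolating data is conjugation-symmetric, so $p$ can be chosen with real coefficients. Thus $j(B) = p(B)$ is literally a real polynomial in $B$ acting on $Q$.

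The lemma then follows at once: if $B(R)\subset R$, then $B^k(R)\subset R$ for every $k\ge 0$, and hence $j(B)(R) = p(B)(R) \subset R$. I do not anticipate any serious obstacles; the only point requiring minor care is to confirm that Gompf's specific normalization of $j$ is indeed of functional-calculus type (as opposed, say, to a construction involving an auxiliary inner product that is not built from $B$ alone). In all standard formulations this is the case, so the invariance conclusion is robust.
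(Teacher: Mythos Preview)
Your proof is correct and is essentially the same approach as the paper's, just made more explicit. The paper's argument is a two-line naturality statement: it sets up the retraction $j_R:\mathcal{L}_R\to\mathcal{J}_R$ on $R$, observes that $B|_R\in\mathcal{L}_R$, and then asserts that ``by the definition of $j$'' one has $j(B)|_R=j_R(B|_R)$, so in particular $j(B)(R)\subset R$. Your polynomial functional calculus argument is precisely the content behind that assertion---it explains \emph{why} the definition of $j$ forces this naturality, by showing $j(B)=p(B)$ for a real polynomial $p$. So there is no genuine difference in route; you have simply unpacked the step the paper leaves to the reader.
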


\begin{proof}[Proof of \Cref{lem:Gompf almostcpxstr preserve subsp}]
Let $\mathcal{J}_R,\mathcal{L}_R\subset \Aut(R)$ and $j_R:\mathcal{L}_R\to \mathcal{J}_R$ be the subsets and the retraction defined in the same way as $\mathcal{J}, \mathcal{L}$ and $j$. 
By the assumption, the restriction $B|_R$ is contained in $\mathcal{L}_R$. 
By the definition of $j$, one can easily check that $j(B)|_R$ is equal to $j_R(B|_{R})$, in particular its image is contained in $R$. 
\end{proof}

Let $c'\in H^2_{\mathrm{dR}}(\Hir_m)$ be the Poincar\'{e} dual of $[S_m]\in H_2(\Hir_m)$. 
For each $z\in \PP^1$, we take an area form $\xi_z$ so that $\int_{\rho_m^{-1}(z)}\xi_z$ is equal to $1$. (Note that $\rho_m^{-1}(z)$ is $J'$-holomorphic, especially has the natural orientation.)
For $U\in \mathcal{U}$, take $z\in U$ and a local trivialization $\Psi_U:\rho_m^{-1}(U) \to \rho_m^{-1}(z)\times U$ of $\rho_m$.
Let $\eta_U = (p_1\circ \Psi_U)^\ast \xi_z$. 
The $2$-form $\eta_U$ is closed and $[\eta_U]=c'|_U$ in $H^2_{\mathrm{dR}}(\rho_m^{-1}(U))$ since $[\rho_m^{-1}(z)]$ generates $H_2(\rho_m^{-1}(U))$ and $\left<[\eta_U],[\rho_m^{-1}(z)]\right> = 1 = \left<c',[\rho_m^{-1}(z)]\right>$. 
Moreover, since the restriction $d(p_1\circ \Psi_U)_{y}|_{\Ker d(\rho_m)_y}$ is an isomorphism for any $y\in \rho_m^{-1}(U)$, $J'$ is $\eta_U$-tame on $\Ker d(\rho_m)_y$. 
Applying \cite[Theorem 3.1]{GompfTowardTopcharasympmfd}, we obtain a closed $2$-form $\eta$ on $\Hir_m$ such that $[\eta]=c' \in H^2_{\mathrm{dR}}(\Hir_m)$, $J|_{\Ker d(\rho_m)}$ is $\eta$-tame, and $\omega'=t'\eta + \rho_m^\ast \omega_{\std}$ is a symplectic form taming $J'$ for a sufficiently small $t'>0$. 
Since $\mathcal{U}$ and $\mathcal{V}_U$ are finite covers and $\rho_m$ is $(J_V',J_U'')$-holomorphic for any $U\in \mathcal{U}$ and $V\in \mathcal{V}_U$, in the same way as that in the proof of \cite[Theorem 3.1]{GompfTowardTopcharasympmfd}, one can deduce that $\omega'$ also tames $J_V'$ for a sufficiently small $t'>0$. 

For $U\in \mathcal{U}$, $V\in \mathcal{V}_U$, and $W\in \mathcal{W}_{U,V}$, we take an almost complex structure $J_W$ on $W$ satisfying the following conditions: 

\begin{itemize}

\item 
$J_W=\varphi_W^\ast J_{\std}$ if $W\cap \Crit(F)\neq \emptyset$. 

\item 
$W \cap \mathcal{S}_k$ and $W\cap \widetilde{S}_m$ are $J_W$-holomorphic (if they are not empty).



\item 
$F$ is $(J_W,J'_V)$-holomorphic except for the case $W$ contains a point in $\mathcal{I}_F^-$ and $\varphi_V'\circ F \circ \varphi_W^{-1}(z_1,z_2,z_3)=(z_1,\overline{z_1}+z_2^2+z_3^2)$. 

\end{itemize}

\noindent
Since $\omega'$ tames $J_V'$, the almost complex structure $J_W$ is $(\omega',F)$-tame in the sense of \cite{GompfTowardTopcharasympmfd} when $F$ is $(J_W,J_V')$-holomorphic. 
Take $V\in \mathcal{V}_U$ and $W\in \mathcal{W}_{U,V}$ so that $W$contains a point in $\mathcal{I}_F^-$ and $\varphi_V'\circ F \circ \varphi_W^{-1}(z_1,z_2,z_3)=(z_1,\overline{z_1}+z_2^2+z_3^2)$. 
We identify $W$ (resp.~$V$) with $\C^3$ (resp.~$\C^2$) via $\varphi_W$ (resp.~$\varphi_V'$). 
Let $F_1(z_1,z_2,z_3)=(z_1,z_2^2+z_3^2)$ and $F_2(z_1,z_2,z_3)=(0,\overline{z_1})$. 
The following (in)equalities hold for $v\in TW$: 
{\allowdisplaybreaks
\begin{align*}
&\omega'(dF(v),dF(J_W v)) \\
=&\rho_m^\ast \omega_{\std}(d(F_1)(v),J_{\std}(d(F_1)(v)))+t'\eta(d(F_1)(v),J_{\std}(d(F_1)(v)))-t'\eta(d(F_2)(v),J_{\std}(d(F_2)(v)))\\
\geq & \rho_m^\ast \omega_{\std}(d(F_1)(v),J_{\std}(d(F_1)(v)))-t'\eta(d(F_2)(v),J_{\std}(d(F_2)(v))).
\end{align*}
}%
Moreover, one can check that there exist constants $C_1,C_2>0$ such that 
\begin{align*}
&\rho_m^\ast \omega_{\std}(d(F_1)(v),J_{\std}(d(F_1)(v)))\geq C_1 (a_1^2+b_1^2), \mbox{ and}\\
&\eta(d(F_2)(v),J_{\std}(d(F_2)(v))) \leq C_2(a_1^2+b_1^2)
\end{align*}%
for any $v = \sum_{i=1}^3 a_i\Pa_{x_i}+b_i\Pa_{y_i}\in TW$ (where $z_i = x_i+\sqrt{-1}y_i$).
Thus, $J_W$ is $(\omega',F)$-tame for a sufficiently small $t'>0$ even if $F$ is not $(J_W,J_V')$-holomorphic.

For each $U\in \mathcal{U}$, $V\in \mathcal{V}_U$, we take a partition of unity $\{\eta_{U,V,W}\}_{W\in \mathcal{W}_{U,V}}$ on $F^{-1}(V)$ subordinate to the open cover $\mathcal{W}_{U,V}$. 
Let $A=\sum_{U\in \mathcal{U},V\in \mathcal{V}_U,W\in \mathcal{W}_{U,V}}\eta_{U,V,W}(\eta'_{U,V}\circ F)(\eta_U''\circ \rho_m\circ F)J_W$. 
The following then holds for $v\in T_xX$:
\[
\omega'(dF_x(v),dF_x(Av))=\sum_{x\in W \in \mathcal{W}_{U,V}}\eta''_U(\rho_m(F(x)))\eta_{U,V}'(F(x))\eta_{U,V,W}(x)\omega'(dF_x(v),dF_x(J_W(v))).
\]
Since $J_W$ is $(\omega',F)$-tame, the right-hand side of the equality above is $0$ only if $v\in \Ker dF_x$.
Thus, one can show that $A$ is contained in $\mathcal{L}_X$ in the same way as before. 
Let $J=j_X(A)$. 
One can check the following conditions. 

\begin{itemize}

\item 
$J=\varphi_{W}^\ast J_{\std}$ in a neighborhood of a point in $\mathcal{B}_F\cup \mathcal{I}_F\cup \mathcal{C}_F$.

\item 
$F$ is $(J,J')$-holomorphic outside a neighborhood of $\mathcal{I}_F^-$. 

\item 
$\widetilde{S}_m,\mathcal{S}_1,\ldots, \mathcal{S}_b$ and $\Crit(F)$ are $J$-holomorphic. 

\end{itemize}

\noindent
Since $\rho_m\circ F|_W$ is $(J_W,J''_U)$-holomorphic and the restriction of $F|_W$ on a fiber of $\rho_m\circ F$ is $(J_W,J_V')$-holomorphic for any $U\in \mathcal{U},V\in \mathcal{V}_U$ and $W\in \mathcal{W}_{U,V}$ (even if $W\cap \mathcal{I}_F^-\neq \emptyset$), one can deduce from \cite[Corollary 4.2]{GompfTowardTopcharasympmfd} that $\rho_m\circ F$ is $(J,J'')$-holomorphic and the restriction of $F$ on a fiber of $\rho_m\circ F$ is $(J,J')$-holomorphic. 

Assume that $c\in H^2_{\mathrm{dR}}(X)$ satisfies the last assumption in \Cref{thm:almost cpx str LbF}. 
For any $y\in \Hir_m$, let $U_y$ be a neighborhood of $F^{-1}(y)\cap \Crit(F)$ contained in complex coordinate neighborhood(s) making $F$ the standard model. 
One can easily take a $2$-form $\xi_y$ on $F^{-1}(y)\cap U_y$ so that the restriction of $\xi_y$ on $\Sigma\setminus \Crit(F)$ is an area form with $\int_{\Sigma\setminus \Crit(F)} \xi_y = c([\Sigma])$ for any irreducible component $\Sigma$ in $F^{-1}(y)$, and $\xi_y$ coincides with $\omega_{\std}$ via the complex charts. 
One can further obtain a neighborhood $W_y$ of $F^{-1}(y)$ and a closed $2$-form $\eta_y$ on $W_y$ taming $J|_{\Ker dF}$ with $[\eta_y] = c|_{W_y}$ in the same way as the construction in the proof of \cite[Theorem 2.11]{GompfTowardTopcharasympmfd} (i.e.~taking a splicing map $\pi:W_y\to W_y$, and considering the pull-back of $\xi_y$ by $\pi$). 
Applying \cite[Theorem 3.1]{GompfTowardTopcharasympmfd}, we eventually obtain a symplectic form $\omega$ of $X$ taming $J$. 
\end{proof}

\begin{proof}[Proof of \Cref{prop:existence cohomology for symp form}]
Let $e'\in H^2_{\mathrm{dR}}(X\setminus \Crit(F))$ be the Euler class of the complex line bundle $\Ker dF$ on $X\setminus \Crit(F)$, $e \in H^2_{\mathrm{dR}}(X)$ be the image of $-e'$ by the isomorphism $H^2_{\mathrm{dR}}(X\setminus \Crit(F)) \xrightarrow{(i^\ast)^{-1}} H^2_{\mathrm{dR}}(X)$, where $i:X\setminus \Crit(F)\to X$ is the inclusion, and $s\in H^2_{\mathrm{dR}}(X)$ be the Poincar\'{e} dual of the union $\mathcal{S}_1\sqcup \cdots \sqcup \mathcal{S}_b$.  
In what follows, we show that $(e+s)([\Sigma])$ is positive for any irreducible component $\Sigma$ of $F$ (i.e.~$e+s$ satisfies the desired condition). 

If $\Sigma$ is a fiber of $F$, the value $(e+s)([\Sigma])$ is then equal to $2g-2+b$, which is positive by the assumption. 
In what follows, we assume that $\Sigma$ is not a fiber of $F$. 
In this case, $\Sigma$ does not contain cusps of $F$. 
Let $\widetilde{\Sigma}$ is a closed surface with a continuous immersion $\widetilde{\Sigma}\to \Sigma$ which is one-to-one except at its double points, $g(\Sigma)$ be the genus of $\widetilde{\Sigma}$, $f(\Sigma)$ be the number of folds of $F$ in $\Sigma$, and $d(\Sigma)$ be the number of double points in $\Sigma$.
It is easy to see that the value $e([\Sigma])$ is equal to $2g(\Sigma)-2+f(\Sigma)+d(\Sigma)$, and $s([\Sigma])$ is equal to the number of points in $\Sigma\cap (\mathcal{S}_1\sqcup \cdots \sqcup \mathcal{S}_b)$. 
If $F^{-1}(F(\Sigma))$ contains one fold of $F$, the corresponding vanishing cycle is separating, $d(\Sigma)=0$, and $f(\Sigma)=1$. 
Since the Lefschetz fibration on a $4$-dimensional fiber of $F$ is relatively minimal, $g(\Sigma)$ is positive and so is the value $(e+s)([\Sigma])$.
Suppose that $F^{-1}(F(\Sigma))$ contains two folds of $F$, that is, $F(\Sigma)$ is in $I_F$. 
Let $\mu_i$ be the corresponding vanishing path and $\beta_i^1, \beta_i^2$ be the pair of vanishing cycles associated with $\mu_i$. 
If either $\beta_i^1$ or $\beta_i^2$ is separating and the other one is non-separating, then $\Sigma$ is homologous to another irreducible component $\Sigma'$ such that $F(\Sigma')$ is not in $I_F$. 
Since $(s+e)([\Sigma'])$ is positive, $(s+e)([\Sigma])$ is also positive. 
If both $\beta_i^1$ and $\beta_i^2$ are separating, $d(\Sigma)=0$ and $f(\Sigma)$ is equal to $1$ or $2$. 
If $f(\Sigma)=1$, $g(\Sigma)$ is positive since the Lefschetz fibration on a $4$-dimensional fiber of $F$ is relatively minimal, and thus $(s+e)([\Sigma])$ is also positive. 
If $f(\Sigma)=2$, $e([\Sigma])\geq 0$ and $\Sigma\cap (\mathcal{S}_1\sqcup \cdots \sqcup \mathcal{S}_b)$ is not empty by the assumption in \Cref{prop:existence cohomology for symp form}. 
Thus, $s([\Sigma])$ is positive and so is $(s+e)([\Sigma])$. 
If both $\beta_i^1$ and $\beta_i^2$ are non-separating, $d(\Sigma)=0$ and $f(\Sigma)=2$. 
Thus, $(e+s)([\Sigma])$ is positive if $g(\Sigma)\geq 1$. 
If $g(\Sigma)=0$, $e([\Sigma])=0$ and $s([\Sigma])>0$ by the assumption, and thus $(e+s)([\Sigma])$ is positive. 
\end{proof}

\begin{proposition}\label{prop:symplectic structure blowdowns}

Let $F:X\to \Hir_m$ the same one as in \Cref{thm:almost cpx str LbF}. 
Suppose that there exists $c\in H^2_{\mathrm{dR}}(X)$ with $c([\Sigma])>0$ for any irreducible component of $F$, and thus $X$ admits a symplectic structure. 
Suppose further that $m$ and/or some $a_i$'s are equal to $1$, and let $F':X' \dasharrow \Hir_m$, $F^\dagger:X^\dagger \to \PP^2$, and $F^{\dagger\dagger}:X^{\dagger\dagger}\dasharrow \PP^2$ be the corresponding pencil-fibrations obtained from $F$ by the blowing-down procedures in the proof of \Cref{thm:combinatorial construction pencil-fibration} (if exist).  
The $6$-manifolds $X'$, $X^\dagger$ and $X^{\dagger\dagger}$ all admit symplectic structures. 

\end{proposition}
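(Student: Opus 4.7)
The plan is to mimic the proof of \Cref{thm:almost cpx str LbF} on each of the blown-down manifolds, producing first an almost complex structure compatible with the pencil-fibration and then a taming symplectic form via Gompf's theorem. Each blow-down in the proof of \Cref{thm:combinatorial construction pencil-fibration} is given by an explicit holomorphic quotient map---$\Psi_{k_i}\colon\widetilde{X}'_{2,k_i}\to\widetilde{X}''_{2,k_i}$ for $X\to X'$, the Hermitian regluing $\widetilde{Z}_1^\dagger\cong\overline{D^4}$ together with the trivial $\Sigma$-bundle $\widetilde{X}_1^\dagger$ for $X\to X^\dagger$, and $\Psi_{k_i}^\dagger\colon\widetilde{X}_{2,k_i}^{\dagger\prime}\to D^6$ for $X^\dagger\to X^{\dagger\dagger}$---so on these local models the standard complex structure descends through the blow-down, while outside the blown-down loci the blow-down is a diffeomorphism that transports the existing almost complex structure directly. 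Assembling the resulting local almost complex structures via Gompf's retraction exactly as in the proof of \Cref{thm:almost cpx str LbF} produces almost complex structures $J_{X'}$, $J_{X^\dagger}$, $J_{X^{\dagger\dagger}}$ compatible with the corresponding pencil-fibrations in the analogous sense.

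For each blown-down manifold I next need a cohomology class positive on every irreducible component; the taming symplectic form then follows from Gompf's theorem exactly as in the last paragraph of the proof of \Cref{thm:almost cpx str LbF}. The irreducible components biject with those of $F$ via the blow-down map $\pi$, so it suffices to modify the given class $c\in H^2_{\mathrm{dR}}(X)$ into one lying in $\mathrm{Im}(\pi^*)$ and still positive on irreducible components of $F$. For $X\to X'$, the contracted $\PP^1$'s are the fibers $f_i$ of $\mathcal{S}_{k_i}\cong\Hir_m$ over $\PP^1$; since the sections $\mathcal{S}_{k_i}$ are mutually disjoint and $[\mathcal{S}_{k_i}]\cdot[f_j]=-\delta_{ij}$ (using $a_{k_i}=1$), the class $c+\sum_i\mu_i\,\mathrm{PD}[\mathcal{S}_{k_i}]+\kappa\,F^*\omega_{\Hir_m}$ with $\mu_i$ determined by the vanishing on $[f_i]$ and $\kappa>0$ sufficiently large has all $\mu_i$ positive, vanishes on every $[f_i]$, and preserves positivity on every irreducible component $\Sigma$ of $F$ because $F_*[\Sigma]=0$ and $\mathcal{S}_{k_i}\cdot\Sigma\in\{0,1\}$. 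For $X\to X^\dagger$ the contracted $\PP^1$'s are the fibers $[S_m\times\{*\}]$ of the trivial bundle $F^{-1}(S_m)\cong S_m\times\Sigma_g$, all mutually homologous; a single correction of the form $c+\alpha F^*\mathrm{PD}[S_m]$ annihilates this class (since $F^*\mathrm{PD}[S_m]$ evaluates to $-1$ on it) without disturbing positivity on any irreducible component. For $X^\dagger\to X^{\dagger\dagger}$ the argument is parallel to that for $X\to X'$ with $\mathcal{S}_{k_i}^\dagger$ in place of $\mathcal{S}_{k_i}$.

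The main technical obstacle is verifying the cohomology-class argument for $X\to X^\dagger$: here the target itself is blown down (from $\Hir_1$ to $\PP^2$), and the total-space blow-down contracts not only $F^{-1}(S_m)$ but also the intersections of each section $\mathcal{S}_k$ with $F^{-1}(S_m)$, so one must simultaneously check that the family of contracted $\PP^1$'s (parameterized by $\Sigma_g$, together with the analogous $\PP^1$'s inside each $\mathcal{S}_k$) is governed by a single homology class in $H_2(X)$, and that the modified cohomology class descends compatibly through $\pi^*$. This requires unpacking the explicit quotient descriptions in the proof of (2) of \Cref{thm:combinatorial construction pencil-fibration} to identify the relevant kernel of $\pi_*\colon H_2(X)\to H_2(X^\dagger)$; once that is done, the construction of the descended class is routine.
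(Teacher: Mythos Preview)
Your approach is genuinely different from the paper's, and while it is plausible, it is much more laborious and leaves real work undone.

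The paper does not rebuild Gompf's construction on the blown-down manifolds at all. Instead, it observes that each of the relevant blow-downs is exactly of the type covered by the symplectic blow-down theorem of Li--Ruan--Zhang \cite[Theorem~1.10]{LRZ2022sympblowdowndim6}: one checks that the submanifold being contracted (namely $\mathcal{S}_{k_i}\cong\Hir_m$, respectively $F^{-1}(S_m)\cong S_m\times\Sigma_g$, respectively the exceptional $\PP^2$'s in $X^\dagger$) is a symplectic $\PP^1$-bundle whose normal bundle satisfies the required Chern-class condition. Since these submanifolds are $J$-holomorphic by \Cref{thm:almost cpx str LbF} and $\omega$ tames $J$, they are automatically symplectic; the normal-bundle computation is a two-line calculation from the Euler numbers recorded in \Cref{thm:combinatorial construction LbF over Hir_m}. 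The entire proof is three short paragraphs.

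Your route---redoing the almost-complex and cohomology-class construction on each $X',X^\dagger,X^{\dagger\dagger}$---would essentially reprove a special case of the Li--Ruan--Zhang theorem. The main gap is not the cohomology-class descent (your analysis there is largely correct, and the ``main technical obstacle'' you flag for $X\to X^\dagger$ is indeed routine once one notes that all contracted $\PP^1$'s, including those inside the $\mathcal{S}_k$, are homologous to $[S_m\times\{\ast\}]$). The real issue is that \Cref{thm:almost cpx str LbF} is stated and proved only for an everywhere-defined map $F:X\to\Hir_m$; the Lefschetz pencil-fibrations $F'$ and $F^{\dagger\dagger}$ have base loci, and $F^\dagger$ has target $\PP^2$ with its own undefined point for $\rho_1'$. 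Extending the two-step Gompf construction ($\omega = t\eta + F^\ast\omega'$, with $\omega'$ built similarly on the target) through these base loci requires genuine additional argument---near the base locus $F'^\ast\omega'$ is not even defined, so one must either work instead with $\rho_m\circ F'$ (changing the fiber dimension and hence the cohomology hypothesis) or invoke Gompf's pencil version and check its compatibility with the bifibration structure. None of this is in your sketch, and ``exactly as in the proof of \Cref{thm:almost cpx str LbF}'' does not cover it.
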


\begin{proof}[Proof of \Cref{prop:symplectic structure blowdowns}]
The manifold $X'$ can be obtained by blowing down $X$ along fibers of $\mathcal{S}_i$'s (as a $\PP^1$-bundle, for $i$ with $a_i=1$). 
Let $\mathcal{N}_i$ be the normal bundle of $\mathcal{S}_i$ in $X$. 
The Euler numbers of the restrictions of $\mathcal{N}_i$ on the $(-m)$-section $\mathcal{S}_i\cap F^{-1}(S_m)$ and a fiber $F$ of $\mathcal{S}_i$ are respectively equal to $0$ and $-a_i=-1$. 
Thus, the Chern class $c_1(\mathcal{N}_i)$ is equal to $-\PD([\mathcal{S}_i\cap F^{-1}(S_m)])$, whose self-intersection in $\mathcal{S}_i$ is equal to $-m$.
By \cite[Theorem 1.10]{LRZ2022sympblowdowndim6}, $X'$ admits a symplectic structure. 

The preimage $F^{-1}(S_m)$ is a surface bundle over $S_m$ having a section with self-intersection $0$. 
Thus, $F^{-1}(S_m)$ is diffeomorphic to $S_m\times \Sigma_g$, in particular it also admits an $S_m\cong \PP^1$-bundle structure. 
Furthermore, the restrictions of the normal bundle of $F^{-1}(S_m)$ in $X$ on the section above and a fiber as a $\Sigma_g$-bundle have the Euler number $-m=-1$ and $0$, respectively.  
Since $X^\dagger$ is obtained by blowing-down $X$ along fibers of $F^{-1}(S_m)$ as a $\PP^1$-bundle, one can obtain a symplectic structure of $X^\dagger$ by applying \cite[Theorem 1.10]{LRZ2022sympblowdowndim6}. 
Lastly, $X^{\dagger\dagger}$ can be obtained by blowing-down $X^\dagger$ along several copies of $\PP^2$, which admits a symplectic structure (as observed in the introduction of \cite{LRZ2022sympblowdowndim6}). 
\end{proof}

\section{Topological invariants of pencil-fibration structures from monodromies}\label{sec:top inv LbF}

In this section, we calculate topological invariants of the total space of a pencil-fibration structure from its monodromies. 
Throughout the section, let $F:X\to \Hir_m$ be a Lefschetz bifibration obtained in \Cref{thm:combinatorial construction LbF over Hir_m} (from $\theta$, $\mu_i$'s and $\epsilon_i$'s in the theorem). 

\begin{proposition}\label{prop:Euler LbF}

The Euler characteristic $\chi(X)$ is equal to $2(4-4g+d)-\ell$, where $\ell$ is the number of indices $i\in \{1,\ldots, d\}$ with $\epsilon_i=1$. 

\end{proposition}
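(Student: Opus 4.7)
The plan is to exploit the observation from the introduction and \Cref{subsec:pencil-fibration str} that the composition $\pi := \rho_m \circ F : X \to \PP^1$ is itself a Lefschetz fibration (in the $6$-dimensional sense), and to compute $\chi(X)$ by combining (a) the Euler characteristic of a regular fiber of $\pi$ computed via the $4$-dimensional Lefschetz fibration structure on it, with (b) the standard formula for how singular fibers of a Lefschetz fibration change the Euler characteristic.

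First, I will identify the critical points of $\pi$. For each $x \in \Crit(F)$, I will examine $d\pi_x = d(\rho_m)_{F(x)} \circ dF_x$ in the local complex charts provided by \Cref{def:Lefschetz bifibration}. At a fold, cusp, or double point, the local model for $\rho_m$ is the projection to the first component of the $\C^2$-chart around $F(x)$, while $F$ has the form $(z_1,\ldots)$ in those coordinates; therefore $\pi(z_1,z_2,z_3) = z_1$ near such an $x$, and in particular $\pi$ is a submersion there. Only at a branch point does $F$ take the form $(z_1^2+z_2^2+z_3^2,\,z_1)$, and then $\pi(z_1,z_2,z_3) = z_1^2+z_2^2+z_3^2$, which is exactly the standard complex Lefschetz model in dimension $6$ (vanishing cycle $S^2$). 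Consequently $\Crit(\pi) = \mathcal{B}_F$, and since each $\epsilon_i = 1$ contributes exactly one branch point, $|\Crit(\pi)| = \ell$.

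Next, I will compute $\chi$ of a regular fiber of $\pi$. For a generic $p \in \PP^1$, set $L := \rho_m^{-1}(p)$, so that $L \cong \PP^1$ is disjoint from $\rho_m(\Delta_F)$ and $L \cap \Critv(F)$ has exactly $d$ points. By \Cref{lem:restriction fold to transverse submfd is Lef}, the restriction $F|_{F^{-1}(L)} : F^{-1}(L) \to L$ is a $4$-dimensional Lefschetz fibration over $\PP^1$ whose regular fiber is the $2$-dimensional fiber of $F$, a closed surface of genus $g$ (the $b$ boundary components of $\Sigma_g^b$ are capped off inside $X$ by the sections $\mathcal{S}_1,\dots,\mathcal{S}_b$), and which has exactly $d$ singular fibers. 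The standard formula for $4$-dimensional Lefschetz fibrations gives
\[
\chi\bigl(F^{-1}(L)\bigr) = 2\chi(\Sigma_g) + d = 2(2-2g) + d = 4 - 4g + d.
\]

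Finally, I will apply the additivity of Euler characteristic to $\pi$. Since each singular fiber of $\pi$ is obtained from a regular fiber by collapsing the vanishing $S^2$ to a point, its Euler characteristic is $(4-4g+d) - \chi(S^2) + 1 = (4-4g+d) - 1$. Therefore
\[
\chi(X) = \chi(\PP^1)\cdot(4-4g+d) + \ell\cdot\bigl((4-4g+d-1)-(4-4g+d)\bigr) = 2(4-4g+d) - \ell.
\]
The only substantive step is the first one (matching the local models of $F$ against those of a $6$-dimensional Lefschetz singularity for $\pi$); once $\Crit(\pi) = \mathcal{B}_F$ is established, everything else is the bookkeeping above.
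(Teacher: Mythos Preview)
Your proof is correct and takes essentially the same approach as the paper's: both recognize $\rho_m\circ F$ as a $6$-dimensional Lefschetz fibration over $\PP^1$ with regular fiber $W$ satisfying $\chi(W)=4-4g+d$ and with exactly $\ell$ critical points, and then conclude $\chi(X)=2\chi(W)-\ell$ --- the paper via Kas's handle decomposition ($\ell$ $3$-handles attached to $D^2\times W$, then capped by another $D^2\times W$), you via the equivalent fiberwise Euler-characteristic count. One minor caveat: \Cref{def:Lefschetz bifibration} does \emph{not} assert that $\rho_m$ is projection to the first coordinate in the fold chart at a \emph{generic} fold (that compatibility is stipulated only at cusps, double points, and branch points); the clean justification that $\pi$ is submersive at such a fold is that $\Im dF_x$ coincides with the tangent line to $\Critv(F)$ at $F(x)$, which by the very definition of $B_F$ is transverse to $\ker d(\rho_m)_{F(x)}$ whenever $x\notin\mathcal{B}_F$.
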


\begin{proof}
The composition $\rho_m\circ F$ is a Lefschetz fibration whose regular fiber $W$ also admits a genus-$g$ Lefschetz fibration over $\PP^1$ with $d$ critical points. 
We can deduce from \cite{Kas1980handlebodyLF} that $X$ can be obtained by attaching $\ell$ $3$-handles to $D^2\times W$, and then closing the boundary by $D^2\times W$. 
We thus obtain $\chi(X) = 2\chi(W)-\ell = 2(4-4g+d)-\ell$.
\end{proof}

Let $T=F^{-1}(S_m)$. 
The restriction $F|_{T}:T\to S_m$ has a section $\widetilde{S}_m$ with self-intersection $0$ in $T$. 
Since $S_m$ is a section of $\rho_m$, $\widetilde{S}_m$ is also a section of the Lefschetz fibration $\rho_m\circ F$.
By this observation and the handle decomposition in the proof above, we obtain:

\begin{proposition}\label{prop:pi1 LbF}

The fundamental group of $X$ is isomorphic to that of a regular fiber $W$ of $\rho_m\circ F$. 

\end{proposition}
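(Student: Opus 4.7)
The plan is to derive $\pi_1(X)\cong \pi_1(W)$ by viewing $\rho_m\circ F:X\to\PP^1$ as a $6$-dimensional Lefschetz fibration, applying van Kampen to a standard disk decomposition of the base, and using the section $\widetilde{S}_m$ to kill the base-circle generator.

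First I would split $\PP^1$ as $D_1\cup D_2$, where $D_1$ is a closed disk containing all of $\Critv(\rho_m\circ F)$ in its interior, and consider the corresponding decomposition $X=A\cup_M B$ with $A=(\rho_m\circ F)^{-1}(D_1)$, $B=(\rho_m\circ F)^{-1}(D_2)$, and $M=A\cap B=(\rho_m\circ F)^{-1}(\Pa D_1)$. The piece $B\cong D_2\times W$ is a trivial $W$-bundle, so fiber inclusion yields $\pi_1(B)\cong \pi_1(W)$. For $A$, I would invoke the Kas-type handle decomposition recalled in the proof of \Cref{prop:Euler LbF}: $A$ is built from $D_1\times W$ by attaching one $3$-handle per critical point of $\rho_m\circ F$ (these are precisely the branch points of $F$), and each such handle is attached along a $2$-sphere in a regular fiber. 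Because the attaching spheres are simply connected, these $3$-handles do not affect $\pi_1$, so fiber inclusion also induces $\pi_1(A)\cong \pi_1(W)$.

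Next I would analyze the inclusions $\pi_1(M)\to \pi_1(A)$ and $\pi_1(M)\to \pi_1(B)$. Since $M\to S^1$ is a $W$-bundle, $\pi_1(M)$ is generated by the fiber subgroup $\pi_1(W)$ together with one lift $t$ of the generator of $\pi_1(S^1)=\pi_1(\Pa D_1)$. The decisive input is the existence of the section $\widetilde{S}_m$ of $\rho_m\circ F$ pointed out just before the statement: choosing $t$ to be represented by $\Pa(\widetilde{S}_m\cap A)\subset M$, the disk $\widetilde{S}_m\cap A\subset A$ provides an explicit null-homotopy, so $t\mapsto 1$ in $\pi_1(A)$. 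On the other side, $\Pa D_2$ bounds $D_2$, hence $t\mapsto 1$ in $\pi_1(B)$ as well. Both inclusions restrict to the identity map on the common fiber subgroup $\pi_1(W)\subset \pi_1(M)$, under the identifications $\pi_1(A)\cong \pi_1(B)\cong \pi_1(W)$ obtained above.

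Van Kampen then gives
\[
\pi_1(X)\;\cong\;\pi_1(A)\ast_{\pi_1(M)}\pi_1(B)\;\cong\;\pi_1(W)\ast_{\pi_1(W)}\pi_1(W)\;\cong\;\pi_1(W),
\]
completing the argument. The only delicate point is confirming that the section-disk really does kill the chosen generator $t$ in $\pi_1(A)$, but this follows immediately from the fact that $\widetilde{S}_m$ is a smooth global section of $\rho_m\circ F$ over all of $\PP^1$, so I do not expect any significant obstacle beyond this routine bookkeeping.
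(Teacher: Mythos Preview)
Your argument is correct and is essentially a fleshed-out version of the paper's own proof, which simply points to the handle decomposition from \Cref{prop:Euler LbF} (attaching $3$-handles to $D^2\times W$ and capping by another $D^2\times W$) together with the existence of the section $\widetilde{S}_m$. The van Kampen bookkeeping you supply is exactly what the paper leaves implicit.
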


We can further prove the following proposition:

\begin{proposition}\label{prop:H2 LbF}

Let $L_1,\ldots,L_{\ell}\subset W$ be the vanishing cycles of the Lefschetz fibration $\rho_m\circ F$. 
There is a natural surjection $\left<[\widetilde{S}_m]\right>\oplus \left(H_2(W)/\left<[L_1],\ldots,[L_\ell]\right>\right) \twoheadrightarrow H_2(X;\Z)$. 
This surjection is isomorphism if $W$ is homeomorphic to a geometrically simply connected $4$-manifold (that is, a $4$-manifold admitting a handle decomposition without $1$-handles).

\end{proposition}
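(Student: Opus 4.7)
The plan is to build on the Kas handle decomposition already invoked in the proof of Proposition~\ref{prop:Euler LbF}. Write $X=X_1\cup_{S^1\times W}(D^2\times W)$, where $X_1:=(D^2\times W)\cup(\text{$\ell$ three-handles})$ is obtained by attaching three-handles along the vanishing two-spheres $L_i\times\{\mathrm{pt}_i\}\subset S^1\times W=\partial(D^2\times W)$; the second copy of $D^2\times W$ can be glued on because the total monodromy around all critical values of $\rho_m\circ F$ is trivial in $\mathcal{M}(W)$. First I would verify that the proposed map is well defined: the inclusion $\iota:W\hookrightarrow X$ of a regular fiber induces $\iota_\ast:H_2(W)\to H_2(X)$, and each $L_i$ bounds a Lefschetz thimble (a three-disk coming from the standard complex model near the Lefschetz critical point of $\rho_m\circ F$), so $\iota_\ast[L_i]=0$ and $\iota_\ast$ factors through $H_2(W)/\langle[L_i]\rangle$; together with the class $[\widetilde{S}_m]$ this produces the map in the statement.

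The core of the argument would be a Mayer--Vietoris computation with the decomposition above. Deformation retracting $D^2\times W$ onto $W$ (sending $L_i\times\{\mathrm{pt}_i\}$ to $L_i$) shows $X_1\simeq W\cup_{L_1}e^3\cup\cdots\cup_{L_\ell}e^3$, hence $H_2(X_1)=H_2(W)/\langle[L_i]\rangle$ and $H_1(X_1)=H_1(W)$. By K\"unneth, $H_2(S^1\times W)\cong H_2(W)\oplus H_1(W)$ and $H_1(S^1\times W)\cong\Z\oplus H_1(W)$. I would then check that on the $H_2(W)$-summand the Mayer--Vietoris map sends $\sigma\mapsto([\sigma],\sigma)\in H_2(X_1)\oplus H_2(D^2\times W)$, whereas on the $H_1(W)\otimes H_1(S^1)$-summand the map vanishes: inside $D^2\times W$ the class $\gamma\otimes[S^1]$ bounds $\gamma\times D^2$, and inside $X_1$ it bounds the three-chain supplied by the deformation retraction to $W$. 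Next, the map $H_1(S^1\times W)\to H_1(X_1)\oplus H_1(D^2\times W)=H_1(W)\oplus H_1(W)$ sends $(n,\gamma)\mapsto(\gamma,\gamma)$, so its kernel is $\Z\cdot[S^1]$. Combining gives the short exact sequence
\[
0\to H_2(W)/\langle[L_i]\rangle\to H_2(X)\to \Z\to 0,
\]
and $[\widetilde{S}_m]$ lifts the generator of $\Z$ because $\widetilde{S}_m$ meets a generic fiber transversely in one point; this yields the claimed surjection with splitting by $[\widetilde{S}_m]$.

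For the isomorphism under the geometrically simple connectivity hypothesis on $W$, one has $H_1(W)=0$ and $H_2(W)$ free abelian, so the $H_1(W)$-summand of $H_2(S^1\times W)$ is not present, the Mayer--Vietoris analysis above produces an isomorphism outright, and no torsion can arise from the $H_1$-boundary. The main obstacle I expect is verifying the vanishing of the $H_1(W)$-summand on the $X_1$ side, since $X_1$ is not a product and one must carefully produce a three-chain inside it (using the deformation retract together with the three-handle cores) whose boundary is $\gamma\times S^1$; a secondary technical point is controlling $\iota_\ast$ at the chain level when identifying its image in $H_2(X)$ with the relevant summand of $\operatorname{coker}(\phi)$.
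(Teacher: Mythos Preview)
Your Mayer--Vietoris approach is sound and genuinely different from the paper's argument. The paper does not run Mayer--Vietoris at all: for the isomorphism it replaces the second copy of $D^2\times W$ by $D^2\times W'$ with $W'$ geometrically simply connected (using the homeomorphism $\psi:W'\to W$), so that relative to $X_0$ this piece contributes exactly one $2$-cell and otherwise only cells of dimension $\geq 4$; the $H_2$ computation is then immediate from the cellular chain complex of $X'$, and one transports back to $X$ via the homeomorphism. Your route bypasses this construction entirely and extracts the conclusion directly from the vanishing of $H_1(W)$, which is all that the GSC hypothesis really contributes to your computation.

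One point deserves sharpening. Your claim that the $H_1(W)$-summand of $H_2(S^1\times W)$ maps to zero in $H_2(X_1)$ is precisely the place where the argument is delicate, and your heuristic (``the three-chain supplied by the deformation retraction'') does not quite settle it: the torus $S^1\times\gamma$ you must kill is the one coming from the $X_2$-trivialization of $\partial X_1$, whereas the $3$-chain $D^2\times\gamma$ produced inside the inner $D^2\times W\subset X_1$ bounds the torus built from the $X_1$-side parallel transport; these two tori can differ by a class in $H_2(W)$, so the connecting map $\delta:H_1(W)\to H_2(X_1)$ need not vanish in general. This does \emph{not} damage either conclusion, however. For surjectivity, the Mayer--Vietoris cokernel is $(H_2(W)/\langle[L_i]\rangle)/\operatorname{im}\delta$, still a quotient of $H_2(W)/\langle[L_i]\rangle$, and the natural map hits it; together with the splitting by $[\widetilde{S}_m]$ this gives the surjection. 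For the isomorphism, the GSC hypothesis forces $H_1(W)=0$, so $\delta=0$ trivially and your short exact sequence is exactly as written. In this sense your argument is slightly more efficient than the paper's: it isolates the single homological consequence of GSC that is actually used.
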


\begin{proof}[Proof of \Cref{prop:H2 LbF}]
Surjectivity follows from the aforementioned handle decomposition. 
Let $X_0$ be the union of $D^2\times W$ and $\ell$ $3$-handles in this handle decomposition. 
The manifold $X$ can be obtained by gluing $D^2\times W$ with $X_0$. 
Let $\Phi:\Pa D^2\times W\to \Pa X_0$ be the gluing map. 
By the assumption, there exists a geometrically simply connected $4$-manifold $W'$ and a homeomorphism $\psi:W'\to W$. 
Modifying $\psi$ by an isotopy if necessary, we can assume that $\psi$ preserves the $0$-handle. 
Let $\Phi'= \Phi\circ (\id_{\Pa D^2}\times \psi):\Pa D^2\times W'\to \Pa X_0$ and $X' = X_0\cup_{\Phi'} (D^2\times W')$. 
The map $\Psi=\id_{X_0}\cup (\id_{D^2}\times \psi):X' \to X$ is a homeomorphism.
Since $W'$ is geometrically simply connected, $D^2\times W'\subset X'$ can be regarded as a union of a $2$-cell and other cells with dimensions larger than $3$. 
Therefore, there exists an isomorphism $\left<[\widetilde{S}_m]\right>\oplus \left(H_2(W)/\left<[L_1],\ldots,[L_\ell]\right>\right)\to H_2(X';\Z)$ making the following diagram commute:
\[
\xymatrix{
\left<[\widetilde{S}_m]\right>\oplus \left(H_2(W)/\left<[L_1],\ldots,[L_\ell]\right>\right)\ar[d]^{\cong}\ar[r]^{\id} &\left<[\widetilde{S}_m]\right>\oplus \left(H_2(W)/\left<[L_1],\ldots,[L_\ell]\right>\right)\ar@{->>}[d]\\
H_2(X';\Z)\ar[r]^{\Psi_\ast}& H_2(X;\Z).
}
\]
Thus, the right vertical surjection in this diagram is an isomorphism. 
\end{proof}

In what follows, we identify the homology and cohomology groups of closed manifolds via the Poincar\'{e} duality. 
For closed manifolds $M^m, N^n$ and a continuous map $h:M\to N$, we put 
\[
h_! = \PD_N^{-1}\circ h_\ast \circ \PD_M :H^k(M;\Z)\to H^{k-m+n}(N;\Z),
\]
which is called the \textit{pushforward} (or the \textit{Umkehr homomorphism} in e.g.~\cite{Dyer1969cohomology}).
The following formula is used repeatedly in this manuscript: 
\begin{equation}\label{eq:projection formula}
h_!(\alpha)\cdot \beta = h_!(\alpha \cdot h^\ast \beta) \mbox{ for }\alpha \in H^k(M), \beta \in H^l (N). 
\end{equation}

By \Cref{thm:almost cpx str LbF}, we take almost complex structures $J,J',J''$ of $X,\Hir_m,\PP^1$, respectively, so that $F$ is $(J,J')$-holomorphic outside the disjoint union $K$ of closed balls each of which contains a point in $\mathcal{I}_F^-$. 
Using them, we can define the total Chern classes $c(X),c(\Hir_m),c(\PP^1)$, which are unit elements in the cohomology rings. 
The following proposition (together with \Cref{prop:H2 LbF}) uniquely characterize the first Chern class $c_1(X)$ up to torsions. 

\begin{proposition}\label{prop:value 1st Chern}

The Kronecker product $\left<c_1(X),[\widetilde{S}_m]\right>$ is equal to $2-m$. 
For $\alpha \in H_2(W)$, we denote by $[\alpha]$ the element in $H_2(X)$ corresponding to $\alpha$ via the surjection in \Cref{prop:H2 LbF}. 
The Kronecker product $\left<c_1(X),[\alpha]\right>$ is equal to $\left<c_1(W),\alpha\right>$. 

\end{proposition}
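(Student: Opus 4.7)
The plan is to compute both Kronecker products by applying the standard splitting $c_1(X)|_Y = c_1(Y) + c_1(\nu_{Y/X})$ for suitable $J$-holomorphic surfaces $Y\subset X$, using the almost complex structures produced in \Cref{thm:almost cpx str LbF}. Recall that $\rho_m\circ F$ is globally $(J,J'')$-holomorphic, while $F$ is $(J,J')$-holomorphic outside a neighborhood of $\mathcal{I}_F^-$.

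First I would treat the easier second statement. Since $W$ is a regular fiber of the $(J,J'')$-holomorphic submersion $\rho_m\circ F$, it is a $J$-complex submanifold, and the differential $d(\rho_m\circ F)$ identifies $\nu_{W/X}$ with $(\rho_m\circ F)^\ast T\PP^1|_W$, a trivial complex line bundle. Hence $c_1(\nu_{W/X})=0$, so $c_1(X)|_W=c_1(W)$ in $H^2(W)$, and the second equality follows at once from the fact that $[\alpha]=i_\ast\alpha$ for the inclusion $i:W\hookrightarrow X$.

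For the first equality, I would use the chain $\widetilde{S}_m\subset T\subset X$, where $T=F^{-1}(S_m)$. The assumption $\Critv(F)\cap S_m=\emptyset$ (condition (0)) ensures that $T$ contains no critical points of $F$, in particular no points of $\mathcal{I}_F^-$, so $F$ is $(J,J')$-holomorphic on a neighborhood of $T$; this makes $T$ a $J$-complex submanifold, and \Cref{thm:almost cpx str LbF} ensures the same for $\widetilde{S}_m$. The resulting short exact sequence of complex normal bundles
\begin{equation*}
0\to \nu_{\widetilde{S}_m/T}\to \nu_{\widetilde{S}_m/X}\to \nu_{T/X}|_{\widetilde{S}_m}\to 0
\end{equation*}
yields $c_1(\nu_{\widetilde{S}_m/X}) = c_1(\nu_{\widetilde{S}_m/T}) + c_1(\nu_{T/X}|_{\widetilde{S}_m})$. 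The first summand evaluates to $0$ on $[\widetilde{S}_m]$, since $\widetilde{S}_m\cong\PP^1$ has self-intersection $0$ in $T$ by hypothesis. For the second, transversality and $(J,J')$-holomorphicity of $F$ near $T$ give $\nu_{T/X}\cong F^\ast \nu_{S_m/\Hir_m}$; restricting to $\widetilde{S}_m$ and using that $F|_{\widetilde{S}_m}:\widetilde{S}_m\to S_m$ is a diffeomorphism while $S_m\cdot S_m=-m$ in $\Hir_m$, this contributes $-m$. Combining with $\langle c_1(T\widetilde{S}_m),[\widetilde{S}_m]\rangle = \chi(\PP^1)=2$ gives $\langle c_1(X),[\widetilde{S}_m]\rangle = 2-m$.

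The potential obstacle is bookkeeping around $J$-holomorphicity, but in fact all the relevant submanifolds ($W$, $T$, and $\widetilde{S}_m$) are arranged by \Cref{thm:almost cpx str LbF} and condition (0) to sit in the locus where $F$ is already $(J,J')$-holomorphic, so no perturbation or global modification of $J$ is required and the splitting formulas apply directly.
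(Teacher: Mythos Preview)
Your proof is correct and follows essentially the same approach as the paper's: both compute $\langle c_1(X),[Y]\rangle$ via the splitting $TX|_Y \cong TY \oplus \nu_{Y/X}$ for $J$-holomorphic $Y$, with the same two-step decomposition of $\nu_{\widetilde{S}_m/X}$ into the fiber direction of $F:T\to S_m$ (trivial, since $[\widetilde{S}_m]^2=0$ in $T$) and the base direction $F^\ast\nu_{S_m/\Hir_m}$ (degree $-m$). Your treatment is slightly more explicit about invoking condition~(0) and \Cref{thm:almost cpx str LbF} to ensure the relevant submanifolds are $J$-complex, whereas the paper takes this for granted; otherwise the arguments coincide.
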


\begin{proof}
The Kronecker product $\left<c_1(X),[\widetilde{S}_m]\right>$ is equal to $\left<c_1(X)\cdot [\widetilde{S}_m],[X]\right>$. 
The normal bundle $\mathcal{N}_{\widetilde{S}_m}$ of ${\widetilde{S}_m}$ in $X$ admits a direct-sum decomposition of two complex bundles: one (the fiber direction of $F:T\to S_m$) is trivial and the other (the base direction of $F:X\to \Hir_m$) has degree $-m$. 
Hence, $c_1(X)\cdot [\widetilde{S}_m]$ is calculated as follows:
\begin{align*}
c_1(X)\cdot [\widetilde{S}_m]=&c_1(X)\cdot (i_{\widetilde{S}_m})_!(1) = (i_{\widetilde{S}_m})_!(i_{\widetilde{S}_m}^\ast(c_1(X))) \hspace{.5em}(\because \mbox{the formula }\eqref{eq:projection formula})\\
=&(i_{\widetilde{S}_m})_!(c_1({\widetilde{S}_m})+c_1(\mathcal{N}_{\widetilde{S}_m}))=(i_{\widetilde{S}_m})_!((2-m)[\ast])=(2-m)[\ast],
\end{align*}%
where $i_{\widetilde{S}_m}:\widetilde{S}_m\to X$ is the inclusion and $[\ast]\in H_0(X)\cong H^6(X)$ is the generator represented by a single point.
We can take a closed oriented surface $Q\subset W$ with $[Q]=\alpha$. 
Since the normal bundle $\mathcal{N}_W$ of $W$ in $X$ is trivial, one can show  $\left<c_1(X),[\alpha]\right>=\left<c_1(W),\alpha\right>$ in the same way as above. 
\end{proof}

We denote the $2i$-th degree part of $F^\ast c(\Hir_m)\cdot c(X)^{-1}$ by $s_i\in H^{2i}(X;\Z)$. 
We can explicitly describe the classes $s_1,s_2$ as follows:
{\allowdisplaybreaks
\begin{align*}
s_1 =& F^\ast c_1(\Hir_m) -c_1(X), \\
s_2 =& c_1(X)^2-c_2(X)-F^\ast c_1(\Hir_m)\cdot c_1(X)+ F^\ast c_2(\Hir_m). 
\end{align*}
}%
In the rest of this section, we identify $H_\ast(X;\Z)$ and $H^\ast(X;\Z)$ via Poincar\'{e} duality.

\begin{theorem}\label{thm:crit pt set Chern class}

Let $\mathcal{C}_F\subset \Crit(F)$ be the set of cusps of $F$. 
The classes $[\Crit(F)]$ and $[\mathcal{C}_F]$ are respectively equal to $s_1^2-s_2$ and $2(s_1^3-s_1s_2)$. 

\end{theorem}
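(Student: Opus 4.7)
The plan is to interpret both Poincaré duals as complex Thom polynomials for corank-one Morin singularities and to compute them via the Kempf--Laksov determinantal formula. Fix the almost complex structures $J, J'$ on $X, \Hir_m$ furnished by \Cref{thm:almost cpx str LbF}, set $\Sigma := \Crit(F)$, which is a $J$-holomorphic complex-codimension-two submanifold, and write $W := F^\ast T\Hir_m \ominus TX$ so that $c_i(W) = s_i$. The key observation is that at every point of $\Crit(F)$ the map $F$ is holomorphic in suitable local charts --- even near $\mathcal{I}_F^-$ each of the two fold critical points is individually a holomorphic fold in its own chart --- so all the bundle maps appearing below are genuinely $\C$-linear once one passes to the $(1,0)$-part with respect to $(J,J')$, and any such perturbation differs from $dF$ only on an arbitrarily small open set not meeting the rank-drop loci of maximal expected complex codimension.

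For the first formula, I would apply Porteous--Kempf--Laksov to $dF : TX \to F^\ast T\Hir_m$, regarded as a generic $\C$-linear section of $\Hom_\C(TX, F^\ast T\Hir_m)$. With $(\mathrm{rk}\, TX, \mathrm{rk}\, F^\ast T\Hir_m, r) = (3, 2, 1)$, the rank-$\leq 1$ locus coincides with $\Sigma$ and the determinantal formula immediately yields
\[
[\Sigma] = \det\!\begin{pmatrix} s_1 & s_2 \\ 1 & s_1 \end{pmatrix} = s_1^2 - s_2.
\]

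For the second formula, I would realise $\mathcal{C}_F$ as a secondary rank-drop locus on $\Sigma$. Along $\Sigma$ put $K := \ker dF|_\Sigma$ (a complex rank-two subbundle of $TX|_\Sigma$) and $L := \mathrm{coker}(dF)|_\Sigma$ (a complex line bundle); the intrinsic Hessian of $F$ along $K$ defines a $\C$-linear bundle map $H : K \to K^\ast \otimes L$. The local models $(z_1, z_2^2 + z_3^2)$ and $(z_1, z_2^3 + z_1 z_2 + z_3^2)$ show that $H$ is an isomorphism at folds and has rank exactly one at cusps, so Kempf--Laksov applied on $\Sigma$ with $(\mathrm{rk}\, K, \mathrm{rk}\, K^\ast \otimes L, r) = (2, 2, 1)$ gives
\[
[\mathcal{C}_F]_\Sigma = c_1\bigl((K^\ast \otimes L) \ominus K\bigr) = 2\bigl(c_1(L) - c_1(K)\bigr).
\]
The short exact sequences $0 \to K \to TX|_\Sigma \to \mathrm{im}\, dF \to 0$ and $0 \to \mathrm{im}\, dF \to F^\ast T\Hir_m|_\Sigma \to L \to 0$ yield $c(L)/c(K) = c(W|_\Sigma)$, whence $c_1(L) - c_1(K) = s_1|_\Sigma$; pushing forward with the projection formula \eqref{eq:projection formula},
\[
[\mathcal{C}_F] = (i_\Sigma)_!\bigl(2 s_1|_\Sigma\bigr) = 2 s_1 \cdot [\Sigma] = 2 s_1(s_1^2 - s_2) = 2(s_1^3 - s_1 s_2).
\]

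The main technical obstacle is the honest treatment of the neighborhood of $\mathcal{I}_F^-$: there $F$ is not $(J,J')$-holomorphic, so $dF$ is only $\R$-linear between the complex bundles $TX$ and $F^\ast T\Hir_m$. Turning the slogan ``replace $dF$ by its $(1,0)$-part'' into a rigorous proof that neither $[\Sigma]$ nor $[\mathcal{C}_F]$ changes requires either invoking Kazarian's universality theorem for Thom polynomials of corank-one Morin singularities, or checking by hand that the real local picture at each critical point in $\mathcal{I}_F^-$ carries the same Porteous contribution as a holomorphic fold; the rest of the argument is then routine Chern-class bookkeeping.
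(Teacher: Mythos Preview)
Your overall strategy matches the paper's: both compute $[\Crit(F)]$ via the Porteous/Thom polynomial for the corank-$1$ locus of $dF$ regarded as a $\C$-linear bundle map, and both compute $[\mathcal{C}_F]$ from the intrinsic second derivative $d^2F:\Ker dF\to(\Ker dF)^\ast\otimes\Coker dF$ along $\Crit(F)$.  Two substantive differences deserve comment.

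\medskip
\textbf{Transversality is not free.}  Calling $dF$ a ``generic $\C$-linear section'' is not a proof: $dF$ is a specific section, and Porteous requires transversality to the degeneracy stratum $\Sigma_2$.  This verification is in fact the bulk of the paper's argument.  At cusps it is immediate from the holomorphic local model, but at an ordinary fold point $q_0$ the almost complex structures $J,J'$ need \emph{not} be standard in the chart that puts $F$ into the form $(z_1,z_2z_3)$.  The paper first argues that $\partial_{x_1},\partial_{x_2},\partial_{x_3}$ (respectively $\partial_{X_1},\partial_{X_2}$) nevertheless form a $J$-complex (respectively $J'$-complex) basis at $q_0$, then writes $dF'_q$ as a block matrix $\begin{pmatrix}1&C(q)\\0&D(q)\end{pmatrix}$ in these bases and checks by an explicit Jacobian computation that $q\mapsto D(q)$ is a submersion at $q_0$.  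You should not skip this.

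\medskip
\textbf{Handling $\mathcal{I}_F^-$.}  The paper's device is cleaner than either of your suggestions: it excises a union $K$ of small closed balls about $\mathcal{I}_F^-$, sets $X'=X\setminus K$, and uses that the restriction maps $H^4(X)\to H^4(X')$ and $H^0(\Crit(F))\to H^0(\Crit(F'))$ are isomorphisms to transport the Porteous computation on $X'$ (where $F$ is honestly $(J,J')$-holomorphic) back to $X$.  No appeal to Kazarian and no by-hand real-versus-complex comparison is needed.  Incidentally, your parenthetical ``not meeting the rank-drop loci'' is wrong as written: the points of $\mathcal{I}_F^-$ lie \emph{on} $\Crit(F)$, so any neighbourhood of them meets the degeneracy locus.

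\medskip
\textbf{A genuine plus.}  Your derivation of the cusp class---computing $[\mathcal{C}_F]_{\Sigma}=2(c_1(L)-c_1(K))=2\,s_1|_\Sigma$ from the two short exact sequences and then pushing forward via \eqref{eq:projection formula} to get $2s_1(s_1^2-s_2)$---is a pleasant self-contained alternative to the paper's route, which instead verifies transversality of $d^2F$ to $\Sigma_1'$ using the local models and then simply cites the Thom polynomial $2(s_1^3-s_1s_2)$ for complex cusps from \cite{FK2006Thompolysecondorder}.
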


\begin{proof}
Let $X'=X\setminus K$ and $F':X'\to \Hir_m$ be the restriction. 
One can define a section $dF':X'\to \Hom_{\C}(TX',F'^\ast T\Hir^2)$ by using $J$ and $J'$. 
Let $\Sigma_2=\{\sigma\in \Hom_\C(TX',F'^\ast T\PP^2)~|~ \dim_\C\Ker \sigma =2\}$.
In the next paragraph, we show that the section $dF$ is transverse to $\Sigma_2$. 
The equality $\PD([\Crit(F)])=s_1^2-s_2$ then follows from this claim, the result in \cite{Porteous1971SimpleSing}, and commutativity of the following diagram:
\[
\begin{CD}
H^0(\Crit(F);\Z) @>\iota_!>> H^4(X;\Z)\\
@V\cong VV @VV\cong V\\
H^0(\Crit(F');\Z) @>\iota'_!>> H^4(X';\Z),
\end{CD}
\] 
where the vertical isomorphisms are induced by the inclusions.

Around any cusp and its image, $J$ and $J'$ are pull-backs of the standard complex structures on $\C^3$ and $\C^2$ by complex charts making $F'$ the local model $(z_1,z_2,z_3)\mapsto (z_1,z_2^3+z_1z_2+z_3^2)$. 
In particular, $dF'$ is transverse to $\Sigma_2$ at a cusp. 
For $q_0\in \Crit(F')\setminus \mathcal{C}_{F'}$, we take a complex coordinate neighborhoods $(U,\varphi)$ and $(V,\psi)$ at $q_0$ and $F'(q_0)$, respectively, satisfying the following conditions:

\begin{enumerate}

\item 
$\varphi(q_0)=0$,

\item 
$\psi\circ F'\circ \varphi^{-1}(z_1,z_2,z_3)=(z_1,z_2z_3)$.

\end{enumerate}

\noindent
Let $(x_1,y_1,x_2,y_2,x_3,y_3),(X_1,Y_1,X_2,Y_2)$ be the real coordinates induced by 
$\varphi$ and $\psi)$, respectively. 
The critical point set $\Crit(F')$ is $J$-holomorphic, and thus $T_{q_0}\Crit(F') = \left<\Pa_{x_1},\Pa_{y_1}\right>$ is a $J$-complex subspace of $T_{q_0}X$.
We can take a sequence of regular points $\{q_i^j\}_{i\geq 1}$ ($j=2,3$) of $F'$ so that it converges to $q_0$ and $\Ker dF'_{q_i^j} = \left<\Pa_{x_j},\Pa_{y_j}\right>$. 
Since $\Ker dF'_q$ is $J$-complex for any $x\in X$, we can deduce that $\left<\Pa_{x_j},\Pa_{y_j}\right>$ is also $J$-complex at $q_0$. 
Hence, $\Pa_{x_1},\Pa_{x_2},\Pa_{x_3}$ is a basis of the $J$-complex vector space $T_{q_0}X$. 
Since $\Im dF'_{q_0}=\left<\Pa_{X_1},\Pa_{Y_1}\right>$ is $J'$-complex, $\Pa_{X_1},\Pa_{X_2}$ is a basis of the $J'$-complex vector space $T_{F'(q_0)}\Hir_m$.  
We take $\alpha_{ij},\beta_{ij}:V\to \R$ so that $(J'\Pa_{X_i})=\sum_{j=1}^2\alpha_{ij}(q')\Pa_{X_j}+\beta_{ij}(q')\Pa_{Y_j}$ at $q'\in V$. 
Let $\delta(y)=\det (\beta_{ij}(y))_{i,j}$, which is not equal to zero near $\psi(q_0)=0$ by the observation above. 
The following equality holds for $q\in U$ close to $q_0$ and $\{j,k\}=\{2,3\}$:
{\allowdisplaybreaks
\begin{align*}
dF'_{q}(\Pa_{x_j})=& x_k \Pa_{X_2}+y_k \Pa_{Y_2} \\
=&x_k \Pa_{X_2}+y_k \left(-\frac{\beta_{21}(F'(q))}{\delta(F'(q))}J'\Pa_{X_1}+\frac{\beta_{11}(F'(q))}{\delta(F'(q))}J'\Pa_{X_2} \right.\\
&\left.+\frac{\beta_{21}(F'(q))\alpha_{11}(F'(q))-\beta_{11}(F'(q))\alpha_{21}(F'(q))}{\delta(F'(q))}\Pa_{X_1}\right.\\
&\left.+\frac{\beta_{21}(F'(q))\alpha_{12}(F'(q))-\beta_{11}(F'(q))\alpha_{22}(F'(q))}{\delta(F'(q))}\Pa_{X_2}\right).
\end{align*}
}%
Thus, the representation matrix of $dF'_q$ with respect to the complex basis $\Pa_{x_1},\Pa_{x_2},\Pa_{x_3}$ and $\Pa_{X_1},\Pa_{X_2}$ is $\begin{pmatrix}
1 & C(q) \\
0 & D(q)
\end{pmatrix}$, where $C(q),D(q)$ are some $1\times 2$-matrices with complex entries, and $D_{1j}(q)$ ($j=1,2$) are explicitly described as follows:
{\allowdisplaybreaks
\begin{align*}
&D_{1j}(q) = x_k+y_k\frac{\beta_{21}(F'(q))\alpha_{12}(F'(q))-\beta_{11}(F'(q))\alpha_{22}(F'(q))}{\delta(F'(q))}+y_k\frac{\beta_{11}(F'(q))}{\delta(F'(q))}\sqrt{-1},
\end{align*}
}%
where $k=3$ and $2$ if $j=1$ and $2$, respectively. 
Since $\alpha_{21}(F'(q_0))=\beta_{21}(F'(q_0))=0$, $C(q_0)$ is the zero matrix. 
Thus, the section $dF'$ is transverse to $\Sigma_2$ at $q_0$ if and only if $D$ is a submersion at $q_0$. 
Regarding $D$ as a smooth map from $\C^3\cong \R^6$ to $\C^2\cong \R^4$ via $\varphi$, the Jacobi matrix of $D$ at the origin (which corresponds to $q_0$) is calculated as follows:
\[
\begin{pmatrix}
0&0&0&0&1&-\frac{\beta_{11}(F'(q_0))\alpha_{22}(F'(q_0))}{\delta(F'(q_0))}\\
0&0&0&0&0&\frac{\beta_{11}(F'(q_0))}{\delta(F'(q_0))}\\
0&0&1&-\frac{\beta_{11}(F'(q_0))\alpha_{22}(F'(q_0))}{\delta(F'(q_0))}&0&0\\
0&0&0&\frac{\beta_{11}(F'(q_0))}{\delta(F'(q_0))}&0&0
\end{pmatrix}
\]
The rank of this matrix is $4$ since $\beta_{11}(F'(q_0))\neq 0$. 
Hence, $dF'$ is transverse to $\Sigma_2$ at $q_0$. 

Since $\Ker dF_q$ is $J$-complex subspace of $T_qX$ for any $q\in X$ and $\dim_{\C}\Ker dF$ is constant (equal to $4$) on $\Crit(F)\subset X$, $\Ker dF$ and $\Coker dF$ are both complex vector bundles on $\Crit(F)$.
By the definition of the intrinsic derivative of $F$, it can be regarded as a section 
\[
d^2F:\Crit(F)\to\Hom(TX, \Hom_\C(\Ker dF,\Coker dF)).
\] 
For each $q\in X$, the restriction of $d^2F(q)\in \Hom(T_qX, \Hom_\C(\Ker dF_q,\Coker dF_q))$ on $\Ker dF_q$ is a symmetric bilinear form. 
Thus, we can further regard $d^2F$ as a section of the bundle $\Hom_{\C}(\Ker dF,(\Ker dF)^\ast\otimes_\C \Coker dF)$ on $\Crit(F)$. 
We define a submanifold $\Sigma_1'$ of this bundle as follows:
\[
\Sigma_1' = \{\sigma \in \Hom_{\C}(\Ker dF,(\Ker dF)^\ast\otimes_\C \Coker dF)~|~\dim_\C \Ker \sigma = 1\}.
\] 
By calculating the intrinsic derivative using the local coordinate description of $F$ around each point in $\Crit(F)$, one can show that the preimage of $\Sigma_1'$ by $d^2F$ is equal to $\mathcal{C}_{F}$ and $d^2F$ is transverse to $\Sigma_1'$ at any point in $\mathcal{C}_{F}$. 
Therefore, we can represent the class $\PD([\mathcal{C}_{F}])$ by the Thom polynomial for the cusp singularity, which is equal to $2(s_1^3-s_1s_2)$ (see e.g.~\cite{FK2006Thompolysecondorder}). 
\end{proof}

\begin{corollary}\label{cor:explicit description cusp [crit]}

Let $\Sigma,W\subset X$ be $2$- and $4$-dimensional fibers of $F$, respectively, $\widetilde{S}_m\subset T=F^{-1}(S_m)$ be a section of $F|_{T}$ with self-intersection $0$ in $T$, and $g$ be the genus of $F$. 

\begin{itemize}

\item 
$[\Crit(F)]\in H_2(X;\Z)\cong H^4(X;\Z)$ is equal to the following class:
\begin{equation}\label{eqn:class crit set}
2m[\Sigma]-(2+m)(i_W)_!(c_1(W))+4(g-1)[\widetilde{S}_m]+c_2(X),
\end{equation}
where $i_W:W\to X$ is the inclusion.

\item 
The number of cusps of $F$ is equal to 
\begin{equation}\label{eqn:number cusps}
24\left(m(g-1)+(2+m)\frac{\sigma(W)+\chi(W)}{4}-\frac{c_1c_2(X)}{12}\right),
\end{equation}
where $c_1c_2(X)$ is the Chern number.
In particular, the number of cusps of $F$ is divisible by $24$.

\end{itemize}

\end{corollary}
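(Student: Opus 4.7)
The plan is to apply \Cref{thm:crit pt set Chern class} and unpack the Chern-class expressions using the bifibration structure. A direct cancellation in the definitions of $s_1,s_2$ gives
\[
s_1^2 - s_2 = (F^\ast c_1(\Hir_m))^2 - F^\ast c_1(\Hir_m)\cdot c_1(X) + c_2(X) - F^\ast c_2(\Hir_m),
\]
so the first claim reduces to identifying each of the four summands in $H^4(X)$. I would use $c_1(\Hir_m) = 2[S_m] + (m+2)[h]$ and $c_2(\Hir_m) = 4[\mathrm{pt}]$, together with $F^\ast [S_m] = [T]$, $F^\ast [h] = [W]$, $F^\ast [\mathrm{pt}] = [\Sigma]$, and the induced identities $[T]^2 = -m[\Sigma]$, $[T]\cdot [W] = [\Sigma]$, $[W]^2 = 0$ in $X$ (all pulled back from $\Hir_m$), to obtain $(F^\ast c_1(\Hir_m))^2 = 8[\Sigma]$ and $F^\ast c_2(\Hir_m) = 4[\Sigma]$.

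For the remaining cross-term $F^\ast c_1(\Hir_m)\cdot c_1(X)$ I will apply the projection formula \eqref{eq:projection formula}. Since $W$ is a regular fiber of $\rho_m\circ F$ its normal bundle is trivial, so $[W]\cdot c_1(X) = (i_W)_!(c_1(W))$. For $[T]\cdot c_1(X)$, the identification $T\cong S_m\times \Sigma_g$ used in the proof of \Cref{prop:symplectic structure blowdowns} gives $c_1(T) = 2[\Sigma]_T + (2-2g)[\widetilde{S}_m]_T$, and $\mathcal{N}_T = F^\ast \mathcal{N}_{S_m}$ gives $c_1(\mathcal{N}_T) = -m[\Sigma]_T$; pushing forward $i_T^\ast c_1(X) = c_1(T) + c_1(\mathcal{N}_T)$ then produces $[T]\cdot c_1(X) = (2-m)[\Sigma] + (2-2g)[\widetilde{S}_m]$. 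Substituting everything and simplifying yields \eqref{eqn:class crit set}.

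For the cusp count, I will factor $[\mathcal{C}_F] = 2(s_1^3 - s_1 s_2) = 2 s_1\cdot [\Crit(F)]$ via \Cref{thm:crit pt set Chern class} and integrate $s_1 = 2[T] + (m+2)[W] - c_1(X)$ against each summand of $[\Crit(F)]$. The intersection numbers I need are $c_1(X)\cdot [\Sigma] = 2-2g$ (trivial normal bundle of $\Sigma$), $c_1(X)\cdot [\widetilde{S}_m] = 2-m$ (\Cref{prop:value 1st Chern}), $[T]\cdot [\Sigma] = [W]\cdot [\Sigma] = 0$, $[T]\cdot [\widetilde{S}_m] = -m$ (via $c_1(\mathcal{N}_T)\cdot [\widetilde{S}_m]_T$ inside $T$), $[W]\cdot [\widetilde{S}_m] = 1$, $\int_X [T]\cdot c_2(X) = 2(m-2)(g-1)$ (from $c_2(TX|_T) = c_2(T) + c_1(T)\, c_1(\mathcal{N}_T)$ on $T\cong \PP^1\times \Sigma_g$), and $\int_X [W]\cdot c_2(X) = \chi(W)$. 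After collecting, the coefficient of $(g-1)$ will collapse to $12m$, the $[\widetilde{S}_m]$-contribution will cancel entirely, and the sum will read $2\bigl(12m(g-1) + (m+2)(c_1^2(W) + \chi(W)) - c_1c_2(X)\bigr)$. Applying $c_1^2(W) = 3\sigma(W) + 2\chi(W)$ to rewrite $c_1^2(W)+\chi(W)$ as $3(\sigma(W)+\chi(W))$ then gives \eqref{eqn:number cusps}. Divisibility by $24$ will follow because $(\sigma(W)+\chi(W))/4$ and $c_1c_2(X)/24$ are the Todd genera of almost complex manifolds of complex dimension $2$ and $3$, hence integers.

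The hardest part will be the bookkeeping: repeatedly applying the projection formula while distinguishing classes in $T$, in $W$, and in $X$ (for instance $[\Sigma]_T$ versus $[\Sigma]$), and ensuring that signs coming from $c_1(\mathcal{N}_T) = -m[\Sigma]_T$ and $c_1(X)\cdot [\widetilde{S}_m] = 2-m$ compose so that the $[\widetilde{S}_m]$-coefficient in the cusp formula really does vanish. Beyond this, no new geometric input beyond \Cref{thm:crit pt set Chern class}, \Cref{prop:value 1st Chern}, and the almost complex structure from \Cref{thm:almost cpx str LbF} appears to be required.
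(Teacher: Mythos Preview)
Your proposal is correct and follows essentially the same route as the paper: identical identification of $F^\ast c_1(\Hir_m)$, $F^\ast c_2(\Hir_m)$, and the projection-formula computations of $[W]\cdot c_1(X)$ and $[T]\cdot c_1(X)$ for the first item. For the cusp count your factorization $2(s_1^3-s_1s_2)=2s_1\cdot[\Crit(F)]$ is a slightly cleaner organization than the paper's direct expansion of $s_1^3-s_1s_2$ in the $\alpha_i,\beta_i$ variables, but the underlying intersection computations (including $[T]\cdot c_2(X)$, $[W]\cdot c_2(X)$, and $c_1(X)\cdot(i_W)_!(c_1(W))=c_1^2(W)$) and the final divisibility argument via Todd integrality are the same.
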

%
%
%

\begin{proof}
We put $\alpha_i = F^\ast c_i(\Hir_m)$ and $\beta_i=c_i(X)$. 
As $\dim_{\R} \Hir_m=4$, $\alpha_1^3$ and $\alpha_1\alpha_2$ are equal to $0$. 
Let $L\subset \Hir_m$ be a fiber of $\rho_m$, which is $J'$-holomorphic by Theorem~\ref{thm:almost cpx str LbF}. 
We can deduce from the adjunction equality that $c_1(\Hir_m)$ is equal to $(2+m)[L]+2[S_m]\in H_2(\Hir_m)$. 
We thus obtain $\alpha_1 = (2+m)[W] + 2[T]$. 
It is easy to check that $[T]^2 = -m[\Sigma]$, $[W]\cdot [T]=[\Sigma]$ and $[W]^2=0$.
Thus the following equality holds: 
\[
\alpha_1^2 = \left((2+m)[W] + 2[T]\right)^2 = 4(2+m)[\Sigma]-4m[\Sigma]=8[\Sigma]. 
\]
The Euler characteristic of $\Hir_m$ is $4$, in particular we obtain $\alpha_2 =4[\Sigma]$. 
Since $[A]=(i_A)_!(1)$ for a submanifold $A\subset X$, the following equality holds: 
\[
[W]\cdot\beta_1 =(i_{W})_!(1)\cdot \beta_1 = (i_W)_!(i_W^\ast c_1(X))= (i_W)_!(c_1(W)+c_1(\mathcal{N}_W)), 
\]
where $\mathcal{N}_W$ is the normal bundle of $W$ in $X$, which is trivial since $W$ is a regular fiber of $\rho_m\circ F$.
We thus obtain $[W]\cdot\beta_1 = (i_W)_!(c_1(W))$.
We can also calculate $[T]\cdot \beta_1$ as follows: 
{\allowdisplaybreaks
\begin{align*}
[T]\cdot\beta_1 &= (i_T)_!(c_1(T)+c_1(\mathcal{N}_T))\\
&= (i_T)_!\left((2-2g)[\widetilde{S}_m]+2[\Sigma]-m[\Sigma]\right)\\
& = (2-2g)[\widetilde{S}_m]+(2-m)[\Sigma],
\end{align*}
}%
where the second equality follows from the adjunction equality. 
We thus obtain:
{\allowdisplaybreaks
\begin{align*}
[\Crit(F)] =&s_1^2-s_2\\
=&(\alpha_1-\beta_1)^2 - (\beta_1^2-\beta_2-\alpha_1\beta_1+ \alpha_2)\\
=&\alpha_1^2-\alpha_1\beta_1+\beta_2-\alpha_2\\
=&4[\Sigma]-\left((2+m)[W]+2[T]\right)\beta_1+\beta_2\\
=&4[\Sigma]-(2+m)(i_W)_!(c_1(W))-2(2-2g)[\widetilde{S}_m]-2(2-m)[\Sigma]+\beta_2\\
=&2m[\Sigma]-(2+m)(i_W)_!(c_1(W))+4(g-1)[\widetilde{S}_m]+c_2(X).
\end{align*}
}%

In the same way as above, one can show the following equalities: 
{\allowdisplaybreaks
\begin{align*}
[\Sigma]\cdot\beta_1 =&(i_\Sigma)_!(c_1(\Sigma)+c_1(\mathcal{N}_\Sigma)) = (2-2g)[\ast],\\
[W]\cdot\beta_1^2 =& (i_W)_!((c_1(W)+c_1(\mathcal{N}_W))^2)=(3\sigma(W)+2\chi(W))[\ast], \\
[W]\cdot\beta_2 =&(i_W)_!(c_2(W)+c_1(W)c_1(\mathcal{N}_W))=\chi(W)[\ast],\\
[T]\cdot\beta_1^2 =& (i_T)_!((c_1(T)+c_1(\mathcal{N}_T))^2)=(i_T)_!((c_1(T)-m[\Sigma])^2)\\
=&(i_T)_!(c_1(T)^2-2m[\Sigma]c_1(T))=(3\sigma(T)+2\chi(T))[\ast]-2m(i_T)_!([\Sigma]c_1(T))\\
=&2(4-4g)[\ast]-2m(2-2g)[\ast]=(2-m)(4-4g)[\ast],\\
[T]\cdot\beta_2 =& (i_T)_!(c_2(T)+c_1(T)c_1(\mathcal{N}_T))=(i_T)_!(c_2(T)-m[\Sigma]c_1(T))\\
=&\chi(T)[\ast]-m(i_T)_!([\Sigma]c_1(T))=(4-4g)[\ast]-m(2-2g)[\ast]\\
=&(2-m)(2-2g)[\ast].
\end{align*}
}%
By \Cref{thm:crit pt set Chern class}, $[\mathcal{C}_F]$ is calculated as follows:
{\allowdisplaybreaks
\begin{align*}
[\mathcal{C}_F]=&2(s_1^3-s_1s_2) \\
=&2\left(\left(\alpha_1-\beta_1\right)^3-\left(\alpha_1-\beta_1\right)\left(\beta_1^2-\beta_2-\alpha_1\beta_1+ \alpha_2\right)\right)\\
=&2\left(-3\alpha_1^2\beta_1+3\alpha_1\beta_1^2-\beta_1^3-\left(2\alpha_1\beta_1^2-\alpha_1\beta_2-\alpha_1^2\beta_1-\beta_1^3+\beta_1\beta_2-\beta_1\alpha_2\right)\right)\\
=&2\left(-2\alpha_1^2\beta_1+\alpha_1\beta_1^2+\alpha_1\beta_2-\beta_1\beta_2+\beta_1\alpha_2\right)\\
=&2\left(-12[\Sigma]\beta_1+\alpha_1(\beta_1^2+\beta_2)-\beta_1\beta_2\right)\\
=&2\left(24(g-1)[\ast]+\left((2+m)[W] + 2[T]\right)(\beta_1^2+\beta_2)-\beta_1\beta_2\right)\\
=&2\left(24(g-1)[\ast]+3(2+m)(\sigma(W)+\chi(W))[\ast]-12(2-m)(g-1)[\ast]-\beta_1\beta_2\right)\\
=&24\left(m(g-1)[\ast]+(2+m)\frac{\sigma(W)+\chi(W)}{4}[\ast]-\frac{c_1(X)c_2(X)}{12}\right).
\end{align*}
}%
Hence the number of cusps of $F$ is equal to \eqref{eqn:number cusps}. 
Since $W$ has an almost complex structure, $\sigma(W)+\chi(W)$ is divisible by $4$ (\cite[\S1.4.]{GS19994-mfd}).
It follows from the $\hat{A}$-integrality theorem~(\cite[Theorem~26.1.1]{Hirzebruch1995topmethod}) that $c_1c_2(X)$ is divisible by $24$, and thus \eqref{eqn:number cusps} is also divisible by $24$.
\end{proof}

Lastly, the following propositions clarify how various blow-down procedures affect topological invariants. 

\begin{proposition}\label{prop:blowdown topinv 1}

Suppose that $a_{i_1}=\cdots a_{i_k}=1$, and let $F':X'\dasharrow \Hir_m$ be the Lefschetz pencil-fibration obtained by blowing down $F:X\to \Hir_m$ along ruled surfaces $\mathcal{S}_{i_1},\ldots, \mathcal{S}_{i_k}$. 
Let $\ell$ be the number of indices $i\in {1,\ldots,d}$ with $\epsilon_i=1$, $W'\subset X'$ be a regular fiber of $\rho_m\circ F'$, $\Sigma'\subset X'$ be the closure of a regular fiber of $F'$, $\widetilde{S}_m'$ be a section with self-intersection $0$, and $\mathcal{S}_{i_1}',\ldots, \mathcal{S}_{i_k}'\subset X'$ be the image of $\mathcal{S}_{i_1},\ldots, \mathcal{S}_{i_k}$ by the blowdown map. 

\begin{enumerate}

\item 
$\chi(X') = 2(4-4g+d)-\ell-2k$, 

\item 
$\pi_1(X') \cong \pi_1(W')$, 

\item 
There is a natural surjection $\left<[\widetilde{S}_m]\right>\oplus \left(H_2(W')/ \left<[L_1],\ldots, [L_\ell]\right>\right)\twoheadrightarrow H_2(X;\Z)$. 
This surjection is isomorphism if $W'$ is homeomorphic to a geometrically simply connected $4$-manifold. 

\item 
$[\Crit(F')]=2m[\Sigma']-(2+m)(i_{W'})_!(c_1(W'))+4(g-1)[\widetilde{S}_m']+c_2(X')+\sum_{a=1}^{k}[\mathcal{S}_{i_a}']$,

\item 
The number of cusps is equal to \eqref{eqn:number cusps} with $W$ and $X$ replaced with $W'$ and $X'$, respectively. 

\end{enumerate}

\end{proposition}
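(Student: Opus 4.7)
The core geometric observation is that $X$ is the blow-up of $X'$ along the disjoint curves $\mathcal{S}'_{i_1},\ldots,\mathcal{S}'_{i_k}$, each diffeomorphic to $\PP^1$, with exceptional divisor $\mathcal{S}_{i_a}$ lying over $\mathcal{S}'_{i_a}$; the hypothesis $a_{i_a}=1$ forces the normal bundle of $\mathcal{S}_{i_a}$ in $X$ to restrict to $\O(-1)$ on each $\rho_m$-fiber of $\mathcal{S}_{i_a}\cong\Hir_m$, which is exactly what is needed for the relative blow-down to identify $\mathcal{S}_{i_a}$ with $\PP\bigl(\mathcal{N}_{\mathcal{S}'_{i_a}/X'}\bigr)$. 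Let $\pi\colon X\to X'$ denote the blow-down map; away from $\bigsqcup_a\mathcal{S}_{i_a}$ it is a diffeomorphism.

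Part (1) follows by additivity of the Euler characteristic: writing $X=X_1\cup\bigsqcup_a\nu\mathcal{S}_{i_a}$ and $X'=X_1\cup\bigsqcup_a\nu\mathcal{S}'_{i_a}$ with the same complementary piece $X_1$ and common boundary (of zero Euler characteristic), one obtains $\chi(X)-\chi(X')=k\bigl(\chi(\Hir_m)-\chi(\PP^1)\bigr)=2k$, and invoking \Cref{prop:Euler LbF} yields the claimed formula. Part (2) is immediate from Van Kampen: since both $\nu\mathcal{S}_{i_a}$ and $\nu\mathcal{S}'_{i_a}$ are simply connected (they deformation retract to $\Hir_m$ and $\PP^1$ respectively), both $\pi_1(X)$ and $\pi_1(X')$ equal the quotient of $\pi_1(X_1)$ by the normal closure of the image of $\pi_1(\partial X_1)$; similarly, blowing down the $(-1)$-section $W\cap\mathcal{S}_{i_a}$ in $W$ preserves $\pi_1$, so $\pi_1(W)=\pi_1(W')$, and (2) follows from \Cref{prop:pi1 LbF}. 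For (3), I would first observe that $\rho_m\circ F'\colon X'\to\PP^1$ is a genuine Lefschetz fibration: composing the local pencil model $F'\sim(z_1,[z_2:z_3])$ near the base locus with $\rho_m$ yields the smooth map $z_1$, so $\rho_m\circ F'$ extends smoothly across the base locus, with regular fiber $W'$ and the same vanishing cycles $L_1,\ldots,L_\ell$. The handle-theoretic argument of \Cref{prop:H2 LbF} then transfers verbatim, yielding (3).

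For (4), since $\Crit(F)$ is disjoint from each $\mathcal{S}_{i_a}$ and $\Crit(F')$ from each $\mathcal{S}'_{i_a}$, $\pi$ is a diffeomorphism on a neighborhood of $\Crit(F)$, whence $\pi_*[\Crit(F)]=[\Crit(F')]$. I would then push forward the identity of \Cref{cor:explicit description cusp [crit]} term by term. The classes $[\Sigma]$ and $[\widetilde{S}_m]$ are represented by submanifolds disjoint from $\bigsqcup_a\mathcal{S}_{i_a}$, so $\pi_*[\Sigma]=[\Sigma']$ and $\pi_*[\widetilde{S}_m]=[\widetilde{S}'_m]$. For the fiber class, the restriction $\pi|_W\colon W\to W'$ is itself a $4$-dimensional blow-down along $(-1)$-sections $E_W=W\cap\bigsqcup_a\mathcal{S}_{i_a}$; the relation $c_1(W)=(\pi|_W)^*c_1(W')-[E_W]$ together with $(\pi|_W)_*[E_W]=0$ and the projection formula yields $\pi_*(i_W)_!\bigl(c_1(W)\bigr)=(i_{W'})_!\bigl(c_1(W')\bigr)$. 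The principal technical step is computing $\pi_*c_2(X)$: using the standard Chern-class formula for the blow-up of a complex $3$-fold along a smooth curve, combined with the pushforward identities $\pi_*[\mathcal{S}_{i_a}]=0$ and $\pi_*([\mathcal{S}_{i_a}]^2)=-[\mathcal{S}'_{i_a}]$, a direct cohomological computation delivers
\[
\pi_*c_2(X)=c_2(X')+\sum_{a=1}^{k}[\mathcal{S}'_{i_a}].
\]
Substituting these pushforwards establishes (4).

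Part (5) follows from (4) by the same algebraic manipulation used to derive \Cref{cor:explicit description cusp [crit]} from \Cref{thm:crit pt set Chern class}, together with the observation that the number of cusps is unchanged by $\pi$ (no cusp lies on any $\mathcal{S}_{i_a}$). Consistency is ensured by the identities $\chi(W')+\sigma(W')=\chi(W)+\sigma(W)$ (each $4$-dimensional $(-1)$-sphere blow-down decreases $\chi$ by $1$ and increases $\sigma$ by $1$) and $c_1c_2(X)=c_1c_2(X')$ (for blow-up of a $3$-fold along a smooth curve: this is $24\chi(\O_X)=24\chi(\O_{X'})$ via Hirzebruch-Riemann-Roch, and for our almost-complex setting it is equivalent to the identity obtained by pairing (4) with $c_1$ and using \Cref{prop:value 1st Chern}); these substitutions transform \eqref{eqn:number cusps} into the expression in $W'$ and $X'$ asserted in (5). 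The main obstacle throughout is the blow-up formula for $\pi_*c_2(X)$: although standard in the algebraic setting, in the almost-complex category one must verify it using the structure $J$ from \Cref{thm:almost cpx str LbF}, which reduces to a purely local computation in a tubular neighborhood of each $\mathcal{S}_{i_a}$.
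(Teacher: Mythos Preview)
Your proposal is correct and follows essentially the same route as the paper: additivity of $\chi$ and Seifert--Van Kampen for (1)--(2), the observation that $\rho_m\circ F'$ extends to a genuine Lefschetz fibration with fiber $W'$ so that the handle argument of \Cref{prop:H2 LbF} applies verbatim for (3), and for (4)--(5) pushing forward the identity of \Cref{cor:explicit description cusp [crit]} via the blow-up Chern-class formula (the paper cites \cite{GP2007Chernblowup} for this, obtaining exactly your key identity $\pi_!c_2(X)=c_2(X')+\sum_a[\mathcal{S}'_{i_a}]$) together with the invariance of $\sigma(W)+\chi(W)$ and $c_1c_2$ under blow-down.
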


\begin{proof}
The manifold $X'$ can be obtained from $X$ by removing tubular neighborhoods of $\mathcal{S}_{i_1},\ldots, \mathcal{S}_{i_k}$ and then gluing those of $\mathcal{S}_{i_1}',\ldots, \mathcal{S}_{i_k}'$.
In particular, the Euler characteristic decreases by $k(\chi(\mathcal{S}_i)-\chi(\mathcal{S}_i'))=2k$. 
Thus, (1) follows from \Cref{prop:Euler LbF}.
(2) follows from the observation above and the Seifert-Van Kampen theorem. (Note that $W'$ is the blowdown of $X$ along the $k$ spheres $\mathcal{S}_{i_1}\cap W,\ldots, \mathcal{S}_{i_k}\cap W$.)
One can show (3) in the same way as the proof of \Cref{prop:H2 LbF} using the Lefschetz fibration $\rho_m\circ F'$ (admitting a section). 
Let $\pi:X\to X'$ be the blowdown mapping. 
Applying the formula in \cite[\S8]{GP2007Chernblowup} repeatedly, we obtain:
\[
c_2(X) = \pi^\ast \left(c_2(X')+\sum_{a=1}^{k}[\mathcal{S}_{i_a}']\right)-\sum_{a=1}^{k}\pi^\ast c_1(X')\cdot [\mathcal{S}_{i_a}].
\]
Hence, we can calculate $\pi_!c_2(X)$ as follows:
{\allowdisplaybreaks
\begin{align*}
&\pi_!c_2(X)\\
=&\pi_!\pi^\ast \left(c_2(X')+\sum_{a=1}^{k}[\mathcal{S}_{i_a}']\right)-\pi_!\left(\sum_{a=1}^{k}\pi^\ast c_1(X')\cdot [\mathcal{S}_{i_a}]\right)\\
=&c_2(X')+\sum_{a=1}^{k}[\mathcal{S}_{i_a}']-\sum_{a=1}^{k}c_1(X')\cdot \pi_![\mathcal{S}_{i_a}]\\
=&c_2(X')+\sum_{a=1}^{k}[\mathcal{S}_{i_a}']-\sum_{a=1}^{k}c_1(X')\cdot \pi_!(i_{\mathcal{S}_{i_a}})_!(1)\\
=&c_2(X')+\sum_{a=1}^{k}[\mathcal{S}_{i_a}']. & (\because \pi_!(i_{\mathcal{S}_{i_a}})_!=(\pi\circ i_{\mathcal{S}_{i_a}})_!=0)
\end{align*}
}%
By \Cref{cor:explicit description cusp [crit]}, we obtain (4) as follows:
{\allowdisplaybreaks
\begin{align*}
[\Crit(F')] =&\pi_! [\Crit(F)]\\
=&\pi_! \left(2m[\Sigma]-(2+m)(i_W)_!(c_1(W))+4(g-1)[\widetilde{S}_m]+c_2(X)\right)\\
=&2m[\Sigma']-(2+m)(\pi|_W)_!\left(\pi^\ast c_1(W')-\sum_{a=1}^{k}[\mathcal{S}_{i_a}\cap W]\right)\\
&+4(g-1)[\widetilde{S}_m']+c_2(X')+\sum_{a=1}^{k}[\mathcal{S}_{i_a}']\\
=&2m[\Sigma']-(2+m)(i_{W'})_!(c_1(W'))+4(g-1)[\widetilde{S}_m']+c_2(X')+\sum_{a=1}^{k}[\mathcal{S}_{i_a}'].
\end{align*}
}%
Since $W'$ can be obtained by blowing down $W$, $\sigma(W)+\chi(W)$ is equal to $\sigma(W')+\chi(W')$. 
Using the formulae in \cite[\S8]{GP2007Chernblowup}, one can show that the Chern number $c_1c_2$ of a $6$-manifold is invariant under blowdowns.
Hence we obtain (5). 
\end{proof}

\begin{proposition}\label{prop:blowdown topinv 2}

Suppose that $m=1$, and let $F^\dagger:X^\dagger\to \PP^2$ be the Lefschetz pencil-fibration obtained by blowing down $X$ along the ruled surface $F^{-1}(S_1)$. 
Let $\ell$ be the number of indices $i\in {1,\ldots,d}$ with $\epsilon_i=1$, $W^\dagger\subset X^\dagger$ be the closure of a regular fiber of $\rho\circ F^\dagger$, and $\Sigma^\dagger\subset X^\dagger$ be a regular fiber of $F^\dagger$.

\begin{enumerate}

\item 
$\chi(X^\dagger) = 2(3-3g+d)-\ell$,

\item 
$\pi_1(X^\dagger) \cong \pi_1(W^\dagger)$, 

\item 
There is a natural surjection $H_2(W^\dagger)/ \left<[L_1],\ldots, [L_\ell]\right>\twoheadrightarrow H_2(X^\dagger)$. 
This surjection is isomorphism if $W^\dagger$ is homeomorphic to a geometrically simply connected $4$-manifold.

\item 
$[\Crit(F^\dagger)]=3[\Sigma^\dagger]-3(i_{W^\dagger})_!(c_1(W^\dagger))+c_2(X^\dagger)$,

\item 
The number of cusps is equal to \eqref{eqn:number cusps} with $W$ and $X$ replaced with $W^\dagger$ and $X^\dagger$, respectively. 

\end{enumerate}

\end{proposition}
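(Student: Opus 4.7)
The setup: $X^\dagger$ is obtained from $X$ by collapsing each $\PP^1$-fiber of the bundle $T \to \Sigma_g$, where $T := F^{-1}(S_1) \cong S_1 \times \Sigma_g$, so that $\pi : X \to X^\dagger$ realizes $X$ as the blowup of $X^\dagger$ along the image $T^\dagger := \pi(T) \cong \Sigma_g$. Three key observations will drive the proof: first, a regular fiber $W$ of $\rho_1\circ F$ meets $T$ transversely along a single $\Sigma_g$-fiber of $F|_T$, and hence meets every contracted $\PP^1$ in exactly one point, so $\pi|_W$ is an embedding whose image is the closure $W^\dagger$ of a regular fiber of $\rho\circ F^\dagger$; this yields a diffeomorphism $W \cong W^\dagger$. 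Second, $\Crit(F) \cap T = \emptyset$ (because $\Critv(F)\cap S_m = \emptyset$), so $\pi$ carries $\Crit(F)$ diffeomorphically onto $\Crit(F^\dagger)$. Third, the section $\widetilde{S}_m$, having self-intersection $0$ in the trivial surface bundle $T\to S_1$, represents in $H_2(T)$ the class of a $\PP^1$-fiber of $T\to T^\dagger$, and in particular $\pi_\ast[\widetilde{S}_m]=0$ in $H_2(X^\dagger)$.

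For (1), additivity of the Euler characteristic gives $\chi(X^\dagger) = \chi(X) - \chi(T) + \chi(T^\dagger) = \chi(X) - 2 + 2g$, and substituting \Cref{prop:Euler LbF} yields the claim. For (2), a smooth blowup of a manifold along a closed submanifold does not change $\pi_1$, so \Cref{prop:pi1 LbF} combined with $W\cong W^\dagger$ gives $\pi_1(X^\dagger)\cong\pi_1(W^\dagger)$. For (3), the blowup formula in singular homology yields $H_2(X) \cong H_2(X^\dagger)\oplus \Z$, the extra summand being generated by $[\widetilde{S}_m]$; composing the surjection of \Cref{prop:H2 LbF} with the projection $\pi_\ast: H_2(X)\twoheadrightarrow H_2(X^\dagger)$ (which kills $[\widetilde{S}_m]$) and using $H_2(W)\cong H_2(W^\dagger)$ produces the desired surjection, and bijectivity in the geometrically simply connected case follows by combining the isomorphism half of \Cref{prop:H2 LbF} with this blowup splitting.

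The main difficulty lies in (4). I apply $\pi_! : H^4(X)\to H^4(X^\dagger)$ to the formula of \Cref{cor:explicit description cusp [crit]} with $m=1$. The first two observations yield $\pi_![\Sigma] = [\Sigma^\dagger]$ and $\pi_!(i_W)_!(c_1(W)) = (i_{W^\dagger})_!(c_1(W^\dagger))$, while the third gives $\pi_![\widetilde{S}_m] = 0$. The crux is computing $\pi_! c_2(X)$: applying the Chern-class blowup formula of \cite[\S 8]{GP2007Chernblowup} in the form $c_2(X) = \pi^\ast(c_2(X^\dagger) + [T^\dagger]) - \pi^\ast c_1(X^\dagger)\cdot [T]$ (the identity used in the proof of \Cref{prop:blowdown topinv 1}), combined with the projection formula \eqref{eq:projection formula}, $\pi_!(1)=1$, and $\pi_![T]=0$ (since $\pi(T)=T^\dagger$ has real dimension $2$ and so represents the zero class in $H_4(X^\dagger)$), yields $\pi_! c_2(X) = c_2(X^\dagger) + [T^\dagger]$. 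Finally $[T^\dagger] = [\Sigma^\dagger]$ in $H_2(X^\dagger)$: a regular fiber $\Sigma$ of $F$ is homologous in $X$ to the $F$-fiber $W\cap T\subset T$ (both are fibers of $F$ joined by a path in $\Hir_1$), and pushing forward by $\pi$ (which restricts to a diffeomorphism on $W\cap T$ onto $T^\dagger$) gives $[\Sigma^\dagger] = \pi_\ast[W\cap T] = [T^\dagger]$. Substituting and collecting terms proves (4), and (5) then follows by applying \Cref{cor:explicit description cusp [crit]} to $F^\dagger$: $W\cong W^\dagger$ gives $\sigma(W)+\chi(W) = \sigma(W^\dagger)+\chi(W^\dagger)$, and $c_1c_2$ is invariant under this blowdown by the same application of \cite[\S 8]{GP2007Chernblowup} as in \Cref{prop:blowdown topinv 1}(5).
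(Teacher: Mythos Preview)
Your proof is correct and follows essentially the same route as the paper's: both push the formula of \Cref{cor:explicit description cusp [crit]} (with $m=1$) forward along $\pi_!$, use the Geiges--Pasquotto blowup identity to compute $\pi_! c_2(X)=c_2(X^\dagger)+[\Sigma^\dagger]$, and observe that $W\cong W^\dagger$ and that $c_1c_2$ is blowdown-invariant. Minor differences: for (3) you appeal to the blowup splitting of $H_2$ while the paper uses Mayer--Vietoris, and in (4) you spell out why $\pi_\ast[\widetilde S_m]=0$ and why the blowup center $T^\dagger$ is homologous to $\Sigma^\dagger$, points the paper leaves tacit. One small wording issue in (5): you cannot literally ``apply \Cref{cor:explicit description cusp [crit]} to $F^\dagger$'' since that corollary is stated only for bifibrations over $\Hir_m$; the actual argument (which you do give) is that the cusp count is unchanged under the blowdown and that formula \eqref{eqn:number cusps} takes the same value after the substitutions $W\rightsquigarrow W^\dagger$, $X\rightsquigarrow X^\dagger$.
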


\begin{proof}
The manifold $X^\dagger$ can be obtained from $X$ by removing tubular neighborhood of $F^{-1}(S_1)\cong S_1\times \Sigma_g$ and then gluing that of $F^{-1}([0:0:1])\cong \Sigma_g$.
One can deduce (1) and (2) from this observation in the same way as the proof of the preceding proposition. 
(3) follows from the Meyer-Vietoris exact sequences for the decompositions of $X$ and $X^\dagger$ above. 
Let $\pi^\dagger:X\to X^\dagger$ be the blowdown mapping. 
Applying the formula in \cite[\S8]{GP2007Chernblowup}, we obtain:
\[
c_2(X) = (\pi^\dagger)^\ast \left(c_2(X^\dagger)+[\Sigma^\dagger]\right)-(\pi^\dagger)^\ast c_1(X^\dagger)\cdot [F^{-1}(S_1)].
\]
We can obtain the following equality in the same way as the proof of \Cref{prop:blowdown topinv 1}:
\[
(\pi^\dagger)_!c_2(X)=c_2(X^\dagger)+[\Sigma^\dagger].
\]
By \Cref{cor:explicit description cusp [crit]}, we obtain (4) as follows:
{\allowdisplaybreaks
\begin{align*}
[\Crit(F^\dagger)] =&(\pi^\dagger)_! [\Crit(F)]\\
=&(\pi^\dagger)_! \left(2\cdot 1[\Sigma]-(2+1)(i_W)_!(c_1(W))+4(g-1)[\widetilde{S}_1]+c_2(X)\right)\\
=&2[\Sigma^\dagger]-3(\pi^\dagger|_W)_!\left((\pi^\dagger)^\ast c_1(W^\dagger)\right)+c_2(X^\dagger)+[\Sigma^\dagger]\\
=&3[\Sigma^\dagger]-3(i_{W^\dagger})_!(c_1(W^\dagger))+c_2(X^\dagger).
\end{align*}
}%
Lastly, (5) follows from the facts that $W^\dagger$ is diffeomorphic to $W$ and $c_1c_2$ of a $6$-manifold is invariant under blowdowns. 
\end{proof}

\begin{proposition}\label{prop:blowdown topinv 3}

Suppose that $m=1$ and $a_{i_1}=\cdots = a_{i_k=1}$.
Let $F^{\dagger\dagger}:X^{\dagger\dagger}\dashrightarrow \PP^2$ be the Lefschetz bipencil obtained by blowing down $X^\dagger$ along $F^\dagger(\mathcal{S}_{i_1}),\ldots, F^\dagger(\mathcal{S}_{i_1})$.  
Let $\ell$ be the number of indices $i\in {1,\ldots,d}$ with $\epsilon_i=1$, $W^{\dagger\dagger}\subset X^{\dagger\dagger}$ be the closure of a regular fiber of $\rho\circ F^{\dagger\dagger}$, and $\Sigma^{\dagger\dagger}\subset X^{\dagger\dagger}$ be a regular fiber of $F^{\dagger\dagger}$.

\begin{enumerate}

\item 
$\chi(X^{\dagger\dagger}) = 2(3-3g+d)-\ell-2k$, 

\item 
$\pi_1(X^{\dagger\dagger}) \cong \pi_1(W^{\dagger\dagger})$, 

\item 
There is a natural surjection $H_2(W^{\dagger\dagger})/ \left<[L_1],\ldots, [L_\ell]\right>\twoheadrightarrow H_2(X^{\dagger\dagger})$. 
This surjection is isomorphism if $W^{\dagger}$ is homeomorphic to a geometrically simply connected $4$-manifold. 

\item 
$[\Crit(F^{\dagger\dagger})]=3[\Sigma^{\dagger\dagger}]-3(i_{W^{\dagger\dagger}})_!(c_1(W^{\dagger\dagger}))+c_2(X^{\dagger\dagger})$,

\item 
The number of cusps is equal to \eqref{eqn:number cusps} with $W$ and $X$ replaced with $W^{\dagger\dagger}$ and $X^{\dagger\dagger}$, respectively. 

\end{enumerate}

\end{proposition}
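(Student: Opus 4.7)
The plan is to follow the template of \Cref{prop:blowdown topinv 1,prop:blowdown topinv 2}. The first step is to identify $\pi^{\dagger\dagger}:X^\dagger \to X^{\dagger\dagger}$ as the simultaneous blowdown of $k$ disjoint copies of $\PP^2$, namely $\mathcal{S}_{i_1}^\dagger, \ldots, \mathcal{S}_{i_k}^\dagger$, each with normal bundle $\mathcal{O}_{\PP^2}(-1)$, to $k$ isolated points (the base locus of $F^{\dagger\dagger}$). Equivalently, $X^\dagger$ is the blowup of $X^{\dagger\dagger}$ at these points. Moreover, each intersection $\mathcal{S}_{i_a}^\dagger \cap W^\dagger$ is a $\PP^1 \subset W^\dagger$ with self-intersection $-a_{i_a}=-1$, since the normal bundle of $\mathcal{S}_{i_a}^\dagger$ restricts to a linear $\PP^1\subset \PP^2$ with Euler number $-1$; thus $W^{\dagger\dagger}$ is obtained from $W^\dagger$ by blowing down these $k$ $(-1)$-spheres. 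Denote the induced map by $\pi_{W^\dagger}:W^\dagger \to W^{\dagger\dagger}$.

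For (1)--(3) I would reproduce the arguments of \Cref{prop:blowdown topinv 2}. Removing tubular neighborhoods of the $\mathcal{S}_{i_a}^\dagger$ (each of Euler characteristic $3$) and gluing in neighborhoods of points changes $\chi$ by $-2k$, which combined with \Cref{prop:blowdown topinv 2}(1) yields~(1). Both $\pi^{\dagger\dagger}$ and $\pi_{W^\dagger}$ collapse simply connected subsets to points, so neither affects $\pi_1$; combined with \Cref{prop:blowdown topinv 2}(2) this gives~(2). For~(3), a Mayer--Vietoris computation applied to the decompositions of $X^{\dagger\dagger}$ and $X^\dagger$ into complements of these neighborhoods, together with the effect of the 4-dimensional blowdown $\pi_{W^\dagger}$ on $H_2$, provides the surjection and the isomorphism statement.

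The substantive step is~(4). I would apply $\pi^{\dagger\dagger}_!$ to the formula for $[\Crit(F^\dagger)]$ from \Cref{prop:blowdown topinv 2}(4). Using the Chern class formula for the blowup at a smooth point of a complex $3$-fold (from \cite{GP2007Chernblowup}), the difference $c_2(X^\dagger) - (\pi^{\dagger\dagger})^\ast c_2(X^{\dagger\dagger})$ is expressible in terms of $[\mathcal{S}_{i_a}^\dagger]$ and $[\mathcal{S}_{i_a}^\dagger]^2$; since each $\mathcal{S}_{i_a}^\dagger$ is collapsed to a point, these classes push down to zero in $H^4(X^{\dagger\dagger})$, yielding $\pi^{\dagger\dagger}_! c_2(X^\dagger) = c_2(X^{\dagger\dagger})$. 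Analogously, the standard 4-dimensional blowdown formula $c_1(W^\dagger) = \pi_{W^\dagger}^\ast c_1(W^{\dagger\dagger}) - \sum_a [\mathcal{S}_{i_a}^\dagger\cap W^\dagger]$ together with $(\pi_{W^\dagger})_![\mathcal{S}_{i_a}^\dagger\cap W^\dagger]=0$ gives $(\pi_{W^\dagger})_! c_1(W^\dagger) = c_1(W^{\dagger\dagger})$. Combined with $\pi^{\dagger\dagger}_! [\Sigma^\dagger] = [\Sigma^{\dagger\dagger}]$ (the fiber is disjoint from the sections), these assemble into the formula in~(4).

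For~(5), I would invoke invariance of the ingredients in the cusp count \eqref{eqn:number cusps}: the Chern number $c_1 c_2$ of a smooth complex $3$-fold is preserved under blowdowns of smooth points (a standard consequence of \cite{GP2007Chernblowup}), and $\sigma(W^\dagger)+\chi(W^\dagger) = \sigma(W^{\dagger\dagger})+\chi(W^{\dagger\dagger})$ because a $(-1)$-sphere blowdown in a $4$-manifold decreases $\chi$ by $1$ and increases $\sigma$ by $1$. The main obstacle I expect is the careful verification that the $c_2$ correction term vanishes under $\pi^{\dagger\dagger}_!$ in step~(4), since this requires making the explicit blowup formula precise; however, this reduces to a degree count on the exceptional $\PP^2$'s, following the same pattern already established in the preceding propositions.
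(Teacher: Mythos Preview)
Your proposal is correct and follows essentially the same approach as the paper, which simply states that the proof proceeds ``in the same way as before, using the formula $c_2(X^{\dagger})=(\pi^{\dagger\dagger})^\ast c_2(X^{\dagger\dagger})$'' from \cite{GP2007Chernblowup} and leaves the details to the reader. One small simplification: for the blowup of a $6$-manifold at a point, the Geiges--Pasquotto formula gives $c_2(X^\dagger) = (\pi^{\dagger\dagger})^\ast c_2(X^{\dagger\dagger})$ with \emph{no} correction term at all (not merely a correction supported on the exceptional $\PP^2$'s), so the pushforward identity $\pi^{\dagger\dagger}_! c_2(X^\dagger) = c_2(X^{\dagger\dagger})$ is immediate from $\pi^{\dagger\dagger}_!(\pi^{\dagger\dagger})^\ast = \id$ without needing to argue that exceptional classes push down to zero.
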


\noindent
One can show this proposition in the same way as before, using the formula $c_2(X^{\dagger\dagger})=(\pi^{\dagger\dagger})^\ast c_2(X^\dagger)$ (see \cite[\S8]{GP2007Chernblowup}). 
The details are left to the reader.


\begin{remark}\label{rem:other topinv from monodromies}

For an almost complex $6$-manifold $X$, we can consider the three (non-trivial) Chern numbers $c_3(X)$, $c_1c_2(X)$, and $c_1^3(X)$. 
Suppose that $X$ is the total space of a pencil-fibration structure. 
One can easily calculate $c_3(X) = \chi(X)$ from monodromies (cf.~\Cref{prop:Euler LbF}). 
Since the $4$-dimensional fiber $W$ admits a Lefschetz fibration, one can calculate $\chi(W)$ and $\sigma(W)$ from monodromies in theory (using Meyer's signature cocycle \cite{Meyer1973sigcocycle}, for example).
Thus we can obtain $c_1c_2(X)$ from monodromies of the pencil-fibration structure (\Cref{cor:explicit description cusp [crit]}). 
The author does not know how to calculate the last Chern number from monodromy invariants:

\begin{problem}

Calculate the Chern number $c_1^3(X)$ (or more generally, determine the triple cup product structure of $H^2(X)$) of the total space $X$ of a pencil-fibration structure from its monodromies. 

\end{problem} 

\end{remark}

\section{An example}\label{sec:example}

In this section, we give an example of a compatible pair of braid and fiber monodromies. 
It follows from \Cref{thm:combinatorial construction pencil-fibration} that a Lefschetz bipencil can be obtained from this pair. 
We also carry out the computation of topological invariants of the total space of the bipencil from its monodromies. 
Note that this Lefschetz bipencil is naturally conjectured to be isomorphic to one obtained by composing the degree-$2$ Veronese embedding $\PP^3 \hookrightarrow \PP^9$ and a generic projection $\PP^9 \dashrightarrow \PP^2$ (cf.~\Cref{rem:example is deg2 Veronese}).

Let $R\subset D^2$ be the set of twelve points described in \Cref{fig:vanishing path 1}. 
We take simple paths $\beta_0,\beta_1, \mu_i\subset D^2$ between two points in $R$ as shown in \Cref{fig:vanishing path 1}, and put
\[
\mu_{i\pm 2} = \tau_{\beta_0}^{\mp 1}(\mu_i)~(i=4,14),\hspace{.3em}\mu_{j\pm 2} = \tau_{\beta_1}^{\mp 1}(\mu_j)~(i=5,15),\hspace{.3em}\mu_{11} = \tau_{\beta_0}^{-1}\tau_{\beta_1}^{-1}(\mu_{10}).
\]
We put $\epsilon_i =3$ for $i=2,\ldots, 7,13,\ldots, 18$, $\epsilon_i=1$ for $i=10,11$, and $\epsilon_i=2$ for other $i\in \{1,\ldots, 28\}$. 
Let $\kappa:D^2\to D^2$ be the $\pi/2$-degree counterclockwise rotation, which preserves $R$ and fixes the center $r_0\in D^2$ described in \Cref{fig:vp_1_2}.
We put $\mu_{28+i} = \kappa (\mu_i)$ and $\epsilon_{28+i}=\epsilon_i$ for $i=1,\ldots, 28$. 
We take simple closed curves $c_1,\ldots, c_6\subset \Sigma_1^8$ as shown in \Cref{fig:vanishing cycle 1}. 
Let $\iota:\Sigma_1^8\to \Sigma_1^8$ be the hyperelliptic involution (i.e.~the $\pi$-degree rotation) along the axis described in \Cref{fig:vc_1_1}, which preserves $\Pa \Sigma_1^8$. 
We put $c_{6+i} = \iota(c_i)$ for $i=7,\ldots, 12$. 
We take simple paths $\gamma_1,\ldots, \gamma_{12}\subset D^2$ from $r_0$ to points in $R$ as shown in \Cref{fig:vp_1_8}, and define a homomorphism $\theta:\pi_1(D^2\setminus R,r_0)\to \mathcal{M}_\Pa(\Sigma_1^8)$ so that $\theta(\alpha_i)=t_{c_i}$ for the meridian loop $\alpha_i$ of $\gamma_i$ for $i=1,\ldots, 12$. 

\begin{figure}[htbp]
\subfigure[$\mu_1,\mu_4,\mu_5$ and $\beta_0,\beta_1$.]{\includegraphics{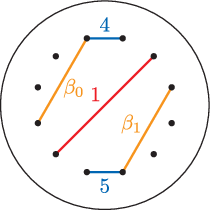}\label{fig:vp_1_1}}
\subfigure[$\mu_8,\mu_9$ and the center $r_0$ of $D^2$.]{\includegraphics{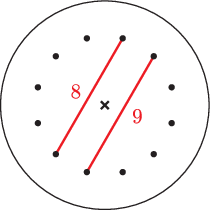}\label{fig:vp_1_2}}
\subfigure[$\mu_{10}$.]{\includegraphics{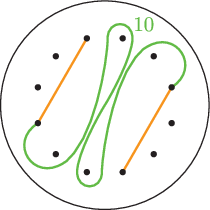}\label{fig:vp_1_3}}
\subfigure[$\mu_{12},\mu_{15}$ and $\mu_{16}$.]{\includegraphics{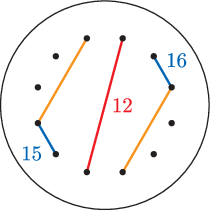}\label{fig:vp_1_4}}

\subfigure[$\mu_{19},\ldots, \mu_{22}$.]{\includegraphics{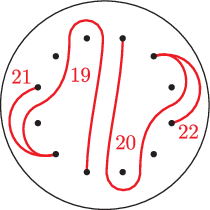}\label{fig:vp_1_5}}
\subfigure[$\mu_{23},\ldots,\mu_{26}$.]{\includegraphics{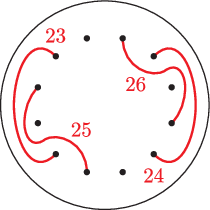}\label{fig:vp_1_6}}
\subfigure[$\mu_{27}$ and $\mu_{28}$.]{\includegraphics{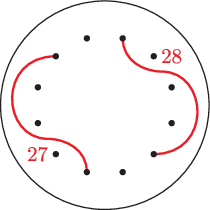}\label{fig:vp_1_7}}
\subfigure[Reference paths.]{\includegraphics{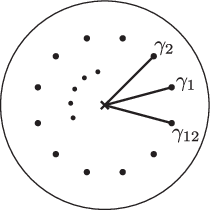}\label{fig:vp_1_8}}
\caption{Simple paths in $D^2$. 
Note that $\epsilon_i=1,2,3$ when $\mu_i$ is described in green, red, blue, respectively.}
\label{fig:vanishing path 1}
\end{figure}

\begin{figure}[htbp]
\subfigure[$c_1$ and the axis for $\iota$.]{\includegraphics{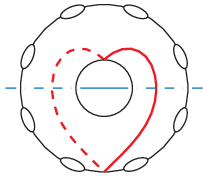}\label{fig:vc_1_1}}
\subfigure[$c_2$ (red) and $c_3$ (blue).]{\includegraphics{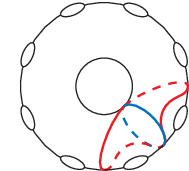}\label{fig:vc_1_2}}
\subfigure[$c_4$.]{\includegraphics{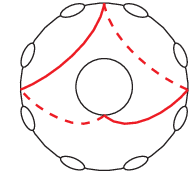}\label{fig:vc_1_3}}
\subfigure[$c_5$ (red) and $c_6$ (blue).]{\includegraphics{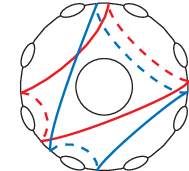}\label{fig:vc_1_4}}
\caption{Simple closed curves in $\Sigma_1^8$.}
\label{fig:vanishing cycle 1}
\end{figure}

\begin{theorem}\label{thm:relation for deg2 Veronese}

The product $\tau_{\mu_1}^{\epsilon_1}\cdots \tau_{\mu_{56}}^{\epsilon_{56}}$ is equal to $t_{\Pa}$ in $B_{12} = \mathcal{M}_{\Pa}(D^2,R)$. 
The product $t_{c_1}\cdots t_{c_{12}}\in \mathcal{M}_\Pa(\Sigma_1^8)$ is equal to $t_{\delta_1}\cdots t_{\delta_8}$. 
Furthermore, $\theta$ and the factorization $\tau_{\mu_1}^{\epsilon_1}\cdots \tau_{\mu_{56}}^{\epsilon_{56}}=t_\Pa$ are compatible.  

\end{theorem}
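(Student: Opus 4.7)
My plan is to verify the three claims in order, heavily exploiting the two symmetries: the $\pi/2$-rotation $\kappa$ of $(D^2,R)$ and the hyperelliptic involution $\iota$ of $\Sigma_1^8$. First I would observe that the $56$ paths split into two blocks related by $\kappa_*: B_{12}\to B_{12}$ (the automorphism induced by $\kappa$), so that $\tau_{\mu_1}^{\epsilon_1}\cdots\tau_{\mu_{56}}^{\epsilon_{56}} = W\cdot \kappa_*(W)$ with $W=\tau_{\mu_1}^{\epsilon_1}\cdots\tau_{\mu_{28}}^{\epsilon_{28}}$; likewise, the $12$ curves $c_1,\dots,c_{12}$ split into two $\iota$-related blocks. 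This cuts the bookkeeping roughly in half.

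For the compatibility condition I would proceed path by path. For each $\mu_i$ with $i=1,\dots,28$, I would first isotope $\mu_i$ so that it passes through $r_0$, then identify the two resulting arcs with compositions of the reference paths $\gamma_1,\dots,\gamma_{12}$ modified by the braid obtained from the prefix $\tau_{\mu_1}^{\epsilon_1}\cdots \tau_{\mu_{i-1}}^{\epsilon_{i-1}}$. Tracking this prefix braid gives a pair of meridian loops $\beta_i^1,\beta_i^2$; applying $\theta$ one reads off the two vanishing cycles as the images of certain $c_j$'s under a composition of Dehn twists (since the action of a half-twist $\tau_\mu$ on reference loops is explicit). The compatibility condition is then the statement that these two images intersect transversely once (if $\epsilon_i=3$), are disjoint (if $\epsilon_i=\pm 2$), or coincide (if $\epsilon_i=1$). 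Each such check reduces to a finite, explicit diagrammatic computation on $\Sigma_1^8$, directly verifiable from Figures \ref{fig:vanishing path 1} and \ref{fig:vanishing cycle 1}. The $\kappa$--$\iota$ equivariance of $\theta$ (which itself has to be verified once, by checking $\theta\circ\kappa_* = \iota_*\circ\theta$ on the generators $\alpha_j$) then propagates all such verifications to the block $i=29,\dots,56$.

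For the braid-group identity $\tau_{\mu_1}^{\epsilon_1}\cdots\tau_{\mu_{56}}^{\epsilon_{56}}=t_\partial$, I would reduce to a word problem in $B_{12}$. After conjugating with the braid in the previous paragraph, I can replace each $\tau_{\mu_i}$ by a word in the Artin generators $\sigma_1,\dots,\sigma_{11}$ associated to a standard linear ordering of $R$. The resulting equality in $B_{12}$ (between a word of length $56$, counted with signs, and the full twist $t_\partial$) can be checked either by hand using the standard braid relations, or, more reliably, by mapping both sides to $\mathrm{Aut}(F_{12})$ via the Artin representation and comparing the actions on the free basis; since $B_{12}\hookrightarrow \mathrm{Aut}(F_{12})$ is faithful, this is decisive. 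For the mapping-class-group identity $t_{c_1}\cdots t_{c_{12}}=t_{\delta_1}\cdots t_{\delta_8}$, I would derive it from the factorization $\tau_{\mu_1}^{\epsilon_1}\cdots \tau_{\mu_{56}}^{\epsilon_{56}}=t_\partial$ together with compatibility: indeed, by \Cref{thm:combinatorial construction LbF over Hir_m} applied with $m=1$ this produces a closed $6$-manifold with $b=8$ sections of Euler number $-a_k$, and the condition $\theta(\alpha_\partial)=t_{\delta_1}\cdots t_{\delta_8}$ is then \emph{equivalent} to all $a_k=1$; conversely, since the product $t_{c_1}\cdots t_{c_{12}}$ lies in the kernel of capping to $\mathcal{M}(\Sigma_1)$ (as can be seen by a chain-relation identification on each boundary), and since $\mathrm{ker}(\mathcal{M}_\partial\to \mathcal{M})$ is free abelian on $t_{\delta_1},\dots,t_{\delta_8}$, the identity reduces to computing eight winding numbers, each obtainable by intersecting a boundary-parallel arc with the vanishing cycles.

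The main obstacle I anticipate is the compatibility verification in the second paragraph: it is elementary but involves carrying prefix braids through a genus-$1$ surface with eight holes, and one must be careful that the conventions for meridian loops, vanishing paths, and the multiplication rule in \Cref{def:compatibility condition} are applied consistently. The use of $\kappa,\iota$ symmetry roughly halves the work, and cases with $\epsilon_i=1$ (where the two vanishing cycles must coincide) provide useful self-checks, since they force specific prefix braids to land the two endpoints of $\mu_i$ on equivalent positions; these fixed points of the verification pin down the entire computation.
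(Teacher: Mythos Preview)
Your plan for the braid identity (via the Artin representation) and for compatibility (case-by-case with symmetry) is workable, though different from the paper's structural derivation via lantern relations and double covers. One correction on the symmetry: the equivariance $\theta\circ\kappa_* = \iota_*\circ\theta$ cannot hold as stated, since $\kappa$ has order $4$ while $\iota$ has order $2$. What actually intertwines with $\kappa$ is an order-$4$ diffeomorphism $\iota'$ of $\Sigma_1^8$ with $(\iota')^2=\iota$ and $\iota'(c_i)=c_{i+3}$; the paper constructs this explicitly. Without $\iota'$ you only get the $\kappa^2$--$\iota$ symmetry, which still halves the work but not as much as you claim.

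The genuine gap is in your argument for $t_{c_1}\cdots t_{c_{12}}=t_{\delta_1}\cdots t_{\delta_8}$. First, the appeal to \Cref{thm:combinatorial construction LbF over Hir_m} is circular: that theorem takes as \emph{input} the hypothesis $\theta(\alpha_\partial)=\prod_k t_{\delta_k}^{a_k}$, and the resulting sections have Euler numbers $-a_k$ by construction; nothing in the output lets you recover the $a_k$ independently. Second, and more seriously, the assertion that $\ker\bigl(\mathcal{M}_\partial(\Sigma_1^8)\to\mathcal{M}(\Sigma_1)\bigr)$ is free abelian on $t_{\delta_1},\dots,t_{\delta_8}$ is false. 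That kernel contains, in addition to boundary twists, all point-pushing classes coming from the surface braid group of $\Sigma_1$ on $8$ strands (via the Birman exact sequence for the intermediate group $\mathcal{M}(\Sigma_1,8\text{ pts})$). So even after checking that the product dies in $\mathcal{M}(\Sigma_1)\cong SL(2,\Z)$, you cannot conclude it is a product of boundary twists, and the winding-number computation determines the $a_k$ only \emph{after} you know this. To close the gap you would need to show triviality in $\mathcal{M}(\Sigma_1,8\text{ pts})$, e.g.\ by the Alexander method on a system of arcs---which is essentially proving the relation directly. The paper instead derives it by manipulating the four-holed torus relation of Korkmaz--Ozbagci together with lantern relations, which avoids this difficulty entirely.
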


\begin{proof}[Proof of \Cref{thm:relation for deg2 Veronese}]

We first give several lemmas for calculating the product $\tau_{\mu_1}^{\epsilon_1}\cdots \tau_{\mu_{56}}^{\epsilon_{56}}$.

\begin{lemma}[{\cite[Lemma 2]{MTIV}}]\label{lem:relation for cusps}

Let $\sigma_1,\sigma_2\in B_3$ be the standard generators of the braid group $B_3$. 
The product $(\overline{\sigma_1}\sigma_2^3\sigma_1)\sigma_2(\sigma_1\sigma_2^3\overline{\sigma_1})$ is equal to $(\sigma_1\sigma_2)^6 \overline{\sigma_1}^3$. 

\end{lemma}

\begin{lemma}\label{lem:relation double quotient for 5pts}

Let $d_1',d'_2\subset \Sigma_0^4$ be the simple closed curves given in \Cref{fig:scc for doublequotient}, $\kappa'':\Sigma_0^4\to \Sigma_0^4$ be the $2\pi/3$-degree clockwise rotation around the center in \Cref{fig:scc for doublequotient 1}, and $d_{2k+i}'=(\kappa'')^{k}(d_i')$ for $i,k=1,2$. 
The following relation holds in $\mathcal{M}_\Pa(\Sigma_0^4)$:
\begin{equation}\label{eqn:relation double quotient for 5pts}
t_{d_1'}^2\cdots t_{d_6'}^2 = t_{\delta_1}^4\cdots t_{\delta_4}^4,
\end{equation}
where $\delta_1,\ldots, \delta_4\subset \Sigma_0^4$ are simple closed curves parallel to the four boundary components.

\begin{figure}[htbp]
\centering
\subfigure[$d_1'$ and the "center" of $\Sigma_0^4$.]{\includegraphics{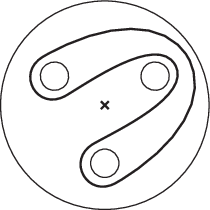}\label{fig:scc for doublequotient 1}}
\subfigure[$d_2'$.]{\includegraphics{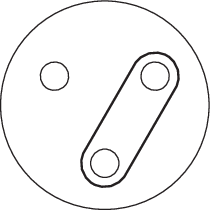}\label{fig:scc for doublequotient 2}}
\caption{Simple closed curves in $\Sigma_0^4$.}
\label{fig:scc for doublequotient}
\end{figure}

\end{lemma}

\begin{proof}[Proof of \Cref{lem:relation double quotient for 5pts}]
By the lantern relation (\cite[Proposition 5.1]{FarbMargalitBookMCG}), the following equalities hold: 
\[
t_{d_1'}t_{d_2'}t_{d_6'} =t_{d_2'}t_{d_3'}t_{d_4'}=t_{d_4'}t_{d_5'}t_{d_6'} = t_{d_2'}t_{d'_4}t_{d_6'}= t_{\delta_1}\cdots t_{\delta_4}.
\]
Using these relations, the product $t_{d_1'}^2\cdots t_{d_6'}^2$ is calculated as follows: 
{\allowdisplaybreaks
\begin{align*}
t_{d'_1}^2\cdots t_{d'_6}^2 =&(t_{\delta_1}\cdots t_{\delta_4})^5t_{d'_1}\overline{t_{d'_6}}\overline{t_{d'_4}}\overline{t_{d'_2}}\overline{t_{d'_6}}\overline{t_{d'_4}}t_{d'_6}\\
=&(t_{\delta_1}\cdots t_{\delta_4})^3t_{d'_1}t_{d'_2}t_{d'_6}=(t_{\delta_1}\cdots t_{\delta_4})^4.
\end{align*}
}%
This completes the proof of the lemma.
\end{proof}

Let $p':\Sigma_0^6\to \Sigma_0^4$ be the double covering corresponding to the element in $H^1(\Sigma_0^4;\Z/2\Z)$ represented by the cocycle sending $\delta_1$ to $1$ and $\delta_i$ to $0$ for $i=2,3$. 
It is easy to see that a representative of each factor in \eqref{eqn:relation double quotient for 5pts} admits a lift by $p'$ whose restriction on $\Pa \Sigma_0^6$ is the identity. 
Furthermore, an isotopy between compositions of the representatives for both sides of \eqref{eqn:relation double quotient for 5pts} is also lifted to an isotopy between their lifts. 
By this procedure, we obtain the following relation in $\mathcal{M}_\Pa(\Sigma_0^6)$.
\begin{equation}\label{eqn:relation single quotient for 5pts}
t_{d_1}^2t_{d_2}^2t_{d_3}t_{d_4}t_{d_5}^2t_{d_6}^2t_{d_7}=t_{d_0}^{-1}t_{\delta_0}^2t_{\delta_0'}^2t_{\delta_1}^4\cdots t_{\delta_4}^4,
\end{equation}
where $d_1,\ldots, d_7\subset \Sigma_0^6$ are simple closed curves given in \Cref{fig:scc for singlequotient 1}, $\delta_0, \delta_0'\subset \Sigma_0^6$ are those parallel to the outer and the central boundary components in \Cref{fig:scc for singlequotient 1}, and $\delta_1,\ldots, \delta_4\subset \Sigma_0^6$ are those parallel to the other boundary components. 

\begin{figure}[htbp]
\centering
\subfigure[$d_0,\ldots,d_3$]{\includegraphics{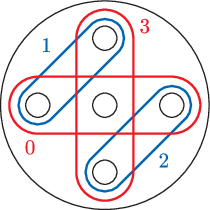}\label{fig:scc for singlequotient 1 1}}
\subfigure[$d_4$.]{\includegraphics{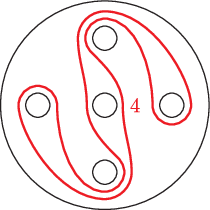}\label{fig:scc for singlequotient 1 2}}
\subfigure[$d_5$ and $d_6$.]{\includegraphics{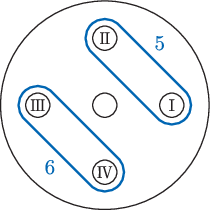}\label{fig:scc for singlequotient 1 3}}
\subfigure[$d_7$.]{\includegraphics{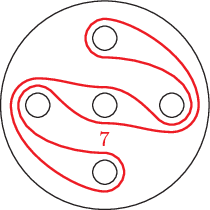}\label{fig:scc for singlequotient 1 4}}
\caption{Simple closed curves in $\Sigma_0^6$.
Each curve is drawn in red (resp.~blue) if the index for it in the relation \eqref{eqn:relation single quotient for 5pts} is $1$ (resp.~$2$).}
\label{fig:scc for singlequotient 1}
\end{figure}

Let $R'\subset \Sigma_0^2$ be the set of six points shown in \Cref{fig:scc for singlequotient 2}. 
By capping the central boundary of $\Sigma_0^6$ by a disk and then embedding the resulting planar surface into $\Sigma_0^2$ so that each boundary component of $\Sigma_0^6$ (labeled with I--IV, given in \Cref{fig:scc for singlequotient 1 3}) is arranged as described in \Cref{fig:scc for singlequotient 2 1}, we obtain the following relation in $\mathcal{M}_\Pa(\Sigma_0^2,R')$:
\begin{align}\nonumber
&(\tau_{\mu'_1}\tau_{\beta'})^6t_{e_2}^2\tau_{\mu'_3}^2t_{e_4}t_{e_5}^2(\tau_{\mu'_6}\tau_{\beta'})^6\tau_{\mu'_7}^2=t_{e_0}^{-1}t_{\widetilde{\delta_0}}^2t_{\widetilde{\delta_1}}^4\tau_{\beta'}^8,\\
\Leftrightarrow&\left((\tau_{\mu'_1}\tau_{\beta'})^6\tau_{\beta'}^{-3}\right)t_{e_2}^2\tau_{\mu'_3}^2\left(t_{e_4}\tau_{\beta'}^{-2}\right)t_{e_5}^2\left((\tau_{\mu'_6}\tau_{\beta'})^6\tau_{\beta'}^{-3}\right)\tau_{\mu'_7}^2=t_{e_0}^{-1}t_{\widetilde{\delta_0}}^2t_{\widetilde{\delta_1}}^4,
\end{align}
where $e_i,\mu_j',\beta'$ are described in \Cref{fig:scc for singlequotient 2}, $\widetilde{\delta_0}$ is the image of the outer boundary component of $\Sigma_0^6$ by the embedding $\Sigma_0^6\hookrightarrow \Sigma_0^2$, and $\widetilde{\delta_1}$ is that of the boundary component labeled with I.

\begin{figure}[htbp]
\centering
\subfigure[Configurations of components of $\Pa \Sigma_0^6$.]{\includegraphics{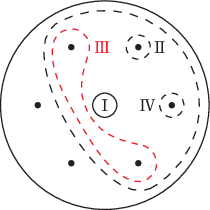}\label{fig:scc for singlequotient 2 1}}
\subfigure[$e_0,\mu_1',e_2$ and $\beta'$.]{\includegraphics{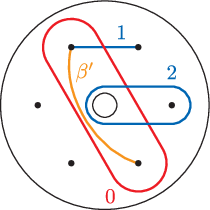}\label{fig:scc for singlequotient 2 2}}
\subfigure[$\mu_3'$.]{\includegraphics{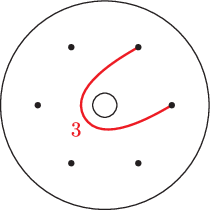}\label{fig:scc for singlequotient 2 3}}
\subfigure[$e_4$.]{\includegraphics{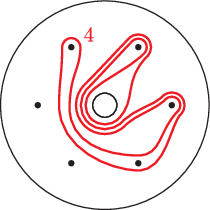}\label{fig:scc for singlequotient 2 4}}
\subfigure[$e_5$ and $\mu_6'$.]{\includegraphics{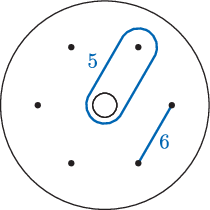}\label{fig:scc for singlequotient 2 5}}
\subfigure[$\mu_7'$.]{\includegraphics{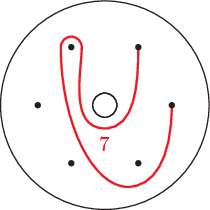}\label{fig:scc for singlequotient 2 6}}
\subfigure[$\mu_8'$ and $\mu_9'$.]{\includegraphics{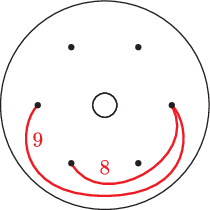}\label{fig:scc for singlequotient 2 7}}
\subfigure[$\mu_{10}'$ and $\mu_{11}'$.]{\includegraphics{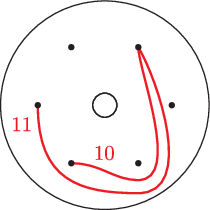}\label{fig:scc for singlequotient 2 8}}
\caption{Simple closed curves $e_i$ and simple paths $\mu_j'$ and $\beta'$ in $\Sigma_0^2$.}
\label{fig:scc for singlequotient 2}
\end{figure}

Let $\kappa':\Sigma_0^2\to \Sigma_0^2$ be the $\pi$-degree rotation, which preserves $R'$ and $\Pa \Sigma_0^2$ setwise. 
One can obtain the following relation by applying the lantern relation repeatedly:
\[
t_{e_0}^{-1}t_{\widetilde{\delta_0}}^2t_{\widetilde{\delta_1}}^4 \tau_{\mu_8'}^2\tau_{\mu_9'}^2\tau_{\mu_{10}'}^2\tau_{\mu_{11}'}^2 = t_{\delta} t_{\widetilde{\delta_1}}^4 t_{\widetilde{\delta_0}}t_{\kappa'(\widetilde{\delta_0})}^{-1},
\]
where $\delta\subset \Sigma_0^2$ is a simple closed curve parallel to the outer boundary component.  
Combining this relation with \eqref{eqn:relation single quotient for 5pts}, we obtain:
\begin{align*}
&\left((\tau_{\mu'_1}\tau_{\beta'})^6\tau_{\beta'}^{-3}\right)t_{e_2}^2\tau_{\mu'_3}^2\left(t_{e_4}\tau_{\beta'}^{-2}\right)t_{e_5}^2\left((\tau_{\mu'_6}\tau_{\beta'})^6\tau_{\beta'}^{-3}\right)\tau_{\mu'_7}^2\tau_{\mu_8'}^2\tau_{\mu_9'}^2\tau_{\mu_{10}'}^2\tau_{\mu_{11}'}^2\\
&\cdot \left((\tau_{\kappa'(\mu'_1)}\tau_{\kappa'(\beta')})^6\tau_{\kappa'(\beta')}^{-3}\right)t_{\kappa'(e_2)}^2\tau_{\kappa'(\mu'_3)}^2\left(t_{\kappa'(e_4)}\tau_{\kappa'(\beta')}^{-2}\right)t_{\kappa'(e_5)}^2\left((\tau_{\kappa'(\mu'_6)}\tau_{\kappa'(\beta')})^6\tau_{\kappa'(\beta')}^{-3}\right)\tau_{\kappa'(\mu'_7)}^2\\
&\cdot \tau_{\kappa'(\mu_8')}^2\tau_{\kappa'(\mu_9')}^2\tau_{\kappa'(\mu_{10}')}^2\tau_{\kappa'(\mu_{11}')}^2\\
=&t_{\delta}^2 t_{\widetilde{\delta_1}}^8.
\end{align*}
Let $p:\Sigma_0^2\to \Sigma_0^2$ be a double covering. 
Taking a lift of each factor of the relation above by $p$ as before, and capping the central boundary of $\Sigma_0^2$ by a disk, we eventually obtain the desired relation $\tau_{\mu_1}^{\epsilon_1}\cdots \tau_{\mu_{56}}^{\epsilon_{56}} = t_{\Pa}$. (One can show that the lifts of $t_{e_4}\tau_{\beta'}^{-2}$ becomes $\tau_{\mu_{10}}\tau_{\mu_{11}}$ after capping the boundary by the Alexander method (see \cite{FarbMargalitBookMCG} for this method) and those of $(\tau_{\mu'_1}\tau_{\beta'})^6\tau_{\beta'}^{-3}$ and $(\tau_{\mu'_6}\tau_{\beta'})^6\tau_{\beta'}^{-3}$ become $\tau_{\mu_2}^3\cdots \tau_{\mu_7}^3$ and $\tau_{\mu_{13}}^3\cdots \tau_{\mu_{18}}^3$, respectively, by using \Cref{lem:relation for cusps}.)

Before calculating the product $t_{c_1}\cdots t_{c_{12}}\in \mathcal{M}_\Pa(\Sigma_1^8)$, we introduce a root $\iota'$ of the involution $\iota:\Sigma_1^8\to \Sigma_1^8$, which clarifies symmetry of relevant simple closed curves in $\Sigma_1^8$. 
In order to define $\iota'$, we first move the boundary components of $\Sigma_1^8$ as described in \Cref{fig:surf for symmetry 1}, yielding the surface in \Cref{fig:surf for symmetry 2}.
We next decompose the surface into four rectangles given in \Cref{fig:surf for symmetry 3} by cutting it along the curves $a,\ldots, h\subset \Sigma_1^8$ in \Cref{fig:surf for symmetry 2}.
Let $\iota':\Sigma_1^8\to \Sigma_1^8$ be the diffeomorphism sending each rectangle in \Cref{fig:surf for symmetry 3} as the arrows indicate. 
\begin{figure}[htbp]
\centering
\subfigure[Trajectories of the components of $\Pa \Sigma_1^8$.]{\includegraphics{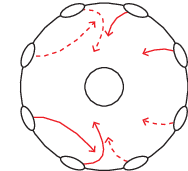}\label{fig:surf for symmetry 1}}
\subfigure["Perforated curves".]{\includegraphics{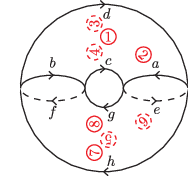}\label{fig:surf for symmetry 2}}
\subfigure[The diffeomorphism $\iota'$.]{\includegraphics{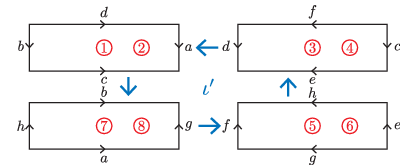}\label{fig:surf for symmetry 3}}
\caption{How to define $\iota'$.}
\label{fig:surf for symmetry}
\end{figure}
One can easily check that the square $(\iota')^2$ is equal to the involution $\iota$, and the image $\iota'(c_i)$ coincides with $c_{i+3}$ (taking indices modulo $12$).

The following relation holds in $\mathcal{M}_{\Pa}(\Sigma_1^8)$ (\cite{KorkmazOzbagci2008}):
\begin{equation}\label{eqn:four holed torus relation}
(t_{a_1}t_{a_3}t_bt_{a_2}t_{a_4}t_b)^2 = t_{\sigma_1}t_{\sigma_2}t_{\sigma_3}t_{\sigma_4},
\end{equation}
where $a_i,b,\sigma_j\subset \Sigma_1^8$ are simple closed curves shown in \Cref{fig:vc_2}.
\begin{figure}[htbp]
\centering
\subfigure[$a_1,\ldots, a_4$.]{\includegraphics{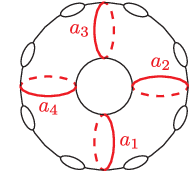}\label{fig:vc_2_1}}
\subfigure[$\sigma_1,\sigma_3$ and $b_1$.]{\includegraphics{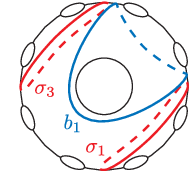}\label{fig:vc_2_2}}
\subfigure[$\sigma_2, \sigma_4$ and $b$.]{\includegraphics{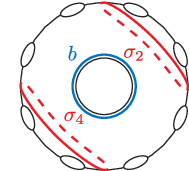}\label{fig:vc_2_3}}
\subfigure[$b_2$.]{\includegraphics{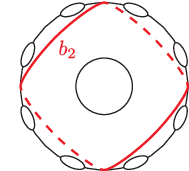}\label{fig:vc_2_4}}
\caption{Simple closed curves in $\Sigma_1^8$.}
\label{fig:vc_2}
\end{figure}
By direct calculations, one can easily check that the following equalities hold:
{\allowdisplaybreaks
\begin{align*}
t_b &= (t_{a_1}t_{a_3}t_bt_{a_2}t_{a_4})^{-1}t_{b_2}(t_{a_1}t_{a_3}t_bt_{a_2}t_{a_4}), \\
t_{a_3}&=(t_{b}t_{a_2})^{-1}t_{b_1}(t_{b}t_{a_2}), \\
t_{c_1}&=t_{a_1}t_bt_{a_1}^{-1}\mbox{ and }t_{c_4}=t_{b_1}t_{a_4}t_{b_1}^{-1}. 
\end{align*}
}%
Thus, the product $(t_{a_1}t_{a_3}t_bt_{a_2}t_{a_4}t_b)^2$ is transformed as follows: 
{\allowdisplaybreaks
\begin{align*}
(t_{a_1}t_{a_3}t_bt_{a_2}t_{a_4}t_b)^2 &= (t_{a_1}t_{a_3}t_bt_{a_2}t_{a_4}t_{b_2})^2\\
&=(t_{a_1}t_{a_3}t_bt_{a_2}t_{a_4}t_{b_2})(t_{\iota(a_1)}t_{\iota(a_3)}t_{\iota(b)}t_{\iota(a_2)}t_{\iota(a_4)}t_{\iota(b_2)})\\
&=(t_{a_1}t_bt_{a_2}t_{b_1}t_{a_4}t_{b_2})(t_{\iota(a_1)}t_{\iota(b)}t_{\iota(a_2)}t_{\iota(b_1)}t_{\iota(a_4)}t_{\iota(b_2)})\\
&=(t_{c_1}t_{a_1}t_{a_2}t_{c_4}t_{b_1}t_{b_2})(t_{\iota(c_1)}t_{\iota(a_1)}t_{\iota(a_2)}t_{\iota(c_4)}t_{\iota(b_1)}t_{\iota(b_2)}).
\end{align*}
}%
One can deduce the following equalities from the lantern relation: 
{\allowdisplaybreaks
\begin{align*}
t_{a_1}t_{a_2} &= t_{c_2}t_{c_3}t_{\sigma_1}^{-1}t_{\delta_1}t_{\delta_2}, \\
t_{b_1}t_{b_2} &= t_{c_5}t_{c_6}t_{\sigma_4}^{-1}t_{\delta_7}t_{\delta_8}. 
\end{align*}
}%
We thus obtain:
{\allowdisplaybreaks
\begin{align*}
(t_{a_1}t_{a_3}t_bt_{a_2}t_{a_4}t_b)^2 &= t_{c_1}\cdots t_{c_{12}}\cdot t_{\sigma_1}t_{\sigma_2}t_{\sigma_3}t_{\sigma_4}t_{\delta_1}^{-1}\cdots t_{\delta_8}^{-1}. 
\end{align*}
}%
Combining it with \eqref{eqn:four holed torus relation}, we eventually obtain the desired relation $t_{c_1}\cdots t_{c_{12}}= t_{\delta_1}\cdots t_{\delta_8}$. 

\begin{remark}\label{rem:example LP is P1P1}

One can indeed show that the factorization $t_{c_1}\cdots t_{c_{12}}$ of the boundary multi-twist is Hurwitz equivalent to that for the Lefschetz pencil $f_s:\PP^1\times \PP^1\dasharrow \PP^1$ given in \cite[Table 1]{HH2021genus1holLP}. 

\end{remark}

Lastly, we check the compatibility condition with $\theta$ for each $\mu_i$. 
By the symmetries induced by $\iota', \iota=(\iota')^2$ and $\kappa$, it is enough to check the condition for $\mu_i$ with 
$i=1,11$, and $i=2j$ ($j\leq 14$). 
The paths $\mu_1,\mu_{10},\mu_{11},\mu_{12}$ go through the center $r_0$.
In order to check the condition, we move the other paths by isotopy so that these go through the center $r_0$ as (implicitly) shown in \Cref{fig:vp_2}. 
We denote the resulting pairs of reference paths by $\beta_i^1, \beta_i^2$.
\begin{figure}[htbp]
\centering
\subfigure[]{\includegraphics{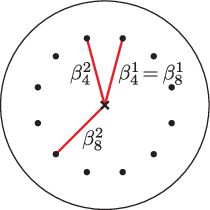}\label{fig:vp_2_1}}
\subfigure[]{\includegraphics{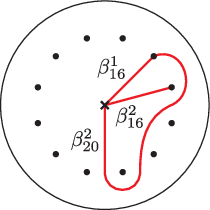}\label{fig:vp_2_2}}
\subfigure[]{\includegraphics{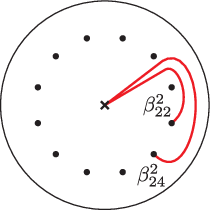}\label{fig:vp_2_3}}
\caption{Reference paths. We put $\beta_{20}^1=\beta_{26}^1=\beta_{28}^1:=\beta_4^1$, $\beta_{22}^1=\beta_{24}^1:=\beta_{16}^1$, $\beta_{26}^2 :=\beta_{22}^2$, and $\beta_{28}^2 :=\beta_{24}^2$. }
\label{fig:vp_2}
\end{figure}
One can easily check that the vanishing cycles associated with $\beta_i^1$ and $\beta_i^2$ are as given in \Cref{tab:vanishing cycles}, and up to isotopy, these intersect in one point transversely (resp.~these are disjoint, these coincide) if $\epsilon_i=3$ (resp.~$2$, $1$). 
This eventually completes the proof of \Cref{thm:relation for deg2 Veronese}.
\begin{table}[htbp]
\centering
\begin{tabular}{|r|l|l|}
\hline
$i$ & $\epsilon_i$ &  Vanishing cycles \\
\hline\hline
$1$ & $2$ & $c_2$ and $c_8$\\
\hline
$2$ & $3$ & $c_3$ and $t_{c_4}^{-1}(c_7)$\\
\hline
$4$ & $3$ & $c_3$ and $c_4$\\
\hline
$6$ & $3$ & $c_3$ and $c_7$\\
\hline
$8$ & $2$ & $c_3$ and $c_8$\\
\hline
$10$ & $1$ & $t_{c_3}^{-1}\left(t_{c_8}(c_7)\right)$ and $t_{c_9}^{-1}\left(t_{c_2}(c_1)\right)$\\
\hline
$11$ & $1$ & $t_{c_3}^{-1}\left(t_{c_8}t_{c_7}(c_4)\right)$ and $t_{c_9}^{-1}\left(t_{c_2}t_{c_1}(c_{10})\right)$\\
\hline
$12$ & $2$ & $c_3$ and $c_9$\\
\hline
$14$ & $3$ & $c_2$ and $c_{10}$\\
\hline
$16$ & $3$ & $c_2$ and $c_1$\\
\hline
$18$ & $3$ & $c_2$ and $t_{c_1}(c_{10})$\\
\hline
$20$ & $2$ & $c_3$ and $t_{c_{10}}^{-1}\left(t_{c_1}^{-1}(c_2)\right)$\\
\hline
$22$ & $2$ & $c_2$ and $t_{c_1}(c_{12})$\\
\hline
$24$ & $2$ & $c_2$ and $t_{c_1}\left(t_{c_{12}}(c_{11})\right)$\\
\hline
$26$ & $2$ & $c_3$ and $t_{c_1}(c_{12})$\\
\hline
$28$ & $2$ & $c_3$ and $t_{c_1}\left(t_{c_{12}}(c_{11})\right)$\\
\hline
\end{tabular}
\V{.3em}
\caption{Vanishing cycles associated with $\beta_i^1$ and $\beta_i^2$. }
\label{tab:vanishing cycles}
\end{table}
\end{proof}

By \Cref{thm:combinatorial construction pencil-fibration,thm:relation for deg2 Veronese}, we obtain a genus-$1$ Lefschetz bipencil $F:X\dashrightarrow \PP^2$.
Furthermore, one can easily check that for each path $\mu_i$ with $\epsilon_i=2$, the corresponding vanishing cycles $\beta_i^1$ and $\beta_i^2$ are not isotopic. 
Thus, by \Cref{prop:existence cohomology for symp form,prop:symplectic structure blowdowns}, $X$ admits a symplectic structure. 
In what follows, we calculate topological invariants of $X$ using the results in \Cref{sec:top inv LbF}.  
The Euler characteristic of $X$ is $2(3-3\cdot 1+12)-4-2\cdot 8=4$. 
Let $W$ be the closure of a regular fiber of $\rho\circ F:X\dasharrow \PP^1$, which admits a Lefschetz pencil corresponding to the factorization $t_{c_1}\cdots t_{c_{12}}=t_{\delta_1}\cdots t_{\delta_8}$. 
The handle decomposition of $W$ associated with the Lefschetz pencil has two $1$-handles, both of which can be canceled with $2$-handles. 
Thus, $W$ is geometrically simply connected and $\pi_1(X) = 1$.  

As mentioned in \Cref{rem:example LP is P1P1}, $W$ is diffeomorphic to $\PP^1\times \PP^1$.
In particular, $H_2(W)$ is isomorphic to $\Z\oplus \Z$. 
Furthermore, any Lagrangian sphere in $W$ represents $(1,-1)\in \Z\oplus \Z\cong H_2(W)$ since it has self-intersection $-2$ by the Lagrangian neighborhood theorem. 
Thus, $H_2(X)$ is isomorphic to $\Z\oplus \Z / \left<(1,-1)\right>\cong \Z$. 
In what follows, we give an alternate proof of this fact without relying on geometric properties of $W$ and using only monodromy information (in particular applicable to general pencil-fibration structures constructed by \Cref{thm:combinatorial construction LbF over Hir_m,thm:combinatorial construction pencil-fibration}). 

Let $x_i\in W$ ($i=1,\ldots, 8$) be the base point corresponding to the component of $\Pa \Sigma_1^8$ labeled with $i$ in \Cref{fig:surf for symmetry}, and $W'$ be the $4$-manifold obtained from $W$ by removing neighborhoods of $x_2,\ldots, x_8$, and then blowing up at $x_1$. 
The manifold $W'$ admits a Lefschetz fibration over $\PP^1$ with fiber $\Sigma_1^7$ and a $(-1)$-section. 
In particular, $W'$ can  be decomposed as follows:
\[
W' = (D^2\times \Sigma_1^7)\cup (h^2_1\sqcup\cdots \sqcup h^2_{12})\cup (D^2\times \Sigma_1^7), 
\]
where $h^2_i$ is the $2$-handle corresponding to the Lefschetz singularity with the vanishing cycle $c_i$. 
(Note that we regard $\Sigma_1^8$ as a subsurface of $\Sigma_1^7$ in the obvious way.)
Furthermore, the former (resp.~the latter) $D^2\times \Sigma_1^7$ can be regarded as one $0$-handle and eight $1$-handles (resp.~one $2$-handle and eight $3$-handles) so that the union of $0$- and $2$-handles is the $(-1)$-section. 
Let $W'_k\subset W'$ be the union of handles with indices less than or equal to $k$, $C_k= H_k(W_k',W'_{k-1})$, and $\Pa_k:C_k\to C_{k-1}$ be the boundary operator. 
It is easy to see that the groups $C_1$ and $C_3$ are both isomorphic to $H_1(\Sigma_1^7)$, while $C_2$ is isomorphic to $\Z^{12}\oplus \Z$.
One can show the following lemma in the same way as that in \cite[Appendix]{HamadaHayano}:

\begin{lemma}\label{lem:representing H_2(W) by vc}

Under the isomorphisms for $C_k$'s, the maps $\Pa_2:\Z^{12}\oplus \Z\to H_1(\Sigma_1^7)$ and $\Pa_3:H_1(\Sigma_1^7)\to \Z^{12}\oplus \Z$ are calculated as follows: 

\begin{itemize}

\item 
$\Pa_2(e_i):= [c_i]\in H_1(\Sigma_1^7)$ and $\Pa_2(e_0)=0$, where $e_1,\ldots, e_{12}\in \Z^{12}$ are the standard generators and $e_0$ is the generator of the $\Z$-summand.

\item 
For each $\alpha=\alpha_0\in H_1(\Sigma_1^7)$, we inductively take $\alpha_i\in H_1(\Sigma_1^7)$ and $m_i\in \Z$ ($i=1,\ldots, 12$) by $m_{i}=\left<\alpha_{i-1},[c_i]\right>$ and $\alpha_i = \alpha_{i-1} - m_i[c_i]\in H_1(\Sigma_1^7)$, where $\left<~,~\right>$ is the intersection pairing.
The image $\Pa_3(\alpha)$ is then equal to $\sum_{i=1}^{12} m_i e_i$. 

\end{itemize}

\end{lemma}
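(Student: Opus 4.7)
The plan is to analyze the boundary operators $\partial_2$ and $\partial_3$ of the cellular chain complex coming from the handle decomposition of $W'$ described just before the lemma, which is induced by the Lefschetz fibration structure $W' \to \PP^1$. Under the identifications $C_1 \cong H_1(\Sigma_1^7) \cong C_3$ (via the standard correspondence between 1-handles/3-handles of $D^2 \times \Sigma_1^7$ and generators of $H_1(\Sigma_1^7)$), and $C_2 \cong \Z^{12} \oplus \Z$ (with $e_1,\dots,e_{12}$ from the Lefschetz handles and $e_0$ from the extra 2-handle), the formulas are local in nature: $\partial_2$ depends on how each 2-handle is attached to a regular fiber, while $\partial_3$ depends on how the attaching 2-spheres of the dual 3-handles intersect the cocores of the 2-handles.

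For $\partial_2$, I would first recall that the $i$-th Lefschetz 2-handle $h^2_i$ is attached along a vanishing cycle $c_i$ sitting in a regular fiber $\Sigma_1^7$, so that its attaching circle represents $[c_i] \in H_1(\Sigma_1^7) = C_1$; this gives $\partial_2(e_i) = [c_i]$. For the remaining 2-handle $h^2_0$, which together with the $0$-handle assembles the $(-1)$-section, the attaching circle is a small loop in $\Sigma_1^7$ around the point where the section meets the fiber. Such a loop bounds a disk inside $\Sigma_1^7$, hence is null-homologous, giving $\partial_2(e_0) = 0$.

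For $\partial_3$, I would follow the strategy of \cite[Appendix]{HamadaHayano}. Given $\alpha \in H_1(\Sigma_1^7) = C_3$, represent $\alpha$ by a simple closed curve on a reference fiber $\Sigma_1^7$ sitting inside the second $D^2 \times \Sigma_1^7$. The attaching $2$-sphere of the corresponding $3$-handle is the trace of this curve under parallel transport back through the Lefschetz critical points to the first $D^2 \times \Sigma_1^7$. Each time the trace passes across the critical value of $h^2_i$, the geometric intersection with the cocore disk of $h^2_i$ contributes with algebraic coefficient equal to $\langle \alpha_{i-1}, [c_i]\rangle = m_i$, and after the passage the residual class becomes $\alpha_i = \alpha_{i-1} - m_i [c_i]$, since the vanishing cycle $c_i$ now bounds inside $h^2_i$. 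Iterating gives $\partial_3(\alpha) = \sum_{i=1}^{12} m_i e_i$. The coefficient of $e_0$ is zero because the attaching 2-sphere can be isotoped into the complement of a neighborhood of the section, so it does not intersect the cocore of $h^2_0$.

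The main obstacle is the rigorous identification of the coefficient $m_i$ with the intersection number $\langle \alpha_{i-1}, [c_i]\rangle$, i.e.\ a careful handle-theoretic tracking of how the attaching $2$-sphere of a 3-handle meets the cocores of successive 2-handles. This requires understanding how monodromy around the $i$-th critical value, which is the Dehn twist $t_{c_i}$, modifies the curve representing $\alpha_{i-1}$, together with the observation that an isotopy of the representative curve changes the algebraic intersection with each cocore only by a boundary, so the formula is well defined on homology. Once this is established, the same argument as in \cite[Appendix]{HamadaHayano} applies verbatim, with the only new feature being the presence of the section-related $2$-handle $h^2_0$, whose contribution vanishes for the reasons above.
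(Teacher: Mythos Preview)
Your proposal is correct and matches the paper's approach exactly: the paper does not give an independent proof of this lemma but simply states that it ``can be shown in the same way as that in \cite[Appendix]{HamadaHayano}'', which is precisely the handle-theoretic argument you have outlined (with the extra observation about the section $2$-handle $h^2_0$ being the only new ingredient). Your sketch faithfully unpacks what that reference entails.
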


\noindent
We take oriented simple closed curves $a,b,\delta_1,\ldots,\delta_8\subset \Sigma_1^8$ as shown in \Cref{fig:generator homology}, and denote the homology classes represented by these curves by the same symbols $a,b,\delta_1,\ldots, \delta_8\in H_1(\Sigma_1^8)$. 
\begin{figure}[htbp]
\centering
\subfigure[]{\includegraphics{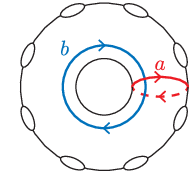}\label{fig:generator homology 1}}
\subfigure[]{\includegraphics{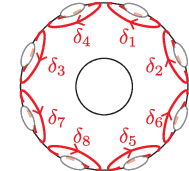}\label{fig:generator homology 2}}
\caption{Simple closed curves generating $H_1(\Sigma_1^8)$.}
\label{fig:generator homology}
\end{figure}
The following equalities hold: 
\[
\sum_{i=1}^{8}\delta_i=0,\hspace{.3em}\iota'_\ast(a)=-b,\hspace{.3em}\iota'_\ast(b)=a\mbox{, and } \iota'_\ast(\delta_i)=\delta_{i-2}\mbox{ (taking indices modulo $8$).}
\]  
Furthermore, under suitable orientations, $c_1,c_2$ and $c_3$ in \Cref{fig:vanishing cycle 1} represent the classes $a+b-\delta_5-\delta_6, a-\delta_5$ and $a-\delta_6$, respectively. 
One can deduce the following lemma from these observations (note that $\delta_1=0$ in $H_1(\Sigma_1^7)$):

\begin{lemma}\label{lem:kernel and image of boundary operators}

The kernel of $\Pa_2$ in \Cref{lem:representing H_2(W) by vc} has a basis 
\begin{align*}
&\left\{e_1+e_4+e_7+e_{10},e_2+e_3+e_5+e_6+e_8+e_9+e_{11}+e_{12},\right.\\
&\hspace{1em}\left.e_2+e_3+e_4+e_8+e_9+e_{10},e_7+e_8-e_9+e_{10}-e_{11}-e_{12}\right\}
\end{align*}
and the image of $\Pa_3$ in \Cref{lem:representing H_2(W) by vc} has a basis
\[
\left\{e_1+e_4+e_7+e_{10},e_2+e_3+e_5+e_6+e_8+e_9+e_{11}+e_{12}\right\}.
\]

\end{lemma}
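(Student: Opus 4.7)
The plan is to verify both claims by direct linear-algebraic computation, exploiting the symmetry $c_{i+3}=\iota'(c_i)$ to keep the bookkeeping manageable. First, I extend the given formulas for $[c_1],[c_2],[c_3]$ to the remaining nine classes via the action of $\iota'_\ast$ on $H_1(\Sigma_1^8)$, namely $\iota'_\ast(a)=b$, $\iota'_\ast(b)=-a$, and $\iota'_\ast(\delta_i)=\delta_{i-2}$ (indices modulo $8$); passing to $H_1(\Sigma_1^7)$ amounts to imposing $\delta_1=0$ and yields explicit formulas for every $[c_i]$ in the basis $\{a,b,\delta_2,\ldots,\delta_8\}$ modulo the single relation $\delta_2+\cdots+\delta_8=0$.

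For the kernel of $\Pa_2$, each of the four listed vectors (write them as $v_1,\ldots,v_4$) is checked to lie in $\Ker\Pa_2$ by direct substitution: in every case the $a$- and $b$-coefficients cancel and the $\delta_j$-coefficients all equal a common value, so the sum is a multiple of $\delta_2+\cdots+\delta_8=0$. To show that these together with $e_0\in\Ker\Pa_2$ exhaust the kernel, I would observe that $\Pa_2$ is surjective onto $H_1(\Sigma_1^7)\cong\Z^8$, for instance via $a=-[c_8]$, $\delta_5=-[c_2]+[c_8]$, $b=[c_1]-[c_2]-[c_3]-[c_8]$ and similar short expressions for the remaining generators; hence $\Ker\Pa_2$ has rank $13-8=5$. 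The $\Z$-primitivity of $\{v_1,\ldots,v_4\}$ inside $\Z^{12}$ is then verified by exhibiting a $4\times 4$ minor with determinant $\pm 1$ (the minor on columns indexed by $\{1,3,5,11\}$ works).

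For the image of $\Pa_3$, I first note that $\Pa_3(\delta_j)=0$ for every boundary-parallel class, since $\delta_j$ can be isotoped into $\Pa\Sigma_1^7$ and hence pairs trivially with each $[c_i]$; all $m_i$ in the recursion vanish. Thus $\Im\Pa_3=\langle\Pa_3(a),\Pa_3(b)\rangle$. Fixing the intersection pairing by $\langle a,b\rangle=-1$---the sign under which the recursion $\alpha_i=\alpha_{i-1}-m_i[c_i]$ closes up to $\alpha_{12}=\alpha_0$, as forced by the chain-complex identity $\Pa_2\circ\Pa_3=0$---a run of the algorithm yields
\[
\Pa_3(b)=e_1+e_4+e_7+e_{10}=v_1
\]
and
\[
\Pa_3(a)=-e_1+e_2+e_3-e_4+e_5+e_6-e_7+e_8+e_9-e_{10}+e_{11}+e_{12}=-v_1+v_2.
\]
The change of basis $\{v_1,v_2\}\to\{v_1,-v_1+v_2\}$ is unimodular, so $\Im\Pa_3=\langle v_1,v_2\rangle$ as claimed.

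The computational heart of the argument is the recursion for $\Pa_3(a)$: the $(a,b)$-component of each $\alpha_j$ must be tracked through all twelve steps, with every index contributing a nonzero $m_i=\pm 1$ that depends on the updated class. The main obstacle is the sign convention for the intersection form; with the opposite choice the recursion fails to close and produces rapidly growing coefficients, yielding an element outside $\Ker\Pa_2$---a readily detected error. Once the sign is fixed, the pattern $(m_1,\ldots,m_{12})=(-1,1,1,-1,1,1,-1,1,1,-1,1,1)$ for $\alpha=a$ is manifestly periodic of period $3$, reflecting the $\iota'$-orbit decomposition $\{1,4,7,10\}\cup\{2,5,8,11\}\cup\{3,6,9,12\}$, while for $\alpha=b$ only the orbit $\{1,4,7,10\}$ contributes, making the identification with $v_1$ immediate.
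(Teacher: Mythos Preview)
Your approach is exactly what the paper intends: the paper gives no argument beyond ``one can deduce the following lemma from these observations,'' and you carry out precisely the implied linear-algebraic computation, extending $[c_1],[c_2],[c_3]$ along the $\iota'$-orbit, checking the four vectors lie in $\Ker\partial_2$, counting ranks, and running the $\partial_3$-recursion on $a$ and $b$.

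Two small corrections. First, the formula you quote, $\iota'_\ast(a)=b$, $\iota'_\ast(b)=-a$, is the \emph{inverse} of the paper's displayed $\iota'_\ast(a)=-b$, $\iota'_\ast(b)=a$. Your version is in fact the one under which $v_4=e_7+e_8-e_9+e_{10}-e_{11}-e_{12}$ actually lands in $\Ker\partial_2$ (with the paper's displayed signs one gets $\partial_2(v_4)=-2a-2b$), so this discrepancy deserves to be resolved against the figures rather than taken on faith from the text; in any case you should state explicitly which orientations of $c_4,\ldots,c_{12}$ you are using rather than appealing to a formula for $\iota'_\ast$ that disagrees with the one on the page. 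Second, your surjectivity witness ``$\delta_5=-[c_2]+[c_8]$'' is off: $-[c_2]+[c_8]=-(a-\delta_5)+(-a)=-2a+\delta_5$; the correct expression is $\delta_5=a-[c_2]=-[c_8]-[c_2]$. Neither issue affects the strategy, which matches the paper's.
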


\noindent
By \Cref{lem:representing H_2(W) by vc,lem:kernel and image of boundary operators}, one can conclude that the homology group $H_2(W)$ is isomorphic to $\Z\oplus \Z$, as expected. (Note that $W$ is diffeomorphic to $\PP^1\times \PP^1$. cf.~\Cref{rem:example LP is P1P1}.) 

In order to calculate the representatives in $\Ker \Pa_2\subset H_2(W'_2,W'_1)$ of the vanishing cycles $L_1,\ldots, L_4$ of the Lefschetz pencil $\rho\circ F:X\dashrightarrow \PP^1$, we first observe relation among reference paths for the Lefschetz pencil on $W'$, corresponding vanishing cycles, and elements in $H_2(W'_2,W'_1)$.  
For a given reference path $\gamma \subset D^2\subset \PP^1$ for the Lefschetz pencil on $W'$, one can obtain a $2$-disk $D_\gamma\subset W'_2$, called a \textit{Lefschetz thimble} in \cite{SeidelbookFukayaPicard}, which contains the Lefschetz critical point over the end of $\gamma$, and bounds a vanishing cycle associated with $\gamma$. 
It is easy to see that $D_\gamma$ represents an element in $H_2(W'_2,W'_1)$, the image $\Pa_2([D_\gamma])\in H_1(\Sigma_1^7)$ is represented by a vanishing cycle associated with $\gamma$, and $[D_{\gamma_i}]$ is equal to $e_i\in \Z^{12}\subset H_2(W'_2,W'_1)$ for the reference path $\gamma_i$ in \Cref{fig:vp_1_8}.
Furthermore, one can deduce the following lemma from relation between elementary transformations for reference paths and handle decompositions observed in \cite[Exercises 8.2.7.~(a)]{GS19994-mfd}: 

\begin{lemma}\label{lem:relation elem trans & homology}

Let $\gamma$ be a reference path from $r_0$ and $\widetilde{\gamma}$ be another path obtained by applying elementary transformation with $\gamma_i$, as shown in \Cref{fig:elem trans}. 
Then $[D_{\widetilde{\gamma}}]$ is equal to $[D_{\gamma}] - \left<\Pa_2([D_{\gamma}]),c_i\right>e_i$. 
\begin{figure}[htbp]
\centering
\subfigure[$\gamma$ and $\gamma_i$.]{\includegraphics{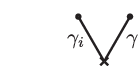}\label{fig:elem trans 1}}
\subfigure[$\widetilde{\gamma}$.]{\includegraphics{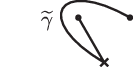}\label{fig:elem trans 2}}
\caption{Reference paths.}
\label{fig:elem trans}
\end{figure}

\end{lemma}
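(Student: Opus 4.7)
The plan is to recognize the formula as a handle-slide relation in the Kas-style handle decomposition of $W'$ and to derive it by comparing Lefschetz thimbles before and after the elementary transformation. First, I would recall that each thimble $D_\gamma$ is constructed by parallel transport of the vanishing cycle along $\gamma$ with respect to a horizontal distribution for the Lefschetz fibration on $W'$, and that its class in $C_2 = H_2(W'_2,W'_1)$ records the attaching information of the corresponding $2$-handle. In particular, $\Pa_2([D_\gamma])$ is the class $[c_\gamma]\in H_1(\Sigma_1^7)$ of the vanishing cycle, and $\Pa_2(e_i)=[c_i]$ by the normalization used in \Cref{lem:representing H_2(W) by vc}.

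Next I would unwind the geometric content of the elementary transformation: up to homotopy rel endpoints, $\widetilde{\gamma}$ agrees with $\gamma$ outside a small neighborhood of the endpoint of $\gamma_i$, where it picks up an extra loop whose monodromy on the fiber is the Dehn twist $t_{c_i}$. The homological Picard--Lefschetz formula
$$
t_{c_i\,*}([c_\gamma]) \;=\; [c_\gamma] \,-\, \langle [c_\gamma], c_i\rangle\, [c_i]
$$
then yields $\Pa_2([D_{\widetilde{\gamma}}]) = \Pa_2([D_\gamma]) - \langle \Pa_2([D_\gamma]), c_i\rangle\,\Pa_2(e_i)$ in $H_1(\Sigma_1^7)$, which is the correct image under $\Pa_2$ of the claimed identity.

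The decisive step is to upgrade this fiber-level identity to an identity in $C_2$ itself. Here I would appeal to Exercise~8.2.7.(a) of \cite{GS19994-mfd}, which interprets an elementary transformation with $\gamma_i$ as a sequence of handle slides of the $2$-handle $D_\gamma$ over the $2$-handle $D_{\gamma_i}$; the signed number of slides required equals $-\langle \Pa_2([D_\gamma]), c_i\rangle$, and each slide contributes exactly one copy of $e_i$ to the relative chain class. To make this self-contained, I would localize the computation in a neighborhood of the critical value at the end of $\gamma_i$, modeled by $(z,w)\mapsto zw$. Outside this neighborhood $D_{\widetilde{\gamma}}$ and $D_\gamma$ literally coincide under the parallel transport construction; inside, the difference $D_{\widetilde{\gamma}} - D_\gamma$ is manifestly a signed union of copies of $D_{\gamma_i}$ whose signed count matches the algebraic intersection number of $c_\gamma$ with $c_i$ after parallel transport to the reference fiber.

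The main obstacle will be pinning down the sign. The magnitude of the coefficient of $e_i$ is forced by Picard--Lefschetz together with the fact that $\Pa_2$ is injective on the subgroup spanned by $e_i$ (so no ambiguity in $e_0$ enters). The sign, however, depends simultaneously on the orientation convention for the detour in $\widetilde{\gamma}$, on the normalization $\Pa_2(e_i)=[c_i]$ fixed above, and on the reverse multiplication convention for mapping classes used throughout the paper. I would resolve this by an explicit computation in the local model $(z,w)\mapsto zw$, checking that the counterclockwise detour around the critical value at the end of $\gamma_i$ produces precisely $-\langle \Pa_2([D_\gamma]), c_i\rangle\, e_i$ and not the opposite sign, thereby matching the sign stated in \Cref{lem:relation elem trans & homology}.
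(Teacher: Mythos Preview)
Your proposal is correct and follows essentially the same approach as the paper: the paper does not give a detailed proof but simply states that the lemma can be deduced from the relation between elementary transformations for reference paths and handle decompositions observed in \cite[Exercises~8.2.7.~(a)]{GS19994-mfd}, which is precisely the handle-slide interpretation you invoke as the decisive step. Your additional discussion of the Picard--Lefschetz formula and the local-model sign check merely fleshes out what the paper leaves implicit.
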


\noindent
The reference path $\beta_{10}^1$ is given in \Cref{fig:matching path 2}. 
This path can be obtained by applying the inverse of the elementary transformation with $\gamma_3$ to the reference path in \Cref{fig:matching path 1}, and the path in \Cref{fig:matching path 1} can also be obtained by applying the elementary transformation with $\gamma_8$ to $\gamma_7$. 
\begin{figure}[htbp]
\centering
\subfigure[]{\includegraphics{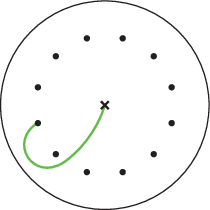}\label{fig:matching path 1}}
\subfigure[]{\includegraphics{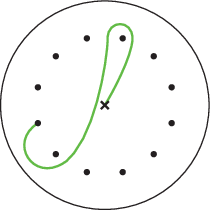}\label{fig:matching path 2}}
\caption{\subref{fig:matching path 1}~:~an intermediate reference path. \subref{fig:matching path 2}~:~the reference path $\beta_{10}^1$. }
\label{fig:matching path}
\end{figure}
We thus obtain:
\[
[D_{\beta_{10}^1}] = \left(e_7 - \left<c_7,c_8\right>e_8\right)+\left<c_7 - \left<c_7,c_8\right>c_8,c_3\right>e_3=e_7 - e_8-e_3.
\]
By the symmetry, we also obtain $[D_{\beta_{10}^2}] = e_1-e_2-e_9$.
Since $L_1$ is a matching cycle associated with $\gamma_{10}$, the class $[L_1]$ is represented by 
\[
[D_{\beta_{10}^1}]+[D_{\beta_{10}^2}] = e_1-e_2-e_3+e_7-e_8-e_9. 
\]
(Note that we take orientations of the thimbles so that $\Pa_2([D_{\beta_{10}^1}]+[D_{\beta_{10}^2}])=0$.)
We can also obtain the following equality in a similar manner:
\[
[D_{\beta_{11}^1}] = e_4 + 2 e_7 -e_8-e_3. 
\]
Thus, $[L_2]$ is represented by 
\[
2e_1-e_2-e_3+e_4+2e_7-e_8-e_9+e_{10} = (e_1-e_2-e_3+e_7-e_8-e_9)+(e_1+e_4+e_7+e_{10}).
\] 
Since $e_1+e_4+e_7+e_{10}$ is contained in $\Im \Pa_3$, $[L_1]$ is equal to $[L_2]$ in $H_2(W')$. 
By the symmetry, we obtain the following equalities:
{\allowdisplaybreaks
\begin{align*}
[L_3]=&[e_4-e_5-e_6+e_{10}-e_{11}-e_{12}],\\
[L_4]=&[2e_4-e_5-e_6+e_7+2e_{10}-e_{11}-e_{12}+e_{1}].
\end{align*}
}%
Both of the elements are equal to $[L_1]$ in $H_2(W')$. 
Hence, $H_2(X)$ is isomorphic to 
\[
(\Ker \Pa_2/\Im \Pa_3)/\left<e_2+e_3+e_4+e_8+e_9+e_{10}\right>\cong \Z
\]
and generated by $A_0:=[[e_7+e_8-e_9+e_{10}-e_{11}-e_{12}]]$. 

We next calculate the value $\left<c_1(X),A_0\right>$. 
Let $W''\subset W'$ be the complement of a tubular neighborhood of a regular fiber in $W'$, which admits a Lefschetz fibration over the disk. 
It is easy to see that the handle decomposition we considered above induces the isomorphism $H_2(W'')\cong \Ker \Pa_2$. 
We denote by $\alpha_0''\in H_2(W'')$ the element corresponding to $e_7+e_8-e_9+e_{10}-e_{11}-e_{12}\in \Ker \Pa_2$ via this isomorphism. 
The universal covering $\R^2\to \Sigma_1$ induces a trivialization of $T\Sigma_1$, and that of $T\Sigma_1^7$ via the inclusion $\Sigma_1^7\hookrightarrow \Sigma_1$. 
It is easy to see that the rotation number of any essential simple closed curve in $\Sigma_1$ with respect to this trivialization is equal to $0$. 
Thus, by \cite[Proposition 12]{APZ2013LFoverdisc}, the value $\left<c_1(W''),\alpha_0''\right>$ is also equal to $0$.

Let $\widetilde{F}:\widetilde{X}\to \Hir_1$ be the Lefschetz bifibration obtained by blowing up $F:X\dashrightarrow \PP^2$ at the eight base points, and then along a regular fiber on $[0:0:1]\in \PP^2$, and $\widetilde{W}$ be a $4$-dimensional fiber of $\widetilde{F}$. 
We can then regard $W'$ as a subset of $\widetilde{W}$. 
Let $i:W''\to \widetilde{W}$ be the inclusion, $\alpha_0 = i_\ast \alpha_0''\in H_2(\widetilde{W})$, and $\widetilde{A}_0\in H_2(\widetilde{X})$ be the homology class corresponding to ${\alpha}_0$ via the isomorphism in \Cref{prop:H2 LbF} (i.e.~$\widetilde{A}_0 = j_\ast \alpha_0$ for the inclusion $j:\widetilde{W}\to \widetilde{X}$). 
By \Cref{prop:value 1st Chern}, the value $\left<c_1(\widetilde{X}),\widetilde{A}_0\right>$ is equal to $\left<c_1(\widetilde{W}),\alpha_0\right>$, which is further calculated as follows:
\[
\left<c_1(\widetilde{W}),\alpha_0\right> = \left<i^\ast c_1(\widetilde{W}),\alpha_0''\right> = \left<c_1(W''),\alpha_0''\right>=0. 
\] 
On the other hand, the following equality holds by \cite[\S8]{GP2007Chernblowup}:
\[
c_1(\widetilde{X}) = \pi^\ast c_1(X) -2 \sum_{i=1}^{8}[\mathcal{S}_i] - 2 [\widetilde{F}^{-1}(S_1)], 
\]
where $\pi:\widetilde{X}\to X$ is the blow-down map. 
Since $\Pa_2(\alpha_0'') = c_7+c_8-c_9+c_{10}-c_{11}-c_{12}$ is zero in $H_1(\Sigma_1^7)$, and $-2\delta_1$ in $H_1(\Sigma_1^8)$, One can take a properly embedded surface in $T\subset D^2\times \Sigma_1^7\cong W_1'\subset W''$ with the following properties:

\begin{itemize}

\item 
$T$ bounds the union of the attaching circles of the $2$-handles $h_7,\ldots, h_{12}$ corresponding to $e_7,\ldots, e_{12}$,

\item 
the union of $T$ and the cores of the handles $h_7,\ldots, h_{12}$ consist of the closed oriented surface $\widetilde{T}$ representing $\alpha_0''$, 

\item 
the algebraic intersection of $\widetilde{T}$ with the $(-1)$-section is $2$. 

\end{itemize} 

\noindent
Since $\widetilde{T}$ is contained in $W''$, it is disjoint from a regular fiber of $\widetilde{T}$. 
We can eventually calculate $\left<c_1(X),A_0\right>$ as follows: 
{\allowdisplaybreaks
\begin{align*}
&\left<c_1(X),A_0\right>\\
=&\left<\pi^\ast c_1(X),\widetilde{A}_0\right>\\
=&\left<c_1(\widetilde{X}),\widetilde{A}_0\right> -2 \sum_{i=1}^{8}[\mathcal{S}_i]\cdot [\widetilde{T}] - 2 [\widetilde{F}^{-1}(S_1)]\cdot [\widetilde{T}]\\
=&0 -2 \cdot 2 - 2 \cdot 0 = -4.
\end{align*}
}%
In particular, $c_1(X)$ is four times a generator of $H^2(X)$. 

\begin{remark}\label{rem:example is deg2 Veronese}

The paths $\mu_i$ and the simple closed curves $c_i$ are obtained by adapting the methods from Moishezon-Teicher's theory for projective surfaces (see \cite{MTII,MTIII,MTIV}) to the degree-$2$ Veronese embedding $\PP^3 \hookrightarrow \PP^9$. 
Specifically, this involves taking a sequence of degenerations, which transform the embedded $\PP^3 \subset \PP^9$ into a union of linear $\PP^3$'s, followed by analyzing the changes in (braid and fiber) monodromies induced by each degeneration, referred to as \textit{regeneration rules} in \cite{MTIV}.
While several lemmas necessary for this procedure, particularly those concerning the determination of regeneration rules, remain unproved, the following conjecture arises naturally from this approach:

\begin{conjecture}

The Lefschetz bipencil constructed in this section is isomorphic to the composition of the degree-$2$ Veronese embedding $\PP^3 \hookrightarrow \PP^9$ and a generic projection $\PP^9 \dashrightarrow \PP^2$.

\end{conjecture}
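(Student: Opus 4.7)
The plan is to prove the conjecture in two stages: first identify the source $6$-manifold $X$ of the constructed bipencil $F:X\dashrightarrow\PP^2$ with $\PP^3$, and then construct the compatible pair of the Veronese bipencil combinatorially and match it with $(\theta,\tau_{\mu_1}^{\epsilon_1}\cdots\tau_{\mu_{56}}^{\epsilon_{56}})$ up to the moves that do not change a bipencil's isomorphism class.

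For the first stage, I would extend the topological invariant computations already carried out in this section. We have $\chi(X)=4$, $\pi_1(X)=1$, $H_2(X;\Z)\cong\Z$, and $\left<c_1(X),A_0\right>=-4$ for a generator $A_0$, which are exactly the invariants of $\PP^3$. Combining this with a computation of $c_1^3(X)$ from the compatible pair (via the problem raised in Remark~5.11, or via the symplectic structure produced in Proposition~5.2 together with Chern number relations in dimension $6$) and with Jupp's and Wall's classification of smooth simply connected $6$-manifolds with torsion-free cohomology, I would conclude that $X$ is diffeomorphic to $\PP^3$ provided $c_1^3$ matches the value $64$ on $\PP^3$; the expected value is forced by the Chern number relations and the vanishing of the second Stiefel–Whitney class ($c_1\equiv 0\bmod 2$ is $4H$, so $c_1^3/c_1\cdot c_2$ ratios pin everything down).

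For the second stage, I would analyze the Veronese bipencil $G:\PP^3\dashrightarrow\PP^2$ via a Moishezon–Teicher-style degeneration. Concretely, take a generic toric degeneration of $V:=v_2(\PP^3)\subset\PP^9$ into a union $V_0$ of four linear $\PP^3$'s indexed by the vertices of the standard $3$-simplex, meeting in linear $\PP^2$'s, $\PP^1$'s, and one point. The composition with a generic linear projection $\PP^9\dashrightarrow\PP^2$ restricts on each component of $V_0$ to a double cover branched over a conic, and the combinatorics of the intersection pattern reads off an initial (degenerate) compatible pair. One then regenerates each stratum: smoothing a double stratum produces folds together with positive/negative double points ($\epsilon_i=\pm 2$), smoothing a triple stratum produces a cusp ($\epsilon_i=3$), and the quadruple point contributes the branch locus ($\epsilon_i=1$). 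Tracking vanishing paths through these local replacements, counting strata, and using the $\pi/2$-symmetry of the simplex (which should reproduce the $\kappa$-symmetry of the constructed $\mu_i$), I expect to recover a factorization Hurwitz-equivalent to $\tau_{\mu_1}^{\epsilon_1}\cdots\tau_{\mu_{56}}^{\epsilon_{56}}$ together with the representation $\theta$.

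The main obstacle is making the $6$-dimensional regeneration rules rigorous. Moishezon–Teicher theory, developed in MT-II, MT-III, MT-IV for surfaces in $\PP^N$, treats analogous but strictly lower-dimensional local models, and its transplantation to Lefschetz bipencils requires formulating and proving lemmas that describe how each local degenerate configuration of $V_0\to\PP^2$ regenerates into the exact standard models $(z_1,z_2^2+z_3^2)$, $(z_1,z_2^3+z_1z_2+z_3^2)$, and $(z_1^2+z_2^2+z_3^2,z_1)$ of Definition~2.4. Once such rules are in hand, the conjectured isomorphism would follow by combining the Hurwitz equivalence of the two compatible pairs with a uniqueness theorem for bipencils with prescribed compatible pair---a $6$-dimensional analogue of Theorem~2.4 that would need to be proved separately, but whose $4$-dimensional projections are already furnished by Theorem~2.4 applied fiberwise to the Lefschetz fibrations $\rho_1'\circ F$ and $\rho_1'\circ G$, together with the lifting criterion of Theorem~3.1 used to glue fiberwise isomorphisms across a Hurwitz system in $\PP^1$.
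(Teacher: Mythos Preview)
The statement is a \emph{conjecture}, and the paper does not prove it. Remark~\ref{rem:example is deg2 Veronese} explains that the paths $\mu_i$ and curves $c_i$ were found by adapting Moishezon--Teicher degeneration techniques to the degree-$2$ Veronese embedding of $\PP^3$, but states explicitly that ``several lemmas necessary for this procedure, particularly those concerning the determination of regeneration rules, remain unproved'' and that the proof ``will be provided in a forthcoming paper.'' There is therefore no proof in the paper to compare your proposal against.

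That said, your Stage~2 is precisely the route the paper indicates: a toric-type degeneration of $v_2(\PP^3)$ into linear pieces, followed by regeneration rules that produce the local models of Definition~\ref{def:Lefschetz bifibration}. You correctly identify the principal obstacle---the $6$-dimensional regeneration lemmas are not in the literature---and this is exactly the gap the paper acknowledges. Two further genuine gaps in your outline deserve emphasis. First, the uniqueness statement you invoke (``a $6$-dimensional analogue of Theorem~\ref{thm:equivalence LF from van cyc}'' asserting that a compatible pair determines the bipencil up to isomorphism) is not established anywhere in the paper; only the existence direction is proved in \Cref{thm:combinatorial construction LbF over D2timesD2,thm:combinatorial construction LbF over Hir_m,thm:combinatorial construction pencil-fibration}, and without uniqueness even a perfect match of compatible pairs would not yield an isomorphism of bipencils. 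Second, your Stage~1 depends on computing $c_1^3(X)$ from the monodromy data, which the paper singles out as an open problem (see \Cref{rem:other topinv from monodromies}); so the diffeomorphism $X\cong\PP^3$ via Wall--Jupp is itself contingent on unresolved work. A minor point: the $3$-simplex has tetrahedral symmetry, not an order-$4$ rotation, so the origin of the $\kappa$-symmetry in the factorization is not quite as you describe.
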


\noindent The proofs of the necessary lemmas and the above conjecture will be provided in a forthcoming paper.

\end{remark}

\begin{remark}\label{rem:example null-hom irre comp}

By \Cref{thm:relation for deg2 Veronese}, we obtain the following factorization in $B_{12}\cong \mathcal{M}_\Pa(D^2,\R)$: 
\begin{equation}\label{eqn:example null-hom irre comp}
\tau_{\mu_1}^{\epsilon_1}\cdots \tau_{\mu_{56}}^{\epsilon_{56}}\cdot \tau_{\mu_1}\cdot \tau_{\mu_1} \cdot\tau_{\mu_2}^{\epsilon_2}\cdots \tau_{\mu_{56}}^{\epsilon_{56}} = t_{\Pa}^2. 
\end{equation}
Let $C:\mathcal{M}_\Pa(\Sigma_1^8)\to \mathcal{M}_\Pa(\Sigma_1^6)$ be the surjective homomorphism obtained by capping the two boundary components of $\Sigma_1^8$ labeled with $1$ and $5$ in \Cref{fig:surf for symmetry 2} by the disk.  
We regard $\Sigma_1^8$ as a subsurface of $\Sigma_1^6$ in the obvious way. 
Since $c_2$ and $c_8$ are isotopic in $\Sigma_1^6$, the homomorphism $C\circ \theta$ is compatible with the factorization \eqref{eqn:example null-hom irre comp} (cf.~\Cref{tab:vanishing cycles}). 
By \Cref{thm:combinatorial construction LbF over Hir_m}, we can obtain a Lefschetz bifibration $F:X\to \Hir_2$ over $\rho_2$ with $\# B_F = 4\times 2 + 2 =10$.  
This Lefschetz bifibration has a spherical irreducible component $\Sigma$ in a fiber corresponding to the first factor $\tau_{\mu_1}^{\epsilon_1=2}$ in \eqref{eqn:example null-hom irre comp}.
On the other hand, one can easily check that $\Sigma$ is isotopic in $X$ to the vanishing cycle of the Lefschetz fibration $\rho_2\circ F$ associated with the critical point corresponding to the middle factor $\tau_{\mu_1}$ in \eqref{eqn:example null-hom irre comp}. 
In particular, $\Sigma$ is null-homologous and thus, one cannot take a symplectic structure of $X$ taming $J$ in \Cref{thm:almost cpx str LbF} for $F$. (Note that each irreducible component of $F$ is $J$-holomorphic.)

\end{remark}

%

\end{document}